\tikzset{anchorbase/.style={baseline={([yshift=-0.5ex]current bounding box.center)}}}
\tikzstyle directed=[postaction={decorate,decoration={markings,
    mark=at position #1 with {\arrow{>}}}}]
\tikzset{cross/.style={cross out, draw=black, minimum size=2*(#1-\pgflinewidth), inner sep=0pt, outer sep=0pt},
cross/.default={1pt}}
\renewcommand*\env@matrix[1][\arraystretch]{%
  \edef\arraystretch{#1}%
  \hskip -\arraycolsep
  \let\@ifnextchar\new@ifnextchar
  \array{*\c@MaxMatrixCols c}}
\newtheorem{thm}{Theorem}[section]
\newtheorem{cor}[thm]{Corollary}
\newtheorem{lem}[thm]{Lemma}
\newtheorem{prop}[thm]{Proposition}
\newtheorem{conj}[thm]{Conjecture}
\theoremstyle{definition}
\newtheorem{defn}[thm]{Definition}
\newtheorem{eg}[thm]{Example}
\newtheorem{rmk}[thm]{Remark}
\DeclareMathOperator{\Sk}{\mathrm{Sk}}
\DeclareMathOperator{\SkAlg}{\mathrm{SkAlg}}
\DeclareMathOperator{\dual}{d}
\DeclareMathOperator{\QGM}{\mathrm{QGM}}
\DeclareMathOperator{\SQGM}{\mathrm{SQGM}}
\title{3d quantum trace map}
\author[S. Panitch]{Samuel Panitch}
\address{Department of Mathematics, Yale University, New Haven, CT 06511, USA}
\email{\href{mailto:sam.panitch@yale.edu}{sam.panitch@yale.edu}}
\author[S. Park]{Sunghyuk Park}
\address{Department of Mathematics, Harvard University, Cambridge, MA 02138, USA}
\email{\href{mailto:sunghyukpark@math.harvard.edu}{sunghyukpark@math.harvard.edu}}
\begin{document}

\maketitle

\begin{abstract}
We construct the 3d quantum trace map, a homomorphism from the Kauffman bracket skein module of an ideally triangulated 3-manifold to its (square root) quantum gluing module, thereby giving a precise relationship between the two quantizations of the character variety of ideally triangulated 3-manifolds. 
This map, whose existence was conjectured earlier by \cite{AGLR}, is a natural 3-dimensional analog of the 2d quantum trace map of \cite{BW}. 
Our construction is based on the study of stated skein modules and their behavior under splitting, especially into face suspensions. 
\end{abstract}

\tableofcontents

\section{Introduction}

The goal of this paper is to establish a precise relationship between two different quantizations of the character variety
\[
X_{\mathrm{SL}_2(\mathbb{C})}(Y) := \mathrm{Hom}(\pi_1(Y) , \mathrm{SL}_2(\mathbb{C})) \sslash \mathrm{SL}_2(\mathbb{C})
\]
of a $3$-manifold $Y$ equipped with an ideal triangulation $\mathcal{T}$. 

The first is the \emph{skein module} $\Sk(Y)$ \cite{Tur, BFK, PS}, which is the free $\mathbb{Z}[A^{\pm 1}]$-module spanned by all isotopy classes of framed links in $Y$, modulo Kauffman bracket skein relations. 
When $Y = \Sigma \times I$ where $\Sigma$ is a surface and $I$ is the interval $[0,1]$, 
then the corresponding skein module is called the \emph{skein algebra} $\SkAlg(\Sigma) := \Sk(\Sigma \times I)$, where the algebra structure is given by stacking links along the $I$-direction. 
It provides a quantization (in the sense of deformation quantization) \cite{Bullock, PS} of the character variety $X_{\mathrm{SL}_2(\mathbb{C})}(\Sigma)$, which is equipped with a natural symplectic (hence Poisson) structure \cite{Pet, Weil, AB, Gol1, Gol2}. 

The second is the \emph{quantum gluing module} $\QGM_{\mathcal{T}}(Y)$ \cite{Dim,GKRY}, which is a 3-dimensional analog of the \emph{quantum Teichm\"uller space} \cite{Kashaev, CF1, CF2}. 
For an ideally triangulated surface $\Sigma$, 
the quantum Teichm\"uller space associates a quantum torus, consisting of Laurent polynomials in $q$-commuting variables called the quantized shear coordinates. 
For an ideally triangulated 3-manifold $Y$, 
the quantum gluing module consists of Laurent polynomials in variables called the quantized shape parameters, 
modulo certain relations quantizing the usual gluing relations and the 3-term relations for the shape parameters of an ideal triangulation $\mathcal{T}$. 

In the case of surfaces, a precise relationship between the two quantizations of the character variety was established by Bonahon and Wong \cite{BW}, 
where they constructed an injective algebra homomorphism from the skein algebra to quantum Teichm\"{u}ller space. 
This map is called the \emph{quantum trace}, 
as the correspondence between the classical limit of the skein algebra and the algebra of regular functions on the character variety $X_{\mathrm{SL}_2(\mathbb{C})}(\Sigma)$ is given in terms of the trace functions (i.e.\ trace of holonomy) associated to closed immersed curves $K$ in $\Sigma$, 
which, in turn, can be expressed as Laurent polynomials in \emph{square roots} of the shear coordinates. 
See also \cite{Gabella, NY} for related constructions developed from the physics point of view. 

Motivated by the construction of Bonahon and Wong, 
Le and collaborators \cite{Le, CL, CL2, LS} have developed the theory of \emph{stated skein modules}. 
They generalize the usual skein module by allowing one to consider not just framed links in $Y$, but also framed tangles in $Y$ ending on a \emph{boundary marking}, a disjoint union of intervals in $\partial Y$. 
The most important property of stated skein modules is that there is a \emph{splitting homomorphism}, a map from the stated skein module of a $3$-manifold to the tensor product of stated skein modules of simpler pieces obtained by cutting the $3$-manifold along disks. 
By splitting a surface into ideal triangles and passing to a certain quotient of the stated skein algebra called the \emph{reduced stated skein algebra}, 
Costantino and Le \cite{CL} provided a simpler proof of the well-definedness of the quantum trace of Bonahon and Wong.

We develop a generalized notion of stated skein modules which allows the boundary markings to be any embedded bipartite graph in $\partial Y$, rather than just disjoint unions of intervals. 
We then prove the existence of a splitting homomorphism in this setting, generalizing the splitting map of stated skein modules of 3-manifolds studied by Costantino and Le \cite{CL2}. 
Our splitting map allows us to cut a 3-manifold into simpler constituent pieces, such as ideal tetrahedra.

Using these new ideas, we establish a precise relationship between the two quantizations of the character variety of ideally triangulated 3-manifolds. 
Just as in the case of surfaces, the classical trace functions on the character variety $X_{\mathrm{SL}_2(\mathbb{C})}(Y)$ can be expressed as Laurent polynomials in \emph{square roots} of the shape parameters of the ideal triangulation. 
Therefore, it is natural to introduce a refinement of the quantum gluing module -- which we call the \emph{square root quantum gluing module} $\SQGM_{\mathcal{T}}(Y)$ --
which consists of Laurent polynomials in square roots of the quantized shape parameters of $\mathcal{T}$, modulo certain natural relations. 
Our main theorem is the following: 
\begin{thm}[3d quantum trace map; detailed version in Theorem \ref{thm:quantumTrace}]
    There is a $\mathbb{Z}[A^{\pm \frac{1}{2}}, (-A^2)^{\pm \frac{1}{2}}]$-module homomorphism 
    \[
    \Tr_{\mathcal{T}}: \Sk(Y) \rightarrow \SQGM_{\mathcal{T}}(Y).
    \]
\end{thm}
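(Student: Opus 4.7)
The plan is to follow the strategy of Bonahon--Wong for surfaces, as simplified by Costantino--Le, and execute it one dimension higher using the new splitting technology developed in this paper. The construction proceeds in three steps: a local model on a single ideal tetrahedron, global assembly via the splitting homomorphism along all faces of $\mathcal{T}$, and verification that the assembly descends to $\SQGM_{\mathcal{T}}(Y)$.

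First I would construct a local quantum trace on a single ideal tetrahedron $\Delta$, equipped with the face-suspension bipartite graph markings on each of its four triangular faces, valued in the algebra generated by square roots of the quantized shape parameters of $\Delta$ modulo the quantum analog of $z z' z'' = -1$. On the elementary generators --- stated arcs joining two distinct faces of $\Delta$ --- the map would send each arc to an explicit Laurent monomial dictated by the classical expression for $\mathrm{SL}_2$ holonomies in terms of shape parameters. One must then verify, by direct case analysis, that this assignment respects the Kauffman bracket, the boundary relations, height exchange, and whatever additional relations are imposed by the bipartite structure of the face markings. This is the tetrahedral analog of the Bonahon--Wong local calculation on an ideal triangle.

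Next I would globally assemble the local maps. Given a framed link $L \subset Y$ transverse to the 2-skeleton of $\mathcal{T}$, the generalized splitting homomorphism of this paper cuts $L$ along every face of $\mathcal{T}$ simultaneously, producing a superposition of stated tangles --- one per tetrahedron --- with matched states on either side of every internal face. Applying the local quantum trace of the previous step on each tetrahedron and then passing to the quotient defining $\SQGM_{\mathcal{T}}(Y)$ produces the candidate $\Tr_{\mathcal{T}}(L)$; independence of the generic representative chosen for $L$ follows from the local map already factoring through all skein relations, combined with the naturality of the splitting homomorphism.

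The main obstacle will be verifying compatibility at internal edges of $\mathcal{T}$. Around an edge $e$ where several tetrahedra meet, a link segment piercing a neighborhood of $e$ produces a sum of stated tangles whose local trace outputs must reorganize, modulo the gluing relations, into an expression insensitive to the cyclic 3-term relations at $e$ --- the quantum shadow of the classical fact that the shape parameters around an internal edge multiply to $+1$. Ensuring that all signs and fractional powers of $A$ line up is precisely why the coefficient ring must be enlarged to $\mathbb{Z}[A^{\pm 1/2}, (-A^2)^{\pm 1/2}]$ and why the target is refined from $\QGM$ to $\SQGM$; it is also the reason the boundary markings must be bipartite, since the bipartition governs the sign conventions required for this cancellation. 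By locality of the splitting map, I expect the entire verification to reduce to an explicit computation in a small regular neighborhood of a single internal edge, together with a separate check of invariance under the 2-3 Pachner-type relations (or, alternatively, under change of triangulation), if any arise in the proof.
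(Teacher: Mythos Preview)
Your decomposition differs from the paper's in a crucial way: you propose cutting $Y$ into tetrahedra, whereas the paper cuts $Y$ into \emph{face suspensions} $Sf$, one per face $f$ of $\mathcal{T}$. This is the central architectural choice, not a cosmetic one. A face suspension is a $3$-ball whose boundary marking has exactly two degree-$3$ sinks and three degree-$2$ sources, so by the structure theorem for $3$-balls its reduced stated skein module is the cyclic $\mathbb{T}^{\otimes 2}\otimes\mathbb{B}^{\otimes 3}$-module subject to only three \emph{monomial} relations. The local quantum trace $\Tr_{Sf}:\overline{\Sk}(Sf)\to\mathbb{S}f=\widetilde{\mathbb{T}}^{\otimes 2}$ is then essentially forced, and its well-definedness is a one-line check. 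The nonmonomial three-term relations quantizing $Z''^{-1}+Z=1$ appear only afterwards, as gluing relations around each \emph{vertex cone} when the face suspensions are reassembled, and are absorbed into the very definition of $\SQGM_{\mathcal{T}}(Y)$ as the quotient of $\bigotimes_f\mathbb{S}f$ making $\bigotimes_f\Tr_{Sf}$ descend.

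Your tetrahedron-based route is not obviously doomed, but it misallocates the difficulty. If you apply the $3$-ball structure theorem to a tetrahedron $\Delta$, the presentation of $\overline{\Sk}(\Delta)$ carries one relation per vertex of $\Delta$, each a six-term identity around a hexagonal puncture, and it is precisely these that encode the three-term shape relations. So the ``direct case analysis'' you defer to the local step is the entire content of the construction, while your expectation that the main obstacle lies at internal edges is misplaced: the edge gluing relations in either picture are monomial and easy. Finally, Pachner invariance is irrelevant here; $\Tr_{\mathcal{T}}$ is constructed for a fixed $\mathcal{T}$, and compatibility under $2$--$3$ moves is listed in the paper as a future direction, not an ingredient.
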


The construction of this map immediately settles part of the conjecture of \cite{AGLR}.

\subsection*{Organization of this paper}
This paper is organized as follows. 

In Section \ref{sec:classical_case}, we recall the classical story behind the 2d quantum trace, and develop an analagous one for the 3d quantum trace. 
We discover that the analogue of crossing an edge of the triangulation of a surface is not crossing a face of a tetrahedron, but rather crossing a geometric object we call \emph{edge cone}. 
This is summed up in Proposition \ref{prop:3d_classical_state_sum}.

In Section \ref{sec:SkeinModules}, we define stated skein modules and prove important properties that will be used throughout the rest of the paper. 
There are two differences between our definition of stated skein modules and the definitions in the literature: 
\begin{enumerate}
    \item\label{item:difference1} we include a skein relation that resolves half twists, and
    \item\label{item:difference2} we allow the boundary markings of our $3$-manifolds to be any embedded bipartite graph, rather than just disjoint unions of intervals. 
\end{enumerate}
The first \eqref{item:difference1} is mostly cosmetic; 
the resolution of half twists allows us to write other skein relations in a manifestly symmetric way. 
On the other hand, the second \eqref{item:difference2} is crucial: 
a critical observation underpinning our theory is that skein modules of $3$-manifolds are naturally modules over skein algebras associated to the vertices of their boundary markings. 
This fact is summarized in Proposition \ref{prop:skein_alg_structure}. 

The main theorem from this section is Theorem \ref{thm:SplittingMap} (and its generalization, Theorem \ref{thm:partialSplittingHomomorphism}), where we generalize the splitting map of \cite{CL2}. 
From this theorem, we obtain as easy corollaries a splitting map that cuts an ideally triangulated manifold $Y$ into ideal tetrahedra or into what we call \emph{face suspensions}. 

We also define \emph{reduced stated skein modules} \cite{CL} by taking a quotient with respect to \emph{bad arcs}. 
It is these reduced versions that will actually be used to construct the 3d quantum trace. 
The splitting map and module structures defined above descend to this quotient.  

Section \ref{sec:struc_theorems} is dedicated to finding an explicit presentation of the stated skein modules of $3$-balls with boundary markings, as summarized in Theorem \ref{thm:SkeinModuleOf3Ball}. 
As a corollary (Corollary \ref{cor:red_skein_module_fs}), we obtain a simple presentation for the reduced stated skein module of a face suspension. 

In Section \ref{sec:3dQuantumTrace}, we construct the 3d quantum trace map.
To do so, we first apply a version of the splitting map (Corollary \ref{cor:fs_splitting_reduced}) for reduced stated skein modules to cut an ideally triangulated 3-manifold $Y$ into face suspensions. 
Then, we use the presentation of the reduced stated skein module of a face suspension obtained in Corollary \ref{cor:red_skein_module_fs} to construct the quantum trace on a single face suspension, whose codomain is what we call the \textit{face suspension module}.
Just as quantum Teichm\"{u}ller space can be realized as the tensor product of triangle algebras, one for each triangle in the triangulation, 
the square root quantum gluing module can be constructed as some quotient of the tensor product of face suspension modules, one for each face suspension composing $Y$.
By composing the tensor product of the face suspension quantum trace with the splitting map from above, we obtain the 3d quantum trace.  

Section \ref{sec:examples} contains concrete examples of computations using our map. 
We compute the quantum trace of three elements of the skein module of the figure-8 knot complement.

Finally, in Section \ref{sec:future_directions}, we list some interesting avenues of future research.

\subsection*{Acknowledgements}
We are indebted to Andy Neitzke who generously shared many ideas with us. 
We also thank Francesco Costantino, Daniel Douglas, Tobias Ekholm, Dan Freed, Sergei Gukov, Ka Ho Wong, and Fei Yan for interesting discussions. 

S. Park gratefully acknowledges support from Simons Foundation through Simons Collaboration on Global Categorical Symmetries.


\section{Classical story} \label{sec:classical_case}
In this section, we remind the reader about the state sum formulation for the classical trace map in 2d, and develop the corresponding story in 3d using face suspensions. 

\subsection{State sum formulation of the 2d classical trace map}
The following well known result can be found in \cite{BW} and \cite{Bonahon}, among other sources. 
Let $\Sigma$ be an oriented punctured surface with boundary that admits an ideal triangulation $\lambda$. 
Let $\mathcal{E}$ denote the set of interior edges in the triangulation.
To each interior edge $e_i \in \mathcal{E}$, we associate a shear parameter $X_i \in \mathbb{R}_+$. 
Associated to this data, we construct a group homomorphism $r: \pi_1(\Sigma)\rightarrow \mathrm{PSL}_2(\mathbb{R})$ as follows:
\begin{enumerate}
    \item Lift the ideal triangulation $\lambda$ to an ideal triangulation $\widetilde{\lambda}$ of the universal cover $\widetilde{\Sigma}$. 
    \item\label{item:2dPSL2Rrep} Construct an orientation-preserving immersion $\widetilde{g}:\widetilde{\Sigma}\rightarrow \mathbb{H}^2$. 
    Having fixed the shear parameters of the ideal triangulation, this map is uniquely determined up to isotopy of $\widetilde{\Sigma}$ respecting $\widetilde{\lambda}$ once we have chosen some $\Delta \in \lambda$ to map to the ideal triangle with vertices at $0, 1$, and $\infty$. 
    This implies $\widetilde{g}$ is unique up to composition by an element of $\mathrm{PSL}_2(\mathbb{R})$. 
    \item From the construction in step \ref{item:2dPSL2Rrep}, we obtain a unique group homomorphism $r$ satisfying $\widetilde{g}(\gamma \widetilde{\Delta})=r(\gamma)\widetilde{g}(\widetilde{\Delta})$ for each $\Delta\in \lambda$. 
    More concretely, if $\gamma$ is a closed, immersed curve in $\Sigma$, then $r(\gamma)\in \mathrm{PSL}_2(\mathbb{R})$ is the unique orientation preserving isometry of $\mathbb{H}^2$ sending the point $\widetilde{g}\circ \widetilde{\gamma}(0)$ to $\widetilde{g}\circ \widetilde{\gamma}(1)$ and the vector $(\widetilde{g}\circ \widetilde{\gamma})'(0)$ to $(\widetilde{g}\circ \widetilde{\gamma})'(1)$. 
    Since the family of triangles $\widetilde{g}(\widetilde{\lambda})\subset \mathbb{H}^2$ was unique up to composition by an element of $\mathrm{PSL}_2(\mathbb{R})$, $r$ is unique up to conjugation by an element of $\mathrm{PSL}_2(\mathbb{R})$. 
\end{enumerate}
Such a group homomorphism can be constructed explicitly. Suppose the curve $\gamma$ transversely meets the edges $e_{1}, e_{2},\cdots, e_{k}, e_{k+1} = e_{1}$ in that order. 
After crossing the edge $e_{i}$, $\gamma$ enters a face $\Delta_i$ of $\lambda$, and then exits $\Delta_i$ across the edge $e_{i+1}$. 
There are three possibilities for the edge $e_{i+1}$:
\begin{enumerate}
    \item $e_{i+1}$ is the edge immediately to the left of $e_{i}$ after crossing into $\Delta_i$. 
    In this case, define 
    \[
    M_i = 
    \begin{bmatrix} 
    1 & 1 \\ 
    0 & 1
    \end{bmatrix}.
    \]
    \item $e_{i+1}$ is the edge immediately to the right of $e_{i}$ after crossing into $\Delta_i$. 
    In this case, define 
    \[
    M_i = 
    \begin{bmatrix} 
    1 & 0 \\ 
    1 & 1
    \end{bmatrix}.
    \]
    \item $e_{i+1}$ is $e_{i}$. 
    In this case, define 
    \[
    M_i = 
    \begin{bmatrix} 
    0 & 1 \\ 
    -1 & 0
    \end{bmatrix}.
    \]
\end{enumerate}
Also, for $X \in \mathbb{R}_+$, define
\[
S(X) = \begin{bmatrix} X^\frac{1}{2} & 0\\ 0 & X^{-\frac{1}{2}}\end{bmatrix}.
\]

\begin{lem} \label{lem:2dclassicaltrace}
    Up to conjugation by an element of $\mathrm{PSL}_2(\mathbb{R})$, 
    \[
    r(\gamma) = S(X_{1}) M_1 S(X_{2}) M_2 \cdots S(X_{k}) M_k,
    \]
    where the matrices $M_i$ and $S(X_{i})$ are defined as above, and $X_i$ is the shear parameter associated to the edge $e_i\in \mathcal{E}$.    
\end{lem}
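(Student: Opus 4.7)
The plan is to work in the universal cover $\widetilde{\Sigma}$ and track how the developing map $\widetilde{g}$ transports a normalized reference frame through the sequence of triangles $\widetilde{\Delta}_1, \ldots, \widetilde{\Delta}_k$ that the lifted curve $\widetilde{\gamma}$ crosses in order. Since $\widetilde{g}$ is determined only up to left composition by $\mathrm{PSL}_2(\mathbb{R})$, I would first normalize it so that $\widetilde{g}(\widetilde{\Delta}_1)$ is the ideal triangle with vertices $0, 1, \infty$, with $e_1$ chosen to be the edge $[0,\infty]$ and its orientation matched to the sense in which $\widetilde{\gamma}$ crosses it. The sought-for holonomy $r(\gamma)$ will then appear as the accumulated product of the transition matrices between consecutive normalized frames, and the whole problem reduces to identifying each such local transition with $S(X_i)\, M_i$.

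The key local step is the following: suppose two adjacent triangles share the edge $e_i$ with shear parameter $X_i$, and the first is placed in the standard position with ideal vertices $(0,1,\infty)$ and $e_i = [0,\infty]$. Then by the definition of shear coordinates the remaining ideal vertex of the second triangle lies at $-X_i$. The unique element of $\mathrm{PSL}_2(\mathbb{R})$ that carries this configuration back to the standard position at the \emph{next} triangle --- that is, that re-normalizes $e_{i+1}$ to play the role of $[0,\infty]$ with matching orientation --- is precisely $S(X_i)\, M_i$. Here $S(X_i)$ is the shear translation along $e_i$ that rescales $-X_i$ to $-1$, and $M_i$ is the combinatorial relabelling of the three vertices that places $e_{i+1}$ in the correct outgoing slot. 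The three cases for $M_i$ correspond exactly to whether $\widetilde{\gamma}$ turns left, turns right, or exits through the same edge it entered, and each is verified by an explicit $\mathrm{PSL}_2(\mathbb{R})$ calculation on the three labelled ideal points in $\partial \mathbb{H}^2$.

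Iterating the local step, the normalized frame at $\widetilde{\Delta}_{k+1}$ is obtained from the normalized frame at $\widetilde{\Delta}_1$ by applying the product $S(X_1)\, M_1 \cdots S(X_k)\, M_k$. Because $\gamma$ is closed, the triangle $\widetilde{\Delta}_{k+1}$ is the deck translate $\gamma \cdot \widetilde{\Delta}_1$, so by the defining property $\widetilde{g}(\gamma \widetilde{\Delta}) = r(\gamma)\, \widetilde{g}(\widetilde{\Delta})$ this accumulated product is exactly $r(\gamma)$ in our chosen normalization. Any other normalization of $\widetilde{g}$ would differ by left composition with a fixed element of $\mathrm{PSL}_2(\mathbb{R})$, which conjugates the whole product, yielding the claimed formula up to conjugation.

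The main obstacle is essentially all contained in the local step: the bookkeeping for the three possible turn types, pinning down the sign and orientation conventions so that the three matrices $M_i$ emerge in exactly the stated form, and in particular handling the slightly subtle ``same edge'' case where the combinatorial relabelling is forced to be the half-turn $\begin{bmatrix} 0 & 1 \\ -1 & 0 \end{bmatrix}$. Once the local calculation is verified on the standard triangle, the global statement follows formally by induction on $k$.
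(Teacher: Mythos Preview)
The paper does not actually prove this lemma: it is stated as a well-known result, with the preceding text saying ``The following well known result can be found in \cite{BW} and \cite{Bonahon}, among other sources.'' So there is no proof in the paper to compare against.

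Your sketch is the standard argument and is correct in outline. The idea of normalizing the developing map so that the first triangle sits at $(0,1,\infty)$, then factoring the holonomy as a product of local transition matrices --- one shear $S(X_i)$ across each edge followed by a combinatorial relabelling $M_i$ inside each triangle --- is exactly how the result is derived in the references cited. The only genuine work, as you note, is the explicit verification of the three cases for $M_i$ on the standard triangle, which is a direct computation on ideal points in $\partial\mathbb{H}^2$. Once that is done, the global statement is immediate from the equivariance property $\widetilde{g}(\gamma\widetilde{\Delta}) = r(\gamma)\,\widetilde{g}(\widetilde{\Delta})$.
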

From Lemma \ref{lem:2dclassicaltrace}, it is straightforward to obtain a state sum formula for the trace of $r(\gamma)$.
Let a state $s$ assign a number $s_1, s_2, \cdots, s_{k} \in \{\pm 1\}$ to each point where $\gamma$ crosses an edge $e_{i}$ in this order. 
For each $i$, write the matrix $M_i$ from above as 
\[
M_i = \begin{bmatrix} m_i^{++} & m_i^{+-}\\ m_i^{-+} & m_i^{--} \end{bmatrix}.
\]

\begin{lem}
    If $\hat{r}(\gamma) \in \mathrm{SL}_2(\mathbb{R})$ is a lift of $r(\gamma)$, then 
    we have, up to sign, 
    \[
    \Tr(\hat{r}(\gamma)) = \pm \sum_s m_1^{s_1s_2} m_2^{s_2s_3} \cdots m_k^{s_ks_1} X_{1}^{s_1/2} X_{2}^{s_2/2} \cdots X_{k}^{s_k/2} ,
    \]
    where the sum is over all possible states $s$.
\end{lem}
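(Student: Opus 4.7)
The plan is to compute the trace directly from the matrix factorization provided by Lemma~\ref{lem:2dclassicaltrace}. Since each $M_i$ and each $S(X_i)$ already lies in $\mathrm{SL}_2(\mathbb{R})$, the product $S(X_1) M_1 \cdots S(X_k) M_k$ is itself a lift of $r(\gamma) \in \mathrm{PSL}_2(\mathbb{R})$ to $\mathrm{SL}_2(\mathbb{R})$. Any other choice of lift $\hat{r}(\gamma)$ differs from this one by a factor of $\pm I$, so their traces agree up to an overall sign; this accounts for the $\pm$ appearing in the statement.

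Next, I would expand the trace in indices. Labelling the rows and columns of each $2 \times 2$ matrix by $\{+, -\}$ (identified with $\pm 1$ when they appear as exponents), the shear matrix has entries $(S(X_i))_{st} = \delta_{st} X_i^{s/2}$, and the matrix $M_i$ has entries $(M_i)_{st} = m_i^{st}$. Inserting these into the standard index expansion
\[
\Tr(A_1 A_2 \cdots A_N) = \sum_{t_1, \ldots, t_N \in \{\pm\}} (A_1)_{t_1 t_2} (A_2)_{t_2 t_3} \cdots (A_N)_{t_N t_1}
\]
with $N = 2k$ and $A_{2i-1} = S(X_i)$, $A_{2i} = M_i$, the diagonality of each $S(X_i)$ forces the two indices flanking it to coincide. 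This collapses the sum to one indexed by a single state $s = (s_1, \ldots, s_k) \in \{\pm\}^k$, with each $S(X_i)$ contributing a factor $X_i^{s_i/2}$ and each $M_i$ contributing $m_i^{s_i s_{i+1}}$, where the cyclic convention $s_{k+1} = s_1$ comes from the closure of the trace.

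This is precisely the claimed state sum formula. There is no real conceptual obstacle beyond careful index bookkeeping; the only point worth double-checking is that the cyclic identification $e_{k+1} = e_1$ used to define a state is correctly matched by the trace closing the matrix product back on itself, so that the final factor in each summand is $m_k^{s_k s_1}$ rather than involving a new index.
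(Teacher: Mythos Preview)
Your proposal is correct and matches the paper's approach: the paper does not give an explicit proof of this lemma, remarking only that it is ``straightforward'' linear algebra from Lemma~\ref{lem:2dclassicaltrace}, and your index expansion is exactly the straightforward computation the paper has in mind.
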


\subsection{State sum formulation of the 3d classical trace map}\label{subsec:3dClassicalTrace}

Now, we will develop a similar story for ideally triangulated $3$-manifolds. 
Surprisingly, a simple state sum formulation is most readily obtained not from cutting the $3$-manifold into tetrahedra, but into objects we call face suspensions. 

We begin with a brief reminder of the meaning of shape parameters of an ideal tetrahedron \cite{Thu}. 
An ideal tetrahedron is the convex hull of $4$ points in $\mathbb{H}^3$, all of which lie on the sphere at infinity. 
\begin{figure}[H]
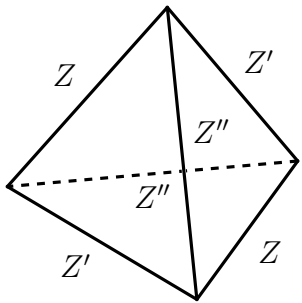

    \centering
    \includestandalone[scale=1]{figures/tetrahedron_no_leaf_space}
    \caption{Shape parameters of a tetrahedron}
    \label{fig:tetrahedron}
\end{figure}
To each edge, we associate a complex number called the \emph{shape parameter}. 
Opposite edges have the same label, and provided we label the edges in the appropriate order on each face, as shown in Figure \ref{fig:tetrahedron},  
these shape parameters must obey the relations 
\begin{equation} \label{eqn:classicalRels1}
    \begin{aligned}
    ZZ'Z'' &= -1, \\
    Z''^{-1}+Z &= 1,\\
    Z^{-1}+Z' &= 1, \\
    Z'^{-1}+Z'' &= 1.
    \end{aligned}
\end{equation}

By an orientation preserving isometry of $\mathbb{H}^3$, it is always possible to take $3$ of the vertices of an ideal tetrahedron to the points $0, 1$, and $\infty$, and the fourth to a complex number with positive imaginary part; by a relabeling if necessary, we can ensure this complex number is $Z$.

Throughout the paper, we will use the terms \emph{bare vertex} and \emph{bare edge} to reference a vertex or an edge of a tetrahedron before gluing. 

Let $Y$ be a $3$-manifold that admits an ideal triangulation $\mathcal{T}$. 
Let $\mathcal{F}$ denote the set of faces, $\mathcal{E}$ the set of internal edges, $\textbf{e}(T)$ the set of $6$ bare edges of each tetrahedron $T$, and $\textbf{v}(T)$ the set of $4$ bare vertices of each tetrahedron $T$. 

Associated to each tetrahedron $T\in \mathcal{T}$, there are $3$ shape parameters $Z_T, Z'_T, Z''_T$, obeying the relations in (\ref{eqn:classicalRels1}). 
In order for the tetrahedra in $\mathcal{T}$ to glue together to yield a hyperbolic structure, we require that the gluing relations hold. 
That is, if the bare edges $e_1, e_2, \cdots, e_k$ of tetrahedra $T_1, T_2, \cdots, T_k$ are glued around a single internal edge $e\in \mathcal{E}$, we require 
\begin{equation}\label{eqn:classicalRels2}
\log Z_{e_1} + \log Z_{e_2} + \cdots + \log Z_{e_k} = 2\pi i,
\end{equation}
where 
$Z_{e_i} \in \{Z_T, Z'_T, Z''_T \;\vert\; T \in  \mathcal{T}\}$ 
is the shape parameter associated to the bare edge $e_i$.\footnote{Here, we are taking the principal branch of the logarithm.} 

Having chosen a set of shape parameters obeying the relations in equations \eqref{eqn:classicalRels1} and \eqref{eqn:classicalRels2}, we construct a group homomorphism $r: \pi_1(Y)\rightarrow \mathrm{PSL}_2(\mathbb{C})$ as follows:
\begin{enumerate}
    \item Lift the ideal triangulation $\mathcal{T}$ to an ideal triangulation $\widetilde{\mathcal{T}}$ of the universal cover $\widetilde{Y}$. 
    \item\label{item:3dPSL2Crep} Construct an orientation-preserving immersion $\widetilde{g}:\widetilde{Y}\rightarrow \mathbb{H}^3$. 
    Having already chosen the shape parameters for each tetrahedron in $\mathcal{T}$, this map is uniquely determined up to isotopy of $\widetilde{Y}$ respecting $\widetilde{\mathcal{T}}$ once we have chosen a tetrahedron $T$ to map to the ideal tetrahedron with vertices at $0, 1, \infty$, and $Z_T$. 
    This implies $\widetilde{g}$ is unique up to composition with an element of $\mathrm{PSL}_2(\mathbb{C})$. 
    \item From the construction in step \ref{item:3dPSL2Crep}, we obtain a unique group homomorphism $r$ satisfying $\widetilde{g}(\gamma \widetilde{T}) = r(\gamma)\widetilde{g}(\widetilde{T})$ for every tetrahedron $T\in \mathcal{T}$. 
    More concretely, if $\gamma$ is a closed, immersed curve in $Y$, then $r(\gamma) \in \mathrm{PSL}_2(\mathbb{C})$ is the unique orientation preserving isometry of $\mathbb{H}^3$ sending the point $\widetilde{g}\circ \widetilde{\gamma}(0)$ to $\widetilde{g}\circ \widetilde{\gamma}(1)$ and the vector $(\widetilde{g}\circ \widetilde{\gamma})'(0)$ to $(\widetilde{g}\circ \widetilde{\gamma})'(1)$. 
    Since the family of tetrahedra $\widetilde{g}(\widetilde{\mathcal{T}}) \subset \mathbb{H}^3$ was unique up to composition with an element of $\mathrm{PSL}_2(\mathbb{C})$, $r$ is unique up to conjugation by an element of $\mathrm{PSL}_2(\mathbb{C})$. 
\end{enumerate}

To make the above construction explicit, we first introduce face suspensions.

\begin{defn}\label{defn:ConesAndSuspensions}
    \begin{enumerate} 
    \item The \emph{barycenter} of an ideal tetrahedron $T \in \mathcal{T}$ is the distinguished point in the interior of $T$ given by the intersection of the $3$ common perpendiculars of pairs of opposite edges of $T$.
    \item For an ideal tetrahedron $T \in \mathcal{T}$ and a bare vertex $v \in \mathbf{v}(T)$, the \emph{cone $Cv$ over $v$}, or \emph{vertex cone} for short, is the unique geodesic between the barycenter of $T$ and $v$. 
    A vertex cone is shown in Figure \ref{fig:vertex_cone}.
    \item For an ideal tetrahedron $T \in \mathcal{T}$ and a bare edge $e \in \mathbf{e}(T)$, 
    the \emph{cone $Ce$ over $e$}, or \emph{edge cone} for short, is the component of the geodesic hemisphere passing through the bare edge $e$ and the barycenter of $T$ bounded by the two vertex cones corresponding to the vertices of $e$. 
    An edge cone is shown in Figure \ref{fig:edge_cone}.
    
    Note that each edge cone naturally corresponds to a shape parameter.    
    \item For a face $f\in \mathcal{F}$ of the ideal triangulation, the \emph{suspension $Sf$ over $f$}, or \emph{face suspension} for short, is the $3$ manifold whose boundary is the $6$ edge cones associated to the three edges of the given face. 
    A face suspension is shown in Figure \ref{fig:faceSusp}.
    \end{enumerate}
\end{defn}

\begin{figure}[htbp]
    \centering
    \begin{subfigure}[t]{.45\linewidth}    
        \centering
        \includestandalone[scale=1]{figures/vertex_cone}
        \caption{A vertex cone corresponding to the bare vertex $v$.}
        \label{fig:vertex_cone}
    \end{subfigure}
    \hspace{5mm}
    \begin{subfigure}[t]{.45\linewidth}
        \includestandalone[scale=1]{figures/edge_cone}
        \caption{An edge cone corresponding to the bare edge $e$.}
        \label{fig:edge_cone}
    \end{subfigure}
    \caption{Vertex cones and edge cones in a tetrahedron $T$.}
\end{figure}

\begin{figure}[htbp]
    \centering
    \includegraphics[scale=.7]{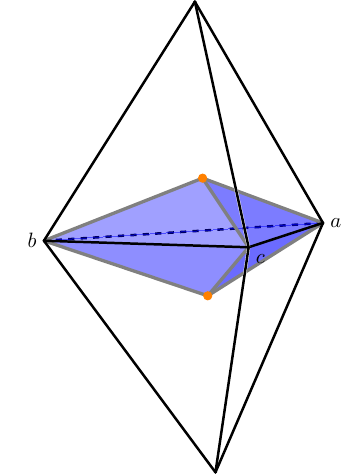}
    \caption{A face suspension corresponding to the face with vertices $a, b, c$. 
    The vertex cones are shown in gray and the edge cones are shown in blue. 
    The orange dots are the barycenters of the two tetrahedra.}
    \label{fig:faceSusp}
\end{figure}

Now suppose a curve $\gamma$ transversally meets the edge cones $Ce_{1}, Ce_{2}, \cdots, Ce_{k}, Ce_{k+1} = Ce_{1}$, in that order. 
After crossing an edge cone $Ce_i$, $\gamma$ enters a face suspension $Sf_{i}$, and exits across the edge cone $Ce_{i+1}$. 
Let $T_i$ be the tetrahedron containing $Ce_i$. 
There are six possibilities for the edge cone $Ce_{i+1}$, and thus $6$ different $\mathrm{PSL}_2(\mathbb{C})$ matrices to consider:
\begin{enumerate}
    \item\label{item:left_turn} When viewed from the center of $T_{i}$, $Ce_{i+1}$ is immediately to the left of $Ce_i$ after crossing into $Sf_i$. 
    In this case, define 
    \[
    M_i = 
    \begin{bmatrix} 
    1 & 1 \\ 
    0 & 1
    \end{bmatrix}.
    \]
    \item When viewed from the center of $T_{i}$, $Ce_{i+1}$ is immediately to the right of $Ce_i$ after crossing into $Sf_i$. 
    In this case, define 
    \[
    M_i = 
    \begin{bmatrix} 
    1 & 0 \\ 
    1 & 1
    \end{bmatrix}.
    \]
    \item $Ce_{i+1}$ is $Ce_i$. 
    In this case, define 
    \[
    M_i = 
    \begin{bmatrix} 
    0 & 1 \\ 
    -1 & 0
    \end{bmatrix}.
    \]
    \item\label{item:elIsom} When viewed from the center of $T_{i}$, $Ce_{i+1}$ is across $f_{i}$ and to the left. 
    In this case, define 
    \[
    M_i = 
    \begin{bmatrix} 
    (-1)^{\frac{1}{2}} & (-1)^{\frac{1}{2}} \\ 
    (-1)^{\frac{1}{2}} & 0
    \end{bmatrix}.
    \]
    \item\label{item:down_right_turn} When viewed from the center of $T_{i}$, $Ce_{i+1}$ is across $f_{i}$ and to the right. 
    In this case, define 
    \[
    M_i = 
    \begin{bmatrix} 
    0 & (-1)^{\frac{1}{2}} \\ 
    (-1)^{\frac{1}{2}} & (-1)^{\frac{1}{2}}
    \end{bmatrix}.
    \]
    \item When viewed from the center of $T_{i}$, $Ce_{i+1}$ is across $f_{i}$ and directly below. 
    In this case, define 
    \[
    M_i = 
    \begin{bmatrix} 
    (-1)^{-\frac{1}{2}} & 0 \\
    0 & (-1)^{\frac{1}{2}}
    \end{bmatrix}.
    \]
\end{enumerate}
We illustrate some of these possibilities in Figure \ref{fig:faceSuspTurns}. 

\begin{figure}[H]
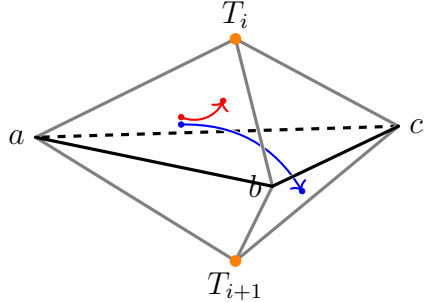

    \centering
    \includestandalone[scale=1.0]{figures/turnsInFaceSusp}
    \caption{Here, $Ce_{i}$ is the edge cone in tetrahedron $T_i$ corresponding to the edge with bare vertices $a$ and $b$. 
    In red, we show a left turn corresponding to case (\ref{item:left_turn}) above, which exits across the edge cone in $T_i$ corresponding the edge with bare vertices $a$ and $c$. 
    In blue, we show a turn corresponding to case (\ref{item:down_right_turn}), which exits across the edge cone in $T_{i+1}$ corresponding to the edge with bare vertices $b$ and $c$.}
    \label{fig:faceSuspTurns}
\end{figure}

Lastly, for $Z\in \mathbb{C}$, define the matrix 
\[
S(Z) = \begin{bmatrix} (-Z)^{\frac{1}{2}} & 0\\ 0 & (-Z)^{-\frac{1}{2}}\end{bmatrix}.
\]
\begin{prop}
    Up to conjugation by an element of $\mathrm{PSL}_2(\mathbb{C}),$
    \[
    r(\gamma) = S(Z_{1}) M_1 S(Z_{2}) M_2 \cdots S(Z_{k}) M_k,
    \]
    where the matrices $M_i$ and $S(Z_{i})$ are defined as above, and $Z_{i} \in \{Z_T, Z'_T, Z''_T \;\vert\; T \in \mathcal{T}\}$ is the shape parameter associated to the edge cone $Ce_i$. 
\end{prop}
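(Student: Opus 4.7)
The plan is to compute $r(\gamma)$ by following the developing map $\widetilde{g}$ along a lift $\widetilde{\gamma}$ in the universal cover $\widetilde{Y}$, and factoring the resulting composition into elementary pieces: one for each edge cone crossing (contributing a shape factor $S(Z_i)$) and one for each face suspension traversal (contributing a combinatorial turn matrix $M_i$). The overall strategy parallels Lemma \ref{lem:2dclassicaltrace}, where $S(X_i)$ absorbs the shear scaling along the axis of the edge and $M_i$ records how the curve turns in the next triangle; here the role of the triangle is played by the face suspension, and the role of the edge by the edge cone.

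First I would fix a normalization. For a bare edge $e$ of a tetrahedron $T$ with shape parameter $Z_e$, declare the \emph{standard placement} of $T$ at $e$ to be the configuration in which the two ideal vertices of $e$ lie at $0$ and $\infty \in \partial \mathbb{H}^3$, a preferred third vertex lies at $1$, and the fourth vertex at $Z_e$; the conventions of Figure \ref{fig:tetrahedron} make this choice unambiguous. At each time $\widetilde{\gamma}$ crosses an edge cone $Ce_i$, this gives a canonical $\mathrm{PSL}_2(\mathbb{C})$-frame for $\widetilde{g}$, and $r(\gamma)$ is the total transition from the first such frame to the last, up to conjugation by the element of $\mathrm{PSL}_2(\mathbb{C})$ used to fix the initial placement.

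The main step is the case analysis. For each of the six turn types listed in Section \ref{subsec:3dClassicalTrace}, one writes down the explicit M\"obius transformation carrying the standard placement at $Ce_i$ to the standard placement at $Ce_{i+1}$, and factors it uniquely as $S(Z_i) M_i$. For the \emph{same-tetrahedron} turns (cases (\ref{item:left_turn})--(3)), this is a permutation of the finite vertices $1$ and $Z_{e_i}$ inside the single tetrahedron $T_i$, and one recovers the three $\mathrm{SL}_2(\mathbb{Z})$ matrices already familiar from the 2d story. For the \emph{cross-face} turns (cases (\ref{item:elIsom})--(6)), one must additionally compose with the hyperbolic gluing across $f_i$ identifying the shape parameters on the two sides via the relations \eqref{eqn:classicalRels1}; the half-twists arising from this gluing are the origin of the $(-1)^{\pm 1/2}$ entries appearing in $M_i$. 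Concatenating these elementary transitions along $\widetilde{\gamma}$ then yields the claimed product.

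The main obstacle is the sign bookkeeping in cases (\ref{item:elIsom})--(6). One must verify that the single prescription $S(Z_i) = \operatorname{diag}((-Z_i)^{1/2}, (-Z_i)^{-1/2})$ correctly matches the outgoing and incoming frames uniformly across all three subcases of the face gluing, and that the $(-1)^{\pm 1/2}$ factors in $M_i$ combine with the $(-Z_i)^{\pm 1/2}$ factors in $S(Z_i)$ to reproduce the true hyperbolic gluing isometry without spurious sign discrepancies. This is where the $-1$ in the relation $Z Z' Z'' = -1$ is used essentially: it is precisely the obstruction to lifting the cyclic symmetry of an ideal tetrahedron from $\mathrm{PSL}_2(\mathbb{C})$ to $\mathrm{SL}_2(\mathbb{C})$, and it must be threaded consistently through the square roots in every cross-face subcase.
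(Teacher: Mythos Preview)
Your proposal is correct and follows essentially the same strategy as the paper: develop $\widetilde{\gamma}$ and factor the holonomy into elementary isometries computed at a standard position, then verify the six cases by explicit calculation. The only real difference is your choice of normalization: you place the bare edge $e_i$ at $0,\infty$ with the remaining vertices at $1,Z_{e_i}$, whereas the paper places the face $f_i$ at $0,1,\infty$ with the fourth vertices of the two abutting tetrahedra at $Z_i^{-1}$ and $Z_{i+1}$, and then uses the conjugation $I_i = (I_{i-1}\cdots I_1)\,I_i^*\,(I_{i-1}\cdots I_1)^{-1}$ to reduce every step to that fixed face. The paper's face-based convention is slightly better adapted to the face-suspension geometry (and makes the cross-face cases (\ref{item:elIsom})--(6) more symmetric), but either normalization leads to the same product and the same explicit case check.
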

\begin{proof}
    An orientation preserving isometry of $\mathbb{H}^3$ that maps a face $f_i \in \mathcal{F}$ to the face $f_j \in \mathcal{F}$ maps $Sf_i$ to $Sf_j$. 
    This follows immediately from the definitions of edge cones and hyperbolic barycenters. 
    
    Next, write $r(\gamma) = I_1 I_2 \cdots I_k$ where each $I_i$ is a hyperbolic isometry that takes $Sf_i$ to $Sf_{i+1}$, and arrange that $f_{1}$ is the face with vertices at $0, 1$, and $\infty$. 
    Suppose that the tetrahedra abutting $f_1$ have their fourth vertices at $Z_1^{-1}$ and $Z_2$, where $\Im Z_2>0$ and $\Im Z_1^{-1}<0$.
    Further arrange that $\gamma$ enters $Sf_1$ through the edge cone corresponding to the bare edge with vertices at $0$ and $1$ inside the tetrahedron with its fourth vertex at $Z_1^{-1}$.
    
    Observe that we can write $I_i = I_{i-1} I_{i-2} \cdots I_1 I_i^* I_1^{-1} \cdots I_{i-2}^{-1} I_{i-1}^{-1}$, where $I_i^*$ is an isometry from a face suspension corresponding to the face with vertices at $0, 1, $ and $\infty$ to one of its neighbors. 
    Furthermore, this face suspension lives inside tetrahedra with their fourth vertices at $Z_{i+1}$ and $Z_{i}^{-1}$. 
    Substituting these expressions into $r(\gamma) = I_1 I_2 \cdots I_k,$ we see it is enough to compute all of the elementary isometries that map a face suspension associated to the face with vertices at $0,1,$ and $\infty$ to one of its neighbors.
    
    These elementary isometries are an explicit computation. 
    For instance, the hyperbolic isometry depicted in Figure \ref{fig:faceSuspInH3} is the isometry that maps $0 \mapsto 1$, $1 \mapsto Z_{i+1}$, and $\infty \mapsto 1$.
\end{proof}

\begin{figure}[H]
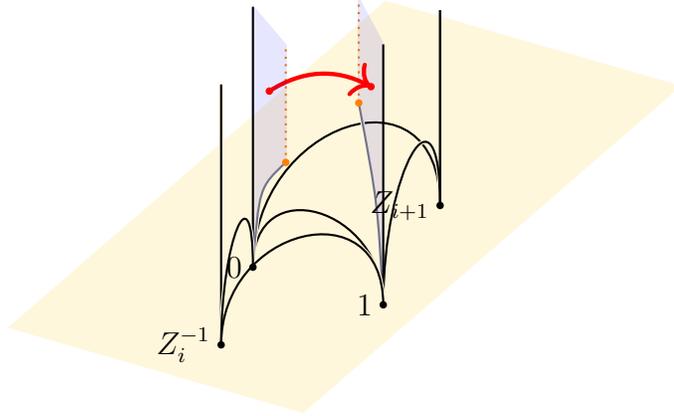

    \centering
    \includestandalone[scale=2]{figures/faceSuspInUniCov}
    \caption{A curve corresponding to the elementary isometry described in (\ref{item:elIsom}) above.}
    \label{fig:faceSuspInH3}
\end{figure}

Now, let a state $s$ assign a number $s_1, s_2, \cdots, s_{k} \in \{\pm 1\}$ to each point where $\gamma$ crosses an edge cone $Ce_{i}$ in this order. 
We write 
\[
M_i = 
\begin{bmatrix} 
m_i^{++} & m_i^{+-} \\ 
m_i^{-+} & m_i^{--} 
\end{bmatrix},
\]
as before and again obtain the following proposition from some easy linear algebra. 
\begin{prop}\label{prop:3d_classical_state_sum}
If $\hat{r}(\gamma) \in \mathrm{SL}_2(\mathbb{C})$ is a lift of $r(\gamma)$, then 
we have, up to sign, 
\[
\Tr(\hat{r}(\gamma)) = \pm \sum_s m_1^{s_1s_2} m_2^{s_2s_3} \cdots m_k^{s_ks_1} Z_{1}^{s_1/2} Z_{2}^{s_2/2} \cdots Z_{k}^{s_k/2} ,
\]
where the sum is over all possible states $s$.
\end{prop}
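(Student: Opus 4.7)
The plan is to start from the matrix factorization $r(\gamma) = S(Z_1) M_1 S(Z_2) M_2 \cdots S(Z_k) M_k$ provided by the previous proposition and read off the trace by expanding the product directly. Since the trace is conjugation-invariant and a lift $\hat r(\gamma)$ differs from $r(\gamma)$ by at most a global sign, it is enough to compute $\Tr$ of this particular representative and accept a global $\pm$.

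The key computational step is the standard tensor-network identity
\[
\Tr(A^{(1)} A^{(2)} \cdots A^{(n)}) \;=\; \sum_{i_1, \ldots, i_n \in \{+,-\}} A^{(1)}_{i_1 i_2} A^{(2)}_{i_2 i_3} \cdots A^{(n)}_{i_n i_1},
\]
applied to the $2k$-fold product of the alternating $S(Z_i)$ and $M_i$ factors. Because each $S(Z_i)$ is diagonal, the two summation indices flanking it must agree; this collapses the $2k$-fold sum to a sum over one index $s_i \in \{+,-\}$ for each crossing $i$. The contribution of $S(Z_i)$ then becomes its $(s_i,s_i)$-entry, namely $(-Z_i)^{s_i/2}$, while $M_i$ contributes its $(s_i, s_{i+1})$-entry $m_i^{s_i s_{i+1}}$, with cyclic convention $s_{k+1} = s_1$. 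This yields
\[
\Tr(r(\gamma)) \;=\; \sum_s \Bigl(\prod_i (-1)^{s_i/2}\Bigr) \, m_1^{s_1 s_2} m_2^{s_2 s_3} \cdots m_k^{s_k s_1} \, Z_1^{s_1/2} Z_2^{s_2/2} \cdots Z_k^{s_k/2}.
\]
It remains only to check that the factor $\prod_i (-1)^{s_i/2}$ is the same for every state $s$ contributing nontrivially, so that it can be pulled out and absorbed into the leading $\pm$ of the statement.

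The main obstacle, and really the only nontrivial part, is the sign bookkeeping. The $(-1)^{1/2}$ factors appearing in the entries of the matrices $M_i$ in cases (\ref{item:elIsom})--(\ref{item:down_right_turn}) (the "across-the-face" turns) need to combine with the $(-1)^{s_i/2}$ contributions coming from the $S(Z_i)$ factors to produce a uniform sign across states. I would handle this by tracking these factors crossing by crossing: whenever $\gamma$ stays within a single tetrahedron between $e_i$ and $e_{i+1}$, the $M_i$ involved has integer entries and produces no extra signs, so the parity of $s_i$ is unconstrained; whenever $\gamma$ crosses a face, the $(-1)^{1/2}$ in $M_i$ contributes a fixed half-integer power of $-1$ regardless of $s_i, s_{i+1}$ (this can be read off from the explicit matrices in cases (\ref{item:elIsom})--(\ref{item:down_right_turn})). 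A short combinatorial argument then shows the total sign depends only on the global structure of $\gamma$ (for instance, only on the number of face crossings mod $4$), and so may be absorbed into the overall $\pm$ on the right-hand side; the additional $\pm 1$ ambiguity of the lift $\hat r(\gamma)$ over $r(\gamma)$ is likewise absorbed there.
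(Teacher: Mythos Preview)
Your approach---expanding the trace of the matrix product and collapsing via the diagonality of $S(Z_i)$---is exactly the ``easy linear algebra'' the paper invokes, and your computation through
\[
\Tr\bigl(S(Z_1)M_1\cdots S(Z_k)M_k\bigr) \;=\; \sum_s \Bigl(\prod_i (-1)^{s_i/2}\Bigr)\, m_1^{s_1 s_2}\cdots m_k^{s_k s_1}\, Z_1^{s_1/2}\cdots Z_k^{s_k/2}
\]
is correct. The gap is in the final sign bookkeeping. Your argument that ``the $(-1)^{1/2}$ in $M_i$ contributes a fixed half-integer power of $-1$'' is beside the point: the factors $m_i^{s_i s_{i+1}}$ appear \emph{identically} on both sides of the identity you are trying to prove, so whatever powers of $(-1)^{1/2}$ they carry cancel. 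The only discrepancy between the two sides is $(-Z_i)^{s_i/2}$ versus $Z_i^{s_i/2}$, whose ratio is $(-1)^{s_i/2}$ regardless of which $M_i$ is involved; in particular your discussion of cases~(4)--(5), and your omission of case~(6) (whose diagonal entries $(-1)^{\mp 1/2}$ are not a single fixed power), does not bear on the question. And the remaining factor $\prod_i (-1)^{s_i/2} = i^{\sum_i s_i}$ is \emph{not} constant across contributing states in general: already with $k=4$ and every $M_i$ of type~(4), the states $(+,+,+,+)$ and $(+,+,-,+)$ both give nonzero products $m_1^{s_1s_2}\cdots m_4^{s_4s_1}=i^4$, yet have $\sum s_i$ equal to $4$ and $2$ respectively, producing opposite signs. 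So the ``short combinatorial argument'' you allude to does not exist as stated.

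What the linear algebra actually delivers is the state sum with $(-Z_i)^{s_i/2}$ in place of $Z_i^{s_i/2}$, up to the single global sign coming from the choice of lift $\hat r(\gamma)$. The paper is being informal in writing $Z_i^{s_i/2}$ (contrast the 2d case, where $S(X)$ has entries $X^{\pm 1/2}$ and no such discrepancy arises); since Proposition~\ref{prop:3d_classical_state_sum} is purely motivational and not used in any later proof, the imprecision is harmless for the paper's purposes. But you should either state the formula with $(-Z_i)^{s_i/2}$, or acknowledge that the passage to $Z_i^{s_i/2}$ is a notational convention rather than a proved identity.
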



\section{Stated skein modules of 3-manifolds} \label{sec:SkeinModules}
In this section, we define stated skein modules and prove general facts about them that we will use in later sections. 

Let $R := \mathbb{Z}[A^{\pm \frac{1}{2}}, (-A^2)^{\pm \frac{1}{2}}]$. 

\begin{defn}
Let $Y$ be a 3-manifold with boundary. 
A \emph{boundary marking} is a smoothly embedded oriented graph $\Gamma\subset \partial Y$ where every vertex is either a source or a sink.\footnote{Sometimes, it will be useful to allow 2-valent vertices whose adjacent edges are oriented in such a way that one edge is pointing toward the vertex and the other edge is pointing away from the vertex. 
These boundary markings are equivalent to the boundary markings obtained by removing such vertices and smoothly connecting the adjacent edges.} 
\end{defn}
\begin{rmk}
The definition of stated skein modules we are about to give is more general than the ones appearing in the literature (see for instance \cite{CL2}) as we allow the boundary marking $\Gamma \subset \partial Y$ to be any embedded bipartite graph, not just a disjoint union of intervals. 
This generalization is crucial in our construction of the 3d quantum trace map. 
\end{rmk}

\begin{defn}
Let $(Y, \Gamma)$ be a boundary marked 3-manifold. 
The \emph{(stated $\mathrm{SL}_2$) skein module} 
$\Sk(Y, \Gamma)$ is the free $R$-module spanned by the isotopy classes of unoriented ribbon tangles in $Y$, each of whose boundary components lies flat in the interior of $\Gamma \setminus V(\Gamma)$ and carries a sign $\in \{\pm 1 \}$ (called a \emph{state}), modulo the following stated skein relations:
\begin{gather*}
\vcenter{\hbox{
\begin{tikzpicture}
\draw[dotted] (0,0) circle (1);
\draw[line width=5] ({sqrt(2)/2},{-sqrt(2)/2}) -- ({-sqrt(2)/2},{sqrt(2)/2});
\draw[white, line width=10] ({-sqrt(2)/2},{-sqrt(2)/2}) -- ({sqrt(2)/2},{sqrt(2)/2});
\draw[line width=5] ({-sqrt(2)/2},{-sqrt(2)/2}) -- ({sqrt(2)/2},{sqrt(2)/2});
\end{tikzpicture}
}}
\;\;=\;\;
A\;
\vcenter{\hbox{
\begin{tikzpicture}
\draw[dotted] (0,0) circle (1);
\draw[line width=5] ({sqrt(2)/2},{sqrt(2)/2}) arc (135:225:1);
\draw[line width=5] ({-sqrt(2)/2},{-sqrt(2)/2}) arc (-45:45:1);
\end{tikzpicture}
}}
\;\;+\;\;
A^{-1}\;
\vcenter{\hbox{
\begin{tikzpicture}
\draw[dotted] (0,0) circle (1);
\draw[line width=5] ({sqrt(2)/2},{sqrt(2)/2}) arc (-45:-135:1);
\draw[line width=5] ({-sqrt(2)/2},{-sqrt(2)/2}) arc (135:45:1);
\end{tikzpicture}
}}
\;,
\\
\vcenter{\hbox{
\begin{tikzpicture}
\draw[dotted] (0,0) circle (1);
\draw[line width=5] (0,0) circle (0.5);
\end{tikzpicture}
}}
\;\;=\;\;
(-A^2 - A^{-2})\;
\vcenter{\hbox{
\begin{tikzpicture}
\draw[dotted] (0,0) circle (1);
\end{tikzpicture}
}}
\;,
\\
\vcenter{\hbox{
\tdplotsetmaincoords{20}{30}
\begin{tikzpicture}[tdplot_main_coords, rotate=30]
\begin{scope}[scale = 1.0, tdplot_main_coords]
\fill[color=lightgray, opacity=0.3] (0, -1, -1) -- (0, -1, 1) -- (0, 1, 1) -- (0, 1, -1) -- cycle;
\draw[ultra thick, orange, ->] (0, -1, 0) -- (0, 0, 0);
\draw[ultra thick, orange] (0, -1, 0) -- (0, 1, 0);
\draw[dotted] (0, -1, -1) -- (0, -1, 1) -- (0, 1, 1) -- (0, 1, -1) -- cycle;
\draw[dotted] (-1.5, -1, -1) -- (-1.5, -1, 1) -- (-1.5, 1, 1) -- (-1.5, 1, -1) -- cycle;
\draw[dotted] (0, -1, -1) -- (-1.5, -1, -1);
\draw[dotted] (0, -1, 1) -- (-1.5, -1, 1);
\draw[dotted] (0, 1, -1) -- (-1.5, 1, -1);
\draw[dotted] (0, 1, 1) -- (-1.5, 1, 1);
\draw[line width=5] (0, 0.5, 0) .. controls (-0.8, 0.5, 0) and (-0.8, -0.5, 0) .. (0, -0.5, 0);
\node[anchor = west] at (0, 0.5, 0) {$\mu$};
\node[anchor = west] at (0, -0.5, 0) {$\nu$};
\end{scope}
\end{tikzpicture}
}}
\;\;=\;\;
\delta_{\mu, -\nu}\,(-A^2)^{\frac{\mu}{2}}
\;,
\quad\quad \mu, \nu \in \{\pm 1\}\;,
\\
\vcenter{\hbox{
\tdplotsetmaincoords{20}{30}
\begin{tikzpicture}[tdplot_main_coords, rotate=30]
\begin{scope}[scale = 1.0, tdplot_main_coords]
\fill[color=lightgray, opacity=0.3] (0, -1, -1) -- (0, -1, 1) -- (0, 1, 1) -- (0, 1, -1) -- cycle;
\draw[ultra thick, orange, ->] (0, -1, 0) -- (0, 0, 0);
\draw[ultra thick, orange] (0, -1, 0) -- (0, 1, 0);
\draw[dotted] (0, -1, -1) -- (0, -1, 1) -- (0, 1, 1) -- (0, 1, -1) -- cycle;
\draw[dotted] (-1.5, -1, -1) -- (-1.5, -1, 1) -- (-1.5, 1, 1) -- (-1.5, 1, -1) -- cycle;
\draw[dotted] (0, -1, -1) -- (-1.5, -1, -1);
\draw[dotted] (0, -1, 1) -- (-1.5, -1, 1);
\draw[dotted] (0, 1, -1) -- (-1.5, 1, -1);
\draw[dotted] (0, 1, 1) -- (-1.5, 1, 1);
\draw[line width=5] (-1.5, 0.5, 0) .. controls (-0.5, 0.5, 0) and (-0.5, -0.5, 0) .. (-1.5, -0.5, 0);
\end{scope}
\end{tikzpicture}
}}
\;\;=\;\;
\sum_{\mu \in \{\pm\}}\,(-A^2)^{\frac{\mu}{2}}
\vcenter{\hbox{
\tdplotsetmaincoords{20}{30}
\begin{tikzpicture}[tdplot_main_coords, rotate=30]
\begin{scope}[scale = 1.0, tdplot_main_coords]
\fill[color=lightgray, opacity=0.3] (0, -1, -1) -- (0, -1, 1) -- (0, 1, 1) -- (0, 1, -1) -- cycle;
\draw[ultra thick, orange, ->] (0, -1, 0) -- (0, 0, 0);
\draw[ultra thick, orange] (0, -1, 0) -- (0, 1, 0);
\draw[dotted] (0, -1, -1) -- (0, -1, 1) -- (0, 1, 1) -- (0, 1, -1) -- cycle;
\draw[dotted] (-1.5, -1, -1) -- (-1.5, -1, 1) -- (-1.5, 1, 1) -- (-1.5, 1, -1) -- cycle;
\draw[dotted] (0, -1, -1) -- (-1.5, -1, -1);
\draw[dotted] (0, -1, 1) -- (-1.5, -1, 1);
\draw[dotted] (0, 1, -1) -- (-1.5, 1, -1);
\draw[dotted] (0, 1, 1) -- (-1.5, 1, 1);
\draw[line width=5] (0, 0.5, 0) -- (-1.5, 0.5, 0);
\draw[line width=5] (0, -0.5, 0) -- (-1.5, -0.5, 0);
\node[anchor = west] at (0, 0.5, 0) {$\mu$};
\node[anchor = west] at (0, -0.5, 0) {$-\mu$};
\end{scope}
\end{tikzpicture}
}}
\;,
\\
\vcenter{\hbox{
\begin{tikzpicture}
\fill[top color=black, bottom color=white] (-0.085, 1) to[out=-90, in=100] (0, 0) to[out=80, in=-90] (0.085, 1)--cycle;
\fill[bottom color=black, top color=white] (-0.085, -1) to[out=90, in=-100] (0, 0) to[out=-80, in=90] (0.085, -1)--cycle;
\draw[line width=1] (-0.075, -1) to[out=90, in=-90] (0.075, 1);
\draw[dotted] (0,0) circle (1);
\end{tikzpicture}
}}
\;\;=\;\;
(-A^3)^{\frac{1}{2}}\;
\vcenter{\hbox{
\begin{tikzpicture}
\draw[dotted] (0,0) circle (1);
\draw[line width=5] (0, -1) -- (0, 1);
\end{tikzpicture}
}}
\;.
\end{gather*}
\end{defn}

\begin{rmk}\label{rmk:OrientationReversingSymmetry}
Notice that, in the definition of $\Sk(Y,\Gamma)$, if we reverse the orientation of $\Gamma$ while flipping all the states, then all the skein relations are preserved. 
This means any property of $\Sk(Y,\Gamma)$ must be preserved under this symmetry. 
\end{rmk}

\begin{defn}
Let $\Sigma$ be a bordered, punctured surface, and let $P \subset \partial \Sigma$ be a finite collection of points. 

The \emph{(stated $\mathrm{SL}_2$) skein algebra} $\SkAlg(\Sigma, P)$ is defined to be
\[
\SkAlg(\Sigma, P) := \Sk(\Sigma \times I, P\times I),
\] 
where $P\times I$ is oriented in the positive direction of the interval $I$. 

 $\SkAlg(\Sigma, P)$ has a natural algebra structure given by stacking along the $I$-direction.\footnote{In our convention, stacking on top (i.e.\ adding a tangle whose $I$-coordinate is bigger than the rest) corresponds to \emph{left} multiplication.} 
\end{defn}

When $\partial \Sigma$ is a disjoint union of intervals (e.g.\ when $\Sigma$ is an $n$-gon), we will often choose $P$ to be a collection of points in $\partial \Sigma$, one for each interval, 
and in such cases we will often omit $P$ from the notation and just write $\SkAlg(\Sigma) = \SkAlg(\Sigma, P)$; it should be clear from the context. 
This notation agrees with the usual definition of stated skein algebras \cite{CL}. 

\begin{eg}
An \emph{$n$-gon}, denoted by $D_n$, is the disk $D^2$ with $n$ marked points on the boundary. 
Its skein algebra $\SkAlg(D_n)$ is by definition the skein module of the $n$-gonal cylinder with $n$ vertical boundary markings. 
We give an example for $n=6$ below:
\[
\includestandalone[scale=1.0,valign=c]{figures/hexagonal_cylinder}
\;.
\]
\end{eg}

\begin{rmk}\label{rmk:SkeinModuleIsGraded}
The skein module $\Sk(Y,\Gamma)$ is naturally $\mathbb{Z}^{E(\Gamma)}$-graded; the grading is given by the sum of states (i.e.\ signs) on each edge of $\Gamma$. 
Likewise, $\SkAlg(\Sigma, P)$ is a $\mathbb{Z}^{P}$-graded algebra. 
\end{rmk}

\begin{rmk}
Note, if we use the opposite orientation of $P\times I$, then we get the opposite algebra $\SkAlg(\Sigma, P)^{\mathrm{op}}$. 

Recall from Remark \ref{rmk:OrientationReversingSymmetry} that reversing the orientation of the boundary marking while flipping all the states is a symmetry of skein modules. 
Therefore, 
$\SkAlg(\Sigma, P)$ is isomorphic to $\SkAlg(\Sigma, P)^{\mathrm{op}}$, 
with one isomorphism given by
\begin{align*}
\SkAlg(\Sigma, P) &\rightarrow \SkAlg(\Sigma, P)^{\mathrm{op}} \\
[L_{\vec{\epsilon}}] &\mapsto [L_{-\vec{\epsilon}}],
\end{align*}
where $L_{\vec{\epsilon}}$ is any stated tangle in $\SkAlg(\Sigma, P)$, and 
$L_{-\vec{\epsilon}}$ denotes the same tangle but with all the states flipped. 

In fact, there is a whole family of such isomorphisms, as we can rescale the map in each graded component. 
As we will see later, especially in Theorem \ref{thm:SplittingMap}, there is a natural choice of such rescaling, which is to rescale $[L]$ by $(-A^2)^{\frac{\epsilon}{4}}$ for each boundary state $\epsilon \in \{\pm 1\}$ of $L$. 
We summarize this discussion in the following proposition: 
\begin{prop}[Self-duality]\label{prop:SelfDuality}
The skein algebra $\SkAlg(\Sigma, P)$ is isomorphic to its opposite algebra, via the following isomorphism: 
\begin{align}\label{eq:duality}
\dual : \SkAlg(\Sigma, P) &\rightarrow \SkAlg(\Sigma, P)^{\mathrm{op}} \\
(-A^2)^{\frac{\Sigma\vec{\epsilon}}{4}}[L_{\vec{\epsilon}}] &\mapsto (-A^2)^{-\frac{\Sigma\vec{\epsilon}}{4}}[L_{-\vec{\epsilon}}], \nonumber
\end{align}
where $\Sigma\vec{\epsilon}$ denotes the total sum of all the boundary states of $L_{\vec{\epsilon}}$. 
\end{prop}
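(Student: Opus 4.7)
The plan is to verify the proposed formula defines an $R$-algebra isomorphism in two separate steps: first, that it is well-defined as an $R$-module homomorphism; second, that it respects multiplication.

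For well-definedness, I would invoke Remark \ref{rmk:OrientationReversingSymmetry}. That remark says that simultaneously reversing the orientation of the boundary marking and flipping all the states is a symmetry of the skein relations, so the underlying state-flip $[L_{\vec{\epsilon}}] \mapsto [L_{-\vec{\epsilon}}]$ already gives a well-defined $R$-module isomorphism $\SkAlg(\Sigma, P) \to \SkAlg(\Sigma, P)^{\mathrm{op}}$, where one identifies the target with the skein algebra of $(\Sigma, P)$ equipped with the opposite orientation on $P \times I$. Promoting this to the formula in \eqref{eq:duality} requires the rescaling factor $(-A^2)^{\Sigma\vec{\epsilon}/4}$ to be consistent across equivalent representatives. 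This uses Remark \ref{rmk:SkeinModuleIsGraded}: the skein algebra is $\mathbb{Z}^{P}$-graded, and each skein relation preserves the total boundary degree $\Sigma\vec{\epsilon}$ (a trivial check in the state-less crossing, loop, and half-twist relations, and a direct consequence of the delta and sum structure in the cap and resolution of parallel lines relations). Hence the rescaling factor is constant on each homogeneous component and so lifts the state-flip to the stated map $\dual$.

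For the algebra homomorphism property, I would check that $\dual([L_1]\cdot_{P}[L_2]) = \dual([L_1]) \cdot^{\mathrm{op}} \dual([L_2])$ by direct computation. The tangle $L_1 *_{I} L_2$ obtained by stacking has total boundary state sum $\Sigma_1 + \Sigma_2$, so the rescaling factors combine to $(-A^2)^{-(\Sigma_1+\Sigma_2)/2}$ on each side. The substantive content is that the orientation reversal implicit in going to $\SkAlg(\Sigma, P)^{\mathrm{op}}$ swaps the direction in which stacking produces multiplication, so left/right multiplication gets exchanged in exactly the way required for a map into the opposite algebra; combined with the state-flip this matches the two sides. To make this precise, I would reduce to checking on a generating set of $\SkAlg(\Sigma, P)$, e.g.\ elementary boundary-connecting arcs in the polygon case (where $\SkAlg$ is a quantum matrix-like algebra and the self-duality is a familiar sign-swap combined with a $q$-rescaling), and then use the splitting structure to propagate to the general case.

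The main obstacle I expect is tracking the rescaling factor carefully through each skein relation: since the bare state-flip is not compatible with multiplication (because $\SkAlg$ is noncommutative), the $(-A^2)^{\epsilon/4}$ contribution per boundary endpoint must precisely absorb the half-twist factor $(-A^3)^{1/2}$ from the final skein relation, and the $(-A^2)^{\mu/2}$ factor that appears in the cap and resolution relations. The fact that these numerical factors are compatible across all relations simultaneously — a nontrivial coincidence built into the definition of $R = \mathbb{Z}[A^{\pm 1/2},(-A^2)^{\pm 1/2}]$ — is what makes the rescaling exactly $(-A^2)^{\Sigma\vec{\epsilon}/4}$ and no other power. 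Once this numerical compatibility is verified on the defining relations, the extension to all of $\SkAlg(\Sigma, P)$ is automatic by the universal property of the skein algebra, and the inverse map is given by the analogous formula with the rescaling sign reversed, establishing that $\dual$ is an isomorphism.
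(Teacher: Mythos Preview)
Your approach is essentially the same as the paper's: the proposition is stated there as a summary of the preceding remark, which (like you) invokes Remark~\ref{rmk:OrientationReversingSymmetry} to get the base isomorphism $[L_{\vec\epsilon}]\mapsto[L_{-\vec\epsilon}]$ into the opposite algebra, and then uses the $\mathbb{Z}^P$-grading of Remark~\ref{rmk:SkeinModuleIsGraded} to observe that any rescaling constant on graded pieces yields another isomorphism. Your third paragraph overcomplicates matters, though: once you have established (as you do in paragraph one) that the skein relations are homogeneous in the total boundary degree $\Sigma\vec\epsilon$, the rescaling by $(-A^2)^{\Sigma\vec\epsilon/4}$ is automatically compatible with them and there is nothing further to ``absorb'' from the half-twist or cap relations.
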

\end{rmk}

\begin{rmk}
Adding a puncture to a 3-manifold doesn't change its skein module, as we can always isotope any isotopy of a tangle away from that puncture. 
Using this fact, sometimes we will freely add or remove punctures from a 3-manifold. 
Note, this also means that any theorem on skein modules of ideally triangulated 3-manifolds holds also for (non-ideally) triangulated 3-manifolds, as any non-ideal vertex of a triangulation can be made ideal by adding a puncture there. 
\end{rmk}

\begin{rmk}
While in this paper we focus on stated $\mathrm{SL}_2$ skein modules, the notion of stated skein modules can be generalized to any quantum group. 
See e.g. \cite{LS} for explicit descriptions of stated $\mathrm{SL}_N$ skein algebras of surfaces. 

A slightly different approach toward cutting and gluing of skein modules is via factorization homology and skein categories, which can be defined for any ribbon category; see \cite{BBJ, Cooke}. 
The two approaches are known to be equivalent in the case of $\mathrm{SL}_2$ skein algebras of surfaces \cite{Haioun}
and are expected to be equivalent in general. 
Roughly, stated skein modules are a ``strictification'' of skein categories in the sense that we require the ends of a ribbon tangle (or, more generally, a ribbon graph) to lie on a 1-skeleton which is a strong deformation retraction of the boundary $\partial Y$. 
\end{rmk}

\subsection{Module structure}
In our study of stated skein modules, especially to be able to cut a 3-manifold into elementary pieces, it is crucial to view $\Sk(Y, \Gamma)$ as a (bi)module over tensor products of stated skein algebras at the vertices of $\Gamma$. 
In this subsection, we explain why $\Sk(Y, \Gamma)$ has this natural (bi)module structure. 

Let $V(\Gamma)$, $V(\Gamma)^{+}$, and $V(\Gamma)^{-}$ be respectively the set of vertices of $\Gamma$, the set of sinks, and the set of sources. 

We claim that, for each $v\in V(\Gamma)^+$ (resp. for each $v\in V(\Gamma)^-$), $\Sk(Y, \Gamma)$ has a natural left (resp. right) $\SkAlg(D_{\deg v})$-module structure. 
This can be seen easily from the following figure: 
\[
\vcenter{\hbox{
\includestandalone[scale=0.7]{figures/module_structureLHS}
}}
\;\approx\;
\vcenter{\hbox{
\includestandalone[scale=0.7]{figures/module_structureRHS}
}}.
\]
In this figure, we are showing an example of a vertex which is a sink of degree $3$, viewed from outside of the 3-manifold $Y$.\footnote{By viewing from ``outside of the 3-manifold'', we mean locally identifying the 3-manifold with the half-space in $\mathbb{R}^3$, and viewing the boundary of the half-space from outside of the half-space (i.e.\ from the other half-space).} 
Whenever we have a vertex of degree $n$, we can first delete a small neighborhood of the vertex, which does not change the skein module, and then ``pull the vertex out'' to get a homeomorphic picture with a cylinder over $D_n$, as in the figure above. 

Once we have such a cylinder over $D_n$, it is immediate that we have a $\SkAlg(D_n)$-module structure on $\Sk(Y, \Gamma)$; 
the action of a stated skein in $\SkAlg(D_n)$ is simply given by stacking it on top of the cylinder. 
From the orientation of $\Gamma$, it is clear that this is a left action if the vertex is a sink, and a right action if the vertex is a source. 

Our discussion so far can be summarized as: 
\begin{prop}[Bimodule structure]\label{prop:skein_alg_structure}
The stated skein module $\Sk(Y, \Gamma)$ has a natural 
$\otimes_{v \in V(\Gamma)^{+}}\SkAlg(D_{\deg v})$-$\otimes_{w \in V(\Gamma)^{-}}\SkAlg(D_{\deg w})$-bimodule structure. 
\end{prop}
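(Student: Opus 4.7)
The plan is to turn the pictorial construction in the paragraph preceding the proposition into a precise definition and then verify that it respects all the equivalences. Fix a vertex $v \in V(\Gamma)^{+}$ of degree $n$; the source case is symmetric. Choose a small closed ball $B_{v} \subset Y$ centered at $v$ such that $\partial B_{v}$ meets $\partial Y$ in a disk $D$ and meets $\Gamma$ transversely in exactly $n$ points, one on each edge of $\Gamma$ incident to $v$. Since by definition tangles representing elements of $\Sk(Y,\Gamma)$ have their endpoints on $\Gamma \setminus V(\Gamma)$, every tangle can be isotoped off of $B_{v}$; this gives a tautological identification $\Sk(Y,\Gamma) \cong \Sk\bigl(Y \setminus \mathrm{int}(B_{v}),\, \Gamma'\bigr)$, where $\Gamma'$ is $\Gamma$ truncated near $v$. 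A collar of the new boundary hemisphere inside $Y \setminus \mathrm{int}(B_{v})$ is homeomorphic to $D_{n} \times I$ in a way that identifies the $n$ new endpoints of $\Gamma'$ with the $n$ marked points of $D_{n}$, and matches the $I$-orientation with the orientation of $\Gamma$ at $v$ (here using that $v$ is a sink).

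With this identification fixed, define the action of a generator $[T] \in \SkAlg(D_{n})$ on $[L] \in \Sk(Y,\Gamma)$ by choosing a representative of $L$ disjoint from $B_{v}$, placing a representative of $T$ in the collar $D_{n} \times I$ so that its bottom endpoints meet the corresponding endpoints of $L$ with matching states, and taking the class of the union in $\Sk(Y,\Gamma)$. To see this is well-defined we must verify: (i) independence of the choice of $B_{v}$, collar, and representatives; (ii) that the five stated skein relations defining $\SkAlg(D_{n})$ descend to relations in $\Sk(Y,\Gamma)$; (iii) associativity $([T_{1}] \cdot [T_{2}]) \cdot [L] = [T_{1}] \cdot ([T_{2}] \cdot [L])$; and (iv) compatibility with the algebra unit.

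Claim (i) follows because any two choices of $B_{v}$ and collar are related by an ambient isotopy of $Y$ supported in a slightly larger ball around $v$, and such isotopies preserve the class in $\Sk(Y,\Gamma)$. Claim (ii) is immediate from the locality of the skein relations: each relation takes place in an arbitrarily small ball (in the interior for the crossing and loop relations, or adjacent to a single edge of the boundary marking for the remaining three), and such a small ball embedded in the collar is equally a valid small ball inside $Y$. Claims (iii) and (iv) follow directly from the fact that multiplication in $\SkAlg(D_{n})$ is defined by stacking along $I$, and stacking inside one enlarged collar is the same as stacking two collars consecutively. The orientation convention adopted for $\SkAlg(\Sigma,P)$ (stacking on top equals left multiplication) ensures that the action at a sink is a left action; for a source vertex $w \in V(\Gamma)^{-}$ the induced $I$-orientation on $D_{\deg w} \times I$ is reversed, so the same construction instead yields a right action of $\SkAlg(D_{\deg w})$.

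Finally, to assemble a bimodule structure, we need the actions at distinct vertices to commute. For $v \neq w$ in $V(\Gamma)$ one may choose ball neighborhoods $B_{v} \cap B_{w} = \emptyset$, so the two stacking operations occur in disjoint regions of $Y$ and can be performed in either order with isotopic results. This yields the claimed $\bigl(\bigotimes_{v \in V(\Gamma)^{+}} \SkAlg(D_{\deg v})\bigr)$--$\bigl(\bigotimes_{w \in V(\Gamma)^{-}} \SkAlg(D_{\deg w})\bigr)$-bimodule structure. The main point requiring care is really step (ii), since once one allows an arbitrary embedded bipartite graph as boundary marking (our innovation over \cite{CL2}) one must check that the local skein relations continue to apply unambiguously near a vertex; this is handled precisely by the initial step of excising $B_{v}$, which pushes all tangle activity away from the vertex and into a region where the relations are manifestly local.
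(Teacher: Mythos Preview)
Your proposal is correct and follows essentially the same approach as the paper: excise a small neighborhood of each vertex, identify a collar with $D_{n}\times I$, and let $\SkAlg(D_{n})$ act by stacking; the paper treats this as self-evident from the picture, while you have written out the well-definedness checks in more detail. One wording issue worth fixing: in your definition of the action, the phrase ``so that its bottom endpoints meet the corresponding endpoints of $L$ with matching states'' is misleading, since endpoints of tangles in $\SkAlg(D_{n})$ lie on the side markings $P\times I$ rather than on the top or bottom disks, and the action is simply the disjoint union of $T$ placed in the collar with $L$ outside $B_{v}$ --- no matching of states is involved.
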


\begin{eg}\label{eg:SkAlg(D_n)AsABimod}
Let $B$ be a 3-ball with boundary marking $\Gamma = \Gamma_n$ whose vertices are the north pole and the south pole and whose edges are $n$ geodesic curves oriented from the south pole to the north pole. 
Then $(B, \Gamma_n)$ is topologically equivalent to $D_n \times I$ with the standard boundary marking, so $\Sk(B, \Gamma_n)$ is isomorphic to $\SkAlg(D_n)$. 
In this case, the bimodule structure is just 
$\SkAlg(D_n)$ as a regular $\SkAlg(D_n)$-$\SkAlg(D_n)$ bimodule. 
\end{eg}

\begin{rmk}
The bimodule structure on $\Sk(Y,\Gamma)$ respects the grading mentioned in Remark \ref{rmk:SkeinModuleIsGraded}. 
That is, $\Sk(Y,\Gamma)$ is a graded bimodule. 
\end{rmk}

\begin{rmk}
One can think of these module structures as something giving new ``skein relations'' associated to sliding an end point of a tangle across a vertex of the boundary marking $\Gamma$. 
For instance, we have
\begin{align*}
\vcenter{\hbox{
\begin{tikzpicture}
\coordinate (o) at (0, 0);
\coordinate (a) at ({sqrt(3)}, -1);
\coordinate (a1) at ({sqrt(3)/3}, -1/3);
\coordinate (b) at (-{sqrt(3)}, -1);
\coordinate (b1) at (-{sqrt(3)/3}, -1/3);
\draw[line width=5] (b1) to[out=-60, in=90](0, -2);
\begin{scope}[very thick,decoration={
    markings,
    mark=at position 0.5 with {\arrow{>}}}
    ] 
    \draw[postaction={decorate}, orange] (a) -- (o);
    \draw[postaction={decorate}, orange] (b) -- (o);
\end{scope}
\filldraw[orange] (o) circle (0.05);
\node[anchor = south] at (b1) {$\mu$};
\end{tikzpicture}
}}
\;\;&=\;\;
\sum_{\nu \in \{\pm\}}
(-A^2)^{-\frac{\nu}{2}}
\vcenter{\hbox{
\begin{tikzpicture}
\coordinate (o) at (0, 0);
\coordinate (a) at ({sqrt(3)}, -1);
\coordinate (a1) at ({sqrt(3)/3}, -1/3);
\coordinate (a2) at ({sqrt(3)*2/3}, -2/3);
\coordinate (b) at (-{sqrt(3)}, -1);
\coordinate (b1) at (-{sqrt(3)/3}, -1/3);
\draw[line width=5] (a1) arc (-30:-150:2/3);
\draw[line width=5] (a2) to[out=-120, in=90](0, -2);
\begin{scope}[very thick,decoration={
    markings,
    mark=at position 0.5 with {\arrow{>}}}
    ] 
    \draw[postaction={decorate}, orange] (a) -- (o);
    \draw[postaction={decorate}, orange] (b) -- (o);
\end{scope}
\filldraw[orange] (o) circle (0.05);
\node[anchor = south] at (a1) {$-\nu$};
\node[anchor = south] at (a2) {$\nu$};
\node[anchor = south] at (b1) {$\mu$};
\end{tikzpicture}
}}\\
\;\;&=\;\;
\sum_{\nu \in \{\pm\}}
\qty(
(-A^2)^{-\frac{\nu}{2}}
\vcenter{\hbox{
\begin{tikzpicture}
\coordinate (o) at (0, 0);
\coordinate (a) at ({sqrt(3)}, -1);
\coordinate (a1) at ({sqrt(3)/3}, -1/3);
\coordinate (a2) at ({sqrt(3)*2/3}, -2/3);
\coordinate (b) at (-{sqrt(3)}, -1);
\coordinate (b1) at (-{sqrt(3)/3}, -1/3);
\draw[line width=5] (a1) arc (-30:-150:2/3);
\begin{scope}[very thick,decoration={
    markings,
    mark=at position 0.5 with {\arrow{>}}}
    ] 
    \draw[postaction={decorate}, orange] (a) -- (o);
    \draw[postaction={decorate}, orange] (b) -- (o);
\end{scope}
\filldraw[orange] (o) circle (0.05);
\node[anchor = south] at (a1) {$-\nu$};
\node[anchor = south] at (b1) {$\mu$};
\end{tikzpicture}
}}
)
\cdot
\vcenter{\hbox{
\begin{tikzpicture}
\coordinate (o) at (0, 0);
\coordinate (a) at ({sqrt(3)}, -1);
\coordinate (a1) at ({sqrt(3)/3}, -1/3);
\coordinate (a2) at ({sqrt(3)*2/3}, -2/3);
\coordinate (b) at (-{sqrt(3)}, -1);
\coordinate (b1) at (-{sqrt(3)/3}, -1/3);
\draw[line width=5] (a1) to[out=-120, in=90](0, -2);
\begin{scope}[very thick,decoration={
    markings,
    mark=at position 0.5 with {\arrow{>}}}
    ] 
    \draw[postaction={decorate}, orange] (a) -- (o);
    \draw[postaction={decorate}, orange] (b) -- (o);
\end{scope}
\filldraw[orange] (o) circle (0.05);
\node[anchor = south] at (a2) {$\nu$};
\end{tikzpicture}
}}
\;,
\end{align*}
where the term in parenthesis is considered as an element of $\SkAlg(D_{\deg v})$, with $v$ being the vertex of $\Gamma$ involved in this figure. 
Therefore, even though the end points of tangles are not allowed to freely slide across vertices of the boundary marking, 
we still get ``skein relations'' under such sliding moves, with coefficients in the skein algebras $\SkAlg(D_{\deg v})$. 
\end{rmk}

\subsection{Splitting map}\label{subsec:SplittingMap}
In this subsection, we formulate a splitting map for our stated skein modules and prove its well-definedness. 
Our splitting map is a generalization of that of \cite{CL}. 

\begin{defn}\label{defn:CombinatorialFoliation}
Let $\Sigma$ be a bordered, punctured surface. 
A \emph{combinatorial foliation} of $\Sigma$ is a decomposition of $\Sigma$ into pieces, each of which is homeomorphic to the \emph{elementary quadrilateral}: a quadrilateral with 2 opposite vertices removed and a diagonal marking:
\[
\vcenter{\hbox{
\begin{tikzpicture}
\filldraw[very thick, draw=darkgray, fill=lightgray] (-1, 0) -- (0, 1) -- (1, 0) -- (0, -1) -- cycle;
\filldraw[draw=black, fill=white] (1, 0) circle (0.1);
\filldraw[draw=black, fill=white] (-1, 0) circle (0.1);
\draw[very thick, orange] (0, -1) -- (0, 1);
\draw[very thick, orange, ->] (0, -1) -- (0, 0);
\filldraw[orange] (0, 1) circle (0.05);
\filldraw[orange] (0, -1) circle (0.05);
\end{tikzpicture}
}}.
\]
That is, it is a presentation of $\Sigma$ as some number of copies of elementary quadrilaterals, with some pairs of edges glued together. 
When two edges are glued together, we require that the orange vertices adjacent to the edges must both be sources or must both be sinks. 
We also assume that no two edges of the same elementary quadrilateral are glued together. 
\end{defn}

\begin{eg}
Here are some examples of combinatorially foliated surfaces: 
\begin{gather*}
\vcenter{\hbox{
\begin{tikzpicture}
\filldraw[very thick, draw=darkgray, fill=lightgray] (-1, 0) -- (0, 1) -- (1, 0) -- (0, -1) -- cycle;
\filldraw[draw=black, fill=white] (1, 0) circle (0.1);
\filldraw[draw=black, fill=white] (-1, 0) circle (0.1);
\draw[very thick, orange] (0, -1) -- (0, 1);
\draw[very thick, orange, ->] (0, -1) -- (0, 0);
\filldraw[orange] (0, 1) circle (0.05);
\filldraw[orange] (0, -1) circle (0.05);
\end{tikzpicture}
}}
\;,\quad
\vcenter{\hbox{
\begin{tikzpicture}[scale=1.5]
\filldraw[very thick, draw=darkgray, fill=lightgray] (0, 0) -- (1, 0) -- (1, 1) -- (0, 1) -- (-1, 1) -- (-1, 0) -- cycle;
\draw[very thick, darkgray] (0, 0) -- (0, 1);
\filldraw[draw=black, fill=white] (0, 0) circle (0.1);
\filldraw[draw=black, fill=white] (1, 1) circle (0.1);
\filldraw[draw=black, fill=white] (-1, 1) circle (0.1);
\draw[very thick, orange] (1, 0) -- (0, 1);
\draw[very thick, orange, ->] (1, 0) -- (1/2, 1/2);
\draw[very thick, orange] (-1, 0) -- (0, 1);
\draw[very thick, orange, ->] (-1, 0) -- (-1/2, 1/2);
\filldraw[orange] (0, 1) circle (0.05);
\filldraw[orange] (1, 0) circle (0.05);
\filldraw[orange] (-1, 0) circle (0.05);
\end{tikzpicture}
}}
\;,\quad
\vcenter{\hbox{
\begin{tikzpicture}[scale=1.5]
\filldraw[very thick, draw=darkgray, fill=lightgray] (-1, 0) -- (0, 1) -- (1, 0) -- (0, -1) -- cycle;
\draw[very thick, darkgray] (1, 0) -- (-1, 0);
\filldraw[draw=black, fill=white] (1, 0) circle (0.1);
\filldraw[draw=black, fill=white] (-1, 0) circle (0.1);
\draw[very thick, orange] (0, 0) -- (0, 1);
\draw[very thick, orange, ->] (0, 0) -- (0, 1/2);
\draw[very thick, orange] (0, 0) -- (0, -1);
\draw[very thick, orange, ->] (0, 0) -- (0, -1/2);
\filldraw[orange] (0, 1) circle (0.05);
\filldraw[orange] (0, 0) circle (0.05);
\filldraw[orange] (0, -1) circle (0.05);
\end{tikzpicture}
}}
\;,\quad
\vcenter{\hbox{
\begin{tikzpicture}[scale=1.7]
\filldraw[very thick, draw=darkgray, fill=lightgray] (0, 1) -- ({sqrt(3)/2}, -1/2) -- ({-sqrt(3)/2}, -1/2) -- cycle;
\draw[very thick, darkgray] (0, 0) -- (0, 1);
\draw[very thick, darkgray] (0, 0) -- ({sqrt(3)/2}, -1/2);
\draw[very thick, darkgray] (0, 0) -- ({-sqrt(3)/2}, -1/2);
\filldraw[draw=black, fill=white] (0, 1) circle (0.08);
\filldraw[draw=black, fill=white] ({sqrt(3)/2}, -1/2) circle (0.08);
\filldraw[draw=black, fill=white] ({-sqrt(3)/2}, -1/2) circle (0.08);
\filldraw[orange] (0, 0) circle (0.05);
\filldraw[orange] ({sqrt(3)/4}, {1/4}) circle (0.05);
\filldraw[orange] ({-sqrt(3)/4}, {1/4}) circle (0.05);
\filldraw[orange] (0, -1/2) circle (0.05);
\draw[very thick, orange] ({sqrt(3)/4}, {1/4}) -- (0, 0);
\draw[very thick, orange, ->] ({sqrt(3)/4}, {1/4}) -- ({sqrt(3)/8}, {1/8});
\draw[very thick, orange] ({-sqrt(3)/4}, {1/4}) -- (0, 0);
\draw[very thick, orange, ->] ({-sqrt(3)/4}, {1/4}) -- ({-sqrt(3)/8}, {1/8});
\draw[very thick, orange] (0, -1/2) -- (0, 0);
\draw[very thick, orange, ->] (0, -1/2) -- (0, -1/4);
\end{tikzpicture}
}}
\;.
\end{gather*}
\end{eg}

\begin{rmk} \label{rem:leaf_space}
We call such a decomposition a combinatorial foliation because one can construct a 1-dimensional foliation of $\Sigma$ using the foliation on the elementary quadrilateral depicted in the following figure: 
\[
\vcenter{\hbox{
\begin{tikzpicture}[scale=1.3]
\filldraw[very thick, draw=darkgray, fill=lightgray] (-1, 0) -- (0, 1) -- (1, 0) -- (0, -1) -- cycle;
\draw (1, 0) .. controls (0, 1) and (0, 1) .. (-1, 0);
\draw (1, 0) .. controls (0.2, 0.8) and (-0.2, 0.8) .. (-1, 0);
\draw (1, 0) .. controls (0.4, 0.6) and (-0.4, 0.6) .. (-1, 0);
\draw (1, 0) .. controls (0.6, 0.4) and (-0.6, 0.4) .. (-1, 0);
\draw (1, 0) .. controls (0.8, 0.2) and (-0.8, 0.2) .. (-1, 0);
\draw (1, 0) .. controls (1.0, 0.0) and (-1.0, 0.0) .. (-1, 0);
\draw (1, 0) .. controls (0.8, -0.2) and (-0.8, -0.2) .. (-1, 0);
\draw (1, 0) .. controls (0.6, -0.4) and (-0.6, -0.4) .. (-1, 0);
\draw (1, 0) .. controls (0.4, -0.6) and (-0.4, -0.6) .. (-1, 0);
\draw (1, 0) .. controls (0.2, -0.8) and (-0.2, -0.8) .. (-1, 0);
\draw (1, 0) .. controls (0, -1) and (0, -1) .. (-1, 0);
\filldraw[draw=black, fill=white] (1, 0) circle (0.1);
\filldraw[draw=black, fill=white] (-1, 0) circle (0.1);
\draw[very thick, orange] (0, -1) -- (0, 1);
\draw[very thick, orange, ->] (0, -1) -- (0, 0);
\filldraw[orange] (0, 1) circle (0.05);
\filldraw[orange] (0, -1) circle (0.05);
\end{tikzpicture}
}}.
\]
Note, the orange graph can be thought of as the \emph{leaf space} (i.e.\ the moduli space of leaves of the foliation), and the edges of the quadrilaterals are exactly the \emph{singular leaves} of the foliation. 
\end{rmk}

\begin{rmk}
Let $P$ be any finite set of points in the elementary quadrilateral, equipped with the foliation as in Remark \ref{rem:leaf_space}. 
Assume that no leaf of the foliation contains more than 1 point of $P$. 
Then, the projection of $P$ to the leaf space, which is just an oriented interval in this case, is injective and therefore induces a total ordering on $P$: for any $p, p' \in P$, we define $p < p'$ if the projection of $p'$ to the leaf space is closer to the sink vertex than that of $p$. 
We call this ordering the \emph{height ordering}. 
\end{rmk}

\begin{thm}[Splitting homomorphism]\label{thm:SplittingMap}
Let $(Y_1, \Gamma_1)$ and $(Y_2, \Gamma_2)$ be boundary marked (bordered punctured) 3-manifolds. 
Suppose that $\Sigma_1 \subset \partial Y_1$ and $\Sigma_2 \subset \partial Y_2$, along with their markings, are homeomorphic combinatorially foliated surfaces of opposite orientations, so that we can glue $Y_1$ and $Y_2$ by identifying $\Sigma_1$ with $\Sigma_2$. 
Let $Y = Y_1 \cup_{\Sigma} Y_2$ be the glued 3-manifold, with boundary marking $\Gamma = (\Gamma_1 \cup \Gamma_2) \setminus \mathrm{int}\;\Sigma $. 

Then, there is an $R$-module homomorphism
\begin{align*}
\sigma \;:\; \Sk(Y, \Gamma) &\rightarrow \Sk(Y_1, \Gamma_1) \;\overline{\otimes}\; \Sk(Y_2, \Gamma_2) \\
[L] &\mapsto \qty[
\sum_{\vec{\epsilon} \;\in\; \{\substack{\mathrm{compatible}\\ \mathrm{states}}\}}  [L_1^{\vec{\epsilon}}] \otimes [L_2^{\vec{\epsilon}}] 
],
\end{align*}
where the \emph{reduced tensor product} $\Sk(Y_1, \Gamma_1) \;\overline{\otimes}\; \Sk(Y_2, \Gamma_2)$ denotes the quotient of the usual tensor product $\Sk(Y_1, \Gamma_1) \otimes \Sk(Y_2, \Gamma_2)$ (as $R$-modules) by the following relations: 
\begin{enumerate}[label= (S\arabic*)]
\item\label{item:S+} For each internal edge $e$ of $\Sigma$ adjacent to a sink, we have the following relations among left actions: 
\begin{align*}
&
\text{the left action of }\;
(-A^2)^{\frac{\mu + \nu}{4}}
\vcenter{\hbox{
\begin{tikzpicture}[scale=2]
\filldraw[very thick, draw=darkgray, fill=lightgray] (0, 0) -- (1, 0) -- (1, 1) -- (0, 1) -- (-1, 1) -- (-1, 0) -- cycle;
\draw[very thick, darkgray] (0, 0) -- (0, 1);
\filldraw[draw=black, fill=white] (0, 0) circle (0.07);
\filldraw[draw=black, fill=white] (1, 1) circle (0.07);
\filldraw[draw=black, fill=white] (-1, 1) circle (0.07);
\draw[line width=3] (1/4, 3/4) to[out=-135, in=-45] (-1/4, 3/4);
\node[anchor=south east] at (-1/4, 3/4){$\mu$};
\node[anchor=south west] at (1/4, 3/4){$\nu$};
\node[anchor=west] at (0, 1/3){$e$};
\draw[very thick, orange] (1, 0) -- (0, 1);
\draw[very thick, orange, ->] (1, 0) -- (1/2, 1/2);
\draw[very thick, orange] (-1, 0) -- (0, 1);
\draw[very thick, orange, ->] (-1, 0) -- (-1/2, 1/2);
\filldraw[orange] (0, 1) circle (0.05);
\filldraw[orange] (1, 0) circle (0.05);
\filldraw[orange] (-1, 0) circle (0.05);
\end{tikzpicture}
}}
\text{ on }\Sk(Y_1, \Gamma_1)\\
=\;\;&\text{the left action of }\;
(-A^2)^{-\frac{\mu + \nu}{4}}
\vcenter{\hbox{
\begin{tikzpicture}[scale=2]
\filldraw[very thick, draw=darkgray, fill=lightgray] (0, 0) -- (1, 0) -- (1, 1) -- (0, 1) -- (-1, 1) -- (-1, 0) -- cycle;
\draw[very thick, darkgray] (0, 0) -- (0, 1);
\filldraw[draw=black, fill=white] (0, 0) circle (0.07);
\filldraw[draw=black, fill=white] (1, 1) circle (0.07);
\filldraw[draw=black, fill=white] (-1, 1) circle (0.07);
\draw[line width=3] (1/4, 3/4) to[out=-135, in=-45] (-1/4, 3/4);
\node[anchor=south east] at (-1/4, 3/4){$-\mu$};
\node[anchor=south west] at (1/4, 3/4){$-\nu$};
\node[anchor=west] at (0, 1/3){$e$};
\draw[very thick, orange] (1, 0) -- (0, 1);
\draw[very thick, orange, ->] (1, 0) -- (1/2, 1/2);
\draw[very thick, orange] (-1, 0) -- (0, 1);
\draw[very thick, orange, ->] (-1, 0) -- (-1/2, 1/2);
\filldraw[orange] (0, 1) circle (0.05);
\filldraw[orange] (1, 0) circle (0.05);
\filldraw[orange] (-1, 0) circle (0.05);
\end{tikzpicture}
}}
\text{ on }\Sk(Y_2, \Gamma_2). 
\end{align*}

\item\label{item:S-} Likewise, for each internal edge $e$ of $\Sigma$ adjacent to a source, we have the following relations among right actions: 
\begin{align*}
&
\text{the right action of }\;
(-A^2)^{-\frac{\mu + \nu}{4}}
\vcenter{\hbox{
\begin{tikzpicture}[scale=2]
\filldraw[very thick, draw=darkgray, fill=lightgray] (0, 0) -- (1, 0) -- (1, 1) -- (0, 1) -- (-1, 1) -- (-1, 0) -- cycle;
\draw[very thick, darkgray] (0, 0) -- (0, 1);
\filldraw[draw=black, fill=white] (0, 0) circle (0.07);
\filldraw[draw=black, fill=white] (1, 1) circle (0.07);
\filldraw[draw=black, fill=white] (-1, 1) circle (0.07);
\draw[line width=3] (1/4, 3/4) to[out=-135, in=-45] (-1/4, 3/4);
\node[anchor=south east] at (-1/4, 3/4){$\mu$};
\node[anchor=south west] at (1/4, 3/4){$\nu$};
\node[anchor=west] at (0, 1/3){$e$};
\draw[very thick, orange] (1, 0) -- (0, 1);
\draw[very thick, orange, ->] (0, 1) -- (1/2, 1/2);
\draw[very thick, orange] (-1, 0) -- (0, 1);
\draw[very thick, orange, ->] (0, 1) -- (-1/2, 1/2);
\filldraw[orange] (0, 1) circle (0.05);
\filldraw[orange] (1, 0) circle (0.05);
\filldraw[orange] (-1, 0) circle (0.05);
\end{tikzpicture}
}}
\text{ on }\Sk(Y_1, \Gamma_1) \\
=\;\;&\text{the right action of }\;
(-A^2)^{\frac{\mu + \nu}{4}}
\vcenter{\hbox{
\begin{tikzpicture}[scale=2]
\filldraw[very thick, draw=darkgray, fill=lightgray] (0, 0) -- (1, 0) -- (1, 1) -- (0, 1) -- (-1, 1) -- (-1, 0) -- cycle;
\draw[very thick, darkgray] (0, 0) -- (0, 1);
\filldraw[draw=black, fill=white] (0, 0) circle (0.07);
\filldraw[draw=black, fill=white] (1, 1) circle (0.07);
\filldraw[draw=black, fill=white] (-1, 1) circle (0.07);
\draw[line width=3] (1/4, 3/4) to[out=-135, in=-45] (-1/4, 3/4);
\node[anchor=south east] at (-1/4, 3/4){$-\mu$};
\node[anchor=south west] at (1/4, 3/4){$-\nu$};
\node[anchor=west] at (0, 1/3){$e$};
\draw[very thick, orange] (1, 0) -- (0, 1);
\draw[very thick, orange, ->] (0, 1) -- (1/2, 1/2);
\draw[very thick, orange] (-1, 0) -- (0, 1);
\draw[very thick, orange, ->] (0, 1) -- (-1/2, 1/2);
\filldraw[orange] (0, 1) circle (0.05);
\filldraw[orange] (1, 0) circle (0.05);
\filldraw[orange] (-1, 0) circle (0.05);
\end{tikzpicture}
}}
\text{ on }\Sk(Y_2, \Gamma_2). 
\end{align*}
\end{enumerate}
\end{thm}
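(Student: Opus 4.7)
The plan is to define $\sigma$ on a generic tangle representative and then verify, move by move, that it descends to a well-defined map on skein modules; the quotient relations \ref{item:S+}--\ref{item:S-} will be seen to absorb exactly the ambiguity that arises at internal edges of the combinatorial foliation of $\Sigma$.

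Given $[L]\in\Sk(Y,\Gamma)$, first isotope $L$ so that it is transverse to $\Sigma$, horizontal in a collar of $\Sigma$, and in \emph{generic position} with respect to the foliation in the sense that no two points of $L\cap\Sigma$ lie on the same leaf. Under the leaf projection, each $p\in L\cap\Sigma$ then lands at an interior point of some edge of the orange graph on $\Sigma$, and the height ordering linearly orders any points landing at nearby positions on a single edge. Cutting along $\Sigma$ produces tangles $L_i\subset Y_i$ whose new endpoints lie in the interiors of the edges of $\Gamma_i$. A \emph{compatible} state $\vec{\epsilon}$ assigns a sign $\pm 1$ to each point of $L\cap\Sigma$, with the same sign used as the state on both the $L_1$-end and the $L_2$-end sitting over $p$; this gives stated tangles $L_i^{\vec{\epsilon}}\subset(Y_i,\Gamma_i)$, and one sets $\sigma([L]):=\sum_{\vec{\epsilon}}[L_1^{\vec{\epsilon}}]\otimes[L_2^{\vec{\epsilon}}]$. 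Invariance under isotopies of $L$ and under stated skein relations applied entirely inside $Y_1$ or $Y_2$ is immediate, since each summand then transforms according to the corresponding relation on one side.

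The two non-trivial moves to check are (a) pushing a strand of $L$ across $\Sigma$, which creates or destroys a pair of intersection points at a local extremum on one side, and (b) sliding an intersection point $p\in L\cap\Sigma$ across an internal edge $e$ of the foliation, causing the endpoints of $L_1^{\vec{\epsilon}}$ and $L_2^{\vec{\epsilon}}$ at $p$ to hop from one adjacent orange edge to the other. For (a), summing over the state $\mu\in\{\pm 1\}$ of the newly created pair and applying the cap/cup skein relation
\[
[\,\text{cap with states }\mu,\nu\,]\;=\;\delta_{\mu,-\nu}(-A^2)^{\mu/2}
\]
on the side carrying the extremum, together with the self-duality rescaling of Proposition \ref{prop:SelfDuality}, collapses the sum to the class before the isotopy.

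The genuine obstruction is (b). The two possible perturbations of $p$ across $e$ differ, on each side, by the insertion of a U-shaped arc crossing $e$; relations \ref{item:S+} and \ref{item:S-} are precisely the statement that the corresponding left (respectively right) actions on $\Sk(Y_1,\Gamma_1)$ and $\Sk(Y_2,\Gamma_2)$ agree modulo the $(-A^2)^{\pm(\mu+\nu)/4}$ prefactors, so the two routings represent the same class in the reduced tensor product. To organize the full argument cleanly, one first handles the case where $\Sigma$ is a single elementary quadrilateral, which is essentially the Costantino--Le splitting of \cite{CL2} for a disk with two opposite boundary intervals, augmented locally by the new half-twist relation \eqref{item:difference1}. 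The general combinatorially foliated case is then built up by iteratively identifying pairs of edges of elementary quadrilaterals, introducing one instance of \ref{item:S+} or \ref{item:S-} at each shared sink or source vertex. The principal technical nuisance throughout is the bookkeeping of the $(-A^2)^{\pm 1/4}$ prefactors, which are fixed by requiring compatibility with the orientation-reversal symmetry of Remark \ref{rmk:OrientationReversingSymmetry}.
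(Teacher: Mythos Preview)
Your outline is in the same spirit as the paper's proof: define $\widetilde{\sigma}$ on tangles in general position with respect to the foliation of $\Sigma$, then check invariance under a finite list of elementary moves. However, your enumeration of moves is incomplete. The paper identifies five move types, corresponding to the four genericity conditions plus ambient isotopy: besides your (a) (tangency to $\Sigma$, the paper's type~IV) and (b) (crossing a singular leaf, type~V), one must also treat the \emph{half-twist} move (the ribbon becomes tangent to the foliation at $\Sigma$, type~II) and the \emph{height-exchange} move (two intersection points land on the same leaf, type~III). Neither of these happens ``entirely inside $Y_1$ or $Y_2$''; both change the stated tangles $L_i^{\vec\epsilon}$ and must be checked separately. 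The paper dispatches II--IV by invoking the 2d splitting arguments of \cite{Le,CL}, but you should at least name them.

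Two smaller points. First, your appeal to the self-duality of Proposition~\ref{prop:SelfDuality} in handling move~(a) is extraneous; the stated cap and cup relations alone suffice, and self-duality plays no role there. Second, your closing ``iterative'' strategy---reduce to a single elementary quadrilateral and then glue---is not how the paper proceeds and is somewhat misleading: for a single quadrilateral there are no internal edges, hence no move~(b) and no relations \ref{item:S+}/\ref{item:S-} arise; the relations enter only when you check move~(V) directly on the full foliated $\Sigma$. The paper does exactly this direct computation, expanding both sides of the type~V move via the stated skein relations and exhibiting their difference as an element of the submodule generated by \ref{item:S+} (or \ref{item:S-}). Your sentence ``relations \ref{item:S+} and \ref{item:S-} are precisely the statement that the corresponding actions agree'' is the right intuition, but the verification that the difference lands in that submodule is the actual content of the proof and deserves the explicit calculation.
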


\begin{rmk}\label{rmk:ModuleQuotient}
Let us make a brief comment on what we mean by identifying left actions. 
Let $A_i$ be a unital associative algebra and $M_i$ a left $A_i$-module, for $i = 1, 2$. 
Then by identifying the action of $\alpha_1 \in A_1$ with the action of $\alpha_2 \in A_2$ we mean to take the quotient
\[
M_1 \;\overline{\otimes}\; M_2 
=
(M_1 \otimes M_2)/(\alpha_1\otimes 1 - 1\otimes \alpha_2)(M_1 \otimes M_2)
\]
as an $R$-module. 
That is, elements of $M_1 \;\overline{\otimes}\; M_2$ are cosets of the form
\[
m_1 \otimes m_2 + (\alpha_1\otimes 1 - 1\otimes \alpha_2)(M_1 \otimes M_2),\quad m_1 \in M_1,\; m_2\in M_2.
\]
Identifying right actions should be interpreted in a similar way. 
\end{rmk}

\begin{rmk}
Using the isomorphism in Proposition \ref{prop:SelfDuality}, 
we can think of any left (resp. right) $\SkAlg(D_n)$-module structure -- which is equivalent to a right (resp. left) $\SkAlg(D_n)^{\mathrm{op}}$-module structure -- as a right (resp. left) $\SkAlg(D_n)$-module structure. 
Then, the relations \ref{item:S+} and \ref{item:S-} are nothing but the usual relations for tensor products of left and right modules over the same algebra. 
\end{rmk}


\begin{proof}[Proof of Theorem \ref{thm:SplittingMap}]
Let $L$ be a stated tangle, representing an element $[L]$ of $\Sk(Y, \Gamma)$. 
We will say that $L$ is in \emph{general position} with respect to the foliation of $\Sigma$ if
\begin{enumerate}
    \item\label{item:genpos1} its intersection with $\Sigma$ is transverse, 
    \item\label{item:genpos2} at the intersection points, the foliation is transverse to the 2-dimensional tangent space to the ribbon tangle, 
    \item\label{item:genpos3} no singular leaf meets $L$, and
    \item\label{item:genpos4} no leaf of the foliation meets $L$ at 2 or more points. 
\end{enumerate}
Note that, in the moduli space of smoothly embedded tangles, generic points are in general position with respect to the foliation, and tangles which are in non-general positions are of real codimension $\geq 1$ in the moduli space. 

If $L$ is in general position near $\Sigma$ with respect to the foliation, we can isotope the part of $L$ near $\Sigma$ along the foliation so that $L \cap \Sigma$ lies in the marking $\Gamma_{\Sigma}$ of $\Sigma$. 
Then $L_1 := L \cap Y_1$ and $L_2 := L \cap Y_2$ are tangles in $(Y_1, \Gamma_1)$ and $(Y_2, \Gamma_2)$. 
We need to introduce states to the newly created ends of $L_1$ and $L_2$ in order to consider them as elements of $\Sk(Y_1, \Gamma_1)$ and $\Sk(Y_2, \Gamma_2)$, respectively. 
We say that a state on $L_1 \sqcup L_2$ is \emph{compatible} if for each intersection point $p \in L \cap \Sigma$, the state of $L_1$ at $p$ is equal to that of $L_2$ at $p$. 
For each $\vec{\epsilon} : L\cap \Sigma \rightarrow \{\pm\}$, we denote by $L_1^{\vec{\epsilon}}$ and $L_2^{\vec{\epsilon}}$ the corresponding tangles with compatible states assigned to the newly created ends. 
By summing over all possible compatible states, we get an element
\[
\widetilde{\sigma}(L) := \sum_{\vec{\epsilon} \;\in\; \{\substack{\mathrm{compatible}\\ \mathrm{states}}\}} [L_1^{\vec{\epsilon}}] \otimes [L_2^{\vec{\epsilon}}] \;\in\; \Sk(Y_1, \Gamma_1) \otimes \Sk(Y_2, \Gamma_2). 
\]
We need to study how $\widetilde{\sigma}(L)$ behaves under isotopy of $L$. 

For any generic isotopy $\{L_t\}_{t\in [0,1]}$, $L_t$ must be in general position with respect to the foliation of $\Sigma$ except for finitely many $t$, 
and for each such $t$ for which $L_t$ is in non-general position, it must violate exactly 1 of the conditions (\ref{item:genpos1})-(\ref{item:genpos4}) for it to be in general position. 
Therefore, any isotopy of $L$ is a finite composition of isotopies of the following types:
\begin{enumerate}[label= (\Roman*)]
\item\label{item:typeIisotopy} An isotopy in the class of tangles in general position with respect to the foliation of $\Sigma$. 
\item\label{item:typeIIisotopy} Half twist of the ribbon tangle near $\Sigma$ -- at some point of the isotopy, the ribbon becomes parallel to the foliation of $\Sigma$. 
\[
\vcenter{\hbox{
\begin{tikzpicture}
\draw[line width=5] (-1, 0) -- (1, 0);
\draw[very thick, orange] (0, -1) -- (0, 0);
\draw[very thick, orange] (0, 1) -- (0, 0);
\node[anchor=south] at (0, 1){$\Sigma$};
\node[anchor=south] at (-1, 0.7){$Y_1$};
\node[anchor=south] at (1, 0.7){$Y_2$};
\end{tikzpicture}
}}
\;\leftrightarrow\;
\vcenter{\hbox{
\begin{tikzpicture}
\begin{scope}[shift={(-0.5, 0)}, xscale=-0.5, rotate=90]
    \fill[right color=black, left color=white] (-0.085, 1) to[out=-90, in=100] (0, 0) to[out=80, in=-90] (0.085, 1)--cycle;
    \fill[left color=black, right color=white] (-0.085, -1) to[out=90, in=-100] (0, 0) to[out=-80, in=90] (0.085, -1)--cycle;
    \draw[line width=1] (-0.075, -1) to[out=90, in=-90] (0.075, 1);
\end{scope}
\begin{scope}[shift={(0.5, 0)}, xscale=0.5, rotate=90]
    \fill[left color=black, right color=white] (-0.085, 1) to[out=-90, in=100] (0, 0) to[out=80, in=-90] (0.085, 1)--cycle;
    \fill[right color=black, left color=white] (-0.085, -1) to[out=90, in=-100] (0, 0) to[out=-80, in=90] (0.085, -1)--cycle;
    \draw[line width=1] (-0.075, -1) to[out=90, in=-90] (0.075, 1);
\end{scope}
\draw[very thick, orange] (0, -1) -- (0, 0);
\draw[very thick, orange] (0, 1) -- (0, 0);
\node[anchor=south] at (0, 1){$\Sigma$};
\node[anchor=south] at (-1, 0.7){$Y_1$};
\node[anchor=south] at (1, 0.7){$Y_2$};
\end{tikzpicture}
}}
\]
\item\label{item:typeIIIisotopy} Height exchange -- at some point of the isotopy, there is a leaf of the foliation of $\Sigma$ which contains two points of the tangle. 
\[
\vcenter{\hbox{
\begin{tikzpicture}
\draw[line width=5] (-1, 0.4) -- (1, 0.4);
\draw[line width=5] (-1, -0.4) -- (1, -0.4);
\draw[very thick, orange] (0, -1) -- (0, 0);
\draw[very thick, orange] (0, 1) -- (0, 0);
\node[anchor=south] at (0, 1){$\Sigma$};
\node[anchor=south] at (-1, 0.7){$Y_1$};
\node[anchor=south] at (1, 0.7){$Y_2$};
\end{tikzpicture}
}}
\;\leftrightarrow\;
\vcenter{\hbox{
\begin{tikzpicture}
\draw[line width=5] (-1, 0.4) to[out=0, in=180] (0, -0.2) to[out=0, in=180] (1, 0.4);
\draw[white, line width=10] (-1, -0.4) to[out=0, in=180] (0, 0.2) to[out=0, in=180] (1, -0.4);
\draw[line width=5] (-1, -0.4) to[out=0, in=180] (0, 0.2) to[out=0, in=180] (1, -0.4);
\draw[very thick, orange] (0, -1) -- (0, 0);
\draw[very thick, orange] (0, 1) -- (0, 0);
\node[anchor=south] at (0, 1){$\Sigma$};
\node[anchor=south] at (-1, 0.7){$Y_1$};
\node[anchor=south] at (1, 0.7){$Y_2$};
\end{tikzpicture}
}}
\]
\item\label{item:typeIVisotopy} Birth or annihilation of a pair of intersection points with $\Sigma$ -- at some point of the isotopy, the tangle becomes tangent to $\Sigma$. 
\[
\vcenter{\hbox{
\begin{tikzpicture}
\draw[line width=5] (-1, 0.4) -- (-0.8, 0.4) to[out=0, in=90] (-0.4, 0) to[out=-90, in=0] (-0.8, -0.4) -- (-1, -0.4);
\draw[very thick, orange] (0, -1) -- (0, 0);
\draw[very thick, orange] (0, 1) -- (0, 0);
\node[anchor=south] at (0, 1){$\Sigma$};
\node[anchor=south] at (-1, 0.7){$Y_1$};
\node[anchor=south] at (1, 0.7){$Y_2$};
\end{tikzpicture}
}}
\;\leftrightarrow\;
\vcenter{\hbox{
\begin{tikzpicture}
\draw[line width=5] (-1, 0.4) -- (0, 0.4) to[out=0, in=90] (0.4, 0) to[out=-90, in=0] (0, -0.4) -- (-1, -0.4);
\draw[very thick, orange] (0, -1) -- (0, 0);
\draw[very thick, orange] (0, 1) -- (0, 0);
\node[anchor=south] at (0, 1){$\Sigma$};
\node[anchor=south] at (-1, 0.7){$Y_1$};
\node[anchor=south] at (1, 0.7){$Y_2$};
\end{tikzpicture}
}}
\]
\item\label{item:typeVisotopy} An isotopy passing the tangle through a singular leaf (i.e.\ an internal edge of the combinatorial foliation of $\Sigma$). 
\begin{equation*}
\vcenter{\hbox{
\tdplotsetmaincoords{20}{40}
\begin{tikzpicture}[tdplot_main_coords, rotate=40]
\begin{scope}[scale = 1.0, tdplot_main_coords]
\draw[dotted] (1.5, 0, 2) -- (-1.5, 0, 2);
\draw[line width=5] (0, 0.8, 0.6) -- (1.5, 0.8, 0.6);
\fill[color=lightgray, opacity=0.6] (0, 0, 0) -- (0, 0, 2) -- (0, 2, 2) -- (0, 2, 0) -- cycle;
\fill[color=lightgray, opacity=0.6] (0, 0, -2) -- (0, 0, 0) -- (0, 2, 0) -- (0, 2, -2) -- cycle;
\draw[ultra thick, orange] (0, 0, 2) -- (0, 2, 0) -- (0, 0, -2);
\draw[dotted] (1.5, 0, 2) -- (1.5, 2, 2) -- (1.5, 2, -2) -- (1.5, 0, -2) -- cycle;
\draw[dotted] (-1.5, 0, 2) -- (-1.5, 2, 2) -- (-1.5, 2, -2) -- (-1.5, 0, -2) -- cycle;
\draw[very thick, darkgray] (0, 0, 0) -- (0, 2, 0);
\draw[dotted] (1.5, 2, 2) -- (-1.5, 2, 2);
\draw[dotted] (1.5, 2, -2) -- (-1.5, 2, -2);
\draw[dotted] (1.5, 0, -2) -- (-1.5, 0, -2);
\filldraw[orange] (0, 2, 0) circle (0.07);
\draw[line width=5] (0, 0.8, 0.6) -- (-1.5, 0.8, 0.6);
\node[anchor=south] at (0, 2.1, 0){$\Sigma$};
\node[anchor=south] at (-1, 1.8, 0){$Y_1$};
\node[anchor=south] at (1, 1.8, 0){$Y_2$};
\end{scope}
\end{tikzpicture}
}}
\;\leftrightarrow\;
\vcenter{\hbox{
\tdplotsetmaincoords{20}{40}
\begin{tikzpicture}[tdplot_main_coords, rotate=40]
\begin{scope}[scale = 1.0, tdplot_main_coords]
\draw[dotted] (1.5, 0, 2) -- (-1.5, 0, 2);
\draw[line width=5] (0, 0.8, -0.6) -- (1.5, 0.8, -0.6);
\fill[color=lightgray, opacity=0.6] (0, 0, 0) -- (0, 0, 2) -- (0, 2, 2) -- (0, 2, 0) -- cycle;
\fill[color=lightgray, opacity=0.6] (0, 0, -2) -- (0, 0, 0) -- (0, 2, 0) -- (0, 2, -2) -- cycle;
\draw[ultra thick, orange] (0, 0, 2) -- (0, 2, 0) -- (0, 0, -2);
\draw[dotted] (1.5, 0, 2) -- (1.5, 2, 2) -- (1.5, 2, -2) -- (1.5, 0, -2) -- cycle;
\draw[dotted] (-1.5, 0, 2) -- (-1.5, 2, 2) -- (-1.5, 2, -2) -- (-1.5, 0, -2) -- cycle;
\draw[very thick, darkgray] (0, 0, 0) -- (0, 2, 0);
\draw[dotted] (1.5, 2, 2) -- (-1.5, 2, 2);
\draw[dotted] (1.5, 2, -2) -- (-1.5, 2, -2);
\draw[dotted] (1.5, 0, -2) -- (-1.5, 0, -2);
\filldraw[orange] (0, 2, 0) circle (0.07);
\draw[line width=5] (0, 0.8, -0.6) -- (-1.5, 0.8, -0.6);
\node[anchor=south] at (0, 2.1, 0){$\Sigma$};
\node[anchor=south] at (-1, 1.8, 0){$Y_1$};
\node[anchor=south] at (1, 1.8, 0){$Y_2$};
\end{scope}
\end{tikzpicture}
}}
\end{equation*}
\end{enumerate}
The moves \ref{item:typeIIisotopy}, \ref{item:typeIIIisotopy}, \ref{item:typeIVisotopy}, \ref{item:typeVisotopy} each correspond, respectively, to the violation of conditions (\ref{item:genpos2}), (\ref{item:genpos4}), (\ref{item:genpos1}), (\ref{item:genpos3}). 

It is clear that $\widetilde{\sigma}(L)$ does not change under the isotopies of type \ref{item:typeIisotopy}. 
In fact, it does not change under isotopies of type \ref{item:typeIIisotopy}, \ref{item:typeIIIisotopy}, and \ref{item:typeIVisotopy} either, thanks to the stated skein relations. 
This follows from the proof of well-definedness of the 2d splitting map; see \cite{Le, CL}. 
Therefore, we only need to check how $\widetilde{\sigma}(L)$ changes under isotopies of type \ref{item:typeVisotopy}. 

Thanks to invariance under type \ref{item:typeIisotopy}, \ref{item:typeIIisotopy}, \ref{item:typeIIIisotopy}, and \ref{item:typeIVisotopy} moves, we can assume that the part of the tangle that passes through an internal edge $e$ of $\Sigma$ under the type \ref{item:typeVisotopy} move is closest to the orange vertex adjacent to $e$ in the height ordering. 
We will show that the relations we get are exactly the relations \ref{item:S+} and \ref{item:S-} defining the reduced tensor product. 

Let's assume that the orange vertex adjacent to the internal edge $e$ is a sink. (The argument for the case when it's a source is exactly the same, with the replacement $(-A^2)^{\frac{1}{2}} \mapsto (-A^2)^{-\frac{1}{2}}$.)
Then, the image of the LHS of \ref{item:typeVisotopy} under $\widetilde{\sigma}$ is 
\begin{gather*}
\sum_{\mu \in \{\pm\}}
\vcenter{\hbox{
\tdplotsetmaincoords{20}{40}

}}
\;.
\end{gather*}
Comparing the two images, it follows immediately from relation \ref{item:S+} that they represent the same element in the quotient $\Sk(Y_1, \Gamma_1) \;\overline{\otimes}\; \Sk(Y_2, \Gamma_2)$. 
\end{proof}

Sometimes, it is also useful to split a 3-manifold $Y$ partially -- that is, splitting along a surface $\Sigma$ whose boundary may not be contained in $\partial Y$. 
Theorem \ref{thm:SplittingMap} can be easily extended to include partial splittings: 
\begin{thm}[Partial splitting homomorphism] \label{thm:partialSplittingHomomorphism}
Let $(Y', \Gamma')$ be a boundary marked 3-manifold. 
Suppose that $\Sigma_1, \Sigma_2 \subset \partial Y$, along with their boundary markings, are homeomorphic (fixing their common edges) combinatorially foliated surfaces of opposite orientations which are disjoint in the interior, so that we can glue them. 
Let $Y = Y'/(\Sigma_1 \sim \Sigma_2)$ be the glued 3-manifold, with boundary marking $\Gamma = \Gamma' \setminus \mathrm{int}\;\Sigma$. 
Then, there is a well-defined $R$-module homomorphism
\begin{align*}
\sigma \; : \; \Sk(Y, \Gamma) &\rightarrow \Sk(Y', \Gamma')/M \\
[L] &\mapsto \qty[\sum_{\vec{\epsilon} \;\in\; \{\substack{\mathrm{compatible}\\ \mathrm{states}}\}} [{L'}^{\vec{\epsilon}}]],
\end{align*}
where $M$ denotes the $R$-submodule generated by relations \ref{item:S+} and \ref{item:S-}, 
as well as the following relations, one for each edge $e\in \Sigma_1 \cap \Sigma_2$ (i.e.\ an internal edge of $\Sigma_1 \cup \Sigma_2$ that is not an internal edge of either $\Sigma_1$ or $\Sigma_2$): 
\begin{enumerate}[label= (PS\arabic*)]
\item\label{item:PS+} For each $e \in \Sigma_1 \cap \Sigma_2$ adjacent to a sink, we have the following relations among left actions: 
\begin{align*}
&
\text{the left action of }\;
\vcenter{\hbox{
\begin{tikzpicture}[scale=2]
\filldraw[very thick, draw=darkgray, fill=lightgray] (0, 0) -- (1, 0) -- (1, 1) -- (0, 1) -- (-1, 1) -- (-1, 0) -- cycle;
\draw[very thick, darkgray] (0, 0) -- (0, 1);
\filldraw[draw=black, fill=white] (0, 0) circle (0.07);
\filldraw[draw=black, fill=white] (1, 1) circle (0.07);
\filldraw[draw=black, fill=white] (-1, 1) circle (0.07);
\draw[line width=3] (1/4, 3/4) to[out=-135, in=-45] (-1/4, 3/4);
\node[anchor=south east] at (-1/4, 3/4){$\mu$};
\node[anchor=south west] at (1/4, 3/4){$\nu$};
\node[anchor=west] at (0, 1/3){$e$};
\draw[very thick, orange] (1, 0) -- (0, 1);
\draw[very thick, orange, ->] (1, 0) -- (1/2, 1/2);
\draw[very thick, orange] (-1, 0) -- (0, 1);
\draw[very thick, orange, ->] (-1, 0) -- (-1/2, 1/2);
\filldraw[orange] (0, 1) circle (0.05);
\filldraw[orange] (1, 0) circle (0.05);
\filldraw[orange] (-1, 0) circle (0.05);
\end{tikzpicture}
}}
\text{ on }\Sk(Y', \Gamma')\\
=\;\;&
\delta_{\mu,\nu}\,(-A^2)^{-\mu}
\end{align*}

\item\label{item:PS-} Likewise, for each $e \in \Sigma_1 \cap \Sigma_2$ adjacent to a source, we have the following relations among right actions: 
\begin{align*}
&
\text{the right action of }\;
\vcenter{\hbox{
\begin{tikzpicture}[scale=2]
\filldraw[very thick, draw=darkgray, fill=lightgray] (0, 0) -- (1, 0) -- (1, 1) -- (0, 1) -- (-1, 1) -- (-1, 0) -- cycle;
\draw[very thick, darkgray] (0, 0) -- (0, 1);
\filldraw[draw=black, fill=white] (0, 0) circle (0.07);
\filldraw[draw=black, fill=white] (1, 1) circle (0.07);
\filldraw[draw=black, fill=white] (-1, 1) circle (0.07);
\draw[line width=3] (1/4, 3/4) to[out=-135, in=-45] (-1/4, 3/4);
\node[anchor=south east] at (-1/4, 3/4){$\mu$};
\node[anchor=south west] at (1/4, 3/4){$\nu$};
\node[anchor=west] at (0, 1/3){$e$};
\draw[very thick, orange] (1, 0) -- (0, 1);
\draw[very thick, orange, ->] (0, 1) -- (1/2, 1/2);
\draw[very thick, orange] (-1, 0) -- (0, 1);
\draw[very thick, orange, ->] (0, 1) -- (-1/2, 1/2);
\filldraw[orange] (0, 1) circle (0.05);
\filldraw[orange] (1, 0) circle (0.05);
\filldraw[orange] (-1, 0) circle (0.05);
\end{tikzpicture}
}}
\text{ on }\Sk(Y', \Gamma') \\
=\;\;&
\delta_{\mu,\nu}\,(-A^2)^{\mu}
\;.
\end{align*}
\end{enumerate}
\end{thm}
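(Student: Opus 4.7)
The plan is to follow exactly the strategy used for Theorem \ref{thm:SplittingMap}, treating the partial splitting as a mild generalization in which the ``two sides'' of the splitting sit inside a single ambient 3-manifold $Y'$ rather than in disjoint pieces $Y_1, Y_2$.

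First I would define the map $\sigma$ by the same recipe as before: given a tangle $L$ representing a class in $\Sk(Y, \Gamma)$, isotope $L$ into general position with respect to the combinatorial foliation of $\Sigma = \Sigma_1/{\sim}$, slide the intersection points $L \cap \Sigma$ along leaves onto the orange marking, and re-interpret the cut-up tangle as a tangle $L'$ in $Y'$. Summing over compatible state assignments at the newly created endpoints yields an element $\widetilde{\sigma}(L) \in \Sk(Y', \Gamma')$, and what needs to be shown is that it descends to a well-defined element of $\Sk(Y', \Gamma')/M$ independent of the representative.

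As in the proof of Theorem \ref{thm:SplittingMap}, invariance of $\widetilde{\sigma}(L)$ modulo $M$ reduces to checking the five types of generic isotopy moves \ref{item:typeIisotopy}--\ref{item:typeVisotopy}. Types \ref{item:typeIisotopy}--\ref{item:typeIVisotopy} are purely local and are handled verbatim as before, since they do not interact with the set $\Sigma_1 \cap \Sigma_2$. Type \ref{item:typeVisotopy} moves across an edge internal to $\Sigma_1$ (equivalently $\Sigma_2$) produce the relations \ref{item:S+} or \ref{item:S-}: the algebraic computation is the same as in Theorem \ref{thm:SplittingMap}, with the only cosmetic change being that the two module actions now both live on $\Sk(Y', \Gamma')$ rather than on separate tensor factors. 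The genuinely new case is a type \ref{item:typeVisotopy} move across an edge $e \in \Sigma_1 \cap \Sigma_2$.

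For such an edge, the two quadrilaterals of the foliation abutting $e$ are glued into a single quadrilateral in $\partial Y$, but appear as two distinct quadrilaterals in $\partial Y'$ meeting precisely along $e$. Repeating the computation at the end of the proof of Theorem \ref{thm:SplittingMap}, I would expand the half-twist via the half-twist skein relation, sum over the intermediate states, and then exploit the key new feature of this setup: the two ends of the resulting arc now land on the \emph{same} edge in $\partial Y'$, so the arc-closing (cap) skein relation can be applied to collapse them. This produces the Kronecker delta $\delta_{\mu,\nu}$ together with a scalar that I expect to match $(-A^2)^{\mp \mu}$, giving exactly relations \ref{item:PS+} and \ref{item:PS-}. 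Thus $\widetilde{\sigma}$ descends to the claimed $\sigma$.

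The main obstacle is the normalization bookkeeping in this last step. One must verify that the $(-A^2)^{\pm 1/4}$ factors from the self-duality convention in Proposition \ref{prop:SelfDuality}, the $(-A^3)^{\pm 1/2}$ from resolving the half-twist, and the cap coefficient $(-A^2)^{\pm \mu/2}$ combine precisely into the coefficient $\delta_{\mu,\nu}(-A^2)^{\mp \mu}$ appearing in \ref{item:PS+} and \ref{item:PS-}. Once the sink case is checked in detail, the source case follows by invoking the orientation-reversing symmetry of Remark \ref{rmk:OrientationReversingSymmetry}.
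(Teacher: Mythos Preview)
Your overall strategy matches the paper's: define $\widetilde\sigma$ on tangles in general position and verify invariance under each generic isotopy move, with the only genuinely new ingredient being a move associated to edges $e \in \Sigma_1 \cap \Sigma_2$. However, two points in your handling of that new move need correction.

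First, the new move is not a type-\ref{item:typeVisotopy} move. In $Y$, an edge $e \in \Sigma_1 \cap \Sigma_2$ lies on $\partial\Sigma \setminus \partial Y$: it is a free boundary edge of $\Sigma$ sitting in the interior of $Y$. The relevant isotopy slides a strand \emph{off} the surface $\Sigma$ around this edge --- on one side the strand crosses $\Sigma$ once, on the other it misses $\Sigma$ entirely. This is geometrically distinct from type~\ref{item:typeVisotopy} (which moves an intersection point across a singular leaf while remaining on $\Sigma$), and no half-twist enters the computation.

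Second, and more importantly, your claim that ``the two ends of the resulting arc now land on the same edge in $\partial Y'$, so the arc-closing (cap) skein relation can be applied'' is incorrect. After splitting, the ends of the relevant arc lie on \emph{distinct} marking edges of $\Gamma'$ --- one on the $\Sigma_1$ copy and one on the $\Sigma_2$ copy --- meeting only at the shared sink (or source) vertex adjacent to $e$. The cap relation does not apply to such an arc, and nothing in the stated skein relations evaluates it to a scalar. The relation \ref{item:PS+} (resp.\ \ref{item:PS-}) is exactly what one must \emph{impose} to make the two sides of the move agree. Concretely, the paper computes $\widetilde\sigma$ of both sides, rewrites the side that misses $\Sigma$ by inserting two strand-breaking sums (the fourth stated skein relation), and recognizes the inner arc thus produced as the skein-algebra element appearing in \ref{item:PS+}; setting that element equal to $\delta_{\mu,\nu}(-A^2)^{-\mu}$ is precisely what forces the two sides to coincide in the quotient. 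So \ref{item:PS+} is the output of this comparison, not a consequence of the cap relation.
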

\begin{proof}
The proof is completely analagous to that of Theorem \ref{thm:SplittingMap}. 
Whenever a stated tangle $L$ in $Y$ representing an element $[L] \in \Sk(Y,\Gamma)$ is in general position with respect to the foliation of $\Sigma$, 
then we can isotope the part of $L$ near $\Sigma$ along the foliation so that $L \cap \Sigma$ lies in the marking $\Gamma_{\Sigma}$ of $\Sigma$. 
Then, by splitting $Y$ along $\Sigma$, we get a tangle $L'$ in $Y'$, and by summing over all possible compatible states on $L' \cap (\Sigma_1 \cup \Sigma_2)$, we get an element
\[
\widetilde{\sigma}(L) := \sum_{\vec{\epsilon} \in \{\substack{\mathrm{compatible}\\ \mathrm{states}}\}} [{L'}^{\vec{\epsilon}}] \in \Sk(Y', \Gamma'). 
\]
We need to study how $\widetilde{\sigma}(L)$ behaves under isotopy of $L$. 

Any isotopy of $L$ is a finite composition of isotopies of the following types: 
\begin{enumerate}
\item An isotopy of types \ref{item:typeIisotopy}-\ref{item:typeVisotopy} listed in the proof of Theorem \ref{thm:SplittingMap}. 
\item\label{item:PartialSplittingIsotopy} An isotopy passing the tangle through an edge $e \in \partial \Sigma \setminus \partial Y$. 
\begin{equation*}
\vcenter{\hbox{
\tdplotsetmaincoords{20}{40}
\begin{tikzpicture}[tdplot_main_coords, rotate=40]
\begin{scope}[scale = 1.0, tdplot_main_coords]
\draw[dotted] (1.5, 0, 2) -- (-1.5, 0, 2);
\draw[line width=5] (0, 0.8, 0.6) -- (1.5, 0.8, 0.6);
\fill[color=lightgray, opacity=0.6] (0, 0, 0) -- (0, 0, 2) -- (0, 2, 2) -- (0, 2, 0) -- cycle;
\draw[ultra thick, orange] (0, 0, 2) -- (0, 2, 0);
\draw[dotted] (1.5, 0, 2) -- (1.5, 2, 2) -- (1.5, 2, -2) -- (1.5, 0, -2) -- cycle;
\draw[dotted] (-1.5, 0, 2) -- (-1.5, 2, 2) -- (-1.5, 2, -2) -- (-1.5, 0, -2) -- cycle;
\draw[very thick, darkgray] (0, 0, 0) -- (0, 2, 0);
\draw[dotted] (1.5, 2, 2) -- (-1.5, 2, 2);
\draw[dotted] (1.5, 2, -2) -- (-1.5, 2, -2);
\draw[dotted] (1.5, 0, -2) -- (-1.5, 0, -2);
\filldraw[orange] (0, 2, 0) circle (0.07);
\draw[line width=5] (0, 0.8, 0.6) -- (-1.5, 0.8, 0.6);
\node[anchor=south] at (0, 2.1, 0){$\Sigma$};
\node[anchor=south] at (1, 1.8, 0){$Y$};
\node[anchor=west] at (0, 0.3, 0){$e$};
\end{scope}
\end{tikzpicture}
}}
\;\leftrightarrow\;
\vcenter{\hbox{
\tdplotsetmaincoords{20}{40}
\begin{tikzpicture}[tdplot_main_coords, rotate=40]
\begin{scope}[scale = 1.0, tdplot_main_coords]
\draw[dotted] (1.5, 0, 2) -- (-1.5, 0, 2);
\draw[line width=5] (0, 0.8, -0.6) -- (1.5, 0.8, -0.6);
\fill[color=lightgray, opacity=0.6] (0, 0, 0) -- (0, 0, 2) -- (0, 2, 2) -- (0, 2, 0) -- cycle;
\draw[ultra thick, orange] (0, 0, 2) -- (0, 2, 0);
\draw[dotted] (1.5, 0, 2) -- (1.5, 2, 2) -- (1.5, 2, -2) -- (1.5, 0, -2) -- cycle;
\draw[dotted] (-1.5, 0, 2) -- (-1.5, 2, 2) -- (-1.5, 2, -2) -- (-1.5, 0, -2) -- cycle;
\draw[very thick, darkgray] (0, 0.75, 0) -- (0, 2, 0);
\draw[very thick, darkgray] (0, 0, 0) -- (0, 0.52, 0);
\draw[dotted] (1.5, 2, 2) -- (-1.5, 2, 2);
\draw[dotted] (1.5, 2, -2) -- (-1.5, 2, -2);
\draw[dotted] (1.5, 0, -2) -- (-1.5, 0, -2);
\filldraw[orange] (0, 2, 0) circle (0.07);
\draw[line width=5] (0, 0.8, -0.6) -- (-1.5, 0.8, -0.6);
\node[anchor=south] at (0, 2.1, 0){$\Sigma$};
\node[anchor=south] at (1, 1.8, 0){$Y$};
\node[anchor=west] at (0, 0.3, 0){$e$};
\end{scope}
\end{tikzpicture}
}}
\end{equation*}
\end{enumerate}
As shown in the proof of Theorem \ref{thm:SplittingMap}, 
$\widetilde{\sigma}(L)$ is invariant under isotopies of types \ref{item:typeIisotopy}-\ref{item:typeVisotopy} once we impose the relations \ref{item:S+} and \ref{item:S-}. 

We claim that $\widetilde{\sigma}(L)$ is invariant under the new type of isotopy (\ref{item:PartialSplittingIsotopy}) as well, once we further impose relations \ref{item:PS+} and \ref{item:PS-}. 
Let's assume that the orange vertex adjacent to the edge $e$ is a sink. 
(The argument for the case when it is a source is exactly the same.) 
Then, the image of the LHS of (\ref{item:PartialSplittingIsotopy}) under $\widetilde{\sigma}$ is
\[
\sum_{\mu \in \{\pm\}}
\vcenter{\hbox{
\tdplotsetmaincoords{20}{40}
\begin{tikzpicture}[tdplot_main_coords, rotate=40]
\begin{scope}[scale = 1.0, tdplot_main_coords]
\draw[dotted] (1.5, 0, 2) -- (0, 0, 2);
\fill[black] (1.5, {0.8-0.085}, 0.6) .. controls (1, {0.8-0.085}, 0.6) and (1, {0.8-0.085}, 0.6) .. (0, {0.8-0.085}, {1.2+0.085}) -- (0, {0.8+0.085}, {1.2-0.085}) .. controls (1, {0.8+0.085}, 0.6) and (1, {0.8+0.085}, 0.6) .. (1.5, {0.8+0.085}, 0.6) -- cycle;
\fill[black] (1.5, {0.8-0.085}, -0.6) .. controls (1, {0.8-0.085}, -0.6) and (1, {0.8-0.085}, -0.6) .. (0, {0.8-0.085}, {-1.2-0.085}) -- (0, {0.8+0.085}, {-1.2+0.085}) .. controls (1, {0.8+0.085}, -0.6) and (1, {0.8+0.085}, -0.6) .. (1.5, {0.8+0.085}, -0.6) -- cycle;
\fill[color=lightgray, opacity=0.6] (0, 0, 0) -- (0, 0, 2) -- (0, 2, 2) -- (0, 2, 0) -- cycle;
\fill[color=lightgray, opacity=0.6] (0, 0, -2) -- (0, 0, 0) -- (0, 2, 0) -- (0, 2, -2) -- cycle;
\draw[ultra thick, orange] (0, 0, 2) -- (0, 2, 0) -- (0, 0, -2);
\draw[thick, orange, ->] (0, 0, 2) -- (0, 1, 1);
\draw[thick, orange, ->] (0, 0, -2)-- (0, 1, -1);
\draw[dotted] (1.5, 0, 2) -- (1.5, 2, 2) -- (1.5, 2, -2) -- (1.5, 0, -2) -- cycle;
\draw[dotted] (0, 0, 2) -- (0, 2, 2) -- (0, 2, -2) -- (0, 0, -2) -- cycle;
\draw[very thick, darkgray] (0, 0, 0) -- (0, 2, 0);
\draw[dotted] (1.5, 2, 2) -- (0, 2, 2);
\draw[dotted] (1.5, 2, -2) -- (0, 2, -2);
\draw[dotted] (1.5, 0, -2) -- (0, 0, -2);
\filldraw[orange] (0, 2, 0) circle (0.07);
\node at (-0.35, 0.8, 1.2){$-\mu$};
\node at (-0.35, 0.8, -1.2){$-\mu$};
\node[anchor=south] at (1, 1.8, 0){$Y$};
\node[anchor=south] at (0, 2.1, 0.5){$\Sigma_1$};
\node[anchor=south] at (0, 2.1, -1.5){$\Sigma_2$};
\end{scope}
\end{tikzpicture}
}},
\]
whereas the image of RHS of (\ref{item:PartialSplittingIsotopy}) under $\widetilde{\sigma}$ is
\begin{gather*}
\vcenter{\hbox{
\tdplotsetmaincoords{20}{40}
\begin{tikzpicture}[tdplot_main_coords, rotate=40]
\begin{scope}[scale = 1.0, tdplot_main_coords]
\draw[dotted] (1.5, 0, 2) -- (0, 0, 2);
\fill[black] (1.5, {0.8-0.085}, 0.6) -- (0.8, {0.8-0.085}, 0) -- (0.8, {0.8+0.085}, 0) -- (1.5, {0.8+0.085}, 0.6) -- cycle;
\filldraw[draw=white, thick, fill=black] (1.5, {0.8-0.085}, -0.6) -- (0.8, {0.8-0.085}, 0) -- (0.8, {0.8+0.085}, 0) -- (1.5, {0.8+0.085}, -0.6) -- cycle;
\fill[color=lightgray, opacity=0.6] (0, 0, 0) -- (0, 0, 2) -- (0, 2, 2) -- (0, 2, 0) -- cycle;
\fill[color=lightgray, opacity=0.6] (0, 0, -2) -- (0, 0, 0) -- (0, 2, 0) -- (0, 2, -2) -- cycle;
\draw[ultra thick, orange] (0, 0, 2) -- (0, 2, 0) -- (0, 0, -2);
\draw[thick, orange, ->] (0, 0, 2) -- (0, 1, 1);
\draw[thick, orange, ->] (0, 0, -2)-- (0, 1, -1);
\draw[dotted] (1.5, 0, 2) -- (1.5, 2, 2) -- (1.5, 2, -2) -- (1.5, 0, -2) -- cycle;
\draw[dotted] (0, 0, 2) -- (0, 2, 2) -- (0, 2, -2) -- (0, 0, -2) -- cycle;
\draw[very thick, darkgray] (0, 0, 0) -- (0, 2, 0);
\draw[dotted] (1.5, 2, 2) -- (0, 2, 2);
\draw[dotted] (1.5, 2, -2) -- (0, 2, -2);
\draw[dotted] (1.5, 0, -2) -- (0, 0, -2);
\filldraw[orange] (0, 2, 0) circle (0.07);
\node[anchor=south] at (1, 1.8, 0){$Y$};
\node[anchor=south] at (0, 2.1, 0.5){$\Sigma_1$};
\node[anchor=south] at (0, 2.1, -1.5){$\Sigma_2$};
\end{scope}
\end{tikzpicture}
}}
\;\;=\;\;
\sum_{\mu,\nu \in \{\pm\}}
(-A^2)^{\frac{\mu+\nu}{2}}\;
\vcenter{\hbox{
\tdplotsetmaincoords{20}{40}
\begin{tikzpicture}[tdplot_main_coords, rotate=40]
\begin{scope}[scale = 1.0, tdplot_main_coords]
\draw[dotted] (1.5, 0, 2) -- (0, 0, 2);
\fill[black] (1.5, {0.8-0.085}, 0.6) .. controls (1, {0.8-0.085}, 0.6) and (1, {0.8-0.085}, 0.6) .. (0, {0.4-0.085}, {1.6+0.085}) -- (0, {0.4+0.085}, {1.6-0.085}) .. controls (1, {0.8+0.085}, 0.6) and (1, {0.8+0.085}, 0.6) .. (1.5, {0.8+0.085}, 0.6) -- cycle;
\fill[black] (0, {1.2+0.085}, {-0.8+0.085}) .. controls (0.05, {1.2+0.085}, 0) and (0.05, {1.2+0.085}, 0) .. (0, {1.2+0.085}, {0.8-0.085}) -- (0, {1.2-0.085}, {0.8+0.085}) .. controls (0.05, {1.2-0.085}, 0) and (0.05, {1.2-0.085}, 0) .. (0, {1.2-0.085}, {-0.8-0.085}) -- cycle;
\fill[black] (1.5, {0.8-0.085}, -0.6) .. controls (1, {0.8-0.085}, -0.6) and (1, {0.8-0.085}, -0.6) .. (0, {0.8-0.085}, {-1.2-0.085}) -- (0, {0.8+0.085}, {-1.2+0.085}) .. controls (1, {0.8+0.085}, -0.6) and (1, {0.8+0.085}, -0.6) .. (1.5, {0.8+0.085}, -0.6) -- cycle;
\fill[color=lightgray, opacity=0.6] (0, 0, 0) -- (0, 0, 2) -- (0, 2, 2) -- (0, 2, 0) -- cycle;
\fill[color=lightgray, opacity=0.6] (0, 0, -2) -- (0, 0, 0) -- (0, 2, 0) -- (0, 2, -2) -- cycle;
\draw[ultra thick, orange] (0, 0, 2) -- (0, 2, 0) -- (0, 0, -2);
\draw[thick, orange, ->] (0, 0, 2) -- (0, 1, 1);
\draw[thick, orange, ->] (0, 0, -2)-- (0, 1, -1);
\draw[dotted] (1.5, 0, 2) -- (1.5, 2, 2) -- (1.5, 2, -2) -- (1.5, 0, -2) -- cycle;
\draw[dotted] (0, 0, 2) -- (0, 2, 2) -- (0, 2, -2) -- (0, 0, -2) -- cycle;
\draw[very thick, darkgray] (0, 0, 0) -- (0, 2, 0);
\draw[dotted] (1.5, 2, 2) -- (0, 2, 2);
\draw[dotted] (1.5, 2, -2) -- (0, 2, -2);
\draw[dotted] (1.5, 0, -2) -- (0, 0, -2);
\filldraw[orange] (0, 2, 0) circle (0.07);
\node at (-0.35, 0.4, 1.6){$-\mu$};
\node at (-0.35, 0.8, -1.2){$-\nu$};
\node at (-0.3, 1.3, 0.7){$\mu$};
\node at (0.2, 1.4, -0.7){$\nu$};
\node[anchor=south] at (1, 1.8, 0){$Y$};
\node[anchor=south] at (0, 2.1, 0.5){$\Sigma_1$};
\node[anchor=south] at (0, 2.1, -1.5){$\Sigma_2$};
\end{scope}
\end{tikzpicture}
}}
\\
\;\;=\;\;
\sum_{\mu,\nu \in \{\pm\}}
(-A^2)^{\frac{\mu+\nu}{2}}
\qty(
\vcenter{\hbox{
\tdplotsetmaincoords{20}{40}
\begin{tikzpicture}[tdplot_main_coords, rotate=40]
\begin{scope}[scale = 1.0, tdplot_main_coords]
\fill[black] (0, {1.2+0.085}, {-0.8+0.085}) .. controls (0.05, {1.2+0.085}, 0) and (0.05, {1.2+0.085}, 0) .. (0, {1.2+0.085}, {0.8-0.085}) -- (0, {1.2-0.085}, {0.8+0.085}) .. controls (0.05, {1.2-0.085}, 0) and (0.05, {1.2-0.085}, 0) .. (0, {1.2-0.085}, {-0.8-0.085}) -- cycle;
\fill[color=lightgray, opacity=0.6] (0, 0, 0) -- (0, 0, 2) -- (0, 2, 2) -- (0, 2, 0) -- cycle;
\fill[color=lightgray, opacity=0.6] (0, 0, -2) -- (0, 0, 0) -- (0, 2, 0) -- (0, 2, -2) -- cycle;
\draw[ultra thick, orange] (0, 0, 2) -- (0, 2, 0) -- (0, 0, -2);
\draw[thick, orange, ->] (0, 0, 2) -- (0, 1, 1);
\draw[thick, orange, ->] (0, 0, -2)-- (0, 1, -1);
\draw[dotted] (0, 0, 2) -- (0, 2, 2) -- (0, 2, -2) -- (0, 0, -2) -- cycle;
\draw[very thick, darkgray] (0, 0, 0) -- (0, 2, 0);
\filldraw[orange] (0, 2, 0) circle (0.07);
\node at (-0.37, 1.3, 0.7){$\mu$};
\node at (0.2, 1.1, -0.7){$\nu$};
\end{scope}
\end{tikzpicture}
}}
)
\cdot
\vcenter{\hbox{
\tdplotsetmaincoords{20}{40}
\begin{tikzpicture}[tdplot_main_coords, rotate=40]
\begin{scope}[scale = 1.0, tdplot_main_coords]
\draw[dotted] (1.5, 0, 2) -- (0, 0, 2);
\fill[black] (1.5, {0.8-0.085}, 0.6) .. controls (1, {0.8-0.085}, 0.6) and (1, {0.8-0.085}, 0.6) .. (0, {0.8-0.085}, {1.2+0.085}) -- (0, {0.8+0.085}, {1.2-0.085}) .. controls (1, {0.8+0.085}, 0.6) and (1, {0.8+0.085}, 0.6) .. (1.5, {0.8+0.085}, 0.6) -- cycle;
\fill[black] (1.5, {0.8-0.085}, -0.6) .. controls (1, {0.8-0.085}, -0.6) and (1, {0.8-0.085}, -0.6) .. (0, {0.8-0.085}, {-1.2-0.085}) -- (0, {0.8+0.085}, {-1.2+0.085}) .. controls (1, {0.8+0.085}, -0.6) and (1, {0.8+0.085}, -0.6) .. (1.5, {0.8+0.085}, -0.6) -- cycle;
\fill[color=lightgray, opacity=0.6] (0, 0, 0) -- (0, 0, 2) -- (0, 2, 2) -- (0, 2, 0) -- cycle;
\fill[color=lightgray, opacity=0.6] (0, 0, -2) -- (0, 0, 0) -- (0, 2, 0) -- (0, 2, -2) -- cycle;
\draw[ultra thick, orange] (0, 0, 2) -- (0, 2, 0) -- (0, 0, -2);
\draw[thick, orange, ->] (0, 0, 2) -- (0, 1, 1);
\draw[thick, orange, ->] (0, 0, -2)-- (0, 1, -1);
\draw[dotted] (1.5, 0, 2) -- (1.5, 2, 2) -- (1.5, 2, -2) -- (1.5, 0, -2) -- cycle;
\draw[dotted] (0, 0, 2) -- (0, 2, 2) -- (0, 2, -2) -- (0, 0, -2) -- cycle;
\draw[very thick, darkgray] (0, 0, 0) -- (0, 2, 0);
\draw[dotted] (1.5, 2, 2) -- (0, 2, 2);
\draw[dotted] (1.5, 2, -2) -- (0, 2, -2);
\draw[dotted] (1.5, 0, -2) -- (0, 0, -2);
\filldraw[orange] (0, 2, 0) circle (0.07);
\node at (-0.35, 0.8, 1.2){$-\mu$};
\node at (-0.35, 0.8, -1.2){$-\nu$};
\node[anchor=south] at (1, 1.8, 0){$Y$};
\node[anchor=south] at (0, 2.1, 0.5){$\Sigma_1$};
\node[anchor=south] at (0, 2.1, -1.5){$\Sigma_2$};
\end{scope}
\end{tikzpicture}
}}
\end{gather*}
Comparing the two images, it follows immediately from relation \ref{item:PS+} that they represent the same element in the quotient $\Sk(Y', \Gamma')/M$. 
\end{proof}

\begin{rmk}\label{rmk:NewGradingAfterSplitting}
Here we make a quick remark on grading. 
Recall from Remark \ref{rmk:SkeinModuleIsGraded} that $\Sk(Y,\Gamma)$ is $\mathbb{Z}^{E(\Gamma)}$-graded. 
It is clear that the reduced tensor product $\Sk(Y_1, \Gamma_1) \;\overline{\otimes}\; \Sk(Y_2, \Gamma_2)$ is also $\mathbb{Z}^{E(\Gamma)}$-graded and that the splitting map $\sigma : \Sk(Y, \Gamma) \rightarrow \Sk(Y_1, \Gamma_1) \;\overline{\otimes}\; \Sk(Y_2, \Gamma_2)$ respects this grading. 

What is more, $\Sk(Y_1, \Gamma_1) \;\overline{\otimes}\; \Sk(Y_2, \Gamma_2)$ carries an extra $\mathbb{Z}^{E(\Gamma_{\Sigma})}$-grading, where $\Gamma_{\Sigma}$ denotes the marking on the cutting surface $\Sigma$.
To see this, first note that $\Sk(Y_1 \sqcup Y_2, \Gamma_1 \sqcup \Gamma_2) \cong \Sk(Y_1, \Gamma_1) \otimes \Sk(Y_2, \Gamma_2)$ carries a natural $\mathbb{Z}^{E(\Gamma_1 \cap \Sigma)} \times \mathbb{Z}^{E(\Gamma_2 \cap \Sigma)}$-grading, which induces a $\mathbb{Z}^{E(\Gamma_{\Sigma})}$-grading;
for each stated tangle in $(Y_1 \sqcup Y_2, \Gamma_1 \sqcup \Gamma_2)$, 
its grading with respect to an edge $e\in E(\Gamma_{\Sigma})$ is its grading with respect to a copy of $e$ in $\Gamma_1 \cap \Sigma$ minus its grading with respect to a copy of $e$ in $\Gamma_2 \cap \Sigma$. 
The relations \ref{item:S+} and \ref{item:S-} are homogeneous with respect to this $\mathbb{Z}^{E(\Gamma_{\Sigma})}$-grading, so this grading descends to the reduced tensor product. 
From the definition of the splitting map $\sigma$, it is clear that the image of $\sigma$ is contained in the $0^{E(\Gamma_{\Sigma})}$-graded piece with respect to this $\mathbb{Z}^{E(\Gamma_{\Sigma})}$-grading. 
\end{rmk}

While in Theorem \ref{thm:SplittingMap} we stated the splitting map just as an $R$-module homomorphism
\[
\sigma : \Sk(Y, \Gamma) \rightarrow \Sk(Y_1, \Gamma_1) \;\overline{\otimes}\; \Sk(Y_2, \Gamma_2),
\]
it is easy to see that it preserves the bimodule structure as well. 
Recall that $\Sk(Y, \Gamma)$ is a $\otimes_{v \in V(\Gamma)^{+}}\SkAlg(D_{\deg v})$-$\otimes_{w \in V(\Gamma)^{-}}\SkAlg(D_{\deg w})$-bimodule. 
To explain why $\sigma$ is a bimodule homomorphism, we need to first discuss the corresponding bimodule structure on the reduced tensor product. 

\begin{lem}\label{lem:BimodStructureOnRedTensProd}
There is a natural
$\otimes_{v \in V(\Gamma)^{+}}\SkAlg(D_{\deg v})$-$\otimes_{w \in V(\Gamma)^{-}}\SkAlg(D_{\deg w})$-bimodule structure on $\Sk(Y_1, \Gamma_1) \;\overline{\otimes}\; \Sk(Y_2, \Gamma_2)$. 
\end{lem}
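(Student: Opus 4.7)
The plan is to first put a bimodule structure on the ordinary tensor product $\Sk(Y_1, \Gamma_1) \otimes_R \Sk(Y_2, \Gamma_2)$, and then verify that this structure descends through the relations \ref{item:S+} and \ref{item:S-} defining the reduced tensor product.

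For each $v \in V(\Gamma)^{+}$, I would fix a small disk neighborhood $D_v \subset \partial Y$ of $v$ inside which $\Gamma$ restricts to the standard marking with $\deg_{\Gamma} v$ edges emanating from the center. Generically, $D_v$ meets $\Sigma$ transversely in a collection of disjoint arcs, splitting it into pieces $D_v^{(1)}$ and $D_v^{(2)}$ lying in $\partial Y_1$ and $\partial Y_2$ respectively. Pulling $D_v$ out into a cylinder as in the proof of Proposition \ref{prop:skein_alg_structure} and applying the 2d splitting map of \cite{CL,Le} along these arcs produces an algebra homomorphism
\[
\SkAlg(D_{\deg_\Gamma v}) \;\longrightarrow\; \SkAlg(D_{\deg_{\Gamma_1} v}) \otimes \SkAlg(D_{\deg_{\Gamma_2} v})^{(\mathrm{op})},
\]
where an opposite appears on whichever factor has its induced orientation reversed. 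Composing with the actions from Proposition \ref{prop:skein_alg_structure} on each factor defines a left action of $\SkAlg(D_{\deg_\Gamma v})$ on $\Sk(Y_1, \Gamma_1) \otimes \Sk(Y_2, \Gamma_2)$. An analogous construction gives right actions at $v \in V(\Gamma)^{-}$, and assembling these over all $v \in V(\Gamma)^{\pm}$ yields the desired bimodule structure on the ordinary tensor product.

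It remains to check that these actions descend through \ref{item:S+} and \ref{item:S-}. Each such relation is supported near an orange vertex $w$ lying on $\partial(\mathrm{int}\,\Sigma)$. For any $v \in V(\Gamma)$ lying off $\partial \Sigma$, one can shrink $D_v$ to be disjoint from a neighborhood of every such $w$, so the action at $v$ and the relation at $w$ commute already on the level of stated tangles in each factor, and hence the $v$-action descends to the quotient.

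The main obstacle is the remaining case, where $v \in V(\Gamma)$ lies on $\partial \Sigma$ and is joined to an orange vertex $w$ by an edge of $\Sigma$. Here the neighborhoods of $v$ and $w$ necessarily share part of a singular leaf of the combinatorial foliation and cannot be isotoped apart. The resolution is a height-ordering argument along the shared leaf: using invariance under the type \ref{item:typeIIIisotopy} height-exchange moves from the proof of Theorem \ref{thm:SplittingMap}, the insertion at $v$ can be pushed to lie strictly above (or below) the element defining the relation at $w$, and a finite local computation in the stated skein algebra of an elementary quadrilateral then shows that commuting the two past one another is accounted for precisely by the rescaling factor $(-A^2)^{\pm(\mu+\nu)/4}$ appearing in \ref{item:S+} and \ref{item:S-}. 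This is the essential computation of the proof, and its success is the reason for the specific choice of rescaling in Theorem \ref{thm:SplittingMap}.
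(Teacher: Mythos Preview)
Your plan breaks down at the central case $v \in V(\Gamma) \cap \partial\Sigma$. The intersection $D_v \cap \partial\Sigma$ is a single arc through $v$, so the 2d splitting map of \cite{CL,Le} yields an algebra homomorphism
\[
\SkAlg(D_{n_1+n_2}) \longrightarrow \SkAlg(D_{n_1+1}) \otimes \SkAlg(D_{n_2+1}),
\]
not one into $\SkAlg(D_{\deg_{\Gamma_1}v}) \otimes \SkAlg(D_{\deg_{\Gamma_2}v}) = \SkAlg(D_{n_1+m}) \otimes \SkAlg(D_{n_2+m})$, where $m = \deg_{\Gamma_\Sigma} v$. These agree only when $m=1$. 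When $m\ge 2$, the extra $m-1$ markings on each side come from the edges of $\Gamma_\Sigma$ meeting $v$; those edges lie in the interior of $\Sigma$, not in $\partial Y$, so they are invisible to any 2d splitting of $D_v \subset \partial Y$. (Also, no opposite algebra arises: $v$ is a sink in $\Gamma$, $\Gamma_1$, and $\Gamma_2$ simultaneously, so both factors carry left actions.)

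What the paper actually does is use the \emph{3d} splitting map of Theorem~\ref{thm:SplittingMap} applied to the local cone $CD_{n_1+n_2}$, landing in $\SkAlg(D_{n_1+m}) \,\overline{\otimes}\, \SkAlg(D_{n_2+m})$. This target is only an $R$-module quotient, not a priori an algebra, so the genuine work --- which your proposal does not contain --- is showing that for $a$ in the image of this local splitting map and $I$ the right ideal generated by the \ref{item:S+} relations at $v$, one has $aI \subset I$. This is established by an explicit commutation identity (Lemma~\ref{lem:StatedSkeinRelLemma}) expressing $a\cdot\alpha_{\mu,\nu}$ as $\sum \alpha_{\epsilon_1,\epsilon_2}\cdot b_{\epsilon_1,\epsilon_2}$. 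Your height-ordering sketch in the last paragraph also misidentifies the configuration: the relations \ref{item:S+}--\ref{item:S-} near $v$ are indexed by internal edges of $\Sigma$ whose adjacent sink/source \emph{is} $v$ itself (this is exactly what forces $m\ge 2$), not a distinct vertex $w$ joined to $v$. So there is no separation of supports to exploit, and the reordering computation must be done in $\SkAlg(D_{n_1+m})\otimes\SkAlg(D_{n_2+m})$ along the lines of the paper's argument.
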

\begin{proof}
There are two cases, depending on whether each vertex $v\in \Gamma$ is in $\partial \Sigma$ or not. 
\begin{enumerate}[label= (BM\arabic*)]
\item For each vertex $v\in \Gamma$ that is not in $\partial \Sigma$, 
it is obvious that the reduced tensor product $\Sk(Y_1, \Gamma_1) \;\overline{\otimes}\; \Sk(Y_2, \Gamma_2)$
has a $\SkAlg(D_{\deg v})$-module structure
induced from that of $\Sk(Y_1, \Gamma_1)$ (resp.\ $\Sk(Y_2, \Gamma_2)$) if $v\in Y_1$ (resp.\ if $v\in Y_2$). 
\item\label{item:vInPartialSigma} Let $v$ be a vertex in $\Gamma \cap \partial \Sigma$. 
Then we define the action of $\alpha \in \SkAlg(D_{\deg v})$ on $x\in \Sk(Y_1, \Gamma_1) \;\overline{\otimes}\; \Sk(Y_2, \Gamma_2)$ to be
\[
\sigma(\alpha)\cdot x 
\quad
\qty(\text{resp. }x \cdot \sigma(\alpha) 
\,),
\]
if $v$ is a sink (resp. source). 
\end{enumerate}
Let us elaborate on what we mean by $\sigma(\alpha)$ in \ref{item:vInPartialSigma}. 
Let $n = n_1 + n_2$, $n_1$, $n_2$, and $m$ be the degree of $v$ in $\Gamma$, $\Gamma \cap Y_1$, $\Gamma \cap Y_2$, and in the marking $\Gamma_\Sigma$ of $\Sigma$, respectively. 
Consider a cone over $D_{n}$, with the usual boundary marking consisting of one interval on each triangular side face, connecting the cone point with the midpoint of the corresponding edge of the base $D_{n}$. 
Let's call this cone $CD_{n}$. 
For example, the cone $CD_{5}$ would look like the following figure: 
\[
\vcenter{\hbox{
\tdplotsetmaincoords{55}{80}
\begin{tikzpicture}[tdplot_main_coords]
\begin{scope}[scale = 0.8, tdplot_main_coords]
    \coordinate (o) at (0, 0, 0);
    \coordinate (a) at (3, 0, 0);
    \coordinate (b) at ({3*cos(72)}, {3*sin(72)}, 0);
    \coordinate (c) at ({3*cos(2*72)}, {3*sin(2*72)}, 0);
    \coordinate (d) at ({3*cos(3*72)}, {3*sin(3*72)}, 0);
    \coordinate (e) at ({3*cos(4*72)}, {3*sin(4*72)}, 0);
    \coordinate (up) at (0, 0, {3*sqrt(3)/2});
    \coordinate (ab) at ({3*(cos(72)+1)/2}, {3*sin(72)/2}, 0);
    \coordinate (bc) at ({3*(cos(72)+cos(2*72))/2}, {3*(sin(72)+sin(2*72))/2}, 0);
    \coordinate (cd) at ({3*(cos(2*72)+cos(3*72))/2}, {3*(sin(2*72)+sin(3*72))/2}, 0);
    \coordinate (de) at ({3*(cos(3*72)+cos(4*72))/2}, {3*(sin(3*72)+sin(4*72))/2}, 0);
    \coordinate (ea) at ({3*(cos(4*72)+cos(5*72))/2}, {3*(sin(4*72)+sin(5*72))/2}, 0);
    \fill[lightgray, opacity=0.3] (up) -- (c) -- (d) -- cycle;
    \draw[very thick, dashed] (c) -- (d);
    \fill[lightgray, opacity=0.3] (up) -- (a) -- (b) -- cycle;
    \fill[lightgray, opacity=0.3] (up) -- (b) -- (c) -- cycle;
    \fill[lightgray, opacity=0.3] (up) -- (d) -- (e) -- cycle;
    \fill[lightgray, opacity=0.3] (up) -- (e) -- (a) -- cycle;
    \draw[orange, very thick] (up) -- (ab);
    \draw[orange, very thick] (up) -- (bc);
    \draw[orange, very thick, dashed] (up) -- (cd);
    \draw[orange, very thick] (up) -- (de);
    \draw[orange, very thick] (up) -- (ea);
    \draw[very thick] (a) -- (b);
    \draw[very thick] (b) -- (c);
    \draw[very thick] (d) -- (e);
    \draw[very thick] (e) -- (a);
    \draw[very thick] (up) -- (a);
    \draw[very thick] (up) -- (b);
    \draw[very thick] (up) -- (c);
    \draw[very thick] (up) -- (d);
    \draw[very thick] (up) -- (e);
    \filldraw (a) circle (0.05em);
    \filldraw (b) circle (0.05em);
    \filldraw (c) circle (0.05em);
    \filldraw (d) circle (0.05em);
    \filldraw (e) circle (0.05em);
    \filldraw (up) circle (0.05em);
\end{scope}
\end{tikzpicture}
}}
\;.
\]
Then, $\Sk(CD_n)$ is just $\SkAlg(D_n)$ as the left (resp.\ right) regular module over itself, if the cone point is a sink (resp.\ source). 

This cone can be thought of as a local model of $Y$ near $v$. 
The local model of the splitting map near $v$ is given by splitting $CD_{n_1+n_2}$ into $CD_{n_1+m}$ and $CD_{n_2+m}$. 
That is, we split the base $D_{n_1+n_2}$ into $D_{n_1+1}$ and $D_{n_2+1}$, add $m$ markings on the newly created edge, and then take the cone over it. 
The following figure illustrates this splitting ($n_1=3$, $n_2=2$, $m=2$ in this example). 
\[
\vcenter{\hbox{
\tdplotsetmaincoords{55}{80}
\begin{tikzpicture}[tdplot_main_coords]
\begin{scope}[scale = 0.8, tdplot_main_coords]
    \coordinate (o) at (0, 0, 0);
    \coordinate (a) at (3, 0, 0);
    \coordinate (b) at ({3*cos(72)}, {3*sin(72)}, 0);
    \coordinate (c) at ({3*cos(2*72)}, {3*sin(2*72)}, 0);
    \coordinate (d) at ({3*cos(3*72)}, {3*sin(3*72)}, 0);
    \coordinate (e) at ({3*cos(4*72)}, {3*sin(4*72)}, 0);
    \coordinate (up) at (0, 0, {3*sqrt(3)/2});
    \coordinate (ab) at ({3*(cos(72)+1)/2}, {3*sin(72)/2}, 0);
    \coordinate (bc) at ({3*(cos(72)+cos(2*72))/2}, {3*(sin(72)+sin(2*72))/2}, 0);
    \coordinate (cd) at ({3*(cos(2*72)+cos(3*72))/2}, {3*(sin(2*72)+sin(3*72))/2}, 0);
    \coordinate (de) at ({3*(cos(3*72)+cos(4*72))/2}, {3*(sin(3*72)+sin(4*72))/2}, 0);
    \coordinate (ea) at ({3*(cos(4*72)+cos(5*72))/2}, {3*(sin(4*72)+sin(5*72))/2}, 0);
    \fill[lightgray, opacity=0.3] (up) -- (c) -- (d) -- cycle;
    \draw[very thick, dashed] (c) -- (d);
    \draw[orange, very thick, dashed] (up) -- (cd);
    \fill[lightgray, opacity=0.3] (up) -- (a) -- (b) -- cycle;
    \fill[lightgray, opacity=0.3] (up) -- (b) -- (c) -- cycle;
    \fill[lightgray, opacity=0.3] (up) -- (d) -- (e) -- cycle;
    \fill[lightgray, opacity=0.3] (up) -- (e) -- (a) -- cycle;
    \draw[orange, very thick] (up) -- (ab);
    \draw[orange, very thick] (up) -- (bc);
    \draw[orange, very thick] (up) -- (de);
    \draw[orange, very thick] (up) -- (ea);
    \draw[very thick] (a) -- (b);
    \draw[very thick] (b) -- (c);
    \draw[very thick] (d) -- (e);
    \draw[very thick] (e) -- (a);
    \draw[very thick] (up) -- (a);
    \draw[very thick] (up) -- (b);
    \draw[very thick] (up) -- (c);
    \draw[very thick] (up) -- (d);
    \draw[very thick] (up) -- (e);
    \filldraw (a) circle (0.05em);
    \filldraw (b) circle (0.05em);
    \filldraw (c) circle (0.05em);
    \filldraw (d) circle (0.05em);
    \filldraw (e) circle (0.05em);
    \filldraw (up) circle (0.05em);
\end{scope}
\end{tikzpicture}
}}
\;\rightarrow\;
\vcenter{\hbox{
\tdplotsetmaincoords{55}{80}
\begin{tikzpicture}[tdplot_main_coords]
\begin{scope}[scale = 0.8, tdplot_main_coords]
    \coordinate (o) at (0, 0, 0);
    \coordinate (a) at (3, 0, 0);
    \coordinate (b) at ({3*cos(72)}, {3*sin(72)}, 0);
    \coordinate (c) at ({3*cos(2*72)}, {3*sin(2*72)}, 0);
    \coordinate (d) at ({3*cos(3*72)}, {3*sin(3*72)}, 0);
    \coordinate (e) at ({3*cos(4*72)}, {3*sin(4*72)}, 0);
    \coordinate (up) at (0, 0, {3*sqrt(3)/2});
    \coordinate (ab) at ({3*(cos(72)+1)/2}, {3*sin(72)/2}, 0);
    \coordinate (bc) at ({3*(cos(72)+cos(2*72))/2}, {3*(sin(72)+sin(2*72))/2}, 0);
    \coordinate (cd) at ({3*(cos(2*72)+cos(3*72))/2}, {3*(sin(2*72)+sin(3*72))/2}, 0);
    \coordinate (de) at ({3*(cos(3*72)+cos(4*72))/2}, {3*(sin(3*72)+sin(4*72))/2}, 0);
    \coordinate (ea) at ({3*(cos(4*72)+cos(5*72))/2}, {3*(sin(4*72)+sin(5*72))/2}, 0);
    \coordinate (aac) at ({2+cos(2*72)}, {sin(2*72)}, 0);
    \coordinate (acc) at ({1+2*cos(2*72)}, {2*sin(2*72)}, 0);
    \fill[lightgray, opacity=0.3] (up) -- (c) -- (d) -- cycle;
    \draw[very thick, dashed] (c) -- (d);
    \draw[orange, very thick, dashed] (up) -- (cd);
    \fill[lightgray, opacity=0.3] (up) -- (a) -- (c) -- cycle;
    \fill[lightgray, opacity=0.3] (up) -- (d) -- (e) -- cycle;
    \fill[lightgray, opacity=0.3] (up) -- (e) -- (a) -- cycle;
    \draw[orange, very thick] (up) -- (aac);
    \draw[orange, very thick] (up) -- (acc);
    \draw[orange, very thick] (up) -- (de);
    \draw[orange, very thick] (up) -- (ea);
    \draw[very thick] (a) -- (c);
    \draw[very thick] (d) -- (e);
    \draw[very thick] (e) -- (a);
    \draw[very thick] (up) -- (a);
    \draw[very thick] (up) -- (c);
    \draw[very thick] (up) -- (d);
    \draw[very thick] (up) -- (e);
    \filldraw (a) circle (0.05em);
    \filldraw (c) circle (0.05em);
    \filldraw (d) circle (0.05em);
    \filldraw (e) circle (0.05em);
    \filldraw (up) circle (0.05em);
\end{scope}
\end{tikzpicture}
}}
\vcenter{\hbox{
\tdplotsetmaincoords{55}{80}
\begin{tikzpicture}[tdplot_main_coords]
\begin{scope}[scale = 0.8, tdplot_main_coords]
    \coordinate (o) at (0, 0, 0);
    \coordinate (a) at (3, 0, 0);
    \coordinate (b) at ({3*cos(72)}, {3*sin(72)}, 0);
    \coordinate (c) at ({3*cos(2*72)}, {3*sin(2*72)}, 0);
    \coordinate (d) at ({3*cos(3*72)}, {3*sin(3*72)}, 0);
    \coordinate (e) at ({3*cos(4*72)}, {3*sin(4*72)}, 0);
    \coordinate (up) at (0, 0, {3*sqrt(3)/2});
    \coordinate (ab) at ({3*(cos(72)+1)/2}, {3*sin(72)/2}, 0);
    \coordinate (bc) at ({3*(cos(72)+cos(2*72))/2}, {3*(sin(72)+sin(2*72))/2}, 0);
    \coordinate (cd) at ({3*(cos(2*72)+cos(3*72))/2}, {3*(sin(2*72)+sin(3*72))/2}, 0);
    \coordinate (de) at ({3*(cos(3*72)+cos(4*72))/2}, {3*(sin(3*72)+sin(4*72))/2}, 0);
    \coordinate (ea) at ({3*(cos(4*72)+cos(5*72))/2}, {3*(sin(4*72)+sin(5*72))/2}, 0);
    \coordinate (aac) at ({2+cos(2*72)}, {sin(2*72)}, 0);
    \coordinate (acc) at ({1+2*cos(2*72)}, {2*sin(2*72)}, 0);
    \fill[lightgray, opacity=0.3] (up) -- (a) -- (c) -- cycle;
    \draw[orange, very thick, dashed] (up) -- (aac);
    \draw[orange, very thick, dashed] (up) -- (acc);
    \draw[very thick, dashed] (c) -- (a);
    \fill[lightgray, opacity=0.3] (up) -- (a) -- (b) -- cycle;
    \fill[lightgray, opacity=0.3] (up) -- (b) -- (c) -- cycle;
    \draw[orange, very thick] (up) -- (ab);
    \draw[orange, very thick] (up) -- (bc);
    \draw[very thick] (a) -- (b);
    \draw[very thick] (b) -- (c);
    \draw[very thick] (up) -- (a);
    \draw[very thick] (up) -- (b);
    \draw[very thick] (up) -- (c);
    \filldraw (a) circle (0.05em);
    \filldraw (b) circle (0.05em);
    \filldraw (c) circle (0.05em);
    \filldraw (up) circle (0.05em);
\end{scope}
\end{tikzpicture}
}}
\]
By Theorem \ref{thm:SplittingMap}, we have the associated splitting map
\[
\sigma : \Sk(CD_{n_1+n_2}) \rightarrow \Sk(CD_{n_1+m})\;\overline{\otimes}\;\Sk(CD_{n_2+m}).
\]
Using the isomorphism $\Sk(CD_n) \cong \SkAlg(D_n)$ (as regular modules over $\SkAlg(D_n)$), 
we can also write this splitting map as
\begin{equation}\label{eq:LocalSplittingMap}
\sigma : \SkAlg(D_{n_1+n_2}) \rightarrow \SkAlg(D_{n_1+m}) \;\overline{\otimes}\; \SkAlg(D_{n_2+m}),
\end{equation}
which is a priori just an $R$-module map. 

We claim that the image $\sigma(\SkAlg(D_{n_1+n_2}))$ of this map is an algebra and that $\sigma$ is an algebra homomorphism onto its image. 
Let $I$ be the right (resp.\ left) ideal of $\SkAlg(D_{n_1+m}) \;\otimes\; \SkAlg(D_{n_2+m})$ which we quotient out by to get the reduced tensor product when $v$ is a sink (resp.\ source); 
that is, $I$ is the right (resp.\ left) ideal generated by the relations \ref{item:S+} (resp.\ \ref{item:S-}) near $v$, in the case $v$ is a sink (resp.\ source).
Then, elements of $\SkAlg(D_{n_1+m}) \;\overline{\otimes}\; \SkAlg(D_{n_2+m})$ are cosets of the form $a+I$, where $a\in \SkAlg(D_{n_1+m}) \otimes \SkAlg(D_{n_2+m})$. 
To show that $\sigma(\SkAlg(D_{n_1+n_2}))$ has a natural algebra structure, we need to show that
$(a + I)(b + I) = ab + I$,
for any $a + I, b + I$ in $\sigma(\SkAlg(D_{n_1+n_2}))$,
or equivalently, $aI \subset I$ (resp.\ $Ib \subset I$) in the case $v$ is a sink (resp.\ source). 

Let's assume that $v$ is a sink; the proof for the case when $v$ is a source is analogous. 
Let's show that $aI \subset I$. 
The following lemma, which easily follows from the stated skein relations, will be useful: 
\begin{lem}\label{lem:StatedSkeinRelLemma}
We have the following skein relations:\footnote{All the diagrams drawn here are seen from \emph{outside} of the 3-manifold.} 
\begin{enumerate}
\item
\[
\vcenter{\hbox{

}}
)
\;.
\end{gather*}
It follows that, for any $a\in \SkAlg(D_{n_1+m}) \otimes \SkAlg(D_{n_2+m})$ in the image of the splitting map $\sigma$, 
we can repeatedly use the above two identities to reexpress $a\cdot \alpha_{\mu,\nu}$ as
\[
a\cdot 
\alpha_{\mu,\nu} = 
\sum_{\epsilon_1,\epsilon_2 \in \{\pm\}}
\alpha_{\epsilon_1,\epsilon_2}\cdot
b_{\epsilon_1,\epsilon_2}
\]
for some $b_{\epsilon_1,\epsilon_2} \in \SkAlg(D_{n_1+m}) \otimes \SkAlg(D_{n_2+m})$. 
It follows that $aI \subset I$ for any such $a$. 
It is immediate that the splitting map \eqref{eq:LocalSplittingMap}, when the codomain is restricted to the image, is an algebra homomorphism. 

Finally, there is a natural action of $\SkAlg(D_{n_1+m}) \;\overline{\otimes}\; \SkAlg(D_{n_2+m})$ on $\Sk(Y_1, \Gamma_1) \;\overline{\otimes}\; \Sk(Y_2, \Gamma_2)$, 
and it is this action we were using in \ref{item:vInPartialSigma}. 
\end{proof}

\begin{rmk}
In the special case $m=1$, there are no relations imposed on the reduced tensor product, so 
\[
\SkAlg(D_{n_1+1}) \;\overline{\otimes}\; \SkAlg(D_{n_2+1}) = \SkAlg(D_{n_1+1}) \otimes \SkAlg(D_{n_2+1}),
\]
and the splitting map \eqref{eq:LocalSplittingMap} is just the usual 2d splitting map \cite{CL}. 
\end{rmk}

\begin{thm}\label{thm:SplittingMapIsABimodHom}
The splitting map $\sigma$ in Theorem \ref{thm:SplittingMap} is not just an $R$-module homomorphism, but a $\otimes_{v \in V(\Gamma)^{+}}\SkAlg(D_{\deg v})$-$\otimes_{w \in V(\Gamma)^{-}}\SkAlg(D_{\deg w})$-bimodule homomorphism. 
\end{thm}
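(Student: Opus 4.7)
The plan is to verify the bimodule compatibility vertex-by-vertex, treating separately the two cases of vertices appearing in the definition of the bimodule structure on the reduced tensor product given in Lemma \ref{lem:BimodStructureOnRedTensProd}: vertices of $\Gamma$ not on $\partial\Sigma$, and vertices lying on $\partial\Sigma$. By symmetry (Remark \ref{rmk:OrientationReversingSymmetry}), it suffices to treat left actions at sinks; right actions at sources are analogous.

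First I would dispose of a vertex $v \in V(\Gamma)^+$ not lying on $\partial\Sigma$. Here the $\SkAlg(D_{\deg v})$-action is realized by stacking a tangle from $\SkAlg(D_{\deg v})$ in a small collar neighborhood of $v$ inside one of the $Y_i$ (say $Y_1$), and the action on $\Sk(Y_1,\Gamma_1)\,\overline{\otimes}\,\Sk(Y_2,\Gamma_2)$ is defined to act in the first factor. After a small isotopy we may assume the stacking neighborhood is disjoint from $\Sigma$; thus for any stated tangle $L$ in general position with respect to $\Sigma$, the cut pieces $L_i^{\vec{\epsilon}}$ of $\alpha\cdot L$ satisfy $(\alpha\cdot L)_1^{\vec{\epsilon}}=\alpha\cdot L_1^{\vec{\epsilon}}$ and $(\alpha\cdot L)_2^{\vec{\epsilon}}=L_2^{\vec{\epsilon}}$. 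Summing over compatible states $\vec{\epsilon}$ yields $\sigma(\alpha\cdot[L])=\alpha\cdot\sigma([L])$ directly.

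The substantive case is a vertex $v\in V(\Gamma)^+\cap\partial\Sigma$. Here the bimodule structure on the reduced tensor product is defined (see \ref{item:vInPartialSigma}) by using the local splitting map \eqref{eq:LocalSplittingMap} on the cone $CD_{n_1+n_2}$ near $v$: an element $\alpha\in\SkAlg(D_{\deg v})$ acts by its image $\sigma(\alpha)\in\SkAlg(D_{n_1+m})\,\overline{\otimes}\,\SkAlg(D_{n_2+m})$, which Lemma \ref{lem:BimodStructureOnRedTensProd} verifies is well defined. The key observation is that the global splitting map $\sigma$ is compatible with this local model: if we take the stacking neighborhood of $v$ used to realize the action of $\alpha$ to be precisely the cone $CD_{n_1+n_2}$ sitting inside $Y$, then the portion of the splitting surface $\Sigma$ contained in this neighborhood is exactly the interior edge of $CD_{n_1+n_2}$ along which we perform the local splitting. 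Thus for a stated tangle $L$ in general position, after isotoping so that the action happens inside this cone, one can factor the computation of $\sigma(\alpha\cdot[L])$ as: first apply the local splitting to $\alpha$ to obtain a sum of products of tangles in $CD_{n_1+m}$ and $CD_{n_2+m}$, then stack these onto $L_1^{\vec{\epsilon}}$ and $L_2^{\vec{\epsilon}}$ respectively, and finally sum over compatible states. This is tautologically the action of $\sigma(\alpha)$ on $\sigma([L])$, yielding $\sigma(\alpha\cdot[L])=\sigma(\alpha)\cdot\sigma([L])$.

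The main obstacle is verifying that this factorization is legitimate modulo the reduced tensor product relations \ref{item:S+} and \ref{item:S-}, because the sum over compatible states at the $m$ intersection points on the cone's interior edge must match up between the local splitting of $\alpha$ and the global splitting of $L$. This is precisely where the well-definedness argument in the proof of Lemma \ref{lem:BimodStructureOnRedTensProd} — which showed that the ideal $I$ defining the reduced tensor product is preserved by left multiplication by elements of $\sigma(\SkAlg(D_{n_1+n_2}))$ — is used: the same argument, applied now to general $L_1^{\vec{\epsilon}}\otimes L_2^{\vec{\epsilon}}$ rather than only to elements in the image of the local $\sigma$, shows that the compositions agree in the quotient. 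The remaining bookkeeping (that isotopies of $L$ used to place the action inside the cone do not affect $\sigma([L])$) is immediate from Theorem \ref{thm:SplittingMap}. Finally, since the bimodule action at distinct vertices is built from disjoint local data, compatibility at each vertex assembles into compatibility with the full bimodule structure.
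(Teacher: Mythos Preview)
Your proof is correct and takes the same approach as the paper, which simply says the result is immediate from the way the bimodule structure on the reduced tensor product was defined in Lemma~\ref{lem:BimodStructureOnRedTensProd}; you have just unpacked what ``immediate'' means. The one place you over-complicate things is the ``main obstacle'' paragraph: the factorization $\widetilde\sigma(\alpha\cdot L)=\widetilde\sigma(\alpha)\cdot\widetilde\sigma(L)$ already holds in the \emph{unreduced} tensor product (the state sums over $\alpha$-crossings and $L$-crossings are independent), so once Lemma~\ref{lem:BimodStructureOnRedTensProd} has established that left multiplication by $\widetilde\sigma(\alpha)$ descends to the quotient, passing to the reduced tensor product requires no further verification.
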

\begin{proof}
This is immediate from the way we defined the bimodule structure on the reduced tensor product in Lemma \ref{lem:BimodStructureOnRedTensProd}. 
\end{proof}

By iteratively cutting a 3-manifold into pieces along combinatorially foliated surfaces and using Theorems \ref{thm:SplittingMap} and \ref{thm:SplittingMapIsABimodHom}, we get the following corollaries: 

\begin{cor}\label{cor:SplittingIntoTetrahedra} 
Let $Y = \cup_{T \in \mathcal{T}} T$ be an ideal triangulation of a 3-manifold $Y$. 
Then, there is a well-defined splitting map
\begin{align*}
\Sk(Y) &\rightarrow \overline{\bigotimes}_{T \in \mathcal{T}}\Sk(T)\\
[L] &\mapsto \qty[\sum_{\vec{\epsilon} \;\in\; \{\substack{\mathrm{compatible}\\ \mathrm{states}}\}}\otimes_{T \in \mathcal{T}} [L_{T}^{\vec{\epsilon}}]],
\end{align*}
where $\overline{\bigotimes}_{T \in \mathcal{T}}\Sk(T)$ denotes the quotient of the usual tensor product $\bigotimes_{T \in \mathcal{T}}\Sk(T)$ (as $R$-modules) by the following relations:
\begin{enumerate}
    \item\label{item:FaceRelation} For each internal face $f \in \mathcal{F}$ 
    \[
    \vcenter{\hbox{
    \includestandalone[scale=0.7]{figures/cutting_around_a_faceLHS}
    }}
    \;,
    \]
    we have the following relations among left actions of $\bigotimes_{\substack{T\in \mathcal{T} \\ f\in \mathbf{f}(T)}}\SkAlg(D_3)$ on $\bigotimes_{T \in \mathcal{T}}\Sk(T)$:
    \[
    (-A^2)^{\frac{\mu+\nu}{4}}\;
    \vcenter{\hbox{
    \includestandalone[scale=0.7]{figures/face_splitting_relationLHS}
    }}
    \;\;=\;\;
    (-A^2)^{-\frac{\mu+\nu}{4}}\;
    \vcenter{\hbox{
    \includestandalone[scale=0.7]{figures/face_splitting_relationRHS}
    }}
    \;.
    \]
    \item\label{item:EdgeRelation} For each internal edge $e \in \mathcal{E}$ 
    \[
    \vcenter{\hbox{
    \includestandalone[scale=0.8]{figures/cutting_around_an_edgeLHS}
    }}
    \;,
    \]
    we have the following relations among right actions of $\bigotimes_{\substack{T\in \mathcal{T} \\ e\in \mathbf{e}(T)}}\SkAlg(D_2)$ on $\bigotimes_{T \in \mathcal{T}}\Sk(T)$: 
    \begin{gather*}
    (-A^2)^{-\frac{\mu + \nu}{4}}
    \vcenter{\hbox{
    \includestandalone[scale=0.8]{figures/edge_splitting_relationLHS}
    }}
    \;\; = \;\;
    (-A^2)^{\frac{\mu + \nu}{4}}
    \sum_{\epsilon_1, \cdots, \epsilon_{m-2} \in \{\pm\}}
    \vcenter{\hbox{
    \includestandalone[scale=0.8]{figures/edge_splitting_relationRHS}
    }}
    \;, 
    \end{gather*}
    where $m$ is the (local) number of tetrahedra around the edge $e$. 
\end{enumerate}
\end{cor}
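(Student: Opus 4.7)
The plan is to iteratively apply Theorem \ref{thm:SplittingMap}, cutting $Y$ along the internal faces of $\mathcal{T}$ one at a time until $Y$ is decomposed into the disjoint union $\bigsqcup_{T\in\mathcal{T}} T$ of its constituent tetrahedra. Each internal face $f\in\mathcal{F}$ will carry the ``Y-shaped'' combinatorial foliation (the third example in Definition \ref{defn:CombinatorialFoliation}, with a central sink vertex at the barycenter and three arms running to the midpoints of the edges of $f$); this foliation is compatible with the standard tetrahedral boundary markings used to define $\Sk(T)$. Composing the splittings across all internal faces yields the desired $R$-module homomorphism $\Sk(Y) \to \overline{\bigotimes}_{T \in \mathcal{T}} \Sk(T)$. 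One can equivalently set up the argument via Theorem \ref{thm:partialSplittingHomomorphism} applied in one shot with $\Sigma = \bigcup_{f \in \mathcal{F}} f$.

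The face relations \ref{item:FaceRelation} follow directly from the splitting relations \ref{item:S+} (or \ref{item:S-}, depending on whether the central vertex of the Y-marking is a sink or a source) applied to each internal face $f$. Independence from the order of face cuts, and hence well-definedness of the composite, is guaranteed by Theorem \ref{thm:SplittingMapIsABimodHom}: two non-adjacent face splittings commute because they act on disjoint vertices of the emerging boundary marking.

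The edge relations \ref{item:EdgeRelation} require more care. An internal edge $e \in \mathcal{E}$ bounded by $m$ tetrahedra, cyclically labeled $T_1, \dots, T_m$, is shared among the $m$ internal faces $f_1, \dots, f_m$ with $f_i = T_i \cap T_{i+1}$ (indices mod $m$). In the partial splitting of Theorem \ref{thm:partialSplittingHomomorphism}, $e$ becomes an edge of $\Sigma_1 \cap \Sigma_2$ and triggers the relations \ref{item:PS+} and \ref{item:PS-}. I would then chain the $m$ face splittings around $e$, summing over compatible intermediate states at the $m-2$ interior midpoints on $e$. This produces the sum over $\epsilon_1, \ldots, \epsilon_{m-2}$ in \ref{item:EdgeRelation}, with the overall $(-A^2)^{\pm(\mu+\nu)/4}$ factors arising from the cumulative $(-A^2)^{\pm 1/4}$ contributions of the individual face cuts.

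The main obstacle will be the bookkeeping of signs and $(-A^2)^{\pm 1/4}$ factors across the $m$ face relations around each edge. The delicate point is verifying that intermediate states at the midpoints of the face-edges \emph{not} lying on $e$ contract trivially (using the $\mathbb{Z}^{E(\Gamma_{\Sigma})}$-grading of Remark \ref{rmk:NewGradingAfterSplitting} together with the stated skein relations), while only the $m-2$ intermediate midpoints on $e$ itself survive as the summation variables $\epsilon_1, \ldots, \epsilon_{m-2}$. This is a finite combinatorial check, but one whose correctness hinges on the precise normalization conventions built into the definitions of \ref{item:S+}, \ref{item:S-}, \ref{item:PS+}, and \ref{item:PS-}.
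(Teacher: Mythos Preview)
Your high-level strategy is correct and matches the paper's: iterate the splitting theorem until $Y$ is decomposed into tetrahedra, and use Theorem \ref{thm:SplittingMapIsABimodHom} to track how the bimodule actions at the emerging boundary vertices behave under successive cuts. The paper organizes the iteration tetrahedron-by-tetrahedron rather than face-by-face, but either scheme works (indeed the paper remarks after Corollary \ref{cor:fs_splitting_unreduced} that the face-by-face version via Theorem \ref{thm:partialSplittingHomomorphism} is a good exercise and recovers the symmetric form of the edge relation in Remark \ref{rmk:SplittingRelationSymmetricalForm}).

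Where your proposal goes off the rails is in the account of the edge relation \ref{item:EdgeRelation}. There are not ``$m-2$ interior midpoints on $e$'': the edge $e$ has a \emph{single} midpoint, which becomes a degree-$2$ source vertex in each tetrahedron's boundary marking. Nor is there anything to ``contract trivially'' at midpoints of the other face-edges; the face relations \ref{item:FaceRelation} live at the sink vertices (face barycenters) and the edge relations live at the source vertices (edge midpoints), and these are simply separate families of relations. In the paper's tetrahedron-by-tetrahedron scheme the $\epsilon_1,\dots,\epsilon_{m-2}$ arise as follows: the first time a tetrahedron containing $e$ is split off, the half-edges of $e$ become internal edges of the cutting surface and \ref{item:S-} gives a single $\SkAlg(D_2)\otimes\SkAlg(D_2)$ relation at the midpoint; thereafter, each subsequent cut refines the right $\SkAlg(D_2)$-action on the remaining piece via the local $2$d splitting homomorphism $\SkAlg(D_2)\to\SkAlg(D_2)^{\otimes(m-1)}$ (Lemma \ref{lem:BimodStructureOnRedTensProd} and Theorem \ref{thm:SplittingMapIsABimodHom}), and it is this iterated bigon splitting that introduces the state sum over $\epsilon_1,\dots,\epsilon_{m-2}$. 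Your invocation of \ref{item:PS-} is closer to the alternative one-shot argument, but note that this cannot literally be done with $\Sigma=\bigcup_{f\in\mathcal{F}} f$ at once, since that union is a $2$-complex rather than a surface; one must still iterate, and the states $\epsilon_i$ then appear as the compatible states along the successive copies of the half-edges of $e$, not as states at distinct midpoints.
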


\begin{rmk}\label{rmk:SplittingRelationSymmetricalForm}
The above relations around each internal edge are equivalent to the following relation that is written in a more symmetric form: 
\begin{gather*}
    \sum_{\epsilon_1, \cdots, \epsilon_{m-1} \in \{\pm\}}
    \vcenter{\hbox{
    \begin{tikzpicture}[scale=0.8]
    \newcommand*{\defcoords}{
    \coordinate (o) at (0, 0);
    \coordinate (a) at ({sqrt(3)}, 1);
    \coordinate (oa) at ({sqrt(3)/3}, 1/3);
    \coordinate (ab) at ({2*sqrt(3)/3}, 4/3);
    \coordinate (ba) at ({sqrt(3)/3}, 5/3);
    \coordinate (b) at (0, 2);
    \coordinate (ob) at (0, 2/3);
    \coordinate (bc) at (-{sqrt(3)/3}, 5/3);
    \coordinate (cb) at ({-2*sqrt(3)/3}, 4/3);
    \coordinate (c) at (-{sqrt(3)}, 1);
    \coordinate (oc) at ({-sqrt(3)/3}, 1/3);
    \coordinate (cd) at (-{sqrt(3)}, 1/3);
    \coordinate (dc) at (-{sqrt(3)}, -1/3);
    \coordinate (d) at (-{sqrt(3)}, -1);
    \coordinate (od) at (-{sqrt(3)/3}, -1/3);
    \coordinate (de) at ({-2*sqrt(3)/3}, -4/3);
    \coordinate (ed) at (-{sqrt(3)/3}, -5/3);
    \coordinate (e) at (0, -2);
    \coordinate (oe) at (0, -2/3);
    \coordinate (ef) at ({sqrt(3)/3}, -5/3);
    \coordinate (fe) at ({2*sqrt(3)/3}, -4/3);
    \coordinate (f) at ({sqrt(3)}, -1);
    \coordinate (of) at ({sqrt(3)/3}, -1/3);
    \coordinate (fa) at ({sqrt(3)}, -1/3);
    \coordinate (af) at ({sqrt(3)}, 1/3);
    }
    \begin{scope}[scale=1.0]
    \begin{scope}[shift={({1/2*1/2}, {1/2*sqrt(3)/2})}, very thick, decoration={
        markings, 
        mark=at position 0.5 with {\arrow{>}}}
        ] 
        \defcoords
        \draw[line width=5] (oa) arc (30:90: 2/3);
        \draw[postaction={decorate}, orange] (o) -- (a);
        \draw[postaction={decorate}, orange] (o) -- (b);
        \filldraw[orange, thick] (o) circle (0.01);
        \node[anchor = south] at (ob) {$\epsilon_1$};
        \node[anchor = south west] at (oa) {$\epsilon_2$};
    \end{scope}
    \begin{scope}[shift={({-1/2*1/2}, {1/2*sqrt(3)/2})}, very thick, decoration={
        markings,
        mark=at position 0.5 with {\arrow{>}}}
        ] 
        \defcoords
        \draw[line width=5] (ob) arc (90:150: 2/3);
        \draw[postaction={decorate}, orange] (o) -- (b);
        \draw[postaction={decorate}, orange] (o) -- (c);
        \filldraw[orange, thick] (o) circle (0.01);
        \node[anchor = south east] at (oc) {$\mu$};
        \node[anchor = south] at (ob) {$\epsilon_1$};
    \end{scope}
    \begin{scope}[shift={({-1/2}, 0)}, very thick, decoration={
        markings,
        mark=at position 0.5 with {\arrow{>}}}
        ] 
        \defcoords
        \draw[line width=5] (oc) arc (150:210: 2/3);
        \draw[postaction={decorate}, orange] (o) -- (c);
        \draw[postaction={decorate}, orange] (o) -- (d);
        \filldraw[orange, thick] (o) circle (0.01);
        \node[anchor = south east] at (oc) {$\nu$};
        \node[anchor = north east] at (od) {$\epsilon_{m-1}$};
    \end{scope}
    \begin{scope}[shift={({-1/2*1/2}, {-1/2*sqrt(3)/2})}, very thick, decoration={
        markings,
        mark=at position 0.5 with {\arrow{>}}}
        ] 
        \defcoords
        \draw[line width=5] (od) arc (210:270: 2/3);
        \draw[postaction={decorate}, orange] (o) -- (d);
        \draw[postaction={decorate}, orange] (o) -- (e);
        \filldraw[orange, thick] (o) circle (0.01);
        \node[anchor = north east] at (od) {$\epsilon_{m-1}$};
    \end{scope}
    \begin{scope}[shift={({1/2*1/2}, {-1/2*sqrt(3)/2})}, very thick, decoration={
        markings,
        mark=at position 0.5 with {\arrow{>}}}
        ] 
        \defcoords
        \draw[line width=5] (oe) arc (270:330: 2/3);
        \draw[postaction={decorate}, orange] (o) -- (e);
        \draw[postaction={decorate}, orange] (o) -- (f);
        \filldraw[orange, thick] (o) circle (0.01);
        \node[anchor = north] at (0, -1) {$\cdots$};
    \end{scope}
    \begin{scope}[shift={({1/2}, 0)}, very thick, decoration={
        markings,
        mark=at position 0.5 with {\arrow{>}}}
        ] 
        \defcoords
        \draw[line width=5] (of) arc (-30:30: 2/3);
        \draw[postaction={decorate}, orange] (o) -- (f);
        \draw[postaction={decorate}, orange] (o) -- (a);
        \filldraw[orange, thick] (o) circle (0.01);
        \node[anchor = south west] at (oa) {$\epsilon_2$};
        \node[anchor = west] at (1, 0.1) {$\vdots$};
    \end{scope}
    \end{scope}
    \end{tikzpicture}
    }}
    \;\;=\;\;
    \delta_{\mu,\nu}\,(-A^2)^{\mu}
    \;.
\end{gather*}
\end{rmk}

\begin{proof}[Proof of Corollary \ref{cor:SplittingIntoTetrahedra}]
The boundary of each ideal tetrahedron $T \in \mathcal{T}$ admits the following canonical combinatorial foliation:
\[
\vcenter{\hbox{
\includestandalone[scale=1.0]{figures/tetrahedron_combinatorial_foliation}
}}
\;.
\]
That is, each face $f\in \mathbf{f}(T)$ is divided into three elementary quadrilaterals, 
and we orient the associated boundary marking $\Gamma$ so that 
the vertex of $\Gamma$ at the center of each face $f\in \mathbf{f}(T)$ is a sink, and
the vertex of $\Gamma$ in the middle of each bare edge $e\in \mathbf{e}(T)$ is a source. 

We can split $Y$ into ideal tetrahedra by splitting one tetrahedron at a time: 
\[
Y = Y_0
\;\;\rightarrow\;\; 
T_1 \sqcup Y_1 
\;\;\rightarrow\;\; 
T_1 \sqcup T_2 \sqcup Y_2 
\;\;\rightarrow 
\cdots 
\rightarrow\;\; 
T_1 \sqcup T_2 \sqcup \cdots \sqcup T_n.
\]
Here, $\{T_1, \cdots, T_n\} = \mathcal{T}$ is the set of tetrahedra in the ideal triangulation of $Y$, and $Y_i$ denotes $T_{i+1} \cup \cdots \cup T_n$. 

Let's assume for simplicity that, around each edge $e\in \mathcal{E}$, each tetrahedron $T \in \mathcal{T}$ appears at most once.\footnote{It is not hard to show that this assumption is not necessary. 
We leave this as an exercise to the reader. } 
Every time we split a tetrahedron $T = T_{i+1}$ from $Y_{i}$, 
Theorems \ref{thm:SplittingMap} and \ref{thm:SplittingMapIsABimodHom} tell us exactly what relations we get: 
\begin{enumerate}
\item On each face of $T$ we are cutting along, there are three internal edges of the combinatorial foliation, and the relations we get from Theorem \ref{thm:SplittingMap} are the relations (\ref{item:FaceRelation}). 
\item \label{item:tetSplittingEdgeRelProof}For each bare edge $e$ of $T$, there are two possibilities:
    \begin{enumerate}
    \item If $e$ is an internal edge of $Y_{i}$, the two halves of $e$ are internal edges of the combinatorial foliation of $\partial T$, and from Theorem \ref{thm:SplittingMap}, they both give the following relation among right actions of $\SkAlg(D_2) \otimes \SkAlg(D_2)$ on $\Sk(T) \otimes \Sk(Y_{i+1})$: 
    \[
    (-A^2)^{-\frac{\mu + \nu}{4}}
    \vcenter{\hbox{
    \begin{tikzpicture}[scale=0.8]
    \newcommand*{\defcoords}{
    \coordinate (o) at (0, 0);
    \coordinate (a) at ({sqrt(3)}, 1);
    \coordinate (oa) at ({sqrt(3)/3}, 1/3);
    \coordinate (ab) at ({2*sqrt(3)/3}, 4/3);
    \coordinate (ba) at ({sqrt(3)/3}, 5/3);
    \coordinate (b) at (0, 2);
    \coordinate (ob) at (0, 2/3);
    \coordinate (bc) at (-{sqrt(3)/3}, 5/3);
    \coordinate (cb) at ({-2*sqrt(3)/3}, 4/3);
    \coordinate (c) at (-{sqrt(3)}, 1);
    \coordinate (oc) at ({-sqrt(3)/3}, 1/3);
    \coordinate (cd) at (-{sqrt(3)}, 1/3);
    \coordinate (dc) at (-{sqrt(3)}, -1/3);
    \coordinate (d) at (-{sqrt(3)}, -1);
    \coordinate (od) at (-{sqrt(3)/3}, -1/3);
    \coordinate (de) at ({-2*sqrt(3)/3}, -4/3);
    \coordinate (ed) at (-{sqrt(3)/3}, -5/3);
    \coordinate (e) at (0, -2);
    \coordinate (oe) at (0, -2/3);
    \coordinate (ef) at ({sqrt(3)/3}, -5/3);
    \coordinate (fe) at ({2*sqrt(3)/3}, -4/3);
    \coordinate (f) at ({sqrt(3)}, -1);
    \coordinate (of) at ({sqrt(3)/3}, -1/3);
    \coordinate (fa) at ({sqrt(3)}, -1/3);
    \coordinate (af) at ({sqrt(3)}, 1/3);
    }
    \begin{scope}[scale=1.0]
    \begin{scope}[shift={({-1/2}, 0)}, very thick, decoration={
        markings,
        mark=at position 0.5 with {\arrow{>}}}
        ] 
        \defcoords
        \draw[line width=5] (oc) arc (150:210: 2/3);
        \draw[postaction={decorate}, orange] (o) -- (c);
        \draw[postaction={decorate}, orange] (o) -- (d);
        \filldraw[orange, thick] (o) circle (0.01);
        \node[anchor = south east] at (oc) {$\mu$};
        \node[anchor = north east] at (od) {$\nu$};
    \end{scope}
    \begin{scope}[shift={(1/2, 0)}, very thick, decoration={
        markings,
        mark=at position 0.5 with {\arrow{>}}}
        ] 
        \defcoords
        \draw[postaction={decorate}, orange] (o) -- (c);
        \draw[postaction={decorate}, orange] (o) -- (d);
        \filldraw[orange, thick] (o) circle (0.01);
    \end{scope}
    \end{scope}
    \end{tikzpicture}
    }}
    \;\; = \;\;
    (-A^2)^{\frac{\mu + \nu}{4}}
    \vcenter{\hbox{
    \begin{tikzpicture}[scale=0.8]
    \newcommand*{\defcoords}{
    \coordinate (o) at (0, 0);
    \coordinate (a) at ({sqrt(3)}, 1);
    \coordinate (oa) at ({sqrt(3)/3}, 1/3);
    \coordinate (ab) at ({2*sqrt(3)/3}, 4/3);
    \coordinate (ba) at ({sqrt(3)/3}, 5/3);
    \coordinate (b) at (0, 2);
    \coordinate (ob) at (0, 2/3);
    \coordinate (bc) at (-{sqrt(3)/3}, 5/3);
    \coordinate (cb) at ({-2*sqrt(3)/3}, 4/3);
    \coordinate (c) at (-{sqrt(3)}, 1);
    \coordinate (oc) at ({-sqrt(3)/3}, 1/3);
    \coordinate (cd) at (-{sqrt(3)}, 1/3);
    \coordinate (dc) at (-{sqrt(3)}, -1/3);
    \coordinate (d) at (-{sqrt(3)}, -1);
    \coordinate (od) at (-{sqrt(3)/3}, -1/3);
    \coordinate (de) at ({-2*sqrt(3)/3}, -4/3);
    \coordinate (ed) at (-{sqrt(3)/3}, -5/3);
    \coordinate (e) at (0, -2);
    \coordinate (oe) at (0, -2/3);
    \coordinate (ef) at ({sqrt(3)/3}, -5/3);
    \coordinate (fe) at ({2*sqrt(3)/3}, -4/3);
    \coordinate (f) at ({sqrt(3)}, -1);
    \coordinate (of) at ({sqrt(3)/3}, -1/3);
    \coordinate (fa) at ({sqrt(3)}, -1/3);
    \coordinate (af) at ({sqrt(3)}, 1/3);
    }
    \begin{scope}[scale=1.0]
    \begin{scope}[shift={({-1/2}, 0)}, very thick, decoration={
        markings,
        mark=at position 0.5 with {\arrow{>}}}
        ] 
        \defcoords
        \draw[postaction={decorate}, orange] (o) -- (c);
        \draw[postaction={decorate}, orange] (o) -- (d);
        \filldraw[orange, thick] (o) circle (0.01);
    \end{scope}
    \begin{scope}[shift={(1/2, 0)}, very thick, decoration={
        markings,
        mark=at position 0.5 with {\arrow{>}}}
        ] 
        \defcoords
        \draw[line width=5] (oc) arc (150:-150: 2/3);
        \draw[postaction={decorate}, orange] (o) -- (c);
        \draw[postaction={decorate}, orange] (o) -- (d);
        \filldraw[orange, thick] (o) circle (0.01);
        \node[anchor = south east] at (oc) {$-\mu$};
        \node[anchor = north east] at (od) {$-\nu$};
    \end{scope}
    \end{scope}
    \end{tikzpicture}
    }},
    \]
    where the smaller angle represents $T$ and the bigger angle represents $Y_{i+1}$. 
    \item If $e$ is already on the boundary of $Y_{i}$, then there must be some $j < i$ such that $e$ is an internal edge of $Y_{j}$ but not of $Y_{j+1}$. 
    Theorem \ref{thm:SplittingMapIsABimodHom} identifies the right $\SkAlg(D_2)$-module structure on $\Sk(Y_i)$ associated to the midpoint of the edge $e$ with that on $\Sk(T) \overline{\otimes} \Sk(Y_{i+1})$. 
    After we have cut everything into tetrahedra, the right $\SkAlg(D_2)$-module structure on
    $\Sk(T_{j+1}) \overline{\otimes} \cdots \overline{\otimes} \Sk(T_{n})$ is induced from the following splitting homomorphism $\SkAlg(D_2) \rightarrow \SkAlg(D_2)^{\otimes (m-1)}$, where, $m$ is the number of tetrahedra (in $Y$) around the edge $e$: 
    \[
    \vcenter{\hbox{
    \begin{tikzpicture}[scale=0.8]
    \newcommand*{\defcoords}{
    \coordinate (o) at (0, 0);
    \coordinate (a) at ({sqrt(3)}, 1);
    \coordinate (oa) at ({sqrt(3)/3}, 1/3);
    \coordinate (ab) at ({2*sqrt(3)/3}, 4/3);
    \coordinate (ba) at ({sqrt(3)/3}, 5/3);
    \coordinate (b) at (0, 2);
    \coordinate (ob) at (0, 2/3);
    \coordinate (bc) at (-{sqrt(3)/3}, 5/3);
    \coordinate (cb) at ({-2*sqrt(3)/3}, 4/3);
    \coordinate (c) at (-{sqrt(3)}, 1);
    \coordinate (oc) at ({-sqrt(3)/3}, 1/3);
    \coordinate (cd) at (-{sqrt(3)}, 1/3);
    \coordinate (dc) at (-{sqrt(3)}, -1/3);
    \coordinate (d) at (-{sqrt(3)}, -1);
    \coordinate (od) at (-{sqrt(3)/3}, -1/3);
    \coordinate (de) at ({-2*sqrt(3)/3}, -4/3);
    \coordinate (ed) at (-{sqrt(3)/3}, -5/3);
    \coordinate (e) at (0, -2);
    \coordinate (oe) at (0, -2/3);
    \coordinate (ef) at ({sqrt(3)/3}, -5/3);
    \coordinate (fe) at ({2*sqrt(3)/3}, -4/3);
    \coordinate (f) at ({sqrt(3)}, -1);
    \coordinate (of) at ({sqrt(3)/3}, -1/3);
    \coordinate (fa) at ({sqrt(3)}, -1/3);
    \coordinate (af) at ({sqrt(3)}, 1/3);
    }
    \begin{scope}[scale=1.0]
    \begin{scope}[shift={(1/2, 0)}, very thick, decoration={
        markings,
        mark=at position 0.5 with {\arrow{>}}}
        ] 
        \defcoords
        \draw[line width=5] (oc) arc (150:-150: 2/3);
        \draw[postaction={decorate}, orange] (o) -- (c);
        \draw[postaction={decorate}, orange] (o) -- (d);
        \filldraw[orange, thick] (o) circle (0.01);
        \node[anchor = south east] at (oc) {$-\mu$};
        \node[anchor = north east] at (od) {$-\nu$};
    \end{scope}
    \end{scope}
    \end{tikzpicture}
    }}
    \;\;\mapsto\;\;
    \sum_{\epsilon_1, \cdots, \epsilon_m \in \{\pm\}}
    \vcenter{\hbox{
    \begin{tikzpicture}[scale=0.8]
    \newcommand*{\defcoords}{
    \coordinate (o) at (0, 0);
    \coordinate (a) at ({sqrt(3)}, 1);
    \coordinate (oa) at ({sqrt(3)/3}, 1/3);
    \coordinate (ab) at ({2*sqrt(3)/3}, 4/3);
    \coordinate (ba) at ({sqrt(3)/3}, 5/3);
    \coordinate (b) at (0, 2);
    \coordinate (ob) at (0, 2/3);
    \coordinate (bc) at (-{sqrt(3)/3}, 5/3);
    \coordinate (cb) at ({-2*sqrt(3)/3}, 4/3);
    \coordinate (c) at (-{sqrt(3)}, 1);
    \coordinate (oc) at ({-sqrt(3)/3}, 1/3);
    \coordinate (cd) at (-{sqrt(3)}, 1/3);
    \coordinate (dc) at (-{sqrt(3)}, -1/3);
    \coordinate (d) at (-{sqrt(3)}, -1);
    \coordinate (od) at (-{sqrt(3)/3}, -1/3);
    \coordinate (de) at ({-2*sqrt(3)/3}, -4/3);
    \coordinate (ed) at (-{sqrt(3)/3}, -5/3);
    \coordinate (e) at (0, -2);
    \coordinate (oe) at (0, -2/3);
    \coordinate (ef) at ({sqrt(3)/3}, -5/3);
    \coordinate (fe) at ({2*sqrt(3)/3}, -4/3);
    \coordinate (f) at ({sqrt(3)}, -1);
    \coordinate (of) at ({sqrt(3)/3}, -1/3);
    \coordinate (fa) at ({sqrt(3)}, -1/3);
    \coordinate (af) at ({sqrt(3)}, 1/3);
    }
    \begin{scope}[scale=1.0]
    \begin{scope}[shift={({1/2*1/2}, {1/2*sqrt(3)/2})}, very thick, decoration={
        markings, 
        mark=at position 0.5 with {\arrow{>}}}
        ] 
        \defcoords
        \draw[line width=5] (oa) arc (30:90: 2/3);
        \draw[postaction={decorate}, orange] (o) -- (a);
        \draw[postaction={decorate}, orange] (o) -- (b);
        \filldraw[orange, thick] (o) circle (0.01);
        \node[anchor = south] at (ob) {$\epsilon_1$};
        \node[anchor = south west] at (oa) {$\epsilon_2$};
    \end{scope}
    \begin{scope}[shift={({-1/2*1/2}, {1/2*sqrt(3)/2})}, very thick, decoration={
        markings,
        mark=at position 0.5 with {\arrow{>}}}
        ] 
        \defcoords
        \draw[line width=5] (ob) arc (90:150: 2/3);
        \draw[postaction={decorate}, orange] (o) -- (b);
        \draw[postaction={decorate}, orange] (o) -- (c);
        \filldraw[orange, thick] (o) circle (0.01);
        \node[anchor = south east] at (oc) {$-\mu$};
        \node[anchor = south] at (ob) {$\epsilon_1$};
    \end{scope}
    \begin{scope}[shift={({-1/2*1/2}, {-1/2*sqrt(3)/2})}, very thick, decoration={
        markings,
        mark=at position 0.5 with {\arrow{>}}}
        ] 
        \defcoords
        \draw[line width=5] (od) arc (210:270: 2/3);
        \draw[postaction={decorate}, orange] (o) -- (d);
        \draw[postaction={decorate}, orange] (o) -- (e);
        \filldraw[orange, thick] (o) circle (0.01);
        \node[anchor = north] at (oe) {$\epsilon_{m-2}$};
        \node[anchor = north east] at (od) {$-\nu$};
    \end{scope}
    \begin{scope}[shift={({1/2*1/2}, {-1/2*sqrt(3)/2})}, very thick, decoration={
        markings,
        mark=at position 0.5 with {\arrow{>}}}
        ] 
        \defcoords
        \draw[line width=5] (oe) arc (270:330: 2/3);
        \draw[postaction={decorate}, orange] (o) -- (e);
        \draw[postaction={decorate}, orange] (o) -- (f);
        \filldraw[orange, thick] (o) circle (0.01);
        \node[anchor = north west] at (of) {$\epsilon_{m-3}$};
        \node[anchor = north west] at (oe) {$\epsilon_{m-2}$};
    \end{scope}
    \begin{scope}[shift={({1/2}, 0)}, very thick, decoration={
        markings,
        mark=at position 0.5 with {\arrow{>}}}
        ] 
        \defcoords
        \draw[line width=5] (of) arc (-30:30: 2/3);
        \draw[postaction={decorate}, orange] (o) -- (f);
        \draw[postaction={decorate}, orange] (o) -- (a);
        \filldraw[orange, thick] (o) circle (0.01);
        \node[anchor = south west] at (oa) {$\epsilon_2$};
        \node[anchor = west] at (1, 0.1) {$\vdots$};
        \node[anchor = north west] at (of) {$\epsilon_{m-3}$};
    \end{scope}
    \end{scope}
    \end{tikzpicture}
    }}
    \;.
    \]
    \end{enumerate}
    Combining these two cases, we see that relations (\ref{item:EdgeRelation}) are the ones we get for each edge $e \in \mathcal{E}$. 
\end{enumerate}
\end{proof}

Recall from Definition \ref{defn:ConesAndSuspensions} that a face suspension is the union of two cones over a triangle $D_3$ glued along $D_3$, each with the usual boundary marking. 
A face suspension with its marking is shown in Figure \ref{fig:fs_marking}.  

\begin{figure}[H]
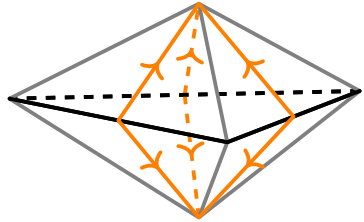

    \centering
    \includestandalone[scale=1.2]{figures/empty_fs}
    \caption{The boundary markings of a face suspension.}
    \label{fig:fs_marking}
\end{figure}

\begin{cor} \label{cor:fs_splitting_unreduced}
Let $Y = \cup_{f\in \mathcal{F}} Sf$ be a decomposition of an ideally triangulated  $3$-manifold $Y$ (without boundary except for cusps at infinity) into face suspensions. 
Then, there is a splitting map 
\begin{align*}
\Sk(Y) &\rightarrow \overline{\bigotimes}_{f\in \mathcal{F}}\Sk(Sf),\\
[L] &\mapsto \qty[\sum_{\vec{\epsilon} \;\in\; \{\substack{\mathrm{compatible}\\ \mathrm{states}}\}}\otimes_{f\in \mathcal{F}} [L_{f}^{\vec{\epsilon}}]],
\end{align*}
where $\overline{\bigotimes}_{f\in \mathcal{F}}\Sk(Sf)$ denotes the quotient of the tensor product $\bigotimes_{f\in \mathcal{F}}\Sk(Sf)$ (as $R$-modules) by the following relations:
\begin{enumerate}
    \item For each internal edge $e \in \mathcal{E}$, 
    we have the following relations among right actions of $\bigotimes_{\substack{f\in \mathcal{F} \\ e\in \mathbf{e}(f)}}\SkAlg(D_2)$ on $\bigotimes_{f\in \mathcal{F}}\Sk(Sf)$: 
    \begin{gather*}
    (-A^2)^{-\frac{\mu + \nu}{4}}
    \vcenter{\hbox{
    \includestandalone[scale=0.8]{figures/edge_splitting_relationLHS}
    }}
    \;\; = \;\;
    (-A^2)^{\frac{\mu + \nu}{4}}
    \sum_{\epsilon_1, \cdots, \epsilon_{m-2} \in \{\pm\}}
    \vcenter{\hbox{
    \includestandalone[scale=0.8]{figures/edge_splitting_relationRHS}
    }}
    \;.
    \end{gather*}
    \item For each vertex cone $Cv$ 
    \[
    \vcenter{\hbox{
    \includestandalone[scale=0.8]{figures/vertex_cone_rel_illustration}
    }}
    \;,
    \]
    we have the following relations among left actions of $\bigotimes_{\substack{T\in \mathcal{T} \\ f\in \mathbf{f}(T)}}\SkAlg(D_3)$ on $\bigotimes_{f\in \mathcal{F}} \Sk(Sf)$:
    \begin{gather*}
    (-A^2)^{\frac{\mu + \nu}{4}}
    \vcenter{\hbox{
    \includestandalone[scale=0.8]{figures/fs_splitting_relation1}
    }}
    \;\; = \;\;
    (-A^2)^{-\frac{\mu + \nu}{4}}
    \sum_{\epsilon \in \{\pm\}}
    \vcenter{\hbox{
    \includestandalone[scale=0.8]{figures/fs_splitting_relation2}
    }}
    \;.
    \end{gather*}
    In the above diagram, each sector represents one of the three face suspensions surrounding $Cv$, and the markings shown live on the edge cones adjacent to $Cv$.
\end{enumerate}
\end{cor}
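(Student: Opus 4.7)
The plan is to apply the partial splitting homomorphism of Theorem \ref{thm:partialSplittingHomomorphism} with cutting surface $\Sigma := \bigcup_{T\in\mathcal{T}}\bigcup_{e\in\mathbf{e}(T)} Ce$, the union of all edge cones; equivalently, one can peel off one edge cone at a time, invoking Theorem \ref{thm:SplittingMap} or Theorem \ref{thm:partialSplittingHomomorphism} at each step and the bimodule compatibility of Theorem \ref{thm:SplittingMapIsABimodHom} to iterate. The complement of $\Sigma$ in $Y$ is $\bigsqcup_{f\in\mathcal{F}}\mathrm{int}\,Sf$, so this is indeed a splitting into face suspensions. First I would equip each edge cone $Ce$ with the combinatorial foliation inherited from the boundary markings of the two adjacent face suspensions shown in Figure \ref{fig:fs_marking}: its singular leaves are the bare edge $e$ and the two vertex cones at its endpoints, giving a tripod-shaped leaf space centered at the barycenter of the containing tetrahedron, with sinks at face barycenters and sources at midpoints of bare edges, matching the orientation conventions of Corollary \ref{cor:SplittingIntoTetrahedra}.

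Second I would identify the relations produced in $\overline{\bigotimes}_{f\in\mathcal{F}}\Sk(Sf)$ according to the internal edges of $\Sigma$, which are of two kinds. Each internal edge $e\in\mathcal{E}$ of $Y$ is the common bare-edge leaf of a cyclic sequence of edge cones, one per tetrahedron around $e$; applying \ref{item:S-} and \ref{item:PS-} at these leaves and summing over intermediate states yields the symmetric edge relation of Remark \ref{rmk:SplittingRelationSymmetricalForm}, equivalent to the asymmetric form stated in the corollary. This runs exactly parallel to the edge analysis in case \ref{item:tetSplittingEdgeRelProof} of the proof of Corollary \ref{cor:SplittingIntoTetrahedra}. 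Each vertex cone $Cv$ inside a tetrahedron $T$ is the common vertex-cone leaf of exactly three edge cones (one per edge of $T$ adjacent to $v$), hence of three face suspensions; applying \ref{item:S+} or \ref{item:PS+} to the three cuts meeting at $Cv$ produces the three-sector vertex cone relation, with the prefactors $(-A^2)^{\pm(\mu+\nu)/4}$ matching those of \ref{item:S+}.

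The main obstacle will be the local bookkeeping at each vertex cone: one must verify that the three edge-cone cuts meeting at $Cv$ collectively produce the single three-sector relation in the grouped form stated, rather than a chain of pairwise identifications, and that the normalizations track correctly across all three sectors. This reduces to a local computation on the link of $Cv$, modeled on a cone over $D_3$ cut into three pie-slice sectors, and is handled by the same sum-over-states manipulations (via Lemma \ref{lem:StatedSkeinRelLemma} and the $aI\subset I$ argument) used at the end of the proof of Lemma \ref{lem:BimodStructureOnRedTensProd}. Once the local pictures at each vertex cone and each internal edge of $Y$ are accounted for, well-definedness, $R$-linearity, and the compatibility of the splitting map with the various $\SkAlg(D_n)$-actions all follow from Theorems \ref{thm:partialSplittingHomomorphism} and \ref{thm:SplittingMapIsABimodHom}.
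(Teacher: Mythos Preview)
Your approach is correct and is precisely the alternative route the paper itself flags right after the proof: ``It is a valuable exercise to prove these two corollaries using Theorem~\ref{thm:partialSplittingHomomorphism}, cutting along faces of tetrahedra and edge cones, respectively.'' The paper's own proof instead peels off one \emph{face suspension} at a time (six edge cones per step, so Theorem~\ref{thm:SplittingMap} applies directly at each stage), and then does a three-case analysis at each vertex cone depending on whether the adjacent barycenter is still internal to the remaining piece; your edge-cone-by-edge-cone version via Theorem~\ref{thm:partialSplittingHomomorphism} avoids that case split and recovers the symmetric form of the relations directly (as the paper notes in Remark~\ref{rmk:SplittingRelationSymmetricalForm}).

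One small imprecision worth correcting: each edge cone $Ce$ is a \emph{single} elementary quadrilateral, with the two ideal vertices of $e$ as the removed corners, the midpoint of $e$ as the source, and the tetrahedron barycenter as the sink. Its leaf space is therefore a single oriented interval, not a tripod; the tripod you have in mind is the local picture of the boundary marking $\Gamma$ of a face suspension near a barycenter (three such intervals, one per adjacent edge cone, meeting at the sink). This does not affect your argument, but getting the foliation picture right makes it clear that the internal edges of your cutting surface $\Sigma$ are exactly the half-edges of each bare $e$ (sources, giving the edge relation via~\ref{item:S-}/\ref{item:PS-}) and the vertex cones $Cv$ (sinks, giving the three-sector relation via~\ref{item:S+}/\ref{item:PS+}). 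Also note that you cannot literally cut along all of $\Sigma$ at once using Theorem~\ref{thm:partialSplittingHomomorphism} as stated, since three sheets meet along each $Cv$; you really do need the iterative ``one edge cone at a time'' version, together with Theorem~\ref{thm:SplittingMapIsABimodHom} to propagate the module structure, exactly as you indicate.
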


\begin{proof}
    As in Corollary \ref{cor:SplittingIntoTetrahedra}, we iteratively split $Y$ into face suspensions by splitting one face suspension at a time:
    \[
    Y = Y_0
    \;\;\rightarrow\;\; 
    Sf_1 \sqcup Y_1 
    \;\;\rightarrow\;\; 
    Sf_1 \sqcup Sf_2 \sqcup Y_2 
    \;\;\rightarrow 
    \cdots 
    \rightarrow\;\; 
    Sf_1 \sqcup Sf_2 \sqcup \cdots \sqcup Sf_n.
    \]
    Every time we split a face suspension $Sf = Sf_{i+1}$ from $Y_i$, we obtain the following relations:
    \begin{enumerate}
        \item For each edge of $Sf$ that is also an edge of a tetrahedron, the argument is essentially the same as (\ref{item:tetSplittingEdgeRelProof}) in Corollary \ref{cor:SplittingIntoTetrahedra}, and we do not repeat it here. 
        \item For every vertex cone $Cv$ in $Sf$, there are three possibilities:
        \begin{enumerate}
            \item  \label{item:fs_splitting_case1} If $Cv$ is an internal edge of $Y_{i}$ and the barycenter adjacent to $Cv$ is an internal vertex of $Y_i$, we obtain the following relation among left actions of $\SkAlg(D_3) \otimes \SkAlg(D_3)$ on $\Sk(Sf) \otimes \Sk(Y_{i+1})$: 
            \begin{gather*}
                (-A^2)^{\frac{\mu + \nu}{4}}
                \vcenter{\hbox{
                \includestandalone[scale=0.8]{figures/fs_internal_barycenter_rel1}
                }}
                \;\; = \;\;
                (-A^2)^{-\frac{\mu + \nu}{4}}
                \vcenter{\hbox{
                \includestandalone[scale=0.8]{figures/fs_internal_barycenter_rel2}
                }}
                \;.
            \end{gather*}

            \item If $Cv$ is an internal edge of $Y_i$ but the barycenter adjacent to $Cv$ is not an internal vertex of $Y_i$, Theorem \ref{thm:SplittingMapIsABimodHom} identifies the left $\SkAlg(D_3)$ module structure on $\Sk(Y_i)$ with the left $\SkAlg(D_3) \;\overline{\otimes}\; \SkAlg(D_4)$ module structure on $\Sk(Sf)\otimes  \Sk(Y_{i+1})$, yielding the following relation:
            \[
            (-A^2)^{\frac{\mu + \nu}{4}}
            \vcenter{\hbox{
            \includestandalone[scale=0.8]{figures/fs_external_barycenter_rel1}
            }}
            \;\; = \;\;
            (-A^2)^{-\frac{\mu + \nu}{4}}
            \vcenter{\hbox{
            \includestandalone[scale=0.8]{figures/fs_external_barycenter_rel2}
            }}.
            \]
            In the above figure, $Sf_j$ is the other face suspension that must have previously been split from the tetrahedron shown. 
            
            \item If $Cv$ is already on the boundary of $Y_{i}$, then there must be some $j < i$ such that $Cv$ is an internal edge of $Y_{j}$ but not of $Y_{j+1}$. That is, one of the three face suspensions surrounding $Cv$, $Sf_j,$ has already been split. Then the left $\SkAlg(D_3)$-module structure on
            $\Sk(Sf) \overline{\otimes} \Sk(Sf_{k})$, where $Sf_k$ is the third face suspension associated to $Cv$, is induced from the splitting homomorphism $\SkAlg(D_4) \rightarrow \SkAlg(D_3)\otimes \SkAlg(D_3)$: 
            \[
            \vcenter{\hbox{
            \includestandalone[scale=0.8]{figures/fs_splitting_cor3}
            }}
            \;\;\mapsto\;\;
            \sum_{\epsilon\in\{\pm\}}
            \vcenter{\hbox{
            \includestandalone[scale=0.8]{figures/fs_splitting_cor4}
            }}
            \;.
            \]
            In the left hand side of the above figure, we show the two components of the boundary marking of $Y_i$ associated to edge cones adjacent to $Cv$. 
            On the right hand side, we've obtained two new boundary marking components after having split along the edge cone connecting two face suspensions.
        \end{enumerate}
    \end{enumerate}
    This concludes the proof.
\end{proof}

It is a valuable exercise to prove these two corollaries using Theorem \ref{thm:partialSplittingHomomorphism}, cutting along faces of tetrahedra and edge cones, respectively. 
Using Theorem \ref{thm:partialSplittingHomomorphism} directly recovers the symmetric versions of the internal edge and edge cone relations mentioned in Remark \ref{rmk:SplittingRelationSymmetricalForm}.

\subsection{Reduced stated skein modules}
On top of cutting the 3-manifold into pieces, we can further simplify our study of skein modules by taking a specific quotient, called the \emph{reduced} (stated) skein module. 
As we will see in Section \ref{sec:3dQuantumTrace}, it turns out that the 3d quantum trace map we need to construct factors through this quotient, so we just need a good understanding of the structure of these reduced skein modules. 

\begin{defn}
The elements of $\SkAlg(D_n)$ shown in Figure \ref{fig:bad_arcs} are called \emph{bad arcs}. 
\begin{figure}[H]
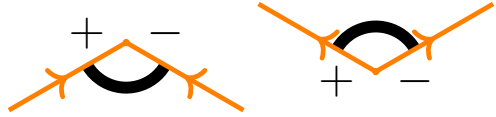

    \centering
    \includestandalone[scale=1.5]{figures/bad_arc1}
    \includestandalone[scale=1.5]{figures/bad_arc2}
    \caption{Bad arcs (seen from outside of $Y$).}
    \label{fig:bad_arcs}
\end{figure}
\end{defn}

\begin{defn}
The \emph{reduced stated skein module} $\overline{\Sk}(Y, \Gamma)$ is the quotient
\[
\overline{\Sk}(Y, \Gamma) := 
\frac{\Sk(Y, \Gamma)}{I^{\mathrm{bad}, +}\Sk(Y,\Gamma) + \Sk(Y,\Gamma)I^{\mathrm{bad}, -}},
\]
where $I^{\mathrm{bad}, +}$ is the right ideal of $\otimes_{v \in V(\Gamma)^{+}}\SkAlg(D_{\deg v})$ generated by the bad arcs near the sinks, and $I^{\mathrm{bad}, -}$ is the left ideal of $\otimes_{w \in V(\Gamma)^{-}}\SkAlg(D_{\deg w})$ generated by the bad arcs near the sources. 
That is, we set the action of the bad arcs to be $0$. 
\end{defn}

The reduced stated skein algebras of biangles and triangles have particularly simple descriptions: 
\begin{thm}[\cite{BW, CL}]
\begin{enumerate}
\item The reduced stated skein algebra of a biangle is the algebra of Laurent polynomials in one variable:
\[
\overline{\SkAlg}(D_2) =: \mathbb{B} \cong R[x,x^{-1}], 
\]
with
\[
x
\mapsto 
\vcenter{\hbox{
\begin{tikzpicture}
\draw[very thick] (0, {-sqrt(3)/2}) to[out=30, in=-90] (1/2, 0) to[out=90, in=-30] (0, {sqrt(3)/2});
\draw[very thick] (0, {-sqrt(3)/2}) to[out=150, in=-90] (-1/2, 0) to[out=90, in=-150] (0, {sqrt(3)/2});
\filldraw[fill=white] (0, {sqrt(3)/2}) circle (0.1);
\filldraw[fill=white] (0, {-sqrt(3)/2}) circle (0.1);
\draw[ultra thick] (-1/2, 0) -- (1/2, 0);
\node[anchor=east] at (-1/2, 0){$+$};
\node[anchor=west] at (1/2, 0){$+$};
\end{tikzpicture}
}}
\;,\quad
x^{-1}
\mapsto 
\vcenter{\hbox{
\begin{tikzpicture}
\draw[very thick] (0, {-sqrt(3)/2}) to[out=30, in=-90] (1/2, 0) to[out=90, in=-30] (0, {sqrt(3)/2});
\draw[very thick] (0, {-sqrt(3)/2}) to[out=150, in=-90] (-1/2, 0) to[out=90, in=-150] (0, {sqrt(3)/2});
\filldraw[fill=white] (0, {sqrt(3)/2}) circle (0.1);
\filldraw[fill=white] (0, {-sqrt(3)/2}) circle (0.1);
\draw[ultra thick] (-1/2, 0) -- (1/2, 0);
\node[anchor=east] at (-1/2, 0){$-$};
\node[anchor=west] at (1/2, 0){$-$};
\end{tikzpicture}
}}
.
\]
\item The reduced stated skein algebra of a triangle is the \emph{triangle algebra}: 
\begin{equation}\label{eq:trianglealg}
\overline{\SkAlg}(D_3) =: \mathbb{T} \cong \frac{R\langle \alpha^{\pm 1}, \beta^{\pm 1}, \gamma^{\pm 1}\rangle}{\langle \beta\alpha = A\alpha\beta, \gamma\beta = A\beta\gamma, \alpha\gamma = A\gamma\alpha \rangle},
\end{equation}
with
\begin{gather*}
\alpha 
\mapsto
\vcenter{\hbox{
\begin{tikzpicture}
\coordinate (a) at (0, 1);
\coordinate (b) at ({-sqrt(3)/2}, -1/2);
\coordinate (c) at ({sqrt(3)/2}, -1/2);
\coordinate (ab) at ({-sqrt(3)/4}, 1/4);
\coordinate (ac) at ({sqrt(3)/4}, 1/4);
\coordinate (bc) at (0, -1/2);
\draw[very thick] (a) -- (b) -- (c) -- cycle;
\filldraw[fill=white] (a) circle (0.1);
\filldraw[fill=white] (b) circle (0.1);
\filldraw[fill=white] (c) circle (0.1);
\draw[ultra thick] (ac) to[out=-150, in=-30] (ab);
\node[anchor=east] at (ab){$+$};
\node[anchor=west] at (ac){$+$};
\end{tikzpicture}
}}
\;,\quad
\beta 
\mapsto 
\vcenter{\hbox{
\begin{tikzpicture}
\coordinate (a) at (0, 1);
\coordinate (b) at ({-sqrt(3)/2}, -1/2);
\coordinate (c) at ({sqrt(3)/2}, -1/2);
\coordinate (ab) at ({-sqrt(3)/4}, 1/4);
\coordinate (ac) at ({sqrt(3)/4}, 1/4);
\coordinate (bc) at (0, -1/2);
\draw[very thick] (a) -- (b) -- (c) -- cycle;
\filldraw[fill=white] (a) circle (0.1);
\filldraw[fill=white] (b) circle (0.1);
\filldraw[fill=white] (c) circle (0.1);
\draw[ultra thick] (ab) to[out=-30, in=90] (bc);
\node[anchor=east] at (ab){$+$};
\node[anchor=north] at (bc){$+$};
\end{tikzpicture}
}}
\;,\quad
\gamma
\mapsto 
\vcenter{\hbox{
\begin{tikzpicture}
\coordinate (a) at (0, 1);
\coordinate (b) at ({-sqrt(3)/2}, -1/2);
\coordinate (c) at ({sqrt(3)/2}, -1/2);
\coordinate (ab) at ({-sqrt(3)/4}, 1/4);
\coordinate (ac) at ({sqrt(3)/4}, 1/4);
\coordinate (bc) at (0, -1/2);
\draw[very thick] (a) -- (b) -- (c) -- cycle;
\filldraw[fill=white] (a) circle (0.1);
\filldraw[fill=white] (b) circle (0.1);
\filldraw[fill=white] (c) circle (0.1);
\draw[ultra thick] (bc) to[out=90, in=-150] (ac);
\node[anchor=west] at (ac){$+$};
\node[anchor=north] at (bc){$+$};
\end{tikzpicture}
}}
\;,
\\
\alpha^{-1} 
\mapsto
\vcenter{\hbox{
\begin{tikzpicture}
\coordinate (a) at (0, 1);
\coordinate (b) at ({-sqrt(3)/2}, -1/2);
\coordinate (c) at ({sqrt(3)/2}, -1/2);
\coordinate (ab) at ({-sqrt(3)/4}, 1/4);
\coordinate (ac) at ({sqrt(3)/4}, 1/4);
\coordinate (bc) at (0, -1/2);
\draw[very thick] (a) -- (b) -- (c) -- cycle;
\filldraw[fill=white] (a) circle (0.1);
\filldraw[fill=white] (b) circle (0.1);
\filldraw[fill=white] (c) circle (0.1);
\draw[ultra thick] (ac) to[out=-150, in=-30] (ab);
\node[anchor=east] at (ab){$-$};
\node[anchor=west] at (ac){$-$};
\end{tikzpicture}
}}
\;,\quad
\beta^{-1} 
\mapsto 
\vcenter{\hbox{
\begin{tikzpicture}
\coordinate (a) at (0, 1);
\coordinate (b) at ({-sqrt(3)/2}, -1/2);
\coordinate (c) at ({sqrt(3)/2}, -1/2);
\coordinate (ab) at ({-sqrt(3)/4}, 1/4);
\coordinate (ac) at ({sqrt(3)/4}, 1/4);
\coordinate (bc) at (0, -1/2);
\draw[very thick] (a) -- (b) -- (c) -- cycle;
\filldraw[fill=white] (a) circle (0.1);
\filldraw[fill=white] (b) circle (0.1);
\filldraw[fill=white] (c) circle (0.1);
\draw[ultra thick] (ab) to[out=-30, in=90] (bc);
\node[anchor=east] at (ab){$-$};
\node[anchor=north] at (bc){$-$};
\end{tikzpicture}
}}
\;,\quad
\gamma^{-1}
\mapsto 
\vcenter{\hbox{
\begin{tikzpicture}
\coordinate (a) at (0, 1);
\coordinate (b) at ({-sqrt(3)/2}, -1/2);
\coordinate (c) at ({sqrt(3)/2}, -1/2);
\coordinate (ab) at ({-sqrt(3)/4}, 1/4);
\coordinate (ac) at ({sqrt(3)/4}, 1/4);
\coordinate (bc) at (0, -1/2);
\draw[very thick] (a) -- (b) -- (c) -- cycle;
\filldraw[fill=white] (a) circle (0.1);
\filldraw[fill=white] (b) circle (0.1);
\filldraw[fill=white] (c) circle (0.1);
\draw[ultra thick] (bc) to[out=90, in=-150] (ac);
\node[anchor=west] at (ac){$-$};
\node[anchor=north] at (bc){$-$};
\end{tikzpicture}
}}
\;.
\end{gather*}
\end{enumerate}
\end{thm}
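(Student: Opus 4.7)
The plan is to construct each isomorphism by specifying a map from the abstract algebra on generators and relations to the reduced stated skein algebra, verifying well-definedness by checking that every defining relation holds in $\overline{\SkAlg}(D_n)$, and then using a normal-form argument to establish bijectivity. Since the results are attributed to \cite{BW, CL}, I would cite those papers for the full calculation; the additional work needed here is only to confirm that our conventions (in particular the half-twist resolution of Remark \ref{item:difference1} and the self-duality twist from Proposition \ref{prop:SelfDuality}) leave the algebraic identifications unchanged up to the obvious rescaling.

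For $\mathbb{B}$, the first step is to check that the elements $x$ and $x^{-1}$ assigned to the $(+,+)$ and $(-,-)$ arcs are mutually inverse in $\overline{\SkAlg}(D_2)$. Stacking them produces a single crossing, which one resolves using the Kauffman skein relation. The two resulting terms are (i) the trivial pair of vertical arcs with compatible states summing to the identity after applying the stated skein relations, and (ii) a diagram containing a bad arc with mixed states, which is killed in the quotient. Tracking the scalars $A, A^{-1}$ and the $(-A^2-A^{-2})$ factor from the closed loop then gives $xx^{-1} = x^{-1}x = 1$. Surjectivity is obtained from the observation that the four simple stated arcs generate $\SkAlg(D_2)$ as an $R$-algebra (using height exchange to pass any diagram to a vertical stack), two of which vanish in the quotient. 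Injectivity follows from the standard fact that the resulting normal form $x^n$, $n\in \mathbb{Z}$, is $R$-linearly independent, which can be confirmed either by exhibiting a faithful module or by comparison with the quantum coordinate ring calculation of \cite{CL}.

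For $\mathbb{T}$, the approach is the same. Invertibility of $\alpha, \beta, \gamma$ in $\overline{\SkAlg}(D_3)$ is verified arc-by-arc, exactly as for the biangle, using that the mixed-state resolutions are bad. The $q$-commutation relation $\beta\alpha = A\alpha\beta$ is then the key computation: the product $\beta\alpha$ places $\beta$ above $\alpha$ with a single crossing near the top vertex of the triangle; applying the Kauffman skein relation and then using the bimodule structure of Proposition \ref{prop:skein_alg_structure} at that vertex to slide endpoints (killing the bad-arc terms that arise) yields $A$ times the opposite ordering. The cyclic symmetry of $D_3$ gives the other two relations at once.

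The main obstacle, and the technically substantive part of the argument, is proving injectivity of the map $\mathbb{T} \to \overline{\SkAlg}(D_3)$ — that is, showing $\alpha^i\beta^j\gamma^k$ for $(i,j,k)\in \mathbb{Z}^3$ form an $R$-basis of $\overline{\SkAlg}(D_3)$. One establishes a normal form via a diamond-lemma style reduction: every stated tangle is rewritten by height-exchange and crossing resolution into monomials in the good arcs, and the bad-arc vanishing is used to cut down the ambiguity. That no spurious relation collapses distinct monomials is the content that must be inherited from \cite{BW, CL}, since our extra half-twist relation only affects overall scalars and the extra grading by boundary state sums (Remark \ref{rmk:SkeinModuleIsGraded}) separates distinct $(i,j,k)$.
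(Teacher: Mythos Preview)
The paper does not supply its own proof of this theorem: it is stated as a result of \cite{BW, CL} and used as a black box, with no argument given beyond the citation. Your proposal is therefore not competing with anything in the paper; it is a reasonable outline of how one would reproduce the cited result, and your explicit deferral of the injectivity step to \cite{BW, CL} matches the paper's own treatment.

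One small inaccuracy in your sketch: stacking the $(+,+)$ arc on the $(-,-)$ arc in $D_2\times I$ does not literally ``produce a single crossing''---the two arcs are parallel at different heights with four endpoints on the two markings. The actual computation of $xx^{-1}=1$ in $\overline{\SkAlg}(D_2)$ goes through the height-exchange relations (as in Lemma~\ref{lem:StatedSkeinRelLemma}) together with the cup/cap stated skein relations, rather than a direct Kauffman resolution of a crossing. The outcome is as you say (the mixed-state terms are bad arcs and die in the quotient), but the mechanism is slightly different from what you wrote.
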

These isomorphisms, along with the 2d splitting map, were the key elements in the construction of the 2d quantum trace map in \cite{BW} and \cite{Le}.

In the 3d setup, it is easy to see that everything we discussed in this section carries over to reduced skein modules. 
That is, we have bimodule structures:
\begin{prop}
$\overline{\Sk}(Y, \Gamma)$ is a $\otimes_{v \in V(\Gamma)^{+}}\overline{\SkAlg}(D_{\deg v})$-$\otimes_{w \in V(\Gamma)^{-}}\overline{\SkAlg}(D_{\deg w})$-bimodule. 
\end{prop}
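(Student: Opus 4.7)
The plan is to derive the claimed bimodule structure on $\overline{\Sk}(Y,\Gamma)$ by descending the bimodule structure on the unreduced skein module $\Sk(Y,\Gamma)$ provided by Proposition \ref{prop:skein_alg_structure}. Writing $A_+ := \otimes_{v \in V(\Gamma)^{+}}\SkAlg(D_{\deg v})$ and $A_- := \otimes_{w \in V(\Gamma)^{-}}\SkAlg(D_{\deg w})$, the unreduced skein module is an $A_+$-$A_-$-bimodule with $A_+$ acting on the left and $A_-$ on the right. Setting $N := I^{\mathrm{bad},+}\Sk(Y,\Gamma) + \Sk(Y,\Gamma)I^{\mathrm{bad},-}$ so that $\overline{\Sk}(Y,\Gamma) = \Sk(Y,\Gamma)/N$, the task reduces to two things: (i) showing $N$ is a sub-$A_+$-$A_-$-bimodule; and (ii) showing that the induced $A_\pm$-actions on the quotient factor through the reduced algebras $\overline{A}_\pm$.

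The key input for (i) is that the bad arc ideal in each $\SkAlg(D_n)$ is in fact two-sided; this is precisely what ensures $\overline{\SkAlg}(D_n) = \SkAlg(D_n)/(\text{bad arcs})$ is a well-defined quotient \emph{algebra}, and it is a standard fact from \cite{CL}. (It can also be verified directly from the stated skein relations, which allow one to rewrite any product $xb$ of a stated tangle $x$ with a bad arc $b$ as a linear combination of products $b_i x_i$ with $b_i$ again a bad arc, and vice versa.) Once we have two-sidedness, both $I^{\mathrm{bad},+}$ and $I^{\mathrm{bad},-}$ are two-sided ideals of the respective tensor product algebras. Then $I^{\mathrm{bad},+}\Sk(Y,\Gamma)$ is preserved by the left $A_+$-action because $I^{\mathrm{bad},+}$ absorbs left multiplication, and by the right $A_-$-action because left and right actions in a bimodule commute; the analogous statement holds for $\Sk(Y,\Gamma)I^{\mathrm{bad},-}$. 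Hence $N$ is a sub-bimodule, and $\overline{\Sk}(Y,\Gamma)$ inherits an $A_+$-$A_-$-bimodule structure.

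For (ii), by the very definition of $N$, bad arcs at any sink act as $0$ on $\overline{\Sk}(Y,\Gamma)$ from the left, and bad arcs at any source act as $0$ from the right. Thus the left $A_+$-action descends to an action of $\overline{A}_+ := \otimes_{v \in V(\Gamma)^{+}}\overline{\SkAlg}(D_{\deg v})$, and the right $A_-$-action analogously descends to an action of $\overline{A}_-$, yielding the desired bimodule structure.

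The only nontrivial input in this argument is the two-sidedness of the bad arc ideal in $\SkAlg(D_n)$; I would invoke the prior work of Costantino--L\^e rather than reprove it. With that granted, the remainder is formal linear algebra about quotient bimodules, which is why the proposition can reasonably be stated without a proof.
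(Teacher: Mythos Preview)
Your proposal is correct; the paper states this proposition without proof, treating it as an immediate consequence of the definitions, and your argument spells out precisely the standard justification one would expect (descending the unreduced bimodule structure, with the only nontrivial input being two-sidedness of the bad-arc ideal in $\SkAlg(D_n)$, which is indeed established in \cite{CL}). There is nothing to compare approaches against, and your write-up is an appropriate elaboration of what the paper leaves implicit.
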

Moreover, the splitting map (Theorem \ref{thm:SplittingMap}) descends to reduced skein modules. 
\begin{prop} \label{prop:splitting_reduced}
In the setting of Theorem \ref{thm:SplittingMap}, 
the splitting map $\sigma$ descends to a $\otimes_{v \in V(\Gamma)^{+}}\overline{\SkAlg}(D_{\deg v})$-$\otimes_{w \in V(\Gamma)^{-}}\overline{\SkAlg}(D_{\deg w})$-bimodule homomorphism
\[
\overline{\sigma} : \overline{\Sk}(Y, \Gamma) \rightarrow 
\overline{\Sk}(Y_1, \Gamma_1)
\;\overline{\otimes}\;
\overline{\Sk}(Y_2, \Gamma_2).
\]
\end{prop}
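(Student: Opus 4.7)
The plan is to show that $\sigma$ sends the bad-arc submodules of $\Sk(Y, \Gamma)$ into the corresponding bad-arc submodules of the reduced tensor product $\Sk(Y_1, \Gamma_1) \;\overline{\otimes}\; \Sk(Y_2, \Gamma_2)$. Since $\sigma$ is a bimodule homomorphism by Theorem \ref{thm:SplittingMapIsABimodHom}, for a bad arc $B \in \SkAlg(D_{\deg v})$ at a vertex $v \in V(\Gamma)^+$ and any $x \in \Sk(Y, \Gamma)$ we have $\sigma(B \cdot x) = B \cdot \sigma(x)$, where the right-hand action is the one on the reduced tensor product furnished by Lemma \ref{lem:BimodStructureOnRedTensProd}. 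So the problem reduces to verifying that this left action of $B$ descends to zero on $\overline{\Sk}(Y_1, \Gamma_1) \;\overline{\otimes}\; \overline{\Sk}(Y_2, \Gamma_2)$ for every bad arc $B$ and every sink $v$ (and symmetrically for sources).

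I would then split into cases based on the location of $v$. If $v \notin \partial \Sigma$, then $v$ is a vertex of exactly one of $\Gamma_1, \Gamma_2$, and the $\SkAlg(D_{\deg v})$-action at $v$ is inherited unchanged from that component, so $B$ acts as zero on its reduced quotient by definition. If $v \in \partial \Sigma$, then Lemma \ref{lem:BimodStructureOnRedTensProd} implements the action of $B \in \SkAlg(D_{n_1 + n_2})$ through the local splitting map
\[
\sigma_v \;:\; \SkAlg(D_{n_1+n_2}) \longrightarrow \SkAlg(D_{n_1+m}) \;\overline{\otimes}\; \SkAlg(D_{n_2+m}),
\]
so it suffices to show that $\sigma_v(B)$ vanishes in $\overline{\SkAlg}(D_{n_1+m}) \;\overline{\otimes}\; \overline{\SkAlg}(D_{n_2+m})$.

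This last statement is purely 2-dimensional and is precisely the well-definedness of the 2d splitting map on reduced stated skein algebras of polygons proved in \cite{CL}. Concretely, one places $B$ in general position with respect to the cutting diagonal of $D_{n_1+n_2}$: when $B$ does not meet the diagonal, $\sigma_v(B) = B \otimes 1$ (or $1 \otimes B$) and one tensor factor is already a bad arc; when $B$ crosses the diagonal, one expands $\sigma_v(B)$ over the state at each intersection and applies the stated skein relations to rewrite the sum as one in which each surviving term either factors through a bad arc in one of the two pieces or cancels against a partner term. The principal technical obstacle lies in the crossing case, as it requires a careful state-by-state bookkeeping of the coefficients produced by height exchanges and the diagonal-crossing relations; however, this is exactly the argument carried out in \cite{CL}. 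Granted this, the quotient map
\[
\overline{\sigma} \;:\; \overline{\Sk}(Y, \Gamma) \longrightarrow \overline{\Sk}(Y_1, \Gamma_1) \;\overline{\otimes}\; \overline{\Sk}(Y_2, \Gamma_2)
\]
is well-defined, and by Theorem \ref{thm:SplittingMapIsABimodHom} it is automatically a bimodule homomorphism over the reduced skein algebras at the vertices of $\Gamma$.
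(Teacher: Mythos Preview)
The paper does not supply a proof of this proposition, remarking only that ``it is easy to see that everything we discussed in this section carries over to reduced skein modules.'' Your argument fills in precisely the details one would expect and is correct: the reduction via Theorem~\ref{thm:SplittingMapIsABimodHom} to checking that each bad arc at a vertex $v \in V(\Gamma)$ acts by zero on the reduced target, the trivial case $v \notin \partial\Sigma$, and the reduction of the case $v \in \partial\Sigma$ via the local splitting $\sigma_v$ of Lemma~\ref{lem:BimodStructureOnRedTensProd} to the $2$d statement handled in \cite{CL}, are all sound.

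One small point you leave implicit and might state explicitly: the target $\overline{\Sk}(Y_1,\Gamma_1)\;\overline{\otimes}\;\overline{\Sk}(Y_2,\Gamma_2)$ is itself well-defined, i.e.\ the relations \ref{item:S+} and \ref{item:S-} descend from $\Sk(Y_i,\Gamma_i)$ to $\overline{\Sk}(Y_i,\Gamma_i)$. This holds because those relations are expressed through the $\SkAlg(D_{\deg v})$-actions at the internal vertices of $\Gamma_\Sigma$, and by the bimodule proposition immediately preceding this one those actions factor through $\overline{\SkAlg}(D_{\deg v})$. With that noted, your argument is complete.
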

In particular, the following is an easy corollary of 
Proposition \ref{prop:splitting_reduced} and Corollary \ref{cor:fs_splitting_unreduced}. 
\begin{cor}\label{cor:fs_splitting_reduced}
    Let $Y= \cup_{f\in \mathcal{F}} Sf$ be a decomposition of an ideally triangulated $3$-manifold (without boundary except for cusps at infinity) into face suspensions. Then, there is a well-defined splitting map 
    \begin{align*}
        \overline{\sigma}:\overline{\Sk}(Y) &\rightarrow \overline{\bigotimes}_{f\in \mathcal{F}}\overline{\Sk}(Sf),\\
        [L] &\mapsto  \qty[\sum_{\vec{\epsilon} \;\in\; \{\substack{\mathrm{compatible}\\ \mathrm{states}}\}}\otimes_{f\in \mathcal{F}} [L_{f}^{\vec{\epsilon}}]],
    \end{align*}
where $\overline{\bigotimes}_{f\in \mathcal{F}}\overline{\Sk}(Sf)$ denotes the quotient of the naive tensor product $\bigotimes_{f\in \mathcal{F}}\overline{\Sk}(Sf)$ by the following relations:
\begin{enumerate}
    \item \label{item:redSkMod1} For each internal edge $e \in \mathcal{E}$ 
    we have the following relations among right actions on $\bigotimes_{f\in \mathcal{F}}\overline{\Sk}(Sf)$: 
    \begin{gather*} 
    \vcenter{\hbox{
    \includestandalone[scale=0.8]{figures/fs_reduced_splitting_relation1}
    }}
    \;\; = \;\;
    (-A^2)^{\epsilon}
    \vcenter{\hbox{
    \includestandalone[scale=0.8]{figures/fs_reduced_splitting_relation2}
    }}
    \;.
    \end{gather*}
    \item \label{item:redSkMod2} Around each vertex cone, we have the following relation among left actions on $\bigotimes_{f\in \mathcal{F}} \overline{\Sk}(Sf)$:
    \begin{gather*} 
    \vcenter{\hbox{
    \includestandalone[scale=0.8]{figures/fs_reduced_splitting_relation3}
    }}
    \;\; = \;\;
    (-A^{2})^{-\epsilon}
    \vcenter{\hbox{
    \includestandalone[scale=0.8]{figures/fs_reduced_splitting_relation4}
    }}
    \;.
    \end{gather*}
    \item \label{item:redSkMod3} Around each vertex cone, we have the following relation among left actions on $\bigotimes_{f\in \mathcal{F}} \overline{\Sk}(Sf)$:
    \begin{gather*} 
    \vcenter{\hbox{
    \includestandalone[scale=0.8]{figures/fs_reduced_splitting_relation6}}}
    \;\; + \;\;
    \vcenter{\hbox{
    \includestandalone[scale=0.8]{figures/fs_reduced_splitting_relation7}}}
    \;\; - \;\;
    \vcenter{\hbox{
    \includestandalone[scale=0.8]{figures/fs_reduced_splitting_relation5}
    }}    
    \;\; = 0 \;\;
    \;.
    \end{gather*}
\end{enumerate}
\end{cor}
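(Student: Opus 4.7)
The plan is to bootstrap from the unreduced version (Corollary \ref{cor:fs_splitting_unreduced}) together with the general principle (Proposition \ref{prop:splitting_reduced}) that the splitting map descends to reduced skein modules. Applying Proposition \ref{prop:splitting_reduced} iteratively, one face suspension at a time, exactly as in the proof of Corollary \ref{cor:fs_splitting_unreduced}, yields a well-defined bimodule homomorphism from $\overline{\Sk}(Y)$ to a quotient of $\bigotimes_{f \in \mathcal{F}} \overline{\Sk}(Sf)$. The remaining task is to identify this quotient explicitly, i.e.\ to show that the two families of unreduced relations in Corollary \ref{cor:fs_splitting_unreduced} descend to exactly the three relations \eqref{item:redSkMod1}--\eqref{item:redSkMod3} after killing the bad arcs in each tensor factor.

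For the internal-edge relation \eqref{item:redSkMod1}, I would start from the unreduced edge-splitting relation and observe that the boundary markings adjacent to the edge cone sit on $D_2$-biangles, whose reduced skein algebras are $\mathbb{B} \cong R[x^{\pm 1}]$. A bad arc in $\mathbb{B}$ is an arc whose two endpoint states disagree; hence in the reduced quotient only terms with matching states survive. Propagating this through the sum over the intermediate states $\epsilon_1,\dots,\epsilon_{m-2}$ collapses the sum to a single surviving state $\epsilon$, and the two prefactors $(-A^2)^{\pm(\mu+\nu)/4}$ combine with the surviving $(-A^2)^{\mu/2}$ contributions from each biangle to yield the single factor $(-A^2)^{\epsilon}$ appearing in \eqref{item:redSkMod1}.

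For the vertex-cone relations \eqref{item:redSkMod2} and \eqref{item:redSkMod3}, I would analyze the unreduced vertex-cone relation in the three copies of $\overline{\SkAlg}(D_3) \cong \mathbb{T}$ acting at the barycenters adjacent to $Cv$. Under the presentation \eqref{eq:trianglealg}, only the generators $\alpha^{\pm 1}, \beta^{\pm 1}, \gamma^{\pm 1}$ survive the bad-arc quotient. Relation \eqref{item:redSkMod2} should then be a direct rewriting of the unreduced vertex-cone identity after discarding the bad-arc summands: the remaining configurations give a $q$-commutation between triangle-algebra generators living on adjacent face suspensions, with the normalizing factor $(-A^2)^{-\epsilon}$ accounting for the half-twist/framing corrections inherited from \ref{item:S+}--\ref{item:S-}. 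Relation \eqref{item:redSkMod3} should arise from the second flavor of the unreduced vertex-cone identity (the one obtained by routing the arc through the third face suspension), which, after reduction, produces a three-term Ptolemy-like identity reflecting the cyclic structure $\beta\alpha = A\alpha\beta$, $\gamma\beta = A\beta\gamma$, $\alpha\gamma = A\gamma\alpha$ of $\mathbb{T}$.

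The main obstacle will be the bookkeeping in the last step: one must verify that every consequence of the two unreduced vertex-cone identities, after modding out bad arcs, is already a consequence of \eqref{item:redSkMod2} and \eqref{item:redSkMod3} (together with the triangle-algebra relations in each tensor factor), so that no extraneous relations are missed and no spurious ones introduced. Concretely, this amounts to checking, for each of the finitely many state-configurations that survive the reduction, that the induced identity reduces to a linear combination of \eqref{item:redSkMod2}--\eqref{item:redSkMod3}; once the base case for one choice of generator pair is established, the remaining cases follow by the $\otimes$-bimodule compatibility of the splitting map guaranteed by Theorem \ref{thm:SplittingMapIsABimodHom}.
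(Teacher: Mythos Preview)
Your proposal is correct and follows essentially the same approach as the paper, which simply states that this corollary ``is an easy corollary of Proposition \ref{prop:splitting_reduced} and Corollary \ref{cor:fs_splitting_unreduced}'' without further elaboration. You supply the missing details---how the bad-arc quotient collapses the state sums in the unreduced relations---which is exactly the computation the paper leaves implicit.

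One small correction: there is no ``second flavor'' of the unreduced vertex-cone relation. Rather, the single unreduced relation in Corollary \ref{cor:fs_splitting_unreduced}(2) depends on a pair of states $\mu,\nu\in\{\pm\}$, and specializing to $\mu=\nu$ versus $\mu\neq\nu$ (after killing bad arcs in each $\overline{\SkAlg}(D_3)$ factor) produces relations \eqref{item:redSkMod2} and \eqref{item:redSkMod3}, respectively. In the mixed-state case, one of the two sign choices makes the left-hand arc bad, so it vanishes, and the surviving terms on the right-hand side yield the three-term identity \eqref{item:redSkMod3}. This is the mechanism you are gesturing at, just indexed by state choices rather than by distinct relation types.
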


Corollary \ref{cor:fs_splitting_reduced} is central to our construction of the 3d quantum trace map in Section \ref{sec:3dQuantumTrace}, 
as it allows us to define the quantum trace for each face suspension first and then glue them together.


\section{Structure theorems for 3-balls} \label{sec:struc_theorems}
In this section, we study skein modules of 3-balls whose boundary is combinatorially foliated. 
In particular, we will completely characterize the skein modules of 3-balls. 
Together with the splitting map studied in Section \ref{sec:SkeinModules}, the results of this section will constitute the main ingredient in our construction of the 3d quantum trace map in Section \ref{sec:3dQuantumTrace}.

\subsection{Stated skein modules of 3-balls}

\begin{lem}\label{lem:Sk(B)isCyclic}
Let $B$ be a 3-ball whose boundary is combinatorially foliated (in the sense of Definition \ref{defn:CombinatorialFoliation}), and let $\Gamma \subset \partial B = S^2$ be the associated boundary marking. 
Then, as a $\otimes_{v \in V(\Gamma)^{+}}\SkAlg(D_{\deg v})$-$\otimes_{w \in V(\Gamma)^{-}}\SkAlg(D_{\deg w})$-bimodule, $\Sk(B, \Gamma)$ is a cyclic module spanned by the empty skein $[\emptyset] \in \Sk(B, \Gamma)$. 
\end{lem}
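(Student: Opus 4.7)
The plan is to show that any stated tangle $L$ representing a class in $\Sk(B, \Gamma)$ can be rewritten, using skein relations together with the bimodule action of Proposition \ref{prop:skein_alg_structure}, as an element of the cyclic submodule generated by $[\emptyset]$. I will argue by induction on a complexity measure of $L$---for example, the number of crossings, plus the number of closed components, plus the number of arc components whose two endpoints lie on edges of $\Gamma$ that do not share a common vertex.

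First, I would apply the Kauffman bracket crossing relation to reduce to crossingless tangles and the loop relation $[\text{unknot}] = (-A^2 - A^{-2})[\emptyset]$ to eliminate closed components; what remains is a disjoint union of simple arcs with endpoints on $\Gamma$. Since $B$ is a $3$-ball, I can then isotope the entire tangle into a collar neighborhood of $\partial B$. For each arc whose endpoints lie on edges $e_1, e_2$ adjacent to a common vertex $v$, I would isotope the arc into the cone $CD_{\deg v}$ of $v$; this is possible because $B$ is simply connected and the two endpoints already sit in that cone. For an arc $\alpha$ whose endpoints lie on edges that do not share a vertex, I would iteratively apply the fourth defining skein relation of $\Sk(Y, \Gamma)$---the sum-over-states relation that replaces an arc running close and parallel to an edge of $\Gamma$ with a sum of two arcs ending on that edge with opposite states. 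Choosing a sequence $e_1 = e^{(0)}, e^{(1)}, \ldots, e^{(k)} = e_2$ in which consecutive edges share a vertex (such a sequence exists because the $\Gamma$ coming from a combinatorial foliation of $\partial B = S^2$ is connected), I would isotope $\alpha$ close to each intermediate $e^{(i)}$ and split there, producing a sum of tangles whose arc components each have endpoints on edges sharing a vertex, and therefore can be localized into a cone as above.

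After this decomposition, each summand is a disjoint union of local tangles, one inside each cone $CD_{\deg v}$. By Example \ref{eg:SkAlg(D_n)AsABimod}, $\Sk(CD_{\deg v}) \cong \SkAlg(D_{\deg v})$ as the regular bimodule over itself, for which $[\emptyset]$ is a cyclic generator. Tensoring across all vertices of $\Gamma$ realizes $[L]$ as $a \cdot [\emptyset] \cdot b$ for appropriate elements $a \in \bigotimes_{v \in V(\Gamma)^{+}} \SkAlg(D_{\deg v})$ and $b \in \bigotimes_{w \in V(\Gamma)^{-}} \SkAlg(D_{\deg w})$, which is what we want.

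The main obstacle I expect is the bookkeeping in the iterative splitting step. I need to verify that each application of the sum-over-states relation strictly reduces the complexity (so the induction terminates) and that the scalar factors $(-A^2)^{\mu/2}$ produced along the way combine correctly with the resulting local arcs to land in the image of the bimodule action on $[\emptyset]$. The key technical point is that, within the $3$-ball, once an arc has been cut into pieces whose endpoints lie on edges adjacent to a single vertex, the piece can be isotoped into the corresponding cone neighborhood without being obstructed by the other arcs of $L$; this should follow from the simple topology of $B$ (where tangles can be freely re-ordered in height and pushed apart) but needs a careful treatment to make sure no new ``long'' arcs are accidentally reintroduced during the isotopy.
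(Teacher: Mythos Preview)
Your approach is essentially correct and shares the same skeleton with the paper's proof: push the tangle to the boundary, then use the sum-over-states relation to break it into pieces that live near the vertices of $\Gamma$. The difference is in how the cutting is organized.

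The paper first punctures the center of $B$ and retracts to $S^2$ to obtain a diagram of $L$ on the boundary. It then chooses, once and for all, a generic leaf (a ``blue dashed line'') in each elementary quadrilateral of the combinatorial foliation, puts the diagram in general position with respect to these leaves, and applies the sum-over-states relation \emph{simultaneously} at every point where the diagram crosses a blue line. After this single cutting step, every component of the resulting diagram sits in the complement of the blue lines, which is already a disjoint union of star neighbourhoods of the vertices of $\Gamma$; no induction, no path-finding in $\Gamma$, and no separate treatment of crossings is needed (crossings simply end up inside some vertex neighbourhood and become part of the $\SkAlg(D_{\deg v})$ element). Your version instead resolves crossings first, then processes arcs one at a time by routing each along a path in $\Gamma$ and cutting at intermediate edges; this works but carries exactly the bookkeeping burden you flag, and you must also argue that $\Gamma$ is connected so that such paths exist. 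A further practical advantage of the paper's organization is that the same blue-line structure and the explicit element $\alpha_L$ it produces are reused in the proof of Theorem~\ref{thm:SkeinModuleOf3Ball} to pin down the annihilator of $[\emptyset]$, so the more uniform setup pays off downstream. One small ordering issue in your write-up: applying the crossing relation presupposes a diagram, so the projection to $\partial B$ should logically come before, not after, the resolution of crossings.
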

\begin{proof}
First, note that adding a puncture at the center of $B^3$ doesn't change the skein module, as any isotopy can be smoothly isotoped away from that puncture. 
Once we have added a puncture, then using the obvious deformation retraction of $B^3 \setminus \{0\}$ to its boundary $\partial B^3 = S^2$, we can isotope any tangle $L$ in $B^3\setminus \{0\}$ so that it is very close to the boundary. 
In other words, we will have a diagram of the tangle $L$ on the boundary. 

Recall that the boundary $\partial B^3 = S^2$ is equipped with a combinatorial foliation, meaning, after adding some punctures, it is decomposed into elementary quadrilaterals:
\[
\vcenter{\hbox{
\begin{tikzpicture}
\filldraw[very thick, draw=darkgray, fill=lightgray] (-1, 0) -- (0, 1) -- (1, 0) -- (0, -1) -- cycle;
\draw[very thick, blue, dashed] (-1, 0) -- (1, 0);
\filldraw[draw=black, fill=white] (1, 0) circle (0.1);
\filldraw[draw=black, fill=white] (-1, 0) circle (0.1);
\draw[very thick, orange] (0, -1) -- (0, 1);
\draw[very thick, orange, ->] (0, -1) -- (0, 0);
\filldraw[orange] (0, 1) circle (0.05);
\filldraw[orange] (0, -1) circle (0.05);
\end{tikzpicture}
}}.
\]
In the picture above, the blue dashed line is a generic leaf of the associated foliation, which will play an important role in our proof. 

Pick such blue dashed lines (i.e.\ generic leaves), one for each elementary quadrilateral of the combinatorial foliation. 
We will say that our tangle $L \subset B$ is in \emph{general position} with respect to the blue dashed lines if
\begin{enumerate}
\item its diagram on $S^2$ (i.e.\ projection to the boundary) is transverse to the blue dashed lines,
\item no crossing in the diagram of $L$ meets the blue dashed lines, and
\item at each intersection between the diagram of $L$ and the blue dashed lines, $L$ (as a ribbon tangle) is flat on $S^2$. 
\end{enumerate}
Note, in the moduli space of smoothly embedded tangles, generic points are in general position, and tangles which are in non-general positions are of real codimension $\geq 1$ in the moduli space. 
Hence, we can assume that $L$ is in general position with respect to the blue dashed lines. 

Then, in each elementary quadrilateral, we can isotope $L$ by pulling the intersection points toward the orange marking along the blue dashed line. 
We can then apply a sequence of stated skein relations to break up the strands in order to get a linear combination of tangle diagrams on $S^2$ which do not intersect with the blue dashed lines at all. 
See the figure below for an illustration:
\begin{gather*}
\vcenter{\hbox{
\begin{tikzpicture}[xscale=0.5, scale=1.7]
\filldraw[lightgray] (-2.5, -0.5) -- (-2.5, 0.5) -- (2.5, 0.5) -- (2.5, -0.5) -- cycle;
\draw[line width=3] (-1.5, -0.5) -- (-1.5, 0.5);
\draw[line width=3] (-1, -0.5) -- (-1, 0.5);
\draw[line width=3] (1.25, -0.5) -- (1.25, 0.5);
\draw[very thick, blue, dashed] (-2.5, 0) -- (2.5, 0);
\draw[very thick, orange] (0, -0.5) -- (0, 0.5);
\draw[very thick, orange, ->] (0, -0.5) -- (0, 0);
\end{tikzpicture}
}}
\;\;=\;\;
\sum_{\epsilon_1, \epsilon_2, \epsilon_3 \in \{\pm 1\}}(-A^2)^{\frac{\epsilon_1 + \epsilon_2 + \epsilon_3}{2}}
\vcenter{\hbox{
\begin{tikzpicture}[xscale=0.5, scale=1.7]
\filldraw[lightgray] (-2.5, -0.5) -- (-2.5, 0.5) -- (2.5, 0.5) -- (2.5, -0.5) -- cycle;
\draw[line width=3] (1.25, -0.5) to[out=90, in=0] (0, -0.1);
\draw[line width=3] (1.25, 0.5) to[out=-90, in=0] (0, 0.1);
\draw[line width=3] (-1.5, -0.5) to[out=90, in=180] (0, -0.25);
\draw[line width=3] (-1.5, 0.5) to[out=-90, in=180] (0, 0.25);
\draw[line width=3] (-1, -0.5) to[out=90, in=180] (0, -0.4);
\draw[line width=3] (-1, 0.5) to[out=-90, in=180] (0, 0.4);
\draw[very thick, blue, dashed] (-2.5, 0) -- (2.5, 0);
\draw[very thick, orange] (0, -0.5) -- (0, 0.5);
\draw[very thick, orange, ->] (0, -0.5) -- (0, 0);
\node[anchor=west] at (0, 0.4){$\epsilon_1$};
\node[anchor=west] at (0, 0.25){$\epsilon_2$};
\node[anchor=east] at (0, 0.1){$\epsilon_3$};
\node[anchor=east] at (0, -0.1){$-\epsilon_3$};
\node[anchor=west] at (0, -0.25){$-\epsilon_2$};
\node[anchor=west] at (0, -0.4){$-\epsilon_1$};
\end{tikzpicture}
}}.
\end{gather*}
Once we have expressed $[L] \in \Sk(B,\Gamma)$ as a linear combination of tangle diagrams on $S^2$ that do not intersect with the blue dashed lines, we are done, 
as then all the components of the tangle are localized near some vertices of the marking $\Gamma$. 
That is, $[L] = \alpha_L \cdot [\emptyset]$ for some 
$\alpha_L \in \bigotimes_{v \in V(\Gamma)^{+}}\SkAlg(D_{\deg v})\otimes\bigotimes_{w \in V(\Gamma)^{-}}\SkAlg(D_{\deg w})^{\mathrm{op}}$. 
\end{proof}

\begin{cor}
In the setup of Lemma \ref{lem:Sk(B)isCyclic}, let $\mathrm{Ann}([\emptyset])$ be the left ideal of $\bigotimes_{v \in V(\Gamma)^{+}}\SkAlg(D_{\deg v})\otimes\bigotimes_{w \in V(\Gamma)^{-}}\SkAlg(D_{\deg w})^{\mathrm{op}}$ consisting of elements annihilating $[\emptyset] \in \Sk(B, \Gamma)$. 
Then, 
\[
\Sk(B, \Gamma) \cong 
\frac{\bigotimes_{v \in V(\Gamma)^{+}}\SkAlg(D_{\deg v})\otimes\bigotimes_{w \in V(\Gamma)^{-}}\SkAlg(D_{\deg w})^{\mathrm{op}}}{\mathrm{Ann}([\emptyset])}.
\]
\end{cor}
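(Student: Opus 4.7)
The plan is to derive this corollary as an immediate application of the standard first isomorphism theorem for cyclic modules, using the content already established in Lemma \ref{lem:Sk(B)isCyclic} as the essential input.

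First, I would reinterpret the bimodule structure on $\Sk(B,\Gamma)$ as a left module structure. Writing $A^{+} := \bigotimes_{v \in V(\Gamma)^{+}}\SkAlg(D_{\deg v})$ and $A^{-} := \bigotimes_{w \in V(\Gamma)^{-}}\SkAlg(D_{\deg w})$, the $A^{+}$-$A^{-}$-bimodule structure from Proposition \ref{prop:skein_alg_structure} is equivalent to a left $A$-module structure for $A := A^{+} \otimes (A^{-})^{\mathrm{op}}$, via the standard identification in which the right action of $\beta \in A^{-}$ corresponds to the left action of $\beta$ viewed as an element of $(A^{-})^{\mathrm{op}}$. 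Under this reinterpretation, $\mathrm{Ann}([\emptyset])$ is the usual left annihilator ideal of the element $[\emptyset] \in \Sk(B,\Gamma)$.

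Next, I would consider the evaluation map
\[
\phi : A \longrightarrow \Sk(B,\Gamma), \qquad \alpha \longmapsto \alpha \cdot [\emptyset].
\]
By construction $\phi$ is a homomorphism of left $A$-modules, and its kernel is $\mathrm{Ann}([\emptyset])$ by definition. Lemma \ref{lem:Sk(B)isCyclic} states precisely that $\Sk(B,\Gamma)$ is a cyclic bimodule generated by $[\emptyset]$, which translates to the statement that $\phi$ is surjective. The first isomorphism theorem then yields the desired isomorphism
\[
\Sk(B,\Gamma) \;\cong\; A/\mathrm{Ann}([\emptyset]).
\]

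There is no real obstacle here: all substantive content lies in Lemma \ref{lem:Sk(B)isCyclic}, whose proof already extracts a generic leaf of the foliation, isotopes the tangle transverse to it, and resolves crossings with the leaf via the stated skein relations to localize every tangle near the vertices of $\Gamma$. The only thing to be mildly careful about is the bookkeeping involved in passing between the bimodule and left-module viewpoints, and in checking that $\phi$ really is a well-defined homomorphism of left $A$-modules --- both of which follow automatically from the bimodule structure recorded in Proposition \ref{prop:skein_alg_structure}.
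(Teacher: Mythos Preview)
Your proposal is correct and matches the paper's approach: the corollary is stated there without proof, precisely because it is the standard first-isomorphism-theorem consequence of the cyclicity established in Lemma~\ref{lem:Sk(B)isCyclic}, exactly as you argue.
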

In the rest of this subsection, we will determine the ideal $\mathrm{Ann}([\emptyset])$.

\begin{thm}\label{thm:SkeinModuleOf3Ball}
In the setup of Lemma \ref{lem:Sk(B)isCyclic}, the skein module of $(B,\Gamma)$ has the following presentation: 
\[
\Sk(B, \Gamma) \cong 
\frac{\bigotimes_{v \in V(\Gamma)^{+}}\SkAlg(D_{\deg v})\otimes\bigotimes_{w \in V(\Gamma)^{-}}\SkAlg(D_{\deg w})^{\mathrm{op}}}{\mathrm{Ann}([\emptyset])},
\]
where $\mathrm{Ann}([\emptyset])$ is the left ideal generated by the following relations, one for each puncture of the combinatorial foliation of $\partial B$:\footnote{Here, we are drawing a hexagon for the sake of concreteness.In general, the face of $S^2 \setminus \Gamma$ associated to the puncture can be any $2n$-gon for some $n$, in which case we need to sum over $\epsilon_1, \cdots, \epsilon_{2n-2} \in \{\pm\}$. }  
\begin{equation}\label{eq:GeneratorsOfAnnhilator}
\vcenter{\hbox{
\begin{tikzpicture}
\draw[line width=5] (-3/16*2, {sqrt(3)*7/16*2}) to[out=-60, in=-120] (3/16*2, {sqrt(3)*7/16*2});
\node[anchor=south] at (-3/16*2, {sqrt(3)*7/16*2}){$\mu$};
\node[anchor=south] at (3/16*2, {sqrt(3)*7/16*2}){$\nu$};
\draw[blue, dashed] (0, 0) -- (1*2, 0);
\draw[blue, dashed] (0, 0) -- (-1*2, 0);
\draw[blue, dashed] (0, 0) -- (1/2*2, {sqrt(3)/2*2});
\draw[blue, dashed] (0, 0) -- (-1/2*2, {sqrt(3)/2*2});
\draw[blue, dashed] (0, 0) -- (1/2*2, {-sqrt(3)/2*2});
\draw[blue, dashed] (0, 0) -- (-1/2*2, {-sqrt(3)/2*2});
\draw[orange, very thick] (-3/4*2, {sqrt(3)/4*2}) -- (0, {sqrt(3)/2*2});
\draw[orange, very thick, ->] (-3/4*2, {sqrt(3)/4*2}) -- (-3/8*2, {sqrt(3)*3/8*2});
\draw[orange, very thick] (3/4*2, {sqrt(3)/4*2}) -- (0, {sqrt(3)/2*2});
\draw[orange, very thick, ->] (3/4*2, {sqrt(3)/4*2}) -- (3/8*2, {sqrt(3)*3/8*2});
\draw[orange, very thick] (3/4*2, {sqrt(3)/4*2}) -- (3/4*2, {-sqrt(3)/4*2});
\draw[orange, very thick, ->] (3/4*2, {sqrt(3)/4*2}) -- (3/4*2, 0);
\draw[orange, very thick] (0, {-sqrt(3)/2*2}) -- (3/4*2, {-sqrt(3)/4*2});
\draw[orange, very thick, ->] (0, {-sqrt(3)/2*2}) -- (3/8*2, {-sqrt(3)*3/8*2});
\draw[orange, very thick] (0, {-sqrt(3)/2*2}) -- (-3/4*2, {-sqrt(3)/4*2});
\draw[orange, very thick, ->] (0, {-sqrt(3)/2*2}) -- (-3/8*2, {-sqrt(3)*3/8*2});
\draw[orange, very thick] (-3/4*2, {sqrt(3)/4*2}) -- (-3/4*2, {-sqrt(3)/4*2});
\draw[orange, very thick, ->] (-3/4*2, {sqrt(3)/4*2}) -- (-3/4*2, 0);
\filldraw[fill=white] (0, 0) circle (0.1);
\filldraw[orange] (-3/4*2, {sqrt(3)/4*2}) circle (0.07);
\filldraw[orange] (3/4*2, {sqrt(3)/4*2}) circle (0.07);
\filldraw[orange] (0, {sqrt(3)/2*2}) circle (0.07);
\filldraw[orange] (0, {-sqrt(3)/2*2}) circle (0.07);
\filldraw[orange] (3/4*2, {-sqrt(3)/4*2}) circle (0.07);
\filldraw[orange] (-3/4*2, {-sqrt(3)/4*2}) circle (0.07);
\end{tikzpicture}
}}
\;\;=\;\;
\sum_{\epsilon_i \in \{\pm\}}(-A^2)^{\frac{\sum_{i} \epsilon_i}{2}}
\vcenter{\hbox{
\begin{tikzpicture}
\draw[line width=5] (-3*3/16*2, {sqrt(3)*5/16*2}) to[out=-60, in=0] (-3/4*2, {sqrt(3)/8*2});
\node[anchor=south] at (-3*3/16*2, {sqrt(3)*5/16*2}){$\mu$};
\node[anchor=east] at (-3/4*2, {sqrt(3)/8*2}){$-\epsilon_1$};
\draw[line width=5] (3/4*2, {sqrt(3)/8*2}) to[out=180, in=-120] (3*3/16*2, {sqrt(3)*5/16*2});
\node[anchor=south] at (3*3/16*2, {sqrt(3)*5/16*2}){$\nu$};
\node[anchor=west] at (3/4*2, {sqrt(3)/8*2}){$-\epsilon_4$};
\draw[line width=5] (-3*3/16*2, {-sqrt(3)*5/16*2}) to[out=60, in=0] (-3/4*2, {-sqrt(3)/8*2});
\node[anchor=east] at (-3/4*2, {-sqrt(3)/8*2}){$\epsilon_1$};
\node[anchor=north] at (-3*3/16*2, {-sqrt(3)*5/16*2}){$\epsilon_2$};
\draw[line width=5] (3/4*2, {-sqrt(3)/8*2}) to[out=180, in=120] (3*3/16*2, {-sqrt(3)*5/16*2});
\node[anchor=north] at (3*3/16*2, {-sqrt(3)*5/16*2}){$\epsilon_3$};
\node[anchor=west] at (3/4*2, {-sqrt(3)/8*2}){$\epsilon_4$};
\draw[line width=5] (-3/16*2, {-sqrt(3)*7/16*2}) to[out=60, in=120] (3/16*2, {-sqrt(3)*7/16*2});
\node[anchor=north] at (-3/16*2-0.1, {-sqrt(3)*7/16*2}){$-\epsilon_2$};
\node[anchor=north] at (3/16*2, {-sqrt(3)*7/16*2}){$-\epsilon_3$};
\draw[blue, dashed] (0, 0) -- (1*2, 0);
\draw[blue, dashed] (0, 0) -- (-1*2, 0);
\draw[blue, dashed] (0, 0) -- (1/2*2, {sqrt(3)/2*2});
\draw[blue, dashed] (0, 0) -- (-1/2*2, {sqrt(3)/2*2});
\draw[blue, dashed] (0, 0) -- (1/2*2, {-sqrt(3)/2*2});
\draw[blue, dashed] (0, 0) -- (-1/2*2, {-sqrt(3)/2*2});
\draw[orange, very thick] (-3/4*2, {sqrt(3)/4*2}) -- (0, {sqrt(3)/2*2});
\draw[orange, very thick, ->] (-3/4*2, {sqrt(3)/4*2}) -- (-3/8*2, {sqrt(3)*3/8*2});
\draw[orange, very thick] (3/4*2, {sqrt(3)/4*2}) -- (0, {sqrt(3)/2*2});
\draw[orange, very thick, ->] (3/4*2, {sqrt(3)/4*2}) -- (3/8*2, {sqrt(3)*3/8*2});
\draw[orange, very thick] (3/4*2, {sqrt(3)/4*2}) -- (3/4*2, {-sqrt(3)/4*2});
\draw[orange, very thick, ->] (3/4*2, {sqrt(3)/4*2}) -- (3/4*2, 0);
\draw[orange, very thick] (0, {-sqrt(3)/2*2}) -- (3/4*2, {-sqrt(3)/4*2});
\draw[orange, very thick, ->] (0, {-sqrt(3)/2*2}) -- (3/8*2, {-sqrt(3)*3/8*2});
\draw[orange, very thick] (0, {-sqrt(3)/2*2}) -- (-3/4*2, {-sqrt(3)/4*2});
\draw[orange, very thick, ->] (0, {-sqrt(3)/2*2}) -- (-3/8*2, {-sqrt(3)*3/8*2});
\draw[orange, very thick] (-3/4*2, {sqrt(3)/4*2}) -- (-3/4*2, {-sqrt(3)/4*2});
\draw[orange, very thick, ->] (-3/4*2, {sqrt(3)/4*2}) -- (-3/4*2, 0);
\filldraw[fill=white] (0, 0) circle (0.1);
\filldraw[orange] (-3/4*2, {sqrt(3)/4*2}) circle (0.07);
\filldraw[orange] (3/4*2, {sqrt(3)/4*2}) circle (0.07);
\filldraw[orange] (0, {sqrt(3)/2*2}) circle (0.07);
\filldraw[orange] (0, {-sqrt(3)/2*2}) circle (0.07);
\filldraw[orange] (3/4*2, {-sqrt(3)/4*2}) circle (0.07);
\filldraw[orange] (-3/4*2, {-sqrt(3)/4*2}) circle (0.07);
\end{tikzpicture}
}}
\;.
\end{equation}
\end{thm}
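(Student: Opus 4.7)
The plan is to identify $\ker\phi$, where $\phi\colon M \twoheadrightarrow \Sk(B, \Gamma)$ is the surjection $\alpha \mapsto \alpha\cdot [\emptyset]$ provided by Lemma \ref{lem:Sk(B)isCyclic}, with $M := \bigotimes_{v \in V(\Gamma)^{+}}\SkAlg(D_{\deg v})\otimes\bigotimes_{w \in V(\Gamma)^{-}}\SkAlg(D_{\deg w})^{\mathrm{op}}$. Let $I \subset M$ denote the left ideal generated by the elements $\mathrm{LHS}-\mathrm{RHS}$ of \eqref{eq:GeneratorsOfAnnhilator}. The theorem asserts $\ker\phi = I$, so I need to show both inclusions.

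First I would dispatch the easy direction $I \subset \ker\phi$. Fix a puncture $p$ of the combinatorial foliation, view the stated arc on the LHS of \eqref{eq:GeneratorsOfAnnhilator} as lying on $\partial B$, and use the deformation retraction of $B\setminus\{\mathrm{pt}\}$ onto $\partial B$ together with the pull-through procedure of Lemma \ref{lem:Sk(B)isCyclic} to slide the arc all the way around $p$ through the surrounding elementary quadrilaterals. Each time the arc crosses an orange edge, applying the biangle-style stated skein relation inserts a sum over a state on the edge. Running this to completion produces exactly the RHS, so both sides represent the same element of $\Sk(B,\Gamma)$ and their difference lies in $\ker\phi$.

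For the reverse inclusion, I would build a well-defined inverse $\psi\colon \Sk(B,\Gamma) \to M/I$. Given a stated tangle $L\subset B$, push it to $\partial B$ via the deformation retract and then run the algorithm of Lemma \ref{lem:Sk(B)isCyclic}: choose one generic (blue dashed) leaf in each elementary quadrilateral, pull all intersection points of the diagram toward the orange vertex along that leaf, and split each resulting strand at the leaf via the stated skein relations. This produces an element $\alpha_L \in M$ with $\alpha_L\cdot[\emptyset]=[L]$, and I would set $\psi([L]) := [\alpha_L]\in M/I$. If $\psi$ is well-defined, then $\psi\circ\phi$ is the identity on $M/I$, so $\ker\phi \subset I$ and we are done.

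The main obstacle is the well-definedness of $\psi$. Two choices entering the algorithm produce values of $\alpha_L$ that a priori might differ: (a) local reroutings confined to the interior of a single elementary quadrilateral, together with reorderings of the resolutions, and (b) isotopies that carry a strand around a puncture $p$. Class (a) is automatic: the resulting discrepancies collapse to identities in the algebras $\SkAlg(D_{\deg v})$ via the stated skein relations, exactly as in the 2d theory of Bonahon--Wong and Costantino--Le. Class (b) is the genuinely three-dimensional content, and I expect that the discrepancy produced by going around $p$ in the two possible ways is precisely the $p$-th generator of $I$ — a local calculation that mirrors the easy direction above. Once this is verified (e.g.\ by inducting on the number of elementary quadrilaterals in the foliation and cutting along a disk through a singular leaf adjacent to $p$ using Theorem \ref{thm:partialSplittingHomomorphism}), and once the stated skein relations applied directly to $L$ are checked to descend to relations already built into $M$, the map $\psi$ is well-defined on $M/I$ and the theorem follows.
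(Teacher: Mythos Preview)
Your strategy matches the paper's exactly: show $I \subset \ker\phi$ by sliding the arc around the puncture, then define $\psi([L]) = [\alpha_L] \in M/I$ via the algorithm of Lemma~\ref{lem:Sk(B)isCyclic} and verify well-definedness under isotopy, so that $\psi\circ\phi = \mathrm{id}$. The paper differs only in execution: rather than appealing to 2d theory for class (a) or to induction and partial splitting for class (b), it explicitly enumerates six isotopy types (I)--(VI) of tangle diagrams on $\partial B$ relative to the blue leaves, checks $\alpha_L$-invariance under (I)--(V) by direct skein computations in $M$ (including a separate check that the order of strand-breaking is irrelevant), and handles the puncture-crossing move (VI) by a direct computation showing $\alpha_{\mathrm{LHS}} \equiv \alpha_{\mathrm{RHS}} \pmod{I}$.
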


\begin{rmk}
The above relations can be equivalently written in a more symmetrical form as
\[
\sum_{\epsilon_i \in \{\pm\}}(-A^2)^{\frac{\sum_{i} \epsilon_i}{2}}
\vcenter{\hbox{
\begin{tikzpicture}
\draw[line width=5] (-3/16*2, {sqrt(3)*7/16*2}) to[out=-60, in=-120] (3/16*2, {sqrt(3)*7/16*2});
\node[anchor=south] at (-3/16*2, {sqrt(3)*7/16*2}){$\mu$};
\node[anchor=south] at (3/16*2, {sqrt(3)*7/16*2}){$\epsilon_5$};
\draw[line width=5] (-3*3/16*2, {sqrt(3)*5/16*2}) to[out=-60, in=0] (-3/4*2, {sqrt(3)/8*2});
\node[anchor=south] at (-3*3/16*2, {sqrt(3)*5/16*2}){$\nu$};
\node[anchor=east] at (-3/4*2, {sqrt(3)/8*2}){$-\epsilon_1$};
\draw[line width=5] (3/4*2, {sqrt(3)/8*2}) to[out=180, in=-120] (3*3/16*2, {sqrt(3)*5/16*2});
\node[anchor=south] at (3*3/16*2, {sqrt(3)*5/16*2}){$-\epsilon_5$};
\node[anchor=west] at (3/4*2, {sqrt(3)/8*2}){$-\epsilon_4$};
\draw[line width=5] (-3*3/16*2, {-sqrt(3)*5/16*2}) to[out=60, in=0] (-3/4*2, {-sqrt(3)/8*2});
\node[anchor=east] at (-3/4*2, {-sqrt(3)/8*2}){$\epsilon_1$};
\node[anchor=north] at (-3*3/16*2, {-sqrt(3)*5/16*2}){$\epsilon_2$};
\draw[line width=5] (3/4*2, {-sqrt(3)/8*2}) to[out=180, in=120] (3*3/16*2, {-sqrt(3)*5/16*2});
\node[anchor=north] at (3*3/16*2, {-sqrt(3)*5/16*2}){$\epsilon_3$};
\node[anchor=west] at (3/4*2, {-sqrt(3)/8*2}){$\epsilon_4$};
\draw[line width=5] (-3/16*2, {-sqrt(3)*7/16*2}) to[out=60, in=120] (3/16*2, {-sqrt(3)*7/16*2});
\node[anchor=north] at (-3/16*2-0.1, {-sqrt(3)*7/16*2}){$-\epsilon_2$};
\node[anchor=north] at (3/16*2, {-sqrt(3)*7/16*2}){$-\epsilon_3$};
\draw[blue, dashed] (0, 0) -- (1*2, 0);
\draw[blue, dashed] (0, 0) -- (-1*2, 0);
\draw[blue, dashed] (0, 0) -- (1/2*2, {sqrt(3)/2*2});
\draw[blue, dashed] (0, 0) -- (-1/2*2, {sqrt(3)/2*2});
\draw[blue, dashed] (0, 0) -- (1/2*2, {-sqrt(3)/2*2});
\draw[blue, dashed] (0, 0) -- (-1/2*2, {-sqrt(3)/2*2});
\draw[orange, very thick] (-3/4*2, {sqrt(3)/4*2}) -- (0, {sqrt(3)/2*2});
\draw[orange, very thick, ->] (-3/4*2, {sqrt(3)/4*2}) -- (-3/8*2, {sqrt(3)*3/8*2});
\draw[orange, very thick] (3/4*2, {sqrt(3)/4*2}) -- (0, {sqrt(3)/2*2});
\draw[orange, very thick, ->] (3/4*2, {sqrt(3)/4*2}) -- (3/8*2, {sqrt(3)*3/8*2});
\draw[orange, very thick] (3/4*2, {sqrt(3)/4*2}) -- (3/4*2, {-sqrt(3)/4*2});
\draw[orange, very thick, ->] (3/4*2, {sqrt(3)/4*2}) -- (3/4*2, 0);
\draw[orange, very thick] (0, {-sqrt(3)/2*2}) -- (3/4*2, {-sqrt(3)/4*2});
\draw[orange, very thick, ->] (0, {-sqrt(3)/2*2}) -- (3/8*2, {-sqrt(3)*3/8*2});
\draw[orange, very thick] (0, {-sqrt(3)/2*2}) -- (-3/4*2, {-sqrt(3)/4*2});
\draw[orange, very thick, ->] (0, {-sqrt(3)/2*2}) -- (-3/8*2, {-sqrt(3)*3/8*2});
\draw[orange, very thick] (-3/4*2, {sqrt(3)/4*2}) -- (-3/4*2, {-sqrt(3)/4*2});
\draw[orange, very thick, ->] (-3/4*2, {sqrt(3)/4*2}) -- (-3/4*2, 0);
\filldraw[fill=white] (0, 0) circle (0.1);
\filldraw[orange] (-3/4*2, {sqrt(3)/4*2}) circle (0.07);
\filldraw[orange] (3/4*2, {sqrt(3)/4*2}) circle (0.07);
\filldraw[orange] (0, {sqrt(3)/2*2}) circle (0.07);
\filldraw[orange] (0, {-sqrt(3)/2*2}) circle (0.07);
\filldraw[orange] (3/4*2, {-sqrt(3)/4*2}) circle (0.07);
\filldraw[orange] (-3/4*2, {-sqrt(3)/4*2}) circle (0.07);
\end{tikzpicture}
}}
\;\;=\;\;
\delta_{\mu, -\nu}\,(-A^2)^{\frac{\mu}{2}}
\;.
\]
\end{rmk}

\begin{proof}[Proof of Theorem \ref{thm:SkeinModuleOf3Ball}]
Firstly, it is clear that the left ideal $I$ generated by the elements in 
$\bigotimes_{v \in V(\Gamma)^{+}}\SkAlg(D_{\deg v})\otimes\bigotimes_{w \in V(\Gamma)^{-}}\SkAlg(D_{\deg w})^{\mathrm{op}}$
given by the difference between the LHS and RHS of the relations \eqref{eq:GeneratorsOfAnnhilator} above is contained in $\mathrm{Ann}([\emptyset])$, 
as one can apply a sequence of stated skein relations in $\Sk(B,\Gamma)$ to turn the LHS of the relation into the one on the RHS. 
That is, we have a surjective homomorphism
\[
f : \frac{\bigotimes_{v \in V(\Gamma)^{+}}\SkAlg(D_{\deg v})\otimes\bigotimes_{w \in V(\Gamma)^{-}}\SkAlg(D_{\deg w})^{\mathrm{op}}}{I}
\twoheadrightarrow 
\Sk(B,\Gamma). 
\]
To show that this is an isomorphism, 
it suffices to construct a map
\begin{equation}\label{eq:gmap}
g : \Sk(B,\Gamma) \rightarrow \frac{\bigotimes_{v \in V(\Gamma)^{+}}\SkAlg(D_{\deg v})\otimes\bigotimes_{w \in V(\Gamma)^{-}}\SkAlg(D_{\deg w})^{\mathrm{op}}}{I}
\end{equation}
such that $g\circ f$ is the identity map. 

In the proof of Lemma \ref{lem:Sk(B)isCyclic}, we have described an algorithm that, given a tangle diagram of $L$ on $S^2$ in general position, produces an element
\[
\alpha_L  \in \bigotimes_{v \in V(\Gamma)^{+}}\SkAlg(D_{\deg v})\otimes\bigotimes_{w \in V(\Gamma)^{-}}\SkAlg(D_{\deg w})^{\mathrm{op}}
\]
such that
\[
\alpha_L \cdot [\emptyset] = [L].
\]
To be more precise, a priori, this algorithm produces such an element $\alpha$ given a choice of ordering in which to break the strands up; we can either break up the strand to the left of the marking first or break up the one to the right of the marking first. 

We claim that the element $\alpha$ is independent of this choice of ordering in which we break up the strands. 
For this, we need to compare the following two different orderings in which we can break up the strands: 
\[
\vcenter{\hbox{
\begin{tikzpicture}[scale=1.3]
\filldraw[lightgray] (-2, -0.5) -- (-2, 0.5) -- (2, 0.5) -- (2, -0.5) -- cycle;
\draw[line width=3] (-1, -0.5) -- (-1, 0.5);
\draw[line width=3] (1, -0.5) -- (1, 0.5);
\draw[very thick, blue, dashed] (-2, 0) -- (2, 0);
\draw[very thick, orange] (0, -0.5) -- (0, 0.5);
\draw[very thick, orange, ->] (0, -0.5) -- (0, 0);
\end{tikzpicture}
}}
\;\;\rightarrow\;\;
\sum_{\mu, \nu\in \{\pm\}}
(-A^2)^{\frac{\mu+\nu}{2}}
\vcenter{\hbox{
\begin{tikzpicture}[scale=1.3]
\filldraw[lightgray] (-2, -0.5) -- (-2, 0.5) -- (2, 0.5) -- (2, -0.5) -- cycle;
\draw[line width=3] (-1, -0.5) to[out=90, in=180] (0, -0.3);
\draw[line width=3] (-1, 0.5) to[out=-90, in=180] (0, 0.3);
\draw[line width=3] (1, -0.5) to[out=90, in=0] (0, -0.1);
\draw[line width=3] (1, 0.5) to[out=-90, in=0] (0, 0.1);
\draw[very thick, blue, dashed] (-2, 0) -- (2, 0);
\draw[very thick, orange] (0, -0.5) -- (0, 0.5);
\draw[very thick, orange, ->] (0, -0.5) -- (0, 0);
\node[anchor=west] at (0, 0.3){$\mu$};
\node[anchor=east] at (0, 0.1){$\nu$};
\node[anchor=east] at (0, -0.1){$-\nu$};
\node[anchor=west] at (0, -0.3){$-\mu$};
\end{tikzpicture}
}}
\]
or
\[
\vcenter{\hbox{
\begin{tikzpicture}[scale=1.3]
\filldraw[lightgray] (-2, -0.5) -- (-2, 0.5) -- (2, 0.5) -- (2, -0.5) -- cycle;
\draw[line width=3] (-1, -0.5) -- (-1, 0.5);
\draw[line width=3] (1, -0.5) -- (1, 0.5);
\draw[very thick, blue, dashed] (-2, 0) -- (2, 0);
\draw[very thick, orange] (0, -0.5) -- (0, 0.5);
\draw[very thick, orange, ->] (0, -0.5) -- (0, 0);
\end{tikzpicture}
}}
\;\;\rightarrow\;\;
\sum_{\mu, \nu\in \{\pm\}}
(-A^2)^{\frac{\mu+\nu}{2}}
\vcenter{\hbox{
\begin{tikzpicture}[scale=1.3]
\filldraw[lightgray] (-2, -0.5) -- (-2, 0.5) -- (2, 0.5) -- (2, -0.5) -- cycle;
\draw[line width=3] (-1, -0.5) to[out=90, in=180] (0, -0.1);
\draw[line width=3] (-1, 0.5) to[out=-90, in=180] (0, 0.1);
\draw[line width=3] (1, -0.5) to[out=90, in=0] (0, -0.3);
\draw[line width=3] (1, 0.5) to[out=-90, in=0] (0, 0.3);
\draw[very thick, blue, dashed] (-2, 0) -- (2, 0);
\draw[very thick, orange] (0, -0.5) -- (0, 0.5);
\draw[very thick, orange, ->] (0, -0.5) -- (0, 0);
\node[anchor=west] at (0, 0.1){$\mu$};
\node[anchor=east] at (0, 0.3){$\nu$};
\node[anchor=east] at (0, -0.3){$-\nu$};
\node[anchor=west] at (0, -0.1){$-\mu$};
\end{tikzpicture}
}}.
\]
That is, if $v$ and $w$ are the sink and the source vertex of the orange marking in the figure, we need to check that the two right-hand side figures represent the same element in 
$\SkAlg(D_{\deg v}) \otimes \SkAlg(D_{\deg w})^{\mathrm{op}}$, 
which is straightforward using Lemma \ref{lem:StatedSkeinRelLemma}:  
\begin{align*}
&\sum_{\mu, \nu\in \{\pm\}}
(-A^2)^{\frac{\mu+\nu}{2}}
\vcenter{\hbox{
\begin{tikzpicture}[xscale=0.7, scale=1.3]
\filldraw[lightgray] (-1.5, -0.5) -- (-1.5, 0.5) -- (1.5, 0.5) -- (1.5, -0.5) -- cycle;
\draw[line width=3] (-1, -0.5) to[out=90, in=180] (0, -0.3);
\draw[line width=3] (-1, 0.5) to[out=-90, in=180] (0, 0.3);
\draw[line width=3] (1, -0.5) to[out=90, in=0] (0, -0.1);
\draw[line width=3] (1, 0.5) to[out=-90, in=0] (0, 0.1);
\draw[very thick, blue, dashed] (-1.5, 0) -- (1.5, 0);
\draw[very thick, orange] (0, -0.5) -- (0, 0.5);
\draw[very thick, orange, ->] (0, -0.5) -- (0, 0);
\node[anchor=west] at (0, 0.3){$\mu$};
\node[anchor=east] at (0, 0.1){$\nu$};
\node[anchor=east] at (0, -0.1){$-\nu$};
\node[anchor=west] at (0, -0.3){$-\mu$};
\end{tikzpicture}
}}
-
\sum_{\mu, \nu\in \{\pm\}}
(-A^2)^{\frac{\mu+\nu}{2}}
\vcenter{\hbox{
\begin{tikzpicture}[xscale=0.7, scale=1.3]
\filldraw[lightgray] (-1.5, -0.5) -- (-1.5, 0.5) -- (1.5, 0.5) -- (1.5, -0.5) -- cycle;
\draw[line width=3] (-1, -0.5) to[out=90, in=180] (0, -0.1);
\draw[line width=3] (-1, 0.5) to[out=-90, in=180] (0, 0.1);
\draw[line width=3] (1, -0.5) to[out=90, in=0] (0, -0.3);
\draw[line width=3] (1, 0.5) to[out=-90, in=0] (0, 0.3);
\draw[very thick, blue, dashed] (-1.5, 0) -- (1.5, 0);
\draw[very thick, orange] (0, -0.5) -- (0, 0.5);
\draw[very thick, orange, ->] (0, -0.5) -- (0, 0);
\node[anchor=west] at (0, 0.1){$\mu$};
\node[anchor=east] at (0, 0.3){$\nu$};
\node[anchor=east] at (0, -0.3){$-\nu$};
\node[anchor=west] at (0, -0.1){$-\mu$};
\end{tikzpicture}
}}
\\
&= \sum_{\mu\in \{\pm\}}
(-A^2)^{\mu}
\vcenter{\hbox{
\begin{tikzpicture}[xscale=0.7, scale=1.3]
\filldraw[lightgray] (-1.5, -0.5) -- (-1.5, 0.5) -- (1.5, 0.5) -- (1.5, -0.5) -- cycle;
\draw[line width=3] (-1, -0.5) to[out=90, in=180] (0, -0.3);
\draw[line width=3] (-1, 0.5) to[out=-90, in=180] (0, 0.3);
\draw[line width=3] (1, -0.5) to[out=90, in=0] (0, -0.1);
\draw[line width=3] (1, 0.5) to[out=-90, in=0] (0, 0.1);
\draw[very thick, blue, dashed] (-1.5, 0) -- (1.5, 0);
\draw[very thick, orange] (0, -0.5) -- (0, 0.5);
\draw[very thick, orange, ->] (0, -0.5) -- (0, 0);
\node[anchor=west] at (0, 0.3){$\mu$};
\node[anchor=east] at (0, 0.1){$\mu$};
\node[anchor=east] at (0, -0.1){$-\mu$};
\node[anchor=west] at (0, -0.3){$-\mu$};
\end{tikzpicture}
}}
+
\vcenter{\hbox{
\begin{tikzpicture}[xscale=0.7, scale=1.3]
\filldraw[lightgray] (-1.5, -0.5) -- (-1.5, 0.5) -- (1.5, 0.5) -- (1.5, -0.5) -- cycle;
\draw[line width=3] (-1, -0.5) to[out=90, in=180] (0, -0.3);
\draw[line width=3] (-1, 0.5) to[out=-90, in=180] (0, 0.3);
\draw[line width=3] (1, -0.5) to[out=90, in=0] (0, -0.1);
\draw[line width=3] (1, 0.5) to[out=-90, in=0] (0, 0.1);
\draw[very thick, blue, dashed] (-1.5, 0) -- (1.5, 0);
\draw[very thick, orange] (0, -0.5) -- (0, 0.5);
\draw[very thick, orange, ->] (0, -0.5) -- (0, 0);
\node[anchor=west] at (0, 0.3){$+$};
\node[anchor=east] at (0, 0.1){$-$};
\node[anchor=east] at (0, -0.1){$+$};
\node[anchor=west] at (0, -0.3){$-$};
\end{tikzpicture}
}}
+
\vcenter{\hbox{
\begin{tikzpicture}[xscale=0.7, scale=1.3]
\filldraw[lightgray] (-1.5, -0.5) -- (-1.5, 0.5) -- (1.5, 0.5) -- (1.5, -0.5) -- cycle;
\draw[line width=3] (-1, -0.5) to[out=90, in=180] (0, -0.3);
\draw[line width=3] (-1, 0.5) to[out=-90, in=180] (0, 0.3);
\draw[line width=3] (1, -0.5) to[out=90, in=0] (0, -0.1);
\draw[line width=3] (1, 0.5) to[out=-90, in=0] (0, 0.1);
\draw[very thick, blue, dashed] (-1.5, 0) -- (1.5, 0);
\draw[very thick, orange] (0, -0.5) -- (0, 0.5);
\draw[very thick, orange, ->] (0, -0.5) -- (0, 0);
\node[anchor=west] at (0, 0.3){$-$};
\node[anchor=east] at (0, 0.1){$+$};
\node[anchor=east] at (0, -0.1){$-$};
\node[anchor=west] at (0, -0.3){$+$};
\end{tikzpicture}
}}\\
&\quad -
\sum_{\mu\in \{\pm\}}
(-A^2)^{\mu}
\vcenter{\hbox{
\begin{tikzpicture}[xscale=0.7, scale=1.3]
\filldraw[lightgray] (-1.5, -0.5) -- (-1.5, 0.5) -- (1.5, 0.5) -- (1.5, -0.5) -- cycle;
\draw[line width=3] (-1, -0.5) to[out=90, in=180] (0, -0.1);
\draw[line width=3] (-1, 0.5) to[out=-90, in=180] (0, 0.1);
\draw[line width=3] (1, -0.5) to[out=90, in=0] (0, -0.3);
\draw[line width=3] (1, 0.5) to[out=-90, in=0] (0, 0.3);
\draw[very thick, blue, dashed] (-1.5, 0) -- (1.5, 0);
\draw[very thick, orange] (0, -0.5) -- (0, 0.5);
\draw[very thick, orange, ->] (0, -0.5) -- (0, 0);
\node[anchor=west] at (0, 0.1){$\mu$};
\node[anchor=east] at (0, 0.3){$\mu$};
\node[anchor=east] at (0, -0.3){$-\mu$};
\node[anchor=west] at (0, -0.1){$-\mu$};
\end{tikzpicture}
}}
-
\vcenter{\hbox{
\begin{tikzpicture}[xscale=0.7, scale=1.3]
\filldraw[lightgray] (-1.5, -0.5) -- (-1.5, 0.5) -- (1.5, 0.5) -- (1.5, -0.5) -- cycle;
\draw[line width=3] (-1, -0.5) to[out=90, in=180] (0, -0.1);
\draw[line width=3] (-1, 0.5) to[out=-90, in=180] (0, 0.1);
\draw[line width=3] (1, -0.5) to[out=90, in=0] (0, -0.3);
\draw[line width=3] (1, 0.5) to[out=-90, in=0] (0, 0.3);
\draw[very thick, blue, dashed] (-1.5, 0) -- (1.5, 0);
\draw[very thick, orange] (0, -0.5) -- (0, 0.5);
\draw[very thick, orange, ->] (0, -0.5) -- (0, 0);
\node[anchor=west] at (0, 0.1){$+$};
\node[anchor=east] at (0, 0.3){$-$};
\node[anchor=east] at (0, -0.3){$+$};
\node[anchor=west] at (0, -0.1){$-$};
\end{tikzpicture}
}}
-
\vcenter{\hbox{
\begin{tikzpicture}[xscale=0.7, scale=1.3]
\filldraw[lightgray] (-1.5, -0.5) -- (-1.5, 0.5) -- (1.5, 0.5) -- (1.5, -0.5) -- cycle;
\draw[line width=3] (-1, -0.5) to[out=90, in=180] (0, -0.1);
\draw[line width=3] (-1, 0.5) to[out=-90, in=180] (0, 0.1);
\draw[line width=3] (1, -0.5) to[out=90, in=0] (0, -0.3);
\draw[line width=3] (1, 0.5) to[out=-90, in=0] (0, 0.3);
\draw[very thick, blue, dashed] (-1.5, 0) -- (1.5, 0);
\draw[very thick, orange] (0, -0.5) -- (0, 0.5);
\draw[very thick, orange, ->] (0, -0.5) -- (0, 0);
\node[anchor=west] at (0, 0.1){$-$};
\node[anchor=east] at (0, 0.3){$+$};
\node[anchor=east] at (0, -0.3){$-$};
\node[anchor=west] at (0, -0.1){$+$};
\end{tikzpicture}
}}\\
&= 
A^{-1}
\vcenter{\hbox{
\begin{tikzpicture}[xscale=0.7, scale=1.3]
\filldraw[lightgray] (-1.5, -0.5) -- (-1.5, 0.5) -- (1.5, 0.5) -- (1.5, -0.5) -- cycle;
\draw[line width=3] (-1, -0.5) to[out=90, in=180] (0, -0.1);
\draw[line width=3] (-1, 0.5) to[out=-90, in=180] (0, 0.3);
\draw[line width=3] (1, -0.5) to[out=90, in=0] (0, -0.3);
\draw[line width=3] (1, 0.5) to[out=-90, in=0] (0, 0.1);
\draw[very thick, blue, dashed] (-1.5, 0) -- (1.5, 0);
\draw[very thick, orange] (0, -0.5) -- (0, 0.5);
\draw[very thick, orange, ->] (0, -0.5) -- (0, 0);
\node[anchor=west] at (0, 0.3){$+$};
\node[anchor=east] at (0, 0.1){$-$};
\node[anchor=west] at (0, -0.1){$-$};
\node[anchor=east] at (0, -0.3){$+$};
\end{tikzpicture}
}}
+ 
\qty(
A^{-1}
\vcenter{\hbox{
\begin{tikzpicture}[xscale=0.7, scale=1.3]
\filldraw[lightgray] (-1.5, -0.5) -- (-1.5, 0.5) -- (1.5, 0.5) -- (1.5, -0.5) -- cycle;
\draw[line width=3] (-1, -0.5) to[out=90, in=180] (0, -0.1);
\draw[line width=3] (-1, 0.5) to[out=-90, in=180] (0, 0.3);
\draw[line width=3] (1, -0.5) to[out=90, in=0] (0, -0.3);
\draw[line width=3] (1, 0.5) to[out=-90, in=0] (0, 0.1);
\draw[very thick, blue, dashed] (-1.5, 0) -- (1.5, 0);
\draw[very thick, orange] (0, -0.5) -- (0, 0.5);
\draw[very thick, orange, ->] (0, -0.5) -- (0, 0);
\node[anchor=west] at (0, 0.3){$-$};
\node[anchor=east] at (0, 0.1){$+$};
\node[anchor=west] at (0, -0.1){$+$};
\node[anchor=east] at (0, -0.3){$-$};
\end{tikzpicture}
}}
+A^{-1}(A^2-A^{-2})
\vcenter{\hbox{
\begin{tikzpicture}[xscale=0.7, scale=1.3]
\filldraw[lightgray] (-1.5, -0.5) -- (-1.5, 0.5) -- (1.5, 0.5) -- (1.5, -0.5) -- cycle;
\draw[line width=3] (-1, -0.5) to[out=90, in=180] (0, -0.1);
\draw[line width=3] (-1, 0.5) to[out=-90, in=180] (0, 0.3);
\draw[line width=3] (1, -0.5) to[out=90, in=0] (0, -0.3);
\draw[line width=3] (1, 0.5) to[out=-90, in=0] (0, 0.1);
\draw[very thick, blue, dashed] (-1.5, 0) -- (1.5, 0);
\draw[very thick, orange] (0, -0.5) -- (0, 0.5);
\draw[very thick, orange, ->] (0, -0.5) -- (0, 0);
\node[anchor=west] at (0, 0.3){$-$};
\node[anchor=east] at (0, 0.1){$+$};
\node[anchor=west] at (0, -0.1){$-$};
\node[anchor=east] at (0, -0.3){$+$};
\end{tikzpicture}
}}
)\\
&\quad - 
A^{-1}\vcenter{\hbox{
\begin{tikzpicture}[xscale=0.7, scale=1.3]
\filldraw[lightgray] (-1.5, -0.5) -- (-1.5, 0.5) -- (1.5, 0.5) -- (1.5, -0.5) -- cycle;
\draw[line width=3] (-1, -0.5) to[out=90, in=180] (0, -0.1);
\draw[line width=3] (-1, 0.5) to[out=-90, in=180] (0, 0.3);
\draw[line width=3] (1, -0.5) to[out=90, in=0] (0, -0.3);
\draw[line width=3] (1, 0.5) to[out=-90, in=0] (0, 0.1);
\draw[very thick, blue, dashed] (-1.5, 0) -- (1.5, 0);
\draw[very thick, orange] (0, -0.5) -- (0, 0.5);
\draw[very thick, orange, ->] (0, -0.5) -- (0, 0);
\node[anchor=west] at (0, 0.3){$-$};
\node[anchor=east] at (0, 0.1){$+$};
\node[anchor=west] at (0, -0.1){$+$};
\node[anchor=east] at (0, -0.3){$-$};
\end{tikzpicture}
}}
-
\qty(
A^{-1}
\vcenter{\hbox{
\begin{tikzpicture}[xscale=0.7, scale=1.3]
\filldraw[lightgray] (-1.5, -0.5) -- (-1.5, 0.5) -- (1.5, 0.5) -- (1.5, -0.5) -- cycle;
\draw[line width=3] (-1, -0.5) to[out=90, in=180] (0, -0.1);
\draw[line width=3] (-1, 0.5) to[out=-90, in=180] (0, 0.3);
\draw[line width=3] (1, -0.5) to[out=90, in=0] (0, -0.3);
\draw[line width=3] (1, 0.5) to[out=-90, in=0] (0, 0.1);
\draw[very thick, blue, dashed] (-1.5, 0) -- (1.5, 0);
\draw[very thick, orange] (0, -0.5) -- (0, 0.5);
\draw[very thick, orange, ->] (0, -0.5) -- (0, 0);
\node[anchor=west] at (0, 0.3){$+$};
\node[anchor=east] at (0, 0.1){$-$};
\node[anchor=west] at (0, -0.1){$-$};
\node[anchor=east] at (0, -0.3){$+$};
\end{tikzpicture}
}}
+A^{-1}(A^2-A^{-2})
\vcenter{\hbox{
\begin{tikzpicture}[xscale=0.7, scale=1.3]
\filldraw[lightgray] (-1.5, -0.5) -- (-1.5, 0.5) -- (1.5, 0.5) -- (1.5, -0.5) -- cycle;
\draw[line width=3] (-1, -0.5) to[out=90, in=180] (0, -0.1);
\draw[line width=3] (-1, 0.5) to[out=-90, in=180] (0, 0.3);
\draw[line width=3] (1, -0.5) to[out=90, in=0] (0, -0.3);
\draw[line width=3] (1, 0.5) to[out=-90, in=0] (0, 0.1);
\draw[very thick, blue, dashed] (-1.5, 0) -- (1.5, 0);
\draw[very thick, orange] (0, -0.5) -- (0, 0.5);
\draw[very thick, orange, ->] (0, -0.5) -- (0, 0);
\node[anchor=west] at (0, 0.3){$-$};
\node[anchor=east] at (0, 0.1){$+$};
\node[anchor=west] at (0, -0.1){$-$};
\node[anchor=east] at (0, -0.3){$+$};
\end{tikzpicture}
}}
)\\
&= 0.
\end{align*}
Note, in the calculation above, all the isotopies or skein relations we are using happen away from the blue dashed lines. 
That is, they take place in the skein algebra $\SkAlg(D_{\deg v}) \otimes \SkAlg(D_{\deg w})^{\mathrm{op}}$. 
Therefore, 
the element $\alpha_L \in \bigotimes_{v \in V(\Gamma)^{+}}\SkAlg(D_{\deg v})\otimes\bigotimes_{w \in V(\Gamma)^{-}}\SkAlg(D_{\deg w})^{\mathrm{op}}$
we produce using this algorithm depends only on the diagram of $L$ on $S^2$ and not on the choice of ordering of breaking up the strands. 

Next, we need to study how $\alpha_L$ changes under isotopy of $L$. 
Any isotopy of $L$ is a finite composition of isotopies of the following types:
\begin{enumerate}[label= (\Roman*)]
\item\label{item:typeIboundaryisotopy} An isotopy in the class of tangles in general position with respect to the blue dashed lines. 
\item\label{item:typeIIboundaryisotopy} Half twist of the ribbon tangle near a blue dashed line. 
\[
\vcenter{\hbox{
\begin{tikzpicture}
\filldraw[lightgray] (-1, -1) -- (-1, 1) -- (1, 1) -- (1, -1) -- cycle;
\draw[line width=5] (0, -1) -- (0, 1);
\draw[very thick, blue, dashed] (-1, 0) -- (1, 0);
\end{tikzpicture}
}}
\;\leftrightarrow\;
\vcenter{\hbox{
\begin{tikzpicture}
\filldraw[lightgray] (-1, -1) -- (-1, 1) -- (1, 1) -- (1, -1) -- cycle;
\begin{scope}[shift={(0, 0.5)}, yscale=-0.5]
    \fill[bottom color=black, top color=white] (-0.085, 1) to[out=-90, in=100] (0, 0) to[out=80, in=-90] (0.085, 1)--cycle;
    \fill[top color=black, bottom color=white] (-0.085, -1) to[out=90, in=-100] (0, 0) to[out=-80, in=90] (0.085, -1)--cycle;
    \draw[line width=1] (-0.075, -1) to[out=90, in=-90] (0.075, 1);
\end{scope}
\begin{scope}[shift={(0, -0.5)}, yscale=0.5]
    \fill[top color=black, bottom color=white] (-0.085, 1) to[out=-90, in=100] (0, 0) to[out=80, in=-90] (0.085, 1)--cycle;
    \fill[bottom color=black, top color=white] (-0.085, -1) to[out=90, in=-100] (0, 0) to[out=-80, in=90] (0.085, -1)--cycle;
    \draw[line width=1] (-0.075, -1) to[out=90, in=-90] (0.075, 1);
\end{scope}
\draw[very thick, blue, dashed] (-1, 0) -- (1, 0);
\end{tikzpicture}
}}
\]
\item\label{item:typeIIIboundaryisotopy} Birth or annihilation of a pair of intersection points between the diagram of $L$ and a blue dashed line. 
\[
\vcenter{\hbox{
\begin{tikzpicture}
\filldraw[lightgray] (-1, -1) -- (-1, 1) -- (1, 1) -- (1, -1) -- cycle;
\draw[line width=5] (-0.4, -1) -- (-0.4, -0.8) to[out=90, in=180] (0, -0.4) to[out=-0, in=90] (0.4, -0.8) -- (0.4, -1);
\draw[very thick, blue, dashed] (-1, 0) -- (1, 0);
\end{tikzpicture}
}}
\;\leftrightarrow\;
\vcenter{\hbox{
\begin{tikzpicture}
\filldraw[lightgray] (-1, -1) -- (-1, 1) -- (1, 1) -- (1, -1) -- cycle;
\draw[line width=5] (-0.4, -1) -- (-0.4, 0) to[out=90, in=180] (0, 0.4) to[out=-0, in=90] (0.4, 0) -- (0.4, -1);
\draw[very thick, blue, dashed] (-1, 0) -- (1, 0);
\end{tikzpicture}
}}
\]
\item\label{item:typeIVboundaryisotopy} Pulling an end point of the tangle across a blue dashed line. 
\[
\vcenter{\hbox{
\begin{tikzpicture}
\filldraw[lightgray] (-1, -1) -- (-1, 1) -- (1, 1) -- (1, -1) -- cycle;
\draw[line width=5] (-0.4, -1) -- (-0.4, -0.8) to[out=90, in=180] (0, -0.4);
\draw[very thick, blue, dashed] (-1, 0) -- (1, 0);
\draw[very thick, orange] (0, -1) -- (0, 1);
\draw[very thick, orange, ->] (0, -1) -- (0, 0);
\node[anchor=west] at (0, -0.4){$\mu$};
\end{tikzpicture}
}}
\;\leftrightarrow\;
\vcenter{\hbox{
\begin{tikzpicture}
\filldraw[lightgray] (-1, -1) -- (-1, 1) -- (1, 1) -- (1, -1) -- cycle;
\draw[line width=5] (-0.4, -1) -- (-0.4, 0) to[out=90, in=180] (0, 0.4); 
\draw[very thick, blue, dashed] (-1, 0) -- (1, 0);
\draw[very thick, orange] (0, -1) -- (0, 1);
\draw[very thick, orange, ->] (0, -1) -- (0, 0);
\node[anchor=west] at (0, 0.4){$\mu$};
\end{tikzpicture}
}}
\]
\item\label{item:typeVboundaryisotopy} Moving a crossing across a blue dashed line. 
\[
\vcenter{\hbox{
\begin{tikzpicture}
\filldraw[lightgray] (-1, -1) -- (-1, 1) -- (1, 1) -- (1, -1) -- cycle;
\draw[line width=5] (0.4, -1) to[out=90, in=-90] (-0.4, 0) -- (-0.4, 1);
\draw[lightgray, line width=9] (-0.4, -1) to[out=90, in=-90] (0.4, 0) -- (0.4, 1);
\draw[line width=5] (-0.4, -1) to[out=90, in=-90] (0.4, 0) -- (0.4, 1);
\draw[very thick, blue, dashed] (-1, 0) -- (1, 0);
\end{tikzpicture}
}}
\;\leftrightarrow\;
\vcenter{\hbox{
\begin{tikzpicture}
\filldraw[lightgray] (-1, -1) -- (-1, 1) -- (1, 1) -- (1, -1) -- cycle;
\draw[line width=5] (0.4, -1) -- (0.4, 0) to[out=90, in=-90] (-0.4, 1);
\draw[lightgray, line width=9] (-0.4, -1) -- (-0.4, 0) to[out=90, in=-90] (0.4, 1);
\draw[line width=5] (-0.4, -1) -- (-0.4, 0) to[out=90, in=-90] (0.4, 1);
\draw[very thick, blue, dashed] (-1, 0) -- (1, 0);
\end{tikzpicture}
}}
\]
\item\label{item:typeVIboundaryisotopy} An isotopy pulling $L$ across a puncture. 
\[
\vcenter{\hbox{
\begin{tikzpicture}[scale=0.8, yscale=-1]
\filldraw[lightgray] (0, 0) circle (2);
\draw[line width=5] (0.517638, -1.93185) to[out=105, in=0] (0, -1) to[out=180, in=75] (-0.517638, -1.93185);
\draw[blue, very thick, dashed] (0, 0) -- (1*2, 0);
\draw[blue, very thick, dashed] (0, 0) -- (-1*2, 0);
\draw[blue, very thick, dashed] (0, 0) -- (1/2*2, {sqrt(3)/2*2});
\draw[blue, very thick, dashed] (0, 0) -- (-1/2*2, {sqrt(3)/2*2});
\draw[blue, very thick, dashed] (0, 0) -- (1/2*2, {-sqrt(3)/2*2});
\draw[blue, very thick, dashed] (0, 0) -- (-1/2*2, {-sqrt(3)/2*2});
\filldraw[fill=white] (0, 0) circle (0.1);
\end{tikzpicture}
}}
\;\leftrightarrow\;
\vcenter{\hbox{
\begin{tikzpicture}[scale=0.8, yscale=-1]
\filldraw[lightgray] (0, 0) circle (2);
\draw[line width=5] (0.517638, -1.93185) to[out=105, in=-150] ({1/2}, {-sqrt(3)/2});
\draw[line width=5] ({-1/2}, {-sqrt(3)/2}) to[out=-30, in=75] (-0.517638, -1.93185);
\draw[line width=5] ({1/2}, {-sqrt(3)/2}) arc (-60:240:1);
\draw[blue, very thick, dashed] (0, 0) -- (1*2, 0);
\draw[blue, very thick, dashed] (0, 0) -- (-1*2, 0);
\draw[blue, very thick, dashed] (0, 0) -- (1/2*2, {sqrt(3)/2*2});
\draw[blue, very thick, dashed] (0, 0) -- (-1/2*2, {sqrt(3)/2*2});
\draw[blue, very thick, dashed] (0, 0) -- (1/2*2, {-sqrt(3)/2*2});
\draw[blue, very thick, dashed] (0, 0) -- (-1/2*2, {-sqrt(3)/2*2});
\filldraw[fill=white] (0, 0) circle (0.1);
\end{tikzpicture}
}}
\]
\end{enumerate}
It is clear that $\alpha_L$ does not change under the isotopies of type \ref{item:typeIboundaryisotopy}. 
In fact, it does not change under isotopies of type \ref{item:typeIIboundaryisotopy}, \ref{item:typeIIIboundaryisotopy}, \ref{item:typeIVboundaryisotopy}, and \ref{item:typeVboundaryisotopy} either, 
as we show below. 
In the cases the orange marking is not visible in the above pictures, we assume that the orange marking is to the right and oriented upward; the proof is analogous in other cases. 
\begin{itemize}
\item For isotopies of type \ref{item:typeIIboundaryisotopy}, 
\[
\alpha_{\mathrm{RHS}} =
\sum_{\mu \in \{\pm\}}(-A^2)^{\frac{\mu}{2}}\vcenter{\hbox{
\begin{tikzpicture}
\filldraw[lightgray] (-1, -1) -- (-1, 1) -- (1, 1) -- (1, -1) -- cycle;
\begin{scope}[shift={(0, 0.7)}, yscale=-0.3]
    \fill[bottom color=black, top color=white] (-0.086, 1) to[out=-90, in=100] (0, 0) to[out=80, in=-90] (0.086, 1)--cycle;
    \fill[top color=black, bottom color=white] (-0.086, -1) to[out=90, in=-100] (0, 0) to[out=-80, in=90] (0.086, -1)--cycle;
    \draw[line width=1] (-0.072, -1) to[out=90, in=-90] (0.072, 1);
\end{scope}
\begin{scope}[shift={(0, -0.7)}, yscale=0.3]
    \fill[top color=black, bottom color=white] (-0.086, 1) to[out=-90, in=100] (0, 0) to[out=80, in=-90] (0.086, 1)--cycle;
    \fill[bottom color=black, top color=white] (-0.086, -1) to[out=90, in=-100] (0, 0) to[out=-80, in=90] (0.086, -1)--cycle;
    \draw[line width=1] (-0.072, -1) to[out=90, in=-90] (0.072, 1);
\end{scope}
\draw[line width=5] (0, 0.4) to[out=-90, in=180] (0.8, 0.2);
\draw[line width=5] (0, -0.4) to[out=90, in=180] (0.8, -0.2);
\draw[very thick, blue, dashed] (-1, 0) -- (1, 0);
\draw[very thick, orange] (0.8, -1) -- (0.8, 1);
\draw[very thick, orange, ->] (0.8, -1) -- (0.8, 0);
\node[anchor=west] at (0.8, 0.2){$\mu$};
\node[anchor=west] at (0.8, -0.2){$-\mu$};
\end{tikzpicture}
}}
=
\sum_{\mu \in \{\pm\}}(-A^2)^{\frac{\mu}{2}}\vcenter{\hbox{
\begin{tikzpicture}
\filldraw[lightgray] (-1, -1) -- (-1, 1) -- (1, 1) -- (1, -1) -- cycle;
\draw[line width=5] (0, 1) -- (0, 0.4) to[out=-90, in=180] (0.8, 0.2);
\draw[line width=5] (0, -1) -- (0, -0.4) to[out=90, in=180] (0.8, -0.2);
\draw[very thick, blue, dashed] (-1, 0) -- (1, 0);
\draw[very thick, orange] (0.8, -1) -- (0.8, 1);
\draw[very thick, orange, ->] (0.8, -1) -- (0.8, 0);
\node[anchor=west] at (0.8, 0.2){$\mu$};
\node[anchor=west] at (0.8, -0.2){$-\mu$};
\end{tikzpicture}
}}
= \alpha_{\mathrm{LHS}}.
\]
\item For isotopies of type \ref{item:typeIIIboundaryisotopy}, 
\begin{align*}
\alpha_{\mathrm{RHS}}
&=
\sum_{\mu,\nu\in \{\pm\}}
(-A^2)^{\frac{\mu+\nu}{2}}
\vcenter{\hbox{
\begin{tikzpicture}
\filldraw[lightgray] (-1, -1) -- (-1, 1) -- (1, 1) -- (1, -1) -- cycle;
\draw[line width=5] (-0.4, -1) to[out=90, in=180] (0.8, -0.2);
\draw[line width=5] (0.4, -1) to[out=90, in=180] (0.8, -0.5);
\draw[line width=5] (-0.4, 0.6) to[out=90, in=180] (0, 0.9) to[out=-0, in=90] (0.4, 0.6);
\draw[line width=5] (-0.4, 0.6) to[out=-90, in=180] (0.8, 0.2);
\draw[line width=5] (0.4, 0.6) to[out=-90, in=180] (0.8, 0.5);
\draw[very thick, blue, dashed] (-1, 0) -- (1, 0);
\draw[very thick, orange] (0.8, -1) -- (0.8, 1);
\draw[very thick, orange, ->] (0.8, -1) -- (0.8, 0);
\node[anchor=west] at (0.8, 0.2){$\nu$};
\node[anchor=west] at (0.8, -0.2){$-\nu$};
\node[anchor=west] at (0.8, 0.5){$\mu$};
\node[anchor=west] at (0.8, -0.5){$-\mu$};
\end{tikzpicture}
}}
=
\sum_{\mu\in \{\pm\}}
(-A^2)^{\frac{\mu}{2}}
\vcenter{\hbox{
\begin{tikzpicture}
\filldraw[lightgray] (-1, -1) -- (-1, 1) -- (1, 1) -- (1, -1) -- cycle;
\draw[line width=5] (-0.4, -1) to[out=90, in=180] (0.8, -0.2);
\draw[line width=5] (0.4, -1) to[out=90, in=180] (0.8, -0.5);
\draw[very thick, blue, dashed] (-1, 0) -- (1, 0);
\draw[very thick, orange] (0.8, -1) -- (0.8, 1);
\draw[very thick, orange, ->] (0.8, -1) -- (0.8, 0);
\node[anchor=west] at (0.8, -0.2){$\mu$};
\node[anchor=west] at (0.8, -0.5){$-\mu$};
\end{tikzpicture}
}}\\
&=
\vcenter{\hbox{
\begin{tikzpicture}
\filldraw[lightgray] (-1, -1) -- (-1, 1) -- (1, 1) -- (1, -1) -- cycle;
\draw[line width=5] (-0.4, -1) -- (-0.4, -0.8) to[out=90, in=180] (0, -0.4) to[out=-0, in=90] (0.4, -0.8) -- (0.4, -1);
\draw[very thick, blue, dashed] (-1, 0) -- (1, 0);
\draw[very thick, orange] (0.8, -1) -- (0.8, 1);
\draw[very thick, orange, ->] (0.8, -1) -- (0.8, 0);
\end{tikzpicture}
}} 
= \alpha_{\mathrm{LHS}}.
\end{align*}
\item For isotopies of type \ref{item:typeIVboundaryisotopy}, 
\[
\alpha_{\mathrm{RHS}}
=
\sum_{\nu\in \{\pm\}}
(-A^2)^{\frac{\nu}{2}}
\vcenter{\hbox{
\begin{tikzpicture}
\filldraw[lightgray] (-1, -1) -- (-1, 1) -- (1, 1) -- (1, -1) -- cycle;
\draw[line width=5] (-0.4, -1) to[out=90, in=180] (0, -0.2);
\draw[line width=5] (0, 0.6) to[out=180, in=90] (-0.4, 0.4) to[out=-90, in=180] (0, 0.2);
\draw[very thick, blue, dashed] (-1, 0) -- (1, 0);
\draw[very thick, orange] (0, -1) -- (0, 1);
\draw[very thick, orange, ->] (0, -1) -- (0, 0);
\node[anchor=west] at (0, 0.6){$\mu$};
\node[anchor=west] at (0, 0.2){$\nu$};
\node[anchor=west] at (0, -0.2){$-\nu$};
\end{tikzpicture}
}}
=
\vcenter{\hbox{
\begin{tikzpicture}
\filldraw[lightgray] (-1, -1) -- (-1, 1) -- (1, 1) -- (1, -1) -- cycle;
\draw[line width=5] (-0.4, -1) to[out=90, in=180] (0, -0.2);
\draw[very thick, blue, dashed] (-1, 0) -- (1, 0);
\draw[very thick, orange] (0, -1) -- (0, 1);
\draw[very thick, orange, ->] (0, -1) -- (0, 0);
\node[anchor=west] at (0, -0.2){$\mu$};
\end{tikzpicture}
}}
= \alpha_{\mathrm{LHS}}.
\]
\item For isotopies of type \ref{item:typeVboundaryisotopy}, 
\begin{align*}
&\alpha_{\mathrm{LHS}} - \alpha_{\mathrm{RHS}}\\
&=
\sum_{\mu, \nu \in \{\pm\}}
(-A^2)^{\frac{\mu + \nu}{2}}
\vcenter{\hbox{
\begin{tikzpicture}
\filldraw[lightgray] (-1, -1) -- (-1, 1) -- (1, 1) -- (1, -1) -- cycle;
\draw[line width=5] (-0.4, 1) to[out=-90, in=180] (0.8, 0.2);
\draw[line width=5] (0.4, 1) to[out=-90, in=180] (0.8, 0.6);
\draw[line width=5] (0.4, -1) to[out=90, in=-90] (0, -0.6) to[out=90, in=180] (0.8, -0.2);
\draw[lightgray, line width=9] (-0.4, -1) to[out=90, in=180] (0.4, -0.6) -- (0.8, -0.6);
\draw[line width=5] (-0.4, -1) to[out=90, in=180] (0.8, -0.6);
\draw[very thick, blue, dashed] (-1, 0) -- (1, 0);
\draw[very thick, orange] (0.8, -1) -- (0.8, 1);
\draw[very thick, orange, ->] (0.8, -1) -- (0.8, 0);
\node[anchor=west] at (0.8, 0.6){$\mu$};
\node[anchor=west] at (0.8, 0.2){$\nu$};
\node[anchor=west] at (0.8, -0.2){$-\nu$};
\node[anchor=west] at (0.8, -0.6){$-\mu$};
\end{tikzpicture}
}}
- 
\sum_{\mu, \nu \in \{\pm\}}
(-A^2)^{\frac{\mu + \nu}{2}}
\vcenter{\hbox{
\begin{tikzpicture}
\filldraw[lightgray] (-1, -1) -- (-1, 1) -- (1, 1) -- (1, -1) -- cycle;
\draw[line width=5] (-0.4, -1) to[out=90, in=180] (0.8, -0.2);
\draw[line width=5] (0.4, -1) to[out=90, in=180] (0.8, -0.6);
\draw[line width=5] (-0.4, 1) to[out=-90, in=180] (0.8, 0.6);
\draw[lightgray, line width=9] (0.4, 1) to[out=-90, in=90] (0, 0.6) to[out=-90, in=180] (0.8, 0.2);
\draw[line width=5] (0.4, 1) to[out=-90, in=90] (0, 0.6) to[out=-90, in=180] (0.8, 0.2);
\draw[very thick, blue, dashed] (-1, 0) -- (1, 0);
\draw[very thick, orange] (0.8, -1) -- (0.8, 1);
\draw[very thick, orange, ->] (0.8, -1) -- (0.8, 0);
\node[anchor=west] at (0.8, 0.6){$\mu$};
\node[anchor=west] at (0.8, 0.2){$\nu$};
\node[anchor=west] at (0.8, -0.2){$-\nu$};
\node[anchor=west] at (0.8, -0.6){$-\mu$};
\end{tikzpicture}
}} \\
&= 
\sum_{\mu, \nu \in \{\pm\}}
(-A^2)^{\frac{\mu + \nu}{2}}
\qty(
A
\vcenter{\hbox{
\begin{tikzpicture}
\filldraw[lightgray] (-1, -1) -- (-1, 1) -- (1, 1) -- (1, -1) -- cycle;
\draw[line width=5] (-0.4, 1) to[out=-90, in=180] (0.8, 0.2);
\draw[line width=5] (0.4, 1) to[out=-90, in=180] (0.8, 0.6);
\draw[line width=5] (0.4, -1) to[out=90, in=180] (0.8, -0.6);
\draw[line width=5] (-0.4, -1) to[out=90, in=180] (0.8, -0.2);
\draw[very thick, blue, dashed] (-1, 0) -- (1, 0);
\draw[very thick, orange] (0.8, -1) -- (0.8, 1);
\draw[very thick, orange, ->] (0.8, -1) -- (0.8, 0);
\node[anchor=west] at (0.8, 0.6){$\mu$};
\node[anchor=west] at (0.8, 0.2){$\nu$};
\node[anchor=west] at (0.8, -0.2){$-\nu$};
\node[anchor=west] at (0.8, -0.6){$-\mu$};
\end{tikzpicture}
}}
+
A^{-1}
\vcenter{\hbox{
\begin{tikzpicture}
\filldraw[lightgray] (-1, -1) -- (-1, 1) -- (1, 1) -- (1, -1) -- cycle;
\draw[line width=5] (-0.4, 1) to[out=-90, in=180] (0.8, 0.2);
\draw[line width=5] (0.4, 1) to[out=-90, in=180] (0.8, 0.6);
\draw[line width=5] (-0.4, -1) to[out=90, in=180] (0, -0.6) to[out=0, in=90] (0.4, -1);
\draw[line width=5] (0.8, -0.2) to[out=180, in=90] (0.6, -0.4) to[out=-90, in=180] (0.8, -0.6);
\draw[very thick, blue, dashed] (-1, 0) -- (1, 0);
\draw[very thick, orange] (0.8, -1) -- (0.8, 1);
\draw[very thick, orange, ->] (0.8, -1) -- (0.8, 0);
\node[anchor=west] at (0.8, 0.6){$\mu$};
\node[anchor=west] at (0.8, 0.2){$\nu$};
\node[anchor=west] at (0.8, -0.2){$-\nu$};
\node[anchor=west] at (0.8, -0.6){$-\mu$};
\end{tikzpicture}
}}
)
\\
&\quad
-
\sum_{\mu, \nu \in \{\pm\}}
(-A^2)^{\frac{\mu + \nu}{2}}
\qty(
A
\vcenter{\hbox{
\begin{tikzpicture}
\filldraw[lightgray] (-1, -1) -- (-1, 1) -- (1, 1) -- (1, -1) -- cycle;
\draw[line width=5] (-0.4, 1) to[out=-90, in=180] (0.8, 0.2);
\draw[line width=5] (0.4, 1) to[out=-90, in=180] (0.8, 0.6);
\draw[line width=5] (0.4, -1) to[out=90, in=180] (0.8, -0.6);
\draw[line width=5] (-0.4, -1) to[out=90, in=180] (0.8, -0.2);
\draw[very thick, blue, dashed] (-1, 0) -- (1, 0);
\draw[very thick, orange] (0.8, -1) -- (0.8, 1);
\draw[very thick, orange, ->] (0.8, -1) -- (0.8, 0);
\node[anchor=west] at (0.8, 0.6){$\mu$};
\node[anchor=west] at (0.8, 0.2){$\nu$};
\node[anchor=west] at (0.8, -0.2){$-\nu$};
\node[anchor=west] at (0.8, -0.6){$-\mu$};
\end{tikzpicture}
}}
+
A^{-1}
\vcenter{\hbox{
\begin{tikzpicture}
\filldraw[lightgray] (-1, -1) -- (-1, 1) -- (1, 1) -- (1, -1) -- cycle;
\draw[line width=5] (-0.4, -1) to[out=90, in=180] (0.8, -0.2);
\draw[line width=5] (0.4, -1) to[out=90, in=180] (0.8, -0.6);
\draw[line width=5] (-0.4, 1) to[out=-90, in=180] (0, 0.6) to[out=0, in=-90] (0.4, 1);
\draw[line width=5] (0.8, 0.2) to[out=180, in=-90] (0.6, 0.4) to[out=90, in=180] (0.8, 0.6);
\draw[very thick, blue, dashed] (-1, 0) -- (1, 0);
\draw[very thick, orange] (0.8, -1) -- (0.8, 1);
\draw[very thick, orange, ->] (0.8, -1) -- (0.8, 0);
\node[anchor=west] at (0.8, 0.6){$\mu$};
\node[anchor=west] at (0.8, 0.2){$\nu$};
\node[anchor=west] at (0.8, -0.2){$-\nu$};
\node[anchor=west] at (0.8, -0.6){$-\mu$};
\end{tikzpicture}
}}
)\\
&= 
A^{-1}
\sum_{\mu\in \{\pm\}}
(-A^2)^{\frac{\mu}{2}}
\vcenter{\hbox{
\begin{tikzpicture}
\filldraw[lightgray] (-1, -1) -- (-1, 1) -- (1, 1) -- (1, -1) -- cycle;
\draw[line width=5] (-0.4, 1) to[out=-90, in=180] (0.8, 0.2);
\draw[line width=5] (0.4, 1) to[out=-90, in=180] (0.8, 0.6);
\draw[line width=5] (-0.4, -1) to[out=90, in=180] (0, -0.6) to[out=0, in=90] (0.4, -1);
\draw[very thick, blue, dashed] (-1, 0) -- (1, 0);
\draw[very thick, orange] (0.8, -1) -- (0.8, 1);
\draw[very thick, orange, ->] (0.8, -1) -- (0.8, 0);
\node[anchor=west] at (0.8, 0.6){$\mu$};
\node[anchor=west] at (0.8, 0.2){$-\mu$};
\end{tikzpicture}
}}
-
A^{-1}
\sum_{\mu\in \{\pm\}}
(-A^2)^{\frac{\mu}{2}}
\vcenter{\hbox{
\begin{tikzpicture}
\filldraw[lightgray] (-1, -1) -- (-1, 1) -- (1, 1) -- (1, -1) -- cycle;
\draw[line width=5] (-0.4, -1) to[out=90, in=180] (0.8, -0.2);
\draw[line width=5] (0.4, -1) to[out=90, in=180] (0.8, -0.6);
\draw[line width=5] (-0.4, 1) to[out=-90, in=180] (0, 0.6) to[out=0, in=-90] (0.4, 1);
\draw[very thick, blue, dashed] (-1, 0) -- (1, 0);
\draw[very thick, orange] (0.8, -1) -- (0.8, 1);
\draw[very thick, orange, ->] (0.8, -1) -- (0.8, 0);
\node[anchor=west] at (0.8, -0.2){$\mu$};
\node[anchor=west] at (0.8, -0.6){$-\mu$};
\end{tikzpicture}
}}
\\
&= 
A^{-1}
\vcenter{\hbox{
\begin{tikzpicture}
\filldraw[lightgray] (-1, -1) -- (-1, 1) -- (1, 1) -- (1, -1) -- cycle;
\draw[line width=5] (-0.4, 1) to[out=-90, in=180] (0, 0.6) to[out=0, in=-90] (0.4, 1);
\draw[line width=5] (-0.4, -1) to[out=90, in=180] (0, -0.6) to[out=0, in=90] (0.4, -1);
\draw[very thick, blue, dashed] (-1, 0) -- (1, 0);
\draw[very thick, orange] (0.8, -1) -- (0.8, 1);
\draw[very thick, orange, ->] (0.8, -1) -- (0.8, 0);
\end{tikzpicture}
}}
- A^{-1}
\vcenter{\hbox{
\begin{tikzpicture}
\filldraw[lightgray] (-1, -1) -- (-1, 1) -- (1, 1) -- (1, -1) -- cycle;
\draw[line width=5] (-0.4, 1) to[out=-90, in=180] (0, 0.6) to[out=0, in=-90] (0.4, 1);
\draw[line width=5] (-0.4, -1) to[out=90, in=180] (0, -0.6) to[out=0, in=90] (0.4, -1);
\draw[very thick, blue, dashed] (-1, 0) -- (1, 0);
\draw[very thick, orange] (0.8, -1) -- (0.8, 1);
\draw[very thick, orange, ->] (0.8, -1) -- (0.8, 0);
\end{tikzpicture}
}}
 = 0
.
\end{align*}
\end{itemize}

The only remaining thing to check is how $\alpha_L$ changes under isotopies of type \ref{item:typeVIboundaryisotopy}. 
The $\alpha_L$ for the RHS of the isotopy of type \ref{item:typeVIboundaryisotopy} would look like
\begin{gather*}
\alpha_{\mathrm{RHS}}
\;\;=\;\; 
\sum_{\mu, \nu \in \{\pm\}} (-A^2)^{\frac{\mu+\nu}{2}} \sum_{\epsilon_i \in  \{\pm\}}(-A^2)^{\frac{\sum_{i}\epsilon_i}{2}}
\vcenter{\hbox{
\begin{tikzpicture}[scale=0.7]
\filldraw[lightgray] (3, 0) -- (3/2, {3/2*sqrt(3)}) -- (-3/2, {3/2*sqrt(3)}) -- (-3, 0) -- (-3/2, {-3/2*sqrt(3)}) -- (3/2, {-3/2*sqrt(3)}) -- cycle;
\filldraw[lightgray] (-4, -0.6) rectangle (4, 0.6);
\begin{scope}[rotate=60]
    \filldraw[lightgray] (-4, -0.6) rectangle (4, 0.6);
\end{scope}
\begin{scope}[rotate=120]
    \filldraw[lightgray] (-4, -0.6) rectangle (4, 0.6);
\end{scope}
\draw[line width=3] (0.3, {3/2*sqrt(3)}) to[out=-90, in=-120] (0.93, 2.4) -- ({2-sqrt(3)*0.2}, {2*sqrt(3)+0.2});
\begin{scope}[xscale=-1]
    \draw[line width=3] (0.3, {3/2*sqrt(3)}) to[out=-90, in=-120] (0.93, 2.4) -- ({2-sqrt(3)*0.2}, {2*sqrt(3)+0.2});
\end{scope}
\begin{scope}[rotate=60]
    \draw[line width=3] ({2-sqrt(3)/10}, {2*sqrt(3)+0.1}) .. controls (0, 0.2) and (0, 0.2) .. ({-2+sqrt(3)/10}, {2*sqrt(3)+0.1});
\end{scope}
\begin{scope}[rotate=120]
    \draw[line width=3] ({2-sqrt(3)/10}, {2*sqrt(3)+0.1}) .. controls (0, 0.2) and (0, 0.2) .. ({-2+sqrt(3)/10}, {2*sqrt(3)+0.1});
\end{scope}
\begin{scope}[rotate=180]
    \draw[line width=3] ({2-sqrt(3)/10}, {2*sqrt(3)+0.1}) .. controls (0, 0.2) and (0, 0.2) .. ({-2+sqrt(3)/10}, {2*sqrt(3)+0.1});
\end{scope}
\begin{scope}[rotate=-60]
    \draw[line width=3] ({2-sqrt(3)/10}, {2*sqrt(3)+0.1}) .. controls (0, 0.2) and (0, 0.2) .. ({-2+sqrt(3)/10}, {2*sqrt(3)+0.1});
\end{scope}
\begin{scope}[rotate=-120]
    \draw[line width=3] ({2-sqrt(3)/10}, {2*sqrt(3)+0.1}) .. controls (0, 0.2) and (0, 0.2) .. ({-2+sqrt(3)/10}, {2*sqrt(3)+0.1});
\end{scope}
\draw[blue, very thick, dashed] (0, 0) -- (1*4, 0);
\draw[blue, very thick, dashed] (0, 0) -- (-1*4, 0);
\draw[blue, very thick, dashed] (0, 0) -- (1/2*4, {sqrt(3)/2*4});
\draw[blue, very thick, dashed] (0, 0) -- (-1/2*4, {sqrt(3)/2*4});
\draw[blue, very thick, dashed] (0, 0) -- (1/2*4, {-sqrt(3)/2*4});
\draw[blue, very thick, dashed] (0, 0) -- (-1/2*4, {-sqrt(3)/2*4});
\draw[orange, ultra thick] (4, {-4/3*sqrt(3)}) -- (4, {4/3*sqrt(3)}) -- (0, {8/3*sqrt(3)}) -- (-4, {4/3*sqrt(3)}) -- (-4, {-4/3*sqrt(3)}) -- (0, {-8/3*sqrt(3)}) -- cycle;
\draw[orange, ultra thick, ->] (4, {4/3*sqrt(3)}) -- (4, 0);
\draw[orange, ultra thick, ->] (4, {4/3*sqrt(3)}) -- (1/2*4, {sqrt(3)/2*4});
\draw[orange, ultra thick, ->] (-4, {4/3*sqrt(3)}) -- (-1/2*4, {sqrt(3)/2*4});
\draw[orange, ultra thick, ->] (-4, {4/3*sqrt(3)}) -- (-4, 0);
\draw[orange, ultra thick, ->] (0, {-8/3*sqrt(3)}) -- (-1/2*4, {-sqrt(3)/2*4});
\draw[orange, ultra thick, ->] (0, {-8/3*sqrt(3)}) -- (1/2*4, {-sqrt(3)/2*4});
\filldraw[fill=white] (0, 0) circle (0.2);
\filldraw[orange] (4, {-4/3*sqrt(3)}) circle (0.1);
\filldraw[orange] (4, {4/3*sqrt(3)}) circle (0.1);
\filldraw[orange] (0, {8/3*sqrt(3)}) circle (0.1);
\filldraw[orange] (-4, {4/3*sqrt(3)}) circle (0.1);
\filldraw[orange] (-4, {-4/3*sqrt(3)}) circle (0.1);
\filldraw[orange] (0, {-8/3*sqrt(3)}) circle (0.1);
\node at (4.5, 0.3){$-\epsilon_4$};
\node at (4.5, -0.3){$\epsilon_4$};
\begin{scope}[rotate=-60]
    \node at (4.5, 0.3){$\epsilon_3$};
    \node at (4.5, -0.3){$-\epsilon_3$};
\end{scope}
\begin{scope}[rotate=-120]
    \node at (4.5, 0.3){$-\epsilon_2$};
    \node at (4.5, -0.3){$\epsilon_2$};
\end{scope}
\begin{scope}[rotate=-180]
    \node at (4.5, 0.3){$\epsilon_1$};
    \node at (4.6, -0.3){$-\epsilon_1$};
\end{scope}
\begin{scope}[rotate=60]
    \node at (4.5, 0.3){$\nu$};
    \node at (4.5, -0.3){$-\nu$};
\end{scope}
\begin{scope}[rotate=120]
    \node at (4.5, 0.4){$-\mu$};
    \node at (4.5, -0.4){$\mu$};
\end{scope}
\end{tikzpicture}
}}.
\end{gather*}
Here, we are breaking up the strand that went over the puncture last, using the fact we have proved earlier that $\alpha_L$ is independent of the choice of ordering of breaking up the strands in the algorithm. 
Moreover, we haven't filled in the background color near the vertices to indicate that there may be some other parts of the tangle in those regions. 

Once we mod out the skein algebra
$\bigotimes_{v \in V(\Gamma)^{+}}\SkAlg(D_{\deg v})\otimes\bigotimes_{w \in V(\Gamma)^{-}}\SkAlg(D_{\deg w})^{\mathrm{op}}$
by the left ideal $I$ generated by relations \eqref{eq:GeneratorsOfAnnhilator}, 
the above picture is equivalent to
\begin{align*}
&\sum_{\mu, \nu \in \{\pm\}} (-A^2)^{\frac{\mu+\nu}{2}}
\vcenter{\hbox{
\begin{tikzpicture}[scale=0.7]
\filldraw[lightgray] (3, 0) -- (3/2, {3/2*sqrt(3)}) -- (-3/2, {3/2*sqrt(3)}) -- (-3, 0) -- (-3/2, {-3/2*sqrt(3)}) -- (3/2, {-3/2*sqrt(3)}) -- cycle;
\filldraw[lightgray] (-4, -0.6) rectangle (4, 0.6);
\begin{scope}[rotate=60]
    \filldraw[lightgray] (-4, -0.6) rectangle (4, 0.6);
\end{scope}
\begin{scope}[rotate=120]
    \filldraw[lightgray] (-4, -0.6) rectangle (4, 0.6);
\end{scope}
\draw[line width=3] (0.3, {3/2*sqrt(3)}) to[out=-90, in=-120] (0.93, 2.4) -- ({2-sqrt(3)*0.2}, {2*sqrt(3)+0.2});
\begin{scope}[xscale=-1]
    \draw[line width=3] (0.3, {3/2*sqrt(3)}) to[out=-90, in=-120] (0.93, 2.4) -- ({2-sqrt(3)*0.2}, {2*sqrt(3)+0.2});
\end{scope}
\begin{scope}[rotate=0]
    \draw[line width=3] ({2-sqrt(3)/10}, {2*sqrt(3)+0.1}) .. controls (0, 0.2) and (0, 0.2) .. ({-2+sqrt(3)/10}, {2*sqrt(3)+0.1});
\end{scope}
\draw[blue, very thick, dashed] (0, 0) -- (1*4, 0);
\draw[blue, very thick, dashed] (0, 0) -- (-1*4, 0);
\draw[blue, very thick, dashed] (0, 0) -- (1/2*4, {sqrt(3)/2*4});
\draw[blue, very thick, dashed] (0, 0) -- (-1/2*4, {sqrt(3)/2*4});
\draw[blue, very thick, dashed] (0, 0) -- (1/2*4, {-sqrt(3)/2*4});
\draw[blue, very thick, dashed] (0, 0) -- (-1/2*4, {-sqrt(3)/2*4});
\draw[orange, ultra thick] (4, {-4/3*sqrt(3)}) -- (4, {4/3*sqrt(3)}) -- (0, {8/3*sqrt(3)}) -- (-4, {4/3*sqrt(3)}) -- (-4, {-4/3*sqrt(3)}) -- (0, {-8/3*sqrt(3)}) -- cycle;
\draw[orange, ultra thick, ->] (4, {4/3*sqrt(3)}) -- (4, 0);
\draw[orange, ultra thick, ->] (4, {4/3*sqrt(3)}) -- (1/2*4, {sqrt(3)/2*4});
\draw[orange, ultra thick, ->] (-4, {4/3*sqrt(3)}) -- (-1/2*4, {sqrt(3)/2*4});
\draw[orange, ultra thick, ->] (-4, {4/3*sqrt(3)}) -- (-4, 0);
\draw[orange, ultra thick, ->] (0, {-8/3*sqrt(3)}) -- (-1/2*4, {-sqrt(3)/2*4});
\draw[orange, ultra thick, ->] (0, {-8/3*sqrt(3)}) -- (1/2*4, {-sqrt(3)/2*4});
\filldraw[fill=white] (0, 0) circle (0.2);
\filldraw[orange] (4, {-4/3*sqrt(3)}) circle (0.1);
\filldraw[orange] (4, {4/3*sqrt(3)}) circle (0.1);
\filldraw[orange] (0, {8/3*sqrt(3)}) circle (0.1);
\filldraw[orange] (-4, {4/3*sqrt(3)}) circle (0.1);
\filldraw[orange] (-4, {-4/3*sqrt(3)}) circle (0.1);
\filldraw[orange] (0, {-8/3*sqrt(3)}) circle (0.1);
\begin{scope}[rotate=60]
    \node at (4.5, 0.6){$\nu$};
    \node at (4.5, 0){$-\nu$};
\end{scope}
\begin{scope}[rotate=120]
    \node at (4.5, 0.1){$-\mu$};
    \node at (4.5, -0.6){$\mu$};
\end{scope}
\end{tikzpicture}
}}
\;\;=\;\;
\vcenter{\hbox{
\begin{tikzpicture}[scale=0.7]
\filldraw[lightgray] (3, 0) -- (3/2, {3/2*sqrt(3)}) -- (-3/2, {3/2*sqrt(3)}) -- (-3, 0) -- (-3/2, {-3/2*sqrt(3)}) -- (3/2, {-3/2*sqrt(3)}) -- cycle;
\filldraw[lightgray] (-4, -0.6) rectangle (4, 0.6);
\begin{scope}[rotate=60]
    \filldraw[lightgray] (-4, -0.6) rectangle (4, 0.6);
\end{scope}
\begin{scope}[rotate=120]
    \filldraw[lightgray] (-4, -0.6) rectangle (4, 0.6);
\end{scope}
\draw[line width=3] (0.3, {3/2*sqrt(3)}) to[out=-90, in=0] (0, 2.2) to[out=180, in=-90](-0.3, {3/2*sqrt(3)});
\draw[blue, very thick, dashed] (0, 0) -- (1*4, 0);
\draw[blue, very thick, dashed] (0, 0) -- (-1*4, 0);
\draw[blue, very thick, dashed] (0, 0) -- (1/2*4, {sqrt(3)/2*4});
\draw[blue, very thick, dashed] (0, 0) -- (-1/2*4, {sqrt(3)/2*4});
\draw[blue, very thick, dashed] (0, 0) -- (1/2*4, {-sqrt(3)/2*4});
\draw[blue, very thick, dashed] (0, 0) -- (-1/2*4, {-sqrt(3)/2*4});
\draw[orange, ultra thick] (4, {-4/3*sqrt(3)}) -- (4, {4/3*sqrt(3)}) -- (0, {8/3*sqrt(3)}) -- (-4, {4/3*sqrt(3)}) -- (-4, {-4/3*sqrt(3)}) -- (0, {-8/3*sqrt(3)}) -- cycle;
\draw[orange, ultra thick, ->] (4, {4/3*sqrt(3)}) -- (4, 0);
\draw[orange, ultra thick, ->] (4, {4/3*sqrt(3)}) -- (1/2*4, {sqrt(3)/2*4});
\draw[orange, ultra thick, ->] (-4, {4/3*sqrt(3)}) -- (-1/2*4, {sqrt(3)/2*4});
\draw[orange, ultra thick, ->] (-4, {4/3*sqrt(3)}) -- (-4, 0);
\draw[orange, ultra thick, ->] (0, {-8/3*sqrt(3)}) -- (-1/2*4, {-sqrt(3)/2*4});
\draw[orange, ultra thick, ->] (0, {-8/3*sqrt(3)}) -- (1/2*4, {-sqrt(3)/2*4});
\filldraw[fill=white] (0, 0) circle (0.2);
\filldraw[orange] (4, {-4/3*sqrt(3)}) circle (0.1);
\filldraw[orange] (4, {4/3*sqrt(3)}) circle (0.1);
\filldraw[orange] (0, {8/3*sqrt(3)}) circle (0.1);
\filldraw[orange] (-4, {4/3*sqrt(3)}) circle (0.1);
\filldraw[orange] (-4, {-4/3*sqrt(3)}) circle (0.1);
\filldraw[orange] (0, {-8/3*sqrt(3)}) circle (0.1);
\end{tikzpicture}
}}\\
&\;\;=\;\;
\alpha_{\mathrm{LHS}},
\end{align*}
as desired. 

To this end, define the map $g$ in \eqref{eq:gmap} to be 
\[
g: [L] \mapsto [\alpha_L],
\]
which we have just shown to be well-defined. 
By construction, $g\circ f$ is the identity map. 
This completes our proof. 
\end{proof}

\begin{rmk}
Note that the only property of a 3-ball we used in the proof of Theorem \ref{thm:SkeinModuleOf3Ball} is that, once we puncture the center of the 3-ball, it is topologically $S^2 \times I$ so that we can project any tangle to the combinatorially foliated boundary. 
Therefore, the theorem extends straightforwardly to any $(\Sigma \times I, \Gamma \times \{0\})$, where $(\Sigma, \Gamma)$ is a combinatorially foliated surface with the associated marking. 
We stated the theorem only for 3-balls, as that is all we need for the purpose of constructing the 3d quantum trace map. 
\end{rmk}

\subsection{Reduced stated skein modules of 3-balls}
From the proof of Theorem \ref{thm:SkeinModuleOf3Ball}, we immediately get the following structure theorem for the reduced stated skein modules of 3-balls: 
\begin{cor}\label{cor:RedSkeinModuleOf3Balls}
In the setup of Lemma \ref{lem:Sk(B)isCyclic}, the reduced skein module of $(B,\Gamma)$ has the following presentation: 
\[
\overline{\Sk}(B, \Gamma) \cong 
\frac{\bigotimes_{v \in V(\Gamma)^{+}}\overline{\SkAlg}(D_{\deg v})\otimes\bigotimes_{w \in V(\Gamma)^{-}}\overline{\SkAlg}(D_{\deg w})^{\mathrm{op}}}{\mathrm{Ann}([\emptyset])},
\]
where $\mathrm{Ann}([\emptyset])$ is the left ideal generated by the following relations, one for each puncture of the combinatorial foliation of $\partial B$:
\[
\vcenter{\hbox{
\begin{tikzpicture}
\draw[line width=5] (-3/16*2, {sqrt(3)*7/16*2}) to[out=-60, in=-120] (3/16*2, {sqrt(3)*7/16*2});
\node[anchor=south] at (-3/16*2, {sqrt(3)*7/16*2}){$\mu$};
\node[anchor=south] at (3/16*2, {sqrt(3)*7/16*2}){$\nu$};
\draw[orange, very thick] (-3/4*2, {sqrt(3)/4*2}) -- (0, {sqrt(3)/2*2});
\draw[orange, very thick, ->] (-3/4*2, {sqrt(3)/4*2}) -- (-3/8*2, {sqrt(3)*3/8*2});
\draw[orange, very thick] (3/4*2, {sqrt(3)/4*2}) -- (0, {sqrt(3)/2*2});
\draw[orange, very thick, ->] (3/4*2, {sqrt(3)/4*2}) -- (3/8*2, {sqrt(3)*3/8*2});
\draw[orange, very thick] (3/4*2, {sqrt(3)/4*2}) -- (3/4*2, {-sqrt(3)/4*2});
\draw[orange, very thick, ->] (3/4*2, {sqrt(3)/4*2}) -- (3/4*2, 0);
\draw[orange, very thick] (0, {-sqrt(3)/2*2}) -- (3/4*2, {-sqrt(3)/4*2});
\draw[orange, very thick, ->] (0, {-sqrt(3)/2*2}) -- (3/8*2, {-sqrt(3)*3/8*2});
\draw[orange, very thick] (0, {-sqrt(3)/2*2}) -- (-3/4*2, {-sqrt(3)/4*2});
\draw[orange, very thick, ->] (0, {-sqrt(3)/2*2}) -- (-3/8*2, {-sqrt(3)*3/8*2});
\draw[orange, very thick] (-3/4*2, {sqrt(3)/4*2}) -- (-3/4*2, {-sqrt(3)/4*2});
\draw[orange, very thick, ->] (-3/4*2, {sqrt(3)/4*2}) -- (-3/4*2, 0);
\draw[blue, dashed] (0, 0) -- (1*2, 0);
\draw[blue, dashed] (0, 0) -- (-1*2, 0);
\draw[blue, dashed] (0, 0) -- (1/2*2, {sqrt(3)/2*2});
\draw[blue, dashed] (0, 0) -- (-1/2*2, {sqrt(3)/2*2});
\draw[blue, dashed] (0, 0) -- (1/2*2, {-sqrt(3)/2*2});
\draw[blue, dashed] (0, 0) -- (-1/2*2, {-sqrt(3)/2*2});
\filldraw[fill=white] (0, 0) circle (0.1);
\filldraw[orange] (-3/4*2, {sqrt(3)/4*2}) circle (0.07);
\filldraw[orange] (3/4*2, {sqrt(3)/4*2}) circle (0.07);
\filldraw[orange] (0, {sqrt(3)/2*2}) circle (0.07);
\filldraw[orange] (0, {-sqrt(3)/2*2}) circle (0.07);
\filldraw[orange] (3/4*2, {-sqrt(3)/4*2}) circle (0.07);
\filldraw[orange] (-3/4*2, {-sqrt(3)/4*2}) circle (0.07);
\end{tikzpicture}
}}
\;\;=\;\;
\sum_{\epsilon_i \in \{\pm\}}(-A^2)^{\frac{\sum_{i} \epsilon_i}{2}}
\vcenter{\hbox{
\begin{tikzpicture}
\draw[line width=5] (-3*3/16*2, {sqrt(3)*5/16*2}) to[out=-60, in=0] (-3/4*2, {sqrt(3)/8*2});
\node[anchor=south] at (-3*3/16*2, {sqrt(3)*5/16*2}){$\mu$};
\node[anchor=east] at (-3/4*2, {sqrt(3)/8*2}){$-\epsilon_1$};
\draw[line width=5] (3/4*2, {sqrt(3)/8*2}) to[out=180, in=-120] (3*3/16*2, {sqrt(3)*5/16*2});
\node[anchor=south] at (3*3/16*2, {sqrt(3)*5/16*2}){$\nu$};
\node[anchor=west] at (3/4*2, {sqrt(3)/8*2}){$-\epsilon_4$};
\draw[line width=5] (-3*3/16*2, {-sqrt(3)*5/16*2}) to[out=60, in=0] (-3/4*2, {-sqrt(3)/8*2});
\node[anchor=east] at (-3/4*2, {-sqrt(3)/8*2}){$\epsilon_1$};
\node[anchor=north] at (-3*3/16*2, {-sqrt(3)*5/16*2}){$\epsilon_2$};
\draw[line width=5] (3/4*2, {-sqrt(3)/8*2}) to[out=180, in=120] (3*3/16*2, {-sqrt(3)*5/16*2});
\node[anchor=north] at (3*3/16*2, {-sqrt(3)*5/16*2}){$\epsilon_3$};
\node[anchor=west] at (3/4*2, {-sqrt(3)/8*2}){$\epsilon_4$};
\draw[line width=5] (-3/16*2, {-sqrt(3)*7/16*2}) to[out=60, in=120] (3/16*2, {-sqrt(3)*7/16*2});
\node[anchor=north] at (-3/16*2-0.1, {-sqrt(3)*7/16*2}){$-\epsilon_2$};
\node[anchor=north] at (3/16*2, {-sqrt(3)*7/16*2}){$-\epsilon_3$};
\draw[orange, very thick] (-3/4*2, {sqrt(3)/4*2}) -- (0, {sqrt(3)/2*2});
\draw[orange, very thick, ->] (-3/4*2, {sqrt(3)/4*2}) -- (-3/8*2, {sqrt(3)*3/8*2});
\draw[orange, very thick] (3/4*2, {sqrt(3)/4*2}) -- (0, {sqrt(3)/2*2});
\draw[orange, very thick, ->] (3/4*2, {sqrt(3)/4*2}) -- (3/8*2, {sqrt(3)*3/8*2});
\draw[orange, very thick] (3/4*2, {sqrt(3)/4*2}) -- (3/4*2, {-sqrt(3)/4*2});
\draw[orange, very thick, ->] (3/4*2, {sqrt(3)/4*2}) -- (3/4*2, 0);
\draw[orange, very thick] (0, {-sqrt(3)/2*2}) -- (3/4*2, {-sqrt(3)/4*2});
\draw[orange, very thick, ->] (0, {-sqrt(3)/2*2}) -- (3/8*2, {-sqrt(3)*3/8*2});
\draw[orange, very thick] (0, {-sqrt(3)/2*2}) -- (-3/4*2, {-sqrt(3)/4*2});
\draw[orange, very thick, ->] (0, {-sqrt(3)/2*2}) -- (-3/8*2, {-sqrt(3)*3/8*2});
\draw[orange, very thick] (-3/4*2, {sqrt(3)/4*2}) -- (-3/4*2, {-sqrt(3)/4*2});
\draw[orange, very thick, ->] (-3/4*2, {sqrt(3)/4*2}) -- (-3/4*2, 0);
\draw[blue, dashed] (0, 0) -- (1*2, 0);
\draw[blue, dashed] (0, 0) -- (-1*2, 0);
\draw[blue, dashed] (0, 0) -- (1/2*2, {sqrt(3)/2*2});
\draw[blue, dashed] (0, 0) -- (-1/2*2, {sqrt(3)/2*2});
\draw[blue, dashed] (0, 0) -- (1/2*2, {-sqrt(3)/2*2});
\draw[blue, dashed] (0, 0) -- (-1/2*2, {-sqrt(3)/2*2});
\filldraw[fill=white] (0, 0) circle (0.1);
\filldraw[orange] (-3/4*2, {sqrt(3)/4*2}) circle (0.07);
\filldraw[orange] (3/4*2, {sqrt(3)/4*2}) circle (0.07);
\filldraw[orange] (0, {sqrt(3)/2*2}) circle (0.07);
\filldraw[orange] (0, {-sqrt(3)/2*2}) circle (0.07);
\filldraw[orange] (3/4*2, {-sqrt(3)/4*2}) circle (0.07);
\filldraw[orange] (-3/4*2, {-sqrt(3)/4*2}) circle (0.07);
\end{tikzpicture}
}}
\;.
\]
\end{cor}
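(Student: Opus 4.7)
The plan is to deduce this from Theorem \ref{thm:SkeinModuleOf3Ball} by observing that reduction (i.e.\ modding out by bad arcs) interacts cleanly with the presentation already established for the unreduced skein module. Write
\[
\mathcal{A} := \bigotimes_{v \in V(\Gamma)^{+}}\SkAlg(D_{\deg v}) \otimes \bigotimes_{w \in V(\Gamma)^{-}}\SkAlg(D_{\deg w})^{\mathrm{op}},
\qquad
\overline{\mathcal{A}} := \bigotimes_{v \in V(\Gamma)^{+}}\overline{\SkAlg}(D_{\deg v}) \otimes \bigotimes_{w \in V(\Gamma)^{-}}\overline{\SkAlg}(D_{\deg w})^{\mathrm{op}},
\]
and let $J \subset \mathcal{A}$ be the two-sided ideal generated by bad arcs at the vertices of $\Gamma$, so that $\overline{\mathcal{A}} = \mathcal{A}/J$. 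By definition $\overline{\Sk}(B,\Gamma) = \Sk(B,\Gamma)/(J \cdot \Sk(B,\Gamma))$, and Theorem \ref{thm:SkeinModuleOf3Ball} identifies $\Sk(B,\Gamma)$ with $\mathcal{A}/\mathrm{Ann}([\emptyset])$ as left $\mathcal{A}$-modules.

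First I would apply the third isomorphism theorem to conclude
\[
\overline{\Sk}(B,\Gamma) \;\cong\; \frac{\mathcal{A}}{\mathrm{Ann}([\emptyset]) + J} \;\cong\; \frac{\overline{\mathcal{A}}}{\overline{\mathrm{Ann}([\emptyset])}},
\]
where $\overline{\mathrm{Ann}([\emptyset])}$ is the image of $\mathrm{Ann}([\emptyset])$ in $\overline{\mathcal{A}}$. Since each generator of $\mathrm{Ann}([\emptyset])$ in Theorem \ref{thm:SkeinModuleOf3Ball} is a difference of two stated skein elements written entirely in terms of good (non-bad) arcs on the disks around the punctures, these generators descend verbatim to $\overline{\mathcal{A}}$. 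The key point to verify is therefore the reverse: that no new relation is forced on $\overline{\Sk}(B,\Gamma)$ beyond these puncture relations, i.e.\ that any element of $\mathrm{Ann}([\emptyset]) + J$ lies in the left ideal of $\overline{\mathcal{A}}$ generated by the images of the puncture relations.

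To see this, I would revisit the algorithm from the proof of Lemma \ref{lem:Sk(B)isCyclic} and Theorem \ref{thm:SkeinModuleOf3Ball}: any tangle $L \subset B$ can be pushed onto the combinatorially foliated boundary and then cut by the generic leaves (the blue dashed lines), producing a canonical element $\alpha_L \in \mathcal{A}$ with $\alpha_L \cdot [\emptyset] = [L]$. The argument showed that the only nontrivial obstruction to well-definedness of $\alpha_L$ came from isotopies of type \ref{item:typeVIboundaryisotopy} (pulling a strand across a puncture), and these obstructions are exactly the puncture relations. The same algorithm still produces a well-defined element $\overline{\alpha}_L \in \overline{\mathcal{A}}$ from any class in $\overline{\Sk}(B,\Gamma)$ because: (i) bad arcs in $\mathcal{A}$ correspond precisely to skein elements near a single vertex of $\Gamma$ which represent the zero class in $\overline{\Sk}(B,\Gamma)$, so the ambiguity in $\alpha_L$ arising from different ways of breaking strands lands in $J$; and (ii) the puncture relations, once pushed into $\overline{\mathcal{A}}$, are precisely the images of the original relations, and no additional relation of type \ref{item:typeVIboundaryisotopy} is created by the further quotient.

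The main obstacle I anticipate is checking that the puncture relations as written (in terms of good arcs only) actually generate the full image $\overline{\mathrm{Ann}([\emptyset])}$ rather than a proper sub-ideal thereof; in other words, one must rule out the possibility that some element of $\mathrm{Ann}([\emptyset])$ which involves bad arcs in an essential way becomes a new, independent relation after reduction. This is handled by the same ``break up the strand'' argument as before: any element of $\mathrm{Ann}([\emptyset])$ can, using only moves within $\mathcal{A}$ (isotopies of types \ref{item:typeIboundaryisotopy}--\ref{item:typeVboundaryisotopy}), be rewritten as a sum of the displayed puncture relations multiplied on the left by elements of $\mathcal{A}$. Projecting to $\overline{\mathcal{A}}$ then kills any bad-arc contribution and leaves exactly the stated left ideal. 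This completes the identification.
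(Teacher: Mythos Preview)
Your approach is correct and matches the paper's, which simply states that the corollary follows ``from the proof of Theorem \ref{thm:SkeinModuleOf3Ball}'' with no further argument. Your third-isomorphism-theorem formulation is a clean way to make this precise: once one checks that $\overline{\Sk}(B,\Gamma) = \Sk(B,\Gamma)/J\cdot\Sk(B,\Gamma)$ (which holds because the reduced module carries an $\overline{\mathcal{A}}$-action, forcing $J\cdot\Sk \subset K$, while conversely $I^{\mathrm{bad},\pm} \subset J$ gives $K \subset J\cdot\Sk$), the identification $\overline{\Sk}(B,\Gamma) \cong \overline{\mathcal{A}}/\overline{\mathrm{Ann}([\emptyset])}$ is immediate.

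However, your last two paragraphs are unnecessary and betray a misplaced worry. Once $\mathrm{Ann}([\emptyset])$ is known to be the \emph{left ideal generated by} the puncture relations (this is exactly the content of Theorem \ref{thm:SkeinModuleOf3Ball}), its image in $\overline{\mathcal{A}}$ is automatically the left ideal generated by the images of those same relations: the image of $\mathcal{A}\cdot S$ under a surjective ring map $\mathcal{A}\twoheadrightarrow\overline{\mathcal{A}}$ is $\overline{\mathcal{A}}\cdot \bar S$, full stop. There is no ``reverse'' direction to verify and no possibility that bad-arc terms in elements of $\mathrm{Ann}([\emptyset])$ produce new independent relations after reduction. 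You can delete those paragraphs entirely; the argument ends after the displayed isomorphism.
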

Note, because bad arcs evaluate to $0$ in reduced skein modules, the above relation can be simplified to the following relations, where the face of $S^2 \setminus \Gamma$ associated to the puncture is a $2n$-gon: 
\begin{enumerate}
\item \[
\vcenter{\hbox{
\begin{tikzpicture}
\draw[line width=5] (-3/16*2, {sqrt(3)*7/16*2}) to[out=-60, in=-120] (3/16*2, {sqrt(3)*7/16*2});
\node[anchor=south] at (-3/16*2, {sqrt(3)*7/16*2}){$\mu$};
\node[anchor=south] at (3/16*2, {sqrt(3)*7/16*2}){$\mu$};
\draw[orange, very thick] (-3/4*2, {sqrt(3)/4*2}) -- (0, {sqrt(3)/2*2});
\draw[orange, very thick, ->] (-3/4*2, {sqrt(3)/4*2}) -- (-3/8*2, {sqrt(3)*3/8*2});
\draw[orange, very thick] (3/4*2, {sqrt(3)/4*2}) -- (0, {sqrt(3)/2*2});
\draw[orange, very thick, ->] (3/4*2, {sqrt(3)/4*2}) -- (3/8*2, {sqrt(3)*3/8*2});
\draw[orange, very thick] (3/4*2, {sqrt(3)/4*2}) -- (3/4*2, {-sqrt(3)/4*2});
\draw[orange, very thick, ->] (3/4*2, {sqrt(3)/4*2}) -- (3/4*2, 0);
\draw[orange, very thick] (0, {-sqrt(3)/2*2}) -- (3/4*2, {-sqrt(3)/4*2});
\draw[orange, very thick, ->] (0, {-sqrt(3)/2*2}) -- (3/8*2, {-sqrt(3)*3/8*2});
\draw[orange, very thick] (0, {-sqrt(3)/2*2}) -- (-3/4*2, {-sqrt(3)/4*2});
\draw[orange, very thick, ->] (0, {-sqrt(3)/2*2}) -- (-3/8*2, {-sqrt(3)*3/8*2});
\draw[orange, very thick] (-3/4*2, {sqrt(3)/4*2}) -- (-3/4*2, {-sqrt(3)/4*2});
\draw[orange, very thick, ->] (-3/4*2, {sqrt(3)/4*2}) -- (-3/4*2, 0);
\draw[blue, dashed] (0, 0) -- (1*2, 0);
\draw[blue, dashed] (0, 0) -- (-1*2, 0);
\draw[blue, dashed] (0, 0) -- (1/2*2, {sqrt(3)/2*2});
\draw[blue, dashed] (0, 0) -- (-1/2*2, {sqrt(3)/2*2});
\draw[blue, dashed] (0, 0) -- (1/2*2, {-sqrt(3)/2*2});
\draw[blue, dashed] (0, 0) -- (-1/2*2, {-sqrt(3)/2*2});
\filldraw[fill=white] (0, 0) circle (0.1);
\filldraw[orange] (-3/4*2, {sqrt(3)/4*2}) circle (0.07);
\filldraw[orange] (3/4*2, {sqrt(3)/4*2}) circle (0.07);
\filldraw[orange] (0, {sqrt(3)/2*2}) circle (0.07);
\filldraw[orange] (0, {-sqrt(3)/2*2}) circle (0.07);
\filldraw[orange] (3/4*2, {-sqrt(3)/4*2}) circle (0.07);
\filldraw[orange] (-3/4*2, {-sqrt(3)/4*2}) circle (0.07);
\end{tikzpicture}
}}
\;\;=\;\;
(-A^2)^{-\mu(n-1)}
\vcenter{\hbox{
\begin{tikzpicture}
\draw[line width=5] (-3*3/16*2, {sqrt(3)*5/16*2}) to[out=-60, in=0] (-3/4*2, {sqrt(3)/8*2});
\node[anchor=south] at (-3*3/16*2, {sqrt(3)*5/16*2}){$\mu$};
\node[anchor=east] at (-3/4*2, {sqrt(3)/8*2}){$\mu$};
\draw[line width=5] (3/4*2, {sqrt(3)/8*2}) to[out=180, in=-120] (3*3/16*2, {sqrt(3)*5/16*2});
\node[anchor=south] at (3*3/16*2, {sqrt(3)*5/16*2}){$\mu$};
\node[anchor=west] at (3/4*2, {sqrt(3)/8*2}){$\mu$};
\draw[line width=5] (-3*3/16*2, {-sqrt(3)*5/16*2}) to[out=60, in=0] (-3/4*2, {-sqrt(3)/8*2});
\node[anchor=east] at (-3/4*2, {-sqrt(3)/8*2}){$-\mu$};
\node[anchor=north] at (-3*3/16*2-0.1, {-sqrt(3)*5/16*2}){$-\mu$};
\draw[line width=5] (3/4*2, {-sqrt(3)/8*2}) to[out=180, in=120] (3*3/16*2, {-sqrt(3)*5/16*2});
\node[anchor=north] at (3*3/16*2, {-sqrt(3)*5/16*2}){$-\mu$};
\node[anchor=west] at (3/4*2, {-sqrt(3)/8*2}){$-\mu$};
\draw[line width=5] (-3/16*2, {-sqrt(3)*7/16*2}) to[out=60, in=120] (3/16*2, {-sqrt(3)*7/16*2});
\node[anchor=north] at (-3/16*2, {-sqrt(3)*7/16*2}){$\mu$};
\node[anchor=north] at (3/16*2, {-sqrt(3)*7/16*2}){$\mu$};
\draw[orange, very thick] (-3/4*2, {sqrt(3)/4*2}) -- (0, {sqrt(3)/2*2});
\draw[orange, very thick, ->] (-3/4*2, {sqrt(3)/4*2}) -- (-3/8*2, {sqrt(3)*3/8*2});
\draw[orange, very thick] (3/4*2, {sqrt(3)/4*2}) -- (0, {sqrt(3)/2*2});
\draw[orange, very thick, ->] (3/4*2, {sqrt(3)/4*2}) -- (3/8*2, {sqrt(3)*3/8*2});
\draw[orange, very thick] (3/4*2, {sqrt(3)/4*2}) -- (3/4*2, {-sqrt(3)/4*2});
\draw[orange, very thick, ->] (3/4*2, {sqrt(3)/4*2}) -- (3/4*2, 0);
\draw[orange, very thick] (0, {-sqrt(3)/2*2}) -- (3/4*2, {-sqrt(3)/4*2});
\draw[orange, very thick, ->] (0, {-sqrt(3)/2*2}) -- (3/8*2, {-sqrt(3)*3/8*2});
\draw[orange, very thick] (0, {-sqrt(3)/2*2}) -- (-3/4*2, {-sqrt(3)/4*2});
\draw[orange, very thick, ->] (0, {-sqrt(3)/2*2}) -- (-3/8*2, {-sqrt(3)*3/8*2});
\draw[orange, very thick] (-3/4*2, {sqrt(3)/4*2}) -- (-3/4*2, {-sqrt(3)/4*2});
\draw[orange, very thick, ->] (-3/4*2, {sqrt(3)/4*2}) -- (-3/4*2, 0);
\draw[blue, dashed] (0, 0) -- (1*2, 0);
\draw[blue, dashed] (0, 0) -- (-1*2, 0);
\draw[blue, dashed] (0, 0) -- (1/2*2, {sqrt(3)/2*2});
\draw[blue, dashed] (0, 0) -- (-1/2*2, {sqrt(3)/2*2});
\draw[blue, dashed] (0, 0) -- (1/2*2, {-sqrt(3)/2*2});
\draw[blue, dashed] (0, 0) -- (-1/2*2, {-sqrt(3)/2*2});
\filldraw[fill=white] (0, 0) circle (0.1);
\filldraw[orange] (-3/4*2, {sqrt(3)/4*2}) circle (0.07);
\filldraw[orange] (3/4*2, {sqrt(3)/4*2}) circle (0.07);
\filldraw[orange] (0, {sqrt(3)/2*2}) circle (0.07);
\filldraw[orange] (0, {-sqrt(3)/2*2}) circle (0.07);
\filldraw[orange] (3/4*2, {-sqrt(3)/4*2}) circle (0.07);
\filldraw[orange] (-3/4*2, {-sqrt(3)/4*2}) circle (0.07);
\end{tikzpicture}
}}
\;,
\]
\item \begin{align*}
\vcenter{\hbox{
\begin{tikzpicture}[scale=0.8]
\draw[line width=5, gray] (-3/16*2, {sqrt(3)*7/16*2}) to[out=-60, in=-120] (3/16*2, {sqrt(3)*7/16*2});
\node[anchor=south] at (-3/16*2, {sqrt(3)*7/16*2}){$-$};
\node[anchor=south] at (3/16*2, {sqrt(3)*7/16*2}){$+$};
\draw[orange, very thick] (-3/4*2, {sqrt(3)/4*2}) -- (0, {sqrt(3)/2*2});
\draw[orange, very thick, ->] (-3/4*2, {sqrt(3)/4*2}) -- (-3/8*2, {sqrt(3)*3/8*2});
\draw[orange, very thick] (3/4*2, {sqrt(3)/4*2}) -- (0, {sqrt(3)/2*2});
\draw[orange, very thick, ->] (3/4*2, {sqrt(3)/4*2}) -- (3/8*2, {sqrt(3)*3/8*2});
\draw[orange, very thick] (3/4*2, {sqrt(3)/4*2}) -- (3/4*2, {-sqrt(3)/4*2});
\draw[orange, very thick, ->] (3/4*2, {sqrt(3)/4*2}) -- (3/4*2, 0);
\draw[orange, very thick] (0, {-sqrt(3)/2*2}) -- (3/4*2, {-sqrt(3)/4*2});
\draw[orange, very thick, ->] (0, {-sqrt(3)/2*2}) -- (3/8*2, {-sqrt(3)*3/8*2});
\draw[orange, very thick] (0, {-sqrt(3)/2*2}) -- (-3/4*2, {-sqrt(3)/4*2});
\draw[orange, very thick, ->] (0, {-sqrt(3)/2*2}) -- (-3/8*2, {-sqrt(3)*3/8*2});
\draw[orange, very thick] (-3/4*2, {sqrt(3)/4*2}) -- (-3/4*2, {-sqrt(3)/4*2});
\draw[orange, very thick, ->] (-3/4*2, {sqrt(3)/4*2}) -- (-3/4*2, 0);
\draw[blue, dashed] (0, 0) -- (1*2, 0);
\draw[blue, dashed] (0, 0) -- (-1*2, 0);
\draw[blue, dashed] (0, 0) -- (1/2*2, {sqrt(3)/2*2});
\draw[blue, dashed] (0, 0) -- (-1/2*2, {sqrt(3)/2*2});
\draw[blue, dashed] (0, 0) -- (1/2*2, {-sqrt(3)/2*2});
\draw[blue, dashed] (0, 0) -- (-1/2*2, {-sqrt(3)/2*2});
\filldraw[fill=white] (0, 0) circle (0.1);
\filldraw[orange] (-3/4*2, {sqrt(3)/4*2}) circle (0.07);
\filldraw[orange] (3/4*2, {sqrt(3)/4*2}) circle (0.07);
\filldraw[orange] (0, {sqrt(3)/2*2}) circle (0.07);
\filldraw[orange] (0, {-sqrt(3)/2*2}) circle (0.07);
\filldraw[orange] (3/4*2, {-sqrt(3)/4*2}) circle (0.07);
\filldraw[orange] (-3/4*2, {-sqrt(3)/4*2}) circle (0.07);
\end{tikzpicture}
}}
&\;\;=\;\;
(-A^2)^{-n+1}
\vcenter{\hbox{
\begin{tikzpicture}[scale=0.8]
\draw[line width=5, gray] (-3*3/16*2, {sqrt(3)*5/16*2}) to[out=-60, in=0] (-3/4*2, {sqrt(3)/8*2});
\node[anchor=south] at (-3*3/16*2, {sqrt(3)*5/16*2}){$-$};
\node[anchor=east] at (-3/4*2, {sqrt(3)/8*2}){$+$};
\draw[line width=5] (3/4*2, {sqrt(3)/8*2}) to[out=180, in=-120] (3*3/16*2, {sqrt(3)*5/16*2});
\node[anchor=south] at (3*3/16*2, {sqrt(3)*5/16*2}){$+$};
\draw[line width=5] (-3*3/16*2, {-sqrt(3)*5/16*2}) to[out=60, in=0] (-3/4*2, {-sqrt(3)/8*2});
\draw[line width=5] (3/4*2, {-sqrt(3)/8*2}) to[out=180, in=120] (3*3/16*2, {-sqrt(3)*5/16*2});
\draw[line width=5] (-3/16*2, {-sqrt(3)*7/16*2}) to[out=60, in=120] (3/16*2, {-sqrt(3)*7/16*2});
\draw[orange, very thick] (-3/4*2, {sqrt(3)/4*2}) -- (0, {sqrt(3)/2*2});
\draw[orange, very thick, ->] (-3/4*2, {sqrt(3)/4*2}) -- (-3/8*2, {sqrt(3)*3/8*2});
\draw[orange, very thick] (3/4*2, {sqrt(3)/4*2}) -- (0, {sqrt(3)/2*2});
\draw[orange, very thick, ->] (3/4*2, {sqrt(3)/4*2}) -- (3/8*2, {sqrt(3)*3/8*2});
\draw[orange, very thick] (3/4*2, {sqrt(3)/4*2}) -- (3/4*2, {-sqrt(3)/4*2});
\draw[orange, very thick, ->] (3/4*2, {sqrt(3)/4*2}) -- (3/4*2, 0);
\draw[orange, very thick] (0, {-sqrt(3)/2*2}) -- (3/4*2, {-sqrt(3)/4*2});
\draw[orange, very thick, ->] (0, {-sqrt(3)/2*2}) -- (3/8*2, {-sqrt(3)*3/8*2});
\draw[orange, very thick] (0, {-sqrt(3)/2*2}) -- (-3/4*2, {-sqrt(3)/4*2});
\draw[orange, very thick, ->] (0, {-sqrt(3)/2*2}) -- (-3/8*2, {-sqrt(3)*3/8*2});
\draw[orange, very thick] (-3/4*2, {sqrt(3)/4*2}) -- (-3/4*2, {-sqrt(3)/4*2});
\draw[orange, very thick, ->] (-3/4*2, {sqrt(3)/4*2}) -- (-3/4*2, 0);
\draw[blue, dashed] (0, 0) -- (1*2, 0);
\draw[blue, dashed] (0, 0) -- (-1*2, 0);
\draw[blue, dashed] (0, 0) -- (1/2*2, {sqrt(3)/2*2});
\draw[blue, dashed] (0, 0) -- (-1/2*2, {sqrt(3)/2*2});
\draw[blue, dashed] (0, 0) -- (1/2*2, {-sqrt(3)/2*2});
\draw[blue, dashed] (0, 0) -- (-1/2*2, {-sqrt(3)/2*2});
\filldraw[fill=white] (0, 0) circle (0.1);
\filldraw[orange] (-3/4*2, {sqrt(3)/4*2}) circle (0.07);
\filldraw[orange] (3/4*2, {sqrt(3)/4*2}) circle (0.07);
\filldraw[orange] (0, {sqrt(3)/2*2}) circle (0.07);
\filldraw[orange] (0, {-sqrt(3)/2*2}) circle (0.07);
\filldraw[orange] (3/4*2, {-sqrt(3)/4*2}) circle (0.07);
\filldraw[orange] (-3/4*2, {-sqrt(3)/4*2}) circle (0.07);
\end{tikzpicture}
}}
\;+\;
(-A^2)^{-n+2}
\vcenter{\hbox{
\begin{tikzpicture}[scale=0.8]
\draw[line width=5] (-3*3/16*2, {sqrt(3)*5/16*2}) to[out=-60, in=0] (-3/4*2, {sqrt(3)/8*2});
\node[anchor=south] at (-3*3/16*2, {sqrt(3)*5/16*2}){$-$};
\draw[line width=5] (3/4*2, {sqrt(3)/8*2}) to[out=180, in=-120] (3*3/16*2, {sqrt(3)*5/16*2});
\node[anchor=south] at (3*3/16*2, {sqrt(3)*5/16*2}){$+$};
\draw[line width=5, gray] (-3*3/16*2, {-sqrt(3)*5/16*2}) to[out=60, in=0] (-3/4*2, {-sqrt(3)/8*2});
\node[anchor=east] at (-3/4*2, {-sqrt(3)/8*2}){$+$};
\node[anchor=north] at (-3*3/16*2, {-sqrt(3)*5/16*2}){$-$};
\draw[line width=5] (3/4*2, {-sqrt(3)/8*2}) to[out=180, in=120] (3*3/16*2, {-sqrt(3)*5/16*2});
\draw[line width=5] (-3/16*2, {-sqrt(3)*7/16*2}) to[out=60, in=120] (3/16*2, {-sqrt(3)*7/16*2});
\draw[orange, very thick] (-3/4*2, {sqrt(3)/4*2}) -- (0, {sqrt(3)/2*2});
\draw[orange, very thick, ->] (-3/4*2, {sqrt(3)/4*2}) -- (-3/8*2, {sqrt(3)*3/8*2});
\draw[orange, very thick] (3/4*2, {sqrt(3)/4*2}) -- (0, {sqrt(3)/2*2});
\draw[orange, very thick, ->] (3/4*2, {sqrt(3)/4*2}) -- (3/8*2, {sqrt(3)*3/8*2});
\draw[orange, very thick] (3/4*2, {sqrt(3)/4*2}) -- (3/4*2, {-sqrt(3)/4*2});
\draw[orange, very thick, ->] (3/4*2, {sqrt(3)/4*2}) -- (3/4*2, 0);
\draw[orange, very thick] (0, {-sqrt(3)/2*2}) -- (3/4*2, {-sqrt(3)/4*2});
\draw[orange, very thick, ->] (0, {-sqrt(3)/2*2}) -- (3/8*2, {-sqrt(3)*3/8*2});
\draw[orange, very thick] (0, {-sqrt(3)/2*2}) -- (-3/4*2, {-sqrt(3)/4*2});
\draw[orange, very thick, ->] (0, {-sqrt(3)/2*2}) -- (-3/8*2, {-sqrt(3)*3/8*2});
\draw[orange, very thick] (-3/4*2, {sqrt(3)/4*2}) -- (-3/4*2, {-sqrt(3)/4*2});
\draw[orange, very thick, ->] (-3/4*2, {sqrt(3)/4*2}) -- (-3/4*2, 0);
\draw[blue, dashed] (0, 0) -- (1*2, 0);
\draw[blue, dashed] (0, 0) -- (-1*2, 0);
\draw[blue, dashed] (0, 0) -- (1/2*2, {sqrt(3)/2*2});
\draw[blue, dashed] (0, 0) -- (-1/2*2, {sqrt(3)/2*2});
\draw[blue, dashed] (0, 0) -- (1/2*2, {-sqrt(3)/2*2});
\draw[blue, dashed] (0, 0) -- (-1/2*2, {-sqrt(3)/2*2});
\filldraw[fill=white] (0, 0) circle (0.1);
\filldraw[orange] (-3/4*2, {sqrt(3)/4*2}) circle (0.07);
\filldraw[orange] (3/4*2, {sqrt(3)/4*2}) circle (0.07);
\filldraw[orange] (0, {sqrt(3)/2*2}) circle (0.07);
\filldraw[orange] (0, {-sqrt(3)/2*2}) circle (0.07);
\filldraw[orange] (3/4*2, {-sqrt(3)/4*2}) circle (0.07);
\filldraw[orange] (-3/4*2, {-sqrt(3)/4*2}) circle (0.07);
\end{tikzpicture}
}}
\\
&\quad\quad\quad+\;\;\cdots
\;\;+\;\;
(-A^2)^{n-1}
\vcenter{\hbox{
\begin{tikzpicture}[scale=0.8]
\draw[line width=5] (-3*3/16*2, {sqrt(3)*5/16*2}) to[out=-60, in=0] (-3/4*2, {sqrt(3)/8*2});
\node[anchor=south] at (-3*3/16*2, {sqrt(3)*5/16*2}){$-$};
\draw[line width=5, gray] (3/4*2, {sqrt(3)/8*2}) to[out=180, in=-120] (3*3/16*2, {sqrt(3)*5/16*2});
\node[anchor=south] at (3*3/16*2, {sqrt(3)*5/16*2}){$+$};
\node[anchor=west] at (3/4*2, {sqrt(3)/8*2}){$-$};
\draw[line width=5] (-3*3/16*2, {-sqrt(3)*5/16*2}) to[out=60, in=0] (-3/4*2, {-sqrt(3)/8*2});
\draw[line width=5] (3/4*2, {-sqrt(3)/8*2}) to[out=180, in=120] (3*3/16*2, {-sqrt(3)*5/16*2});
\draw[line width=5] (-3/16*2, {-sqrt(3)*7/16*2}) to[out=60, in=120] (3/16*2, {-sqrt(3)*7/16*2});
\draw[orange, very thick] (-3/4*2, {sqrt(3)/4*2}) -- (0, {sqrt(3)/2*2});
\draw[orange, very thick, ->] (-3/4*2, {sqrt(3)/4*2}) -- (-3/8*2, {sqrt(3)*3/8*2});
\draw[orange, very thick] (3/4*2, {sqrt(3)/4*2}) -- (0, {sqrt(3)/2*2});
\draw[orange, very thick, ->] (3/4*2, {sqrt(3)/4*2}) -- (3/8*2, {sqrt(3)*3/8*2});
\draw[orange, very thick] (3/4*2, {sqrt(3)/4*2}) -- (3/4*2, {-sqrt(3)/4*2});
\draw[orange, very thick, ->] (3/4*2, {sqrt(3)/4*2}) -- (3/4*2, 0);
\draw[orange, very thick] (0, {-sqrt(3)/2*2}) -- (3/4*2, {-sqrt(3)/4*2});
\draw[orange, very thick, ->] (0, {-sqrt(3)/2*2}) -- (3/8*2, {-sqrt(3)*3/8*2});
\draw[orange, very thick] (0, {-sqrt(3)/2*2}) -- (-3/4*2, {-sqrt(3)/4*2});
\draw[orange, very thick, ->] (0, {-sqrt(3)/2*2}) -- (-3/8*2, {-sqrt(3)*3/8*2});
\draw[orange, very thick] (-3/4*2, {sqrt(3)/4*2}) -- (-3/4*2, {-sqrt(3)/4*2});
\draw[orange, very thick, ->] (-3/4*2, {sqrt(3)/4*2}) -- (-3/4*2, 0);
\draw[blue, dashed] (0, 0) -- (1*2, 0);
\draw[blue, dashed] (0, 0) -- (-1*2, 0);
\draw[blue, dashed] (0, 0) -- (1/2*2, {sqrt(3)/2*2});
\draw[blue, dashed] (0, 0) -- (-1/2*2, {sqrt(3)/2*2});
\draw[blue, dashed] (0, 0) -- (1/2*2, {-sqrt(3)/2*2});
\draw[blue, dashed] (0, 0) -- (-1/2*2, {-sqrt(3)/2*2});
\filldraw[fill=white] (0, 0) circle (0.1);
\filldraw[orange] (-3/4*2, {sqrt(3)/4*2}) circle (0.07);
\filldraw[orange] (3/4*2, {sqrt(3)/4*2}) circle (0.07);
\filldraw[orange] (0, {sqrt(3)/2*2}) circle (0.07);
\filldraw[orange] (0, {-sqrt(3)/2*2}) circle (0.07);
\filldraw[orange] (3/4*2, {-sqrt(3)/4*2}) circle (0.07);
\filldraw[orange] (-3/4*2, {-sqrt(3)/4*2}) circle (0.07);
\end{tikzpicture}
}}
\;.
\end{align*}
\end{enumerate}
In the second relation above, which is a $2n$-term relation, we highlighted the elementary tangles with opposite states on its end points with gray color. 
We haven't labeled all the states, but there is a unique way of labeling the remaining states in such a way that each term could possibly be non-zero in the reduced skein algebra. 

\begin{defn}
An \emph{$n$-gonal pillow} $nP$ is the $3$-ball $B^3$
along with an embedded oriented graph $\Gamma \subset \partial B^3 = S^2$ on the boundary, which is dual to
the cell decomposition of $S^2$ into two 2-cells, both of which are $n$-gons whose boundaries are identified with each other. 
\end{defn}

\begin{eg}
A triangular pillow $3P$ looks like Figure \ref{fig:triangular_pillow}. 
\begin{figure}[H]
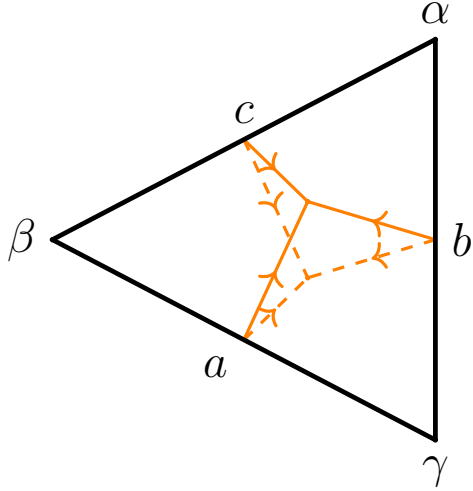

    \centering
    \includestandalone[scale=1.5]{figures/triangular_pillow}
    \caption{A triangular pillow (or a face suspension).}
    \label{fig:triangular_pillow}
\end{figure}
Note, each face suspension $Sf$ is topologically equivalent to a triangular pillow. 
\end{eg}

Applying Corollary \ref{cor:RedSkeinModuleOf3Balls} to a triangular pillow, we immediately get:
\begin{cor}\label{cor:red_skein_module_fs}
The reduced stated skein module of a triangular pillow is given by 
\[
\overline{\Sk}(3P) \cong \frac{\mathbb{T}^{\otimes 2} \otimes \mathbb{B}^{\otimes 3}}{
\langle
(-A^2)\alpha_1\alpha_2 = x_b x_c, \;
(-A^2)\beta_1\beta_2 = x_c x_a, \;
(-A^2)\gamma_1\gamma_2 = x_a x_b
\rangle}
\]
as a left $\mathbb{T}^{\otimes 2} \otimes (\mathbb{B}^{\mathrm{op}})^{\otimes 3} = \mathbb{T}^{\otimes 2} \otimes \mathbb{B}^{\otimes 3}$-module, 
where
\[
\mathbb{T}^{\otimes 2} =
\frac{R\langle \alpha_1^{\pm 1}, \beta_1^{\pm 1}, \gamma_1^{\pm 1}\rangle}{\langle \beta_1\alpha_1 = A\alpha_1\beta_1, \gamma_1\beta_1 = A\beta_1\gamma_1, \alpha_1\gamma_1 = A\gamma_1\alpha_1 \rangle}
\otimes
\frac{R\langle \alpha_2^{\pm 1}, \beta_2^{\pm 1}, \gamma_2^{\pm 1}\rangle}{\langle \alpha_2\beta_2 = A\beta_2\alpha_2, \beta_2\gamma_2 = A\gamma_2\beta_2, \gamma_2\alpha_2 = A\alpha_2\gamma_2 \rangle}
\]
and
\[
\mathbb{B}^{\otimes 3} = R[x_a^{\pm 1}, x_b^{\pm 1}, x_c^{\pm 1}]. 
\]
\end{cor}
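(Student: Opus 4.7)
The strategy is to apply Corollary \ref{cor:RedSkeinModuleOf3Balls} directly to the triangular pillow $3P$ and then identify the annihilator of $[\emptyset]$ with the three relations in the statement.

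First, from Figure \ref{fig:fs_marking} the marking $\Gamma \subset \partial(3P)$ has two sinks, both of degree $3$ (the tetrahedron barycenters), and three sources, each of degree $2$ (the edge-cone midpoints). Hence
\[
\bigotimes_{v \in V(\Gamma)^+} \overline{\SkAlg}(D_{\deg v}) \;\otimes\; \bigotimes_{w \in V(\Gamma)^-} \overline{\SkAlg}(D_{\deg w})^{\mathrm{op}} \;\cong\; \mathbb{T}^{\otimes 2} \otimes \mathbb{B}^{\otimes 3},
\]
where we used that $\mathbb{B} \cong \mathbb{B}^{\mathrm{op}}$ is commutative. The two copies of $\mathbb{T}$ acquire opposite cyclic conventions (as displayed in the statement) because, viewed from outside $3P$, the cyclic orderings of the $\Gamma$-edges at the top and bottom barycenters are reversed; equivalently, one copy is related to the other by the orientation-reversing symmetry of Remark \ref{rmk:OrientationReversingSymmetry}. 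Then Corollary \ref{cor:RedSkeinModuleOf3Balls} gives
\[
\overline{\Sk}(3P) \;\cong\; \frac{\mathbb{T}^{\otimes 2} \otimes \mathbb{B}^{\otimes 3}}{\mathrm{Ann}([\emptyset])},
\]
with $\mathrm{Ann}([\emptyset])$ generated by one relation per puncture of the combinatorial foliation of $\partial(3P)$.

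Second, I identify the punctures. The complement $S^2 \setminus \Gamma$ has exactly three faces, one at each corner $v_a, v_b, v_c$ of the pillow. At each such corner, precisely two edges from each of the two barycenters meet the face, so each face is a $2n$-gon with $n = 2$. Hence $\mathrm{Ann}([\emptyset])$ is generated by three relations, one per corner.

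Third, I evaluate each relation and match it to the stated form. Consider the corner $v = v_c$, where the two pillow-edges $a$ and $b$ meet. Choosing $\mu = \nu = +$ in the puncture relation of Corollary \ref{cor:RedSkeinModuleOf3Balls}, the left-hand side is the arc between the two top-barycenter $\Gamma$-edges at $v$; under the isomorphism \eqref{eq:trianglealg} this is one of the generators of $\mathbb{T}_1$, say $\gamma_1$. In the right-hand sum, any term carrying opposite-sign states on a single edge of $\Gamma$ contains a bad arc and therefore vanishes in $\overline{\Sk}(3P)$. Only one term survives: it is the product of the corresponding bottom-barycenter generator $\gamma_2 \in \mathbb{T}_2$ with the two bigon arcs $x_a, x_b \in \mathbb{B}$. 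Tracking the prefactor $(-A^2)^{(\sum_i \epsilon_i)/2}$ against the surviving $\epsilon_i$'s, together with the opposite-orientation phases on the two $\mathbb{T}$-factors, produces an overall factor of $(-A^2)^{-1}$, yielding the relation $(-A^2)\gamma_1 \gamma_2 = x_a x_b$. The identical analysis at $v_a$ and $v_b$ gives the remaining two relations, completing the proof.

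The main obstacle is the careful bookkeeping of the $(-A^2)^{\pm 1/2}$ factors from the bad-arc reduction and from the opposite-orientation sign conventions between the top and bottom barycenters, and identifying which generator of each $\mathbb{T}$ factor corresponds to which corner of the pillow. Once this is done, the three relations are exactly $(-A^2)\alpha_1\alpha_2 = x_b x_c$, $(-A^2)\beta_1\beta_2 = x_c x_a$, and $(-A^2)\gamma_1\gamma_2 = x_a x_b$, up to the labeling $\{v_a, v_b, v_c\} \leftrightarrow \{\alpha, \beta, \gamma\}$ fixed by the choice of generators in \eqref{eq:trianglealg}.
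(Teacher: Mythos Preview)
Your proof is correct and follows exactly the route the paper takes: the paper simply writes ``Applying Corollary~\ref{cor:RedSkeinModuleOf3Balls} to a triangular pillow, we immediately get'' and states the result, so you have supplied the details the authors left implicit. One small imprecision: the surviving arc at the bottom sink carries states $(-\mu,-\mu)$, so it is $\gamma_2^{-1}$ rather than $\gamma_2$ that appears directly in the right-hand side of the puncture relation; after rearranging you obtain $(-A^2)\gamma_1\gamma_2 = x_a x_b$ as stated, but it is worth being explicit about that step since it is where the sign bookkeeping you flag actually enters.
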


\begin{rmk}
One interesting feature of an $n$-gonal pillow is that it can be split into two $n$-gonal pillows, along an $n$-gon. 
That is, we have splitting maps
\[
\sigma : \Sk(nP) \rightarrow \Sk(nP) \;\overline{\otimes}\; \Sk(nP)
\]
and
\[
\overline{\sigma} : \overline{\Sk}(nP) \rightarrow \overline{\Sk}(nP) \;\overline{\otimes}\; \overline{\Sk}(nP).
\]
The reduced tensor product $\Sk(nP) \;\overline{\otimes}\; \Sk(nP)$ (and similarly its reduced version, $\overline{\Sk}(nP) \;\overline{\otimes}\; \overline{\Sk}(nP)$) above is easy to describe: 
it is the usual tensor product of left and right modules over the same algebra, $\SkAlg(D_n)$, after turning one of the left $\SkAlg(D_n)$-module structures into a right $\SkAlg(D_n)$-module structure using the self-duality map in Proposition \ref{prop:SelfDuality}. 
The right $\SkAlg(D_2)$-module structures on $\Sk(nP) \;\overline{\otimes}\; \Sk(nP)$ is induced from the usual coalgebra structure on $\SkAlg(D_2)$, which is the splitting map for splitting a bigon into two bigons. 
In this way, we get a coalgebra-like structure on $\Sk(nP)$. 

If $Y$ is a 3-manifold whose boundary marking $\Gamma$ is induced from an ideal triangulation of its boundary, 
then for any face of the boundary ideal triangulation, we can split a triangular pillow off of $Y$. 
That is, $\Sk(Y,\Gamma)$ is naturally equipped with a comodule-like structure over $\Sk(3P)$:
\[
\sigma : \Sk(Y,\Gamma) \rightarrow \Sk(Y,\Gamma) \;\overline{\otimes}\; \Sk(3P).
\]
This is a 3-dimensional analog of the coalgebra structure on $\SkAlg(D_2)$ and the $\SkAlg(D_2)$-comodule structure on $\SkAlg(\Sigma)$ for any bordered punctured surface $\Sigma$ with a choice of a boundary interval, as described in \cite{CL}. 
\end{rmk}


\section{Quantum trace map for triangulated 3-manifolds}\label{sec:3dQuantumTrace}
In this section, we construct the quantum trace map for ideally triangulated 3-manifolds. 
Our strategy is to cut the triangulated 3-manifold $Y$ into face suspensions $Y = \cup_{f\in \mathcal{F}} Sf$, construct the quantum trace map for each face suspension $Sf$, and then show that they glue well, by using the splitting theorem we proved earlier. 

\subsection{Quantum trace map for a single face suspension}
First, let's recall the definition of the Weyl-ordered product in a quantum torus. 
\begin{defn}
Let $\Lambda$ be a lattice with a basis $e_1, \cdots, e_n$, equipped with an anti-symmetric bilinear form 
$\langle \, , \rangle : \Lambda \times \Lambda \rightarrow \mathbb{Z}$. 
The associated \emph{quantum torus} $\mathrm{QT}_{\Lambda}$ is the unital associative algebra over a ring containing an invertible element $A$, generated by formal variables $x_1, \cdots, x_n$
with commutation relations
\[
x_i x_j = A^{\langle e_i, e_j\rangle} x_j x_i. 
\]

The \emph{Weyl-ordered product} $[x_{i_1} \cdots x_{i_k}]$ of $x_{i_1}, \cdots, x_{i_k}$ is defined to be
\[
[x_{i_1} \cdots  x_{i_k}] := A^{-\frac{\sum_{1\leq j_1 < j_2 \leq k} \langle e_{i_{j_1}}, e_{i_{j_2}}\rangle}{2}} x_{i_1} \cdots  x_{i_k}. 
\]
Note, the Weyl-ordered product is independent of the ordering of the variables. 
\end{defn}

Recall from \eqref{eq:trianglealg} that the triangle algebra $\mathbb{T}$, which is an example of a quantum torus, is the reduced stated skein algebra of a triangle. 
It is generated by $A$-commuting variables $\alpha, \beta, \gamma$ associated to the three vertices of the triangle. 
The triangle algebra admits a natural extension generated by $A$-commuting variables $a, b, c$ associated to the edges of the triangle. 

\begin{defn}
The \emph{extended triangle algebra} $\widetilde{\mathbb{T}}$ is
an extension of the triangle algebra $\mathbb{T}$
defined as
\[
\widetilde{\mathbb{T}} := \frac{R\langle a^{\pm 1}, b^{\pm 1}, c^{\pm 1}\rangle}{\langle ba = Aab, cb = Abc, ac = Aca \rangle}.
\]
It is equipped with the following embedding of the triangle algebra $\mathbb{T}$: 
\begin{align*}
\mathbb{T} &\hookrightarrow \widetilde{\mathbb{T}}\\
\alpha &\mapsto A^{-1} [bc],\\
\beta &\mapsto A^{-1} [ca],\\
\gamma &\mapsto A^{-1} [ab].
\end{align*}
\end{defn}

\begin{defn}
Let $Sf$ be a face suspension. 
The \emph{face suspension module} $\mathbb{S}f$ of $Sf$ is 
\[
\mathbb{S}f := \widetilde{\mathbb{T}}^{\otimes 2} = 
\frac{R\langle a_1^{\pm 1}, b_1^{\pm 1}, c_1^{\pm 1}\rangle \otimes R\langle a_2^{\pm 1}, b_2^{\pm 1}, c_2^{\pm 1}\rangle}{\langle b_1a_1 = Aa_1b_1, c_1b_1 = Ab_1c_1, a_1c_1 = Ac_1a_1,\; a_2b_2 = Ab_2a_2, b_2c_2 = Ac_2b_2, c_2a_2 = Aa_2c_2\rangle}
\]
as a regular $\widetilde{\mathbb{T}}^{\otimes 2}$-$\widetilde{\mathbb{T}}^{\otimes 2}$ bimodule. 
\end{defn}
We will think of the $6$ generators of $\mathbb{S}f$ as being naturally associated to the $6$ edge cones on the boundary of $Sf$: 
$a_1, b_1, c_1$ (resp.\ $a_2, b_2, c_2$) are associated to the top (resp.\ bottom) $3$ edge cones in Figure \ref{fig:triangular_pillow}. 

The following lemma equips $\mathbb{S}f$ with a structure of a $\mathbb{T}^{\otimes 2}$-$\mathbb{B}^{\otimes 3}$-bimodule: 
\begin{lem}\label{lem:triangle_biangle_embedding}
We have the following embeddings of algebras
\begin{align*}
\mathbb{T}^{\otimes 2} &\hookrightarrow \widetilde{\mathbb{T}}^{\otimes 2}\\
\alpha_1 &\mapsto A^{-1} [b_1c_1],\\
\beta_1 &\mapsto A^{-1} [c_1a_1],\\
\gamma_1 &\mapsto A^{-1} [a_1b_1],\\
\alpha_2 &\mapsto A^{-1} [b_2c_2],\\
\beta_2 &\mapsto A^{-1} [c_2a_2],\\
\gamma_2 &\mapsto A^{-1} [a_2b_2],
\end{align*}
and
\begin{align*}
\mathbb{B}^{\otimes 3} &\hookrightarrow \widetilde{\mathbb{T}}^{\otimes 2}\\
x_a &\mapsto (-1)^{-\frac{1}{2}}a_1 \otimes a_2,\\
x_b &\mapsto (-1)^{-\frac{1}{2}}b_1 \otimes b_2,\\
x_c &\mapsto (-1)^{-\frac{1}{2}}c_1 \otimes c_2.
\end{align*}
\end{lem}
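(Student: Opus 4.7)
The plan is to verify each of the two maps is a well-defined algebra homomorphism by checking that the defining relations of the source algebra are preserved, and then to conclude injectivity using the fact that both source and target are quantum tori with monomial $R$-bases.

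For the map $\mathbb{T}^{\otimes 2} \hookrightarrow \widetilde{\mathbb{T}}^{\otimes 2}$, the key observation is that in $\widetilde{\mathbb{T}}_1$ the relations $b_1 a_1 = A a_1 b_1$, $c_1 b_1 = A b_1 c_1$, $a_1 c_1 = A c_1 a_1$ make the Weyl-ordered products take the form $[b_1 c_1] = A^{1/2} b_1 c_1$ and cyclic permutations thereof. A direct computation then gives, for example,
\begin{align*}
\beta_1 \alpha_1 &= A^{-2}(c_1 a_1)(b_1 c_1) = A^{-2}\cdot A \cdot a_1 b_1 c_1^2, \\
\alpha_1 \beta_1 &= A^{-2}(b_1 c_1)(c_1 a_1) = A^{-2}\cdot A^{-1}\cdot a_1 b_1 c_1^2,
\end{align*}
so that $\beta_1 \alpha_1 = A\, \alpha_1 \beta_1$ as required. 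The two remaining relations in $\mathbb{T}_1$, as well as the three relations in $\mathbb{T}_2$ (whose reversed commutation is cleanly mirrored in $\widetilde{\mathbb{T}}_2$, reflecting the opposite orientation of the second tetrahedron), follow by entirely analogous computations or by cyclic symmetry. Because variables indexed by $1$ commute with those indexed by $2$, this yields a well-defined algebra map on the tensor product.

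For the map $\mathbb{B}^{\otimes 3} \hookrightarrow \widetilde{\mathbb{T}}^{\otimes 2}$, one needs only that the images of $x_a, x_b, x_c$ be pairwise commuting units. For example, $x_a x_b = (-1)^{-1}(a_1 b_1)\otimes(a_2 b_2)$ while $x_b x_a = (-1)^{-1}(b_1 a_1)\otimes(b_2 a_2) = (-1)^{-1}(A a_1 b_1)\otimes(A^{-1}a_2 b_2) = x_a x_b$: the $A$-factors from the two tetrahedra, which carry opposite orientations, cancel exactly. The other two commutations are identical. The scalar $(-1)^{-1/2}$ plays no role in the commutativity check but is needed to match the classical story of Section \ref{sec:classical_case} and to produce the correct gluing relations invoked in Corollary \ref{cor:red_skein_module_fs}.

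The main content of the proof is therefore the relation-checking above; injectivity is essentially automatic. Both $\mathbb{T}^{\otimes 2}$ and $\mathbb{B}^{\otimes 3}$ are quantum tori, and hence have $R$-bases of monomials indexed by their underlying rank-$6$ and rank-$3$ lattices respectively. Each map sends a generator to a nonzero $R$-scalar times a monomial in the $a_i, b_i, c_i$, and the corresponding exponent vectors in $\mathbb{Z}^6$ are linearly independent in both cases (the vectors $(0,1,1), (1,0,1), (1,1,0)$ in each copy of $\mathbb{Z}^3$ for the first map, and the three diagonal vectors $e_i \oplus e_i$ for $i = a, b, c$ for the second). Since $\widetilde{\mathbb{T}}^{\otimes 2}$ is $\mathbb{Z}^6$-graded with each graded piece free of rank $1$ over $R$, distinct monomials remain $R$-linearly independent in the image, and injectivity follows. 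The only point that bears a moment's care is the sign convention in $[b_1 c_1] = A^{1/2} b_1 c_1$, which is fixed uniquely by the requirement that the three relations of $\mathbb{T}$ transform consistently under cyclic permutation.
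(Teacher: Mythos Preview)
Your approach—direct verification of the defining relations followed by an injectivity argument via the monomial lattice grading—is correct and is precisely the routine check the paper leaves implicit (the lemma is stated without proof). One minor arithmetic slip: with $[b_1c_1]=A^{1/2}b_1c_1$ and $[c_1a_1]=A^{1/2}c_1a_1$ one finds $\alpha_1\beta_1 = A^{-2}[b_1c_1][c_1a_1] = A^{-1}b_1c_1^2a_1 = A^{-2}a_1b_1c_1^2$ (not $A^{-2}\cdot A^{-1}\cdot a_1b_1c_1^2$), and with this correction the ratio $\beta_1\alpha_1/\alpha_1\beta_1$ is indeed $A$ as you claim.
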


We define the quantum trace $\Tr_{Sf}$ for a single face suspension as follows:
\begin{thm} \label{thm:fsQT}
There is a well-defined $\mathbb{T}^{\otimes 2}$-$\mathbb{B}^{\otimes 3}$-bimodule homomorphism 
\[
\Tr_{Sf} : \overline{\Sk}(Sf) \rightarrow \mathbb{S}f
\]
mapping the empty skein $[\emptyset]$ to $1$. 
\end{thm}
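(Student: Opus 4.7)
The plan is to leverage the presentation of $\overline{\Sk}(Sf)$ obtained in Corollary \ref{cor:red_skein_module_fs}, which exhibits it as a cyclic left $\mathbb{T}^{\otimes 2} \otimes \mathbb{B}^{\otimes 3}$-module generated by $[\emptyset]$, with annihilator the left ideal
\[
J \;:=\; \bigl\langle\, (-A^2)\alpha_1\alpha_2 - x_b x_c,\;\; (-A^2)\beta_1\beta_2 - x_c x_a,\;\; (-A^2)\gamma_1\gamma_2 - x_a x_b \,\bigr\rangle.
\]
Since $\mathbb{B}$ is commutative, this single left module structure packages both the left $\mathbb{T}^{\otimes 2}$-action and the right $\mathbb{B}^{\otimes 3}$-action of the claimed bimodule structure, so defining a bimodule homomorphism out of $\overline{\Sk}(Sf)$ is the same as defining a left $\mathbb{T}^{\otimes 2}\otimes\mathbb{B}^{\otimes 3}$-module homomorphism. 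The only candidate for a map sending $[\emptyset]$ to $1$ is, using the embeddings $i_{\mathbb{T}} : \mathbb{T}^{\otimes 2} \hookrightarrow \widetilde{\mathbb{T}}^{\otimes 2}$ and $i_{\mathbb{B}} : \mathbb{B}^{\otimes 3} \hookrightarrow \widetilde{\mathbb{T}}^{\otimes 2}$ of Lemma \ref{lem:triangle_biangle_embedding},
\[
\Tr_{Sf}\!\bigl((\alpha \otimes x)\cdot [\emptyset]\bigr) \;:=\; i_{\mathbb{T}}(\alpha)\, i_{\mathbb{B}}(x) \;\in\; \mathbb{S}f.
\]

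The content of the theorem then reduces to verifying well-definedness, namely that $i_{\mathbb{T}}(\alpha)\,i_{\mathbb{B}}(x) = 0$ in $\widetilde{\mathbb{T}}^{\otimes 2}$ whenever $\alpha \otimes x \in J$. It suffices to check the three generators of $J$, which is a direct computation in the quantum torus $\widetilde{\mathbb{T}}^{\otimes 2}$ using Weyl-ordered monomials. For the first relation, using the fact that generators from the two tensor factors commute and summing the antisymmetric brackets, one gets
\[
(-A^2)\, i_{\mathbb{T}}(\alpha_1)\,i_{\mathbb{T}}(\alpha_2) \;=\; -[b_1 c_1]\,[b_2 c_2] \;=\; -[b_1 b_2 c_1 c_2],
\]
while on the other side
\[
i_{\mathbb{B}}(x_b)\,i_{\mathbb{B}}(x_c) \;=\; (-1)^{-1/2}(b_1 b_2)\cdot (-1)^{-1/2}(c_1 c_2) \;=\; -[b_1 b_2 c_1 c_2],
\]
so the two images agree. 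The remaining two relations follow by the same calculation after the cyclic relabeling $a \to b \to c \to a$, which preserves the structure of the embeddings.

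I do not anticipate any real obstacle: all the topological and algebraic work has been absorbed into Corollary \ref{cor:red_skein_module_fs} (the presentation) and Lemma \ref{lem:triangle_biangle_embedding} (that the proposed embeddings are genuinely algebra maps). What remains is essentially a monomial identity check inside a quantum torus. The only mildly delicate point is bookkeeping: one must be careful that the two copies of $\mathbb{T}$ inside $\mathbb{T}^{\otimes 2}$ carry opposite $q$-commutation conventions (reflecting the opposite orientations of the top and bottom cones of $Sf$), which is precisely what makes the factors of $A^{\pm 1/2}$ coming from the Weyl ordering cancel so that the normalization $(-A^2)$ on the triangle-algebra side matches the factor $(-1)^{-1}$ produced by the two $(-1)^{-1/2}$'s on the biangle-algebra side.
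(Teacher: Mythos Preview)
Your proof is correct and follows essentially the same approach as the paper: define the map via the embeddings of Lemma \ref{lem:triangle_biangle_embedding} and verify by a direct computation in $\widetilde{\mathbb{T}}^{\otimes 2}$ that the three relations from Corollary \ref{cor:red_skein_module_fs} are preserved. The paper's proof is slightly terser but the content is the same.
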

\begin{proof}
The embeddings of Lemma \ref{lem:triangle_biangle_embedding} induce a $\mathbb{T}^{\otimes 2}$-$\mathbb{B}^{\otimes 3}$-bimodule map
\[
\mathbb{T}^{\otimes 2} \otimes \mathbb{B}^{\otimes 3} \rightarrow \widetilde{\mathbb{T}}^{\otimes 2}.
\]
The relations defining $\overline{\Sk}(Sf)$ in Corollary \ref{cor:red_skein_module_fs} are preserved under this map; for instance, the first relation is preserved because
\begin{align*}
(-A^2)\alpha_1 \alpha_2 [\emptyset] \;&\mapsto\; -[b_1 c_1] \otimes [b_2 c_2], \\
[\emptyset]x_b x_c \;&\mapsto\; -b_1c_1 \otimes b_2c_2 = -[b_1 c_1] \otimes [b_2 c_2],
\end{align*}
and likewise the other two relations are preserved. 
Therefore, there is a well-defined $\mathbb{T}^{\otimes 2}$-$\mathbb{B}^{\otimes 3}$-bimodule homomorphism
\[
\overline{\Sk}(Sf) \rightarrow \widetilde{\mathbb{T}}^{\otimes 2}.
\]
\end{proof}

\begin{thm}\label{thm:InjectivityOfTrSf}
The map $\Tr_{Sf}$ is injective. 
\end{thm}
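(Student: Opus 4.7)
The plan is to exhibit an explicit $R$-spanning family $\{M_{\vec n}\}_{\vec n \in \mathbb{Z}^6}$ of $\overline{\Sk}(Sf)$ and to check that $\Tr_{Sf}$ carries it to an $R$-linearly independent family in $\mathbb{S}f = \widetilde{\mathbb{T}}^{\otimes 2}$; injectivity of $\Tr_{Sf}$ then follows at once (and as a byproduct $\{M_{\vec n}\}$ is in fact an $R$-basis of $\overline{\Sk}(Sf)$).

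For the spanning family: by Corollary~\ref{cor:red_skein_module_fs}, $\overline{\Sk}(Sf)$ is the cyclic left $\mathbb{T}^{\otimes 2} \otimes \mathbb{B}^{\otimes 3}$-module on $[\emptyset]$ modulo the three relations $(-A^2)\alpha_1\alpha_2 = x_b x_c$, $(-A^2)\beta_1\beta_2 = x_c x_a$, $(-A^2)\gamma_1\gamma_2 = x_a x_b$. Since $\alpha_1,\beta_1,\gamma_1 \in \mathbb{T}$ commute pairwise with $x_a,x_b,x_c \in \mathbb{B}^{\otimes 3}$ in the ambient tensor product and the $\alpha_i,\beta_i,\gamma_i$ are invertible, each relation can be solved explicitly (for instance $\alpha_2 \cdot [\emptyset] = (-A^2)^{-1}\alpha_1^{-1} x_b x_c \cdot [\emptyset]$), so $\alpha_2,\beta_2,\gamma_2$ can be eliminated from any monomial acting on $[\emptyset]$. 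Iterating, every element of $\overline{\Sk}(Sf)$ is an $R$-combination of the elements
\[
M_{\vec n} \;:=\; \alpha_1^{i_1}\beta_1^{j_1}\gamma_1^{k_1}\, x_a^{l} x_b^{m} x_c^{n}\,[\emptyset], \qquad \vec n = (i_1,j_1,k_1,l,m,n) \in \mathbb{Z}^6.
\]

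For linear independence of the images: using the embeddings of Lemma~\ref{lem:triangle_biangle_embedding} and collecting the quantum factors that arise from Weyl orderings and $A$-commutations, one computes that $\Tr_{Sf}(M_{\vec n})$ is a unit of $R$ times the Weyl-ordered monomial
\[
[\,a_1^{j_1+k_1+l}\, b_1^{i_1+k_1+m}\, c_1^{i_1+j_1+n}\, a_2^{l}\, b_2^{m}\, c_2^{n}\,] \;\in\; \widetilde{\mathbb{T}}^{\otimes 2}.
\]
Since Weyl-ordered monomials form an $R$-basis of the quantum torus $\widetilde{\mathbb{T}}^{\otimes 2}$, the $R$-linear independence of $\{\Tr_{Sf}(M_{\vec n})\}$ reduces to injectivity of the $\mathbb{Z}$-linear exponent map $\mathbb{Z}^6 \to \mathbb{Z}^6$. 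In block form (first three vs.\ last three coordinates of $\vec n$), this map has matrix
\[
\begin{pmatrix} J & I_3 \\ 0 & I_3 \end{pmatrix}, \qquad J = \begin{pmatrix} 0 & 1 & 1 \\ 1 & 0 & 1 \\ 1 & 1 & 0 \end{pmatrix},
\]
with determinant $\det J = 2 \neq 0$, hence is injective. Combining this with the spanning statement, any element of $\ker \Tr_{Sf}$ must vanish.

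The main technical obstacle is the quantum-factor bookkeeping needed to justify the image formula; this is routine but tedious, as each factor arises from a single instance of moving two $A$-commuting generators past each other. A useful sanity check is to verify that the displayed right-hand side is compatible with the three defining relations of $\overline{\Sk}(Sf)$, which amounts to the well-definedness calculation already carried out in the proof of Theorem~\ref{thm:fsQT}.
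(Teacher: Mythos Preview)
Your proof is correct and follows essentially the same approach as the paper. Both arguments eliminate $\alpha_2,\beta_2,\gamma_2$ using the three defining relations of Corollary~\ref{cor:red_skein_module_fs} to obtain a spanning family indexed by monomials in $\alpha_1,\beta_1,\gamma_1,x_a,x_b,x_c$, and then verify that these map to distinct monomials in $\widetilde{\mathbb{T}}^{\otimes 2}$; the paper phrases this last step as showing each $\mathbb{Z}^6$-graded piece of $\overline{\Sk}(Sf)$ is $1$-dimensional (and writes down the explicit inverse of your exponent map), while you compute the determinant of the exponent matrix directly.
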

\begin{proof}
It is easy to see that the image of $\Tr_{Sf}$ is exactly the submodule of $\widetilde{\mathbb{T}}^{\otimes 2}$ spanned by monomials of even total degree. 
We claim that $\Tr_{Sf}$ is an isomorphism from $\overline{\Sk}(Sf)$ to this submodule. 
It suffices to show that in each grading $\vec{n} = (n_{a_1}, n_{b_1}, n_{c_1}, n_{a_2}, n_{b_2}, n_{c_2}) \in \mathbb{Z}^{E(\Gamma)}$ (Remark \ref{rmk:SkeinModuleIsGraded}) with $\sum \vec{n}$ even, 
$\overline{\Sk}(Sf)$ is 1-dimensional. 
For this, note that, for any monomial in $\alpha_1, \beta_1, \gamma_1, \alpha_2, \beta_2, \gamma_2, x_a, x_b, x_c$, 
we can apply the relations defining $\overline{\Sk}(Sf)$ (Corollary \ref{cor:red_skein_module_fs}) to express it as a monomial in $\alpha_1, \beta_1, \gamma_1, x_a, x_b, x_c$. 
It is easy to see that such a monomial is uniquely determined by its grading $\vec{n} = (n_{a_1}, n_{b_1}, n_{c_1}, n_{a_2}, n_{b_2}, n_{c_2})$ in $\mathbb{Z}^{E(\Gamma)}$; the monomial must be
\[
\alpha_1^{\frac{-n_{a_1}+n_{b_1}+n_{c_1}+n_{a_2}-n_{b_2}-n_{c_2}}{2}} \beta_1^{\frac{n_{a_1}-n_{b_1}+n_{c_1}-n_{a_2}+n_{b_2}-n_{c_2}}{2}} \gamma_1^{\frac{n_{a_1}+n_{b_1}-n_{c_1}-n_{a_2}-n_{b_2}+n_{c_2}}{2}} x_a^{n_{a_2}} x_b^{n_{b_2}} x_c^{n_{c_2}}. 
\]
It follows that $\overline{\Sk}(Sf)$ is 1-dimensional in each graded piece. 
\end{proof}


\subsection{Square-root quantum gluing modules and the main theorem}
For an ideally triangulated 3-manifold $Y$ (without boundary except for cusps at infinity) decomposed into face suspensions $Y = \cup_{f\in \mathcal{F}} Sf$,
we define $\overline{\bigotimes}_{f\in \mathcal{F}} \mathbb{S}f$ to be the maximal quotient of $\bigotimes_{f\in \mathcal{F}} \mathbb{S}f$ such that the tensor product 
\[
\otimes_{f\in \mathcal{F}}\Tr_{Sf} : 
\bigotimes_{f\in \mathcal{F}} \overline{\Sk}(Sf) \rightarrow 
\bigotimes_{f\in \mathcal{F}}\mathbb{S}f
\]
of the homomorphisms $\Tr_{Sf}$ descends to the quotient
\[
\overline{\otimes}_{f\in \mathcal{F}} \Tr_{Sf} : 
\overline{\bigotimes}_{f\in \mathcal{F}} \overline{\Sk}(Sf) \rightarrow 
\overline{\bigotimes}_{f\in \mathcal{F}}\mathbb{S}f.
\]
That is, it will fit into the following commutative diagram 
\[
\begin{tikzcd}
& \bigotimes_{f\in \mathcal{F}} \overline{\Sk}(Sf) \arrow[d] \arrow[r, "\otimes \Tr_{Sf}"] & \bigotimes_{f\in \mathcal{F}}\mathbb{S}f \arrow[d]\\
\Sk(Y) = \overline{\Sk}(Y) \arrow[r, "\overline{\sigma}"] \arrow[rr, bend right=20, dashed, "\Tr_{\mathcal{T}} \;:=\; \overline{\otimes} \Tr_{Sf} \,\circ\; \overline{\sigma}"'] & \overline{\bigotimes}_{f\in \mathcal{F}} \overline{\Sk}(Sf) \arrow[r, dashed, "\overline{\otimes} \Tr_{Sf}"] & \overline{\bigotimes}_{f\in \mathcal{F}}\mathbb{S}f
\end{tikzcd},
\]
where the vertical arrows are quotients. 

Let $U_{+}$ and $U_{-}$ be respectively the $R$-submodules of $\otimes_{f\in \mathcal{F}}\mathbb{T}^{\otimes 2}$ and $\otimes_{f\in \mathcal{F}}{\mathbb{B}}^{\otimes 3}$ spanned by the relations in Corollary \ref{cor:fs_splitting_reduced} so that
\[
\overline{\bigotimes}_{f\in \mathcal{F}} \overline{\Sk}(Sf)
= 
\frac{\bigotimes_{f\in \mathcal{F}} \overline{\Sk}(Sf)}{U_{+}\qty(\bigotimes_{f\in \mathcal{F}} \overline{\Sk}(Sf)) +
\qty(\bigotimes_{f\in \mathcal{F}} \overline{\Sk}(Sf))U_{-}}. 
\]
Then, by our definition above, the maximal quotient is the quotient
\[
\overline{\bigotimes}_{f\in \mathcal{F}} \mathbb{S}f 
:= 
\frac{\bigotimes_{f\in \mathcal{F}} \mathbb{S}f}{V_{+} \qty(\bigotimes_{f\in \mathcal{F}} \mathbb{S}f) + 
\qty(\bigotimes_{f\in \mathcal{F}} \mathbb{S}f) V_{-}}
\]
of $\otimes_{f\in \mathcal{F}} \mathbb{S}f \cong \otimes_{f\in \mathcal{F}}\widetilde{\mathbb{T}}^{\otimes 2}$, 
where $V_+$ (resp.\ $V_-$) is the image of $U_+$ (resp.\ $U_-$) under $\otimes_{f\in \mathcal{F}} \Tr_{Sf}$. 

\begin{prop}\label{prop:TensorProductOfQuantumTraceIsInjective}
The map $\overline{\otimes}_{f\in \mathcal{F}} \Tr_{Sf}$ is injective. 
\end{prop}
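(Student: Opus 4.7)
The plan is to deduce injectivity from Theorem~\ref{thm:InjectivityOfTrSf} together with a parity argument in the ambient quantum torus $\bigotimes_f \widetilde{\mathbb{T}}^{\otimes 2}$. By Theorem~\ref{thm:InjectivityOfTrSf}, each $\Tr_{Sf}$ identifies $\overline{\Sk}(Sf)$ with the $R$-submodule $I_f \subset \widetilde{\mathbb{T}}^{\otimes 2} = \mathbb{S}f$ spanned by monomials of even total degree, and this is a direct $R$-summand under the $\mathbb{Z}/2$-parity grading. Taking tensor products, $\otimes_f \Tr_{Sf}$ embeds $\bigotimes_f \overline{\Sk}(Sf)$ as the direct $R$-summand $I := \bigotimes_f I_f$ of $\bigotimes_f \mathbb{S}f$ consisting of tensors whose each face-suspension factor has even total degree.

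Let $M_S$ and $M_T$ denote the submodules by which we quotient to form $\overline{\bigotimes}_f \overline{\Sk}(Sf)$ and $\overline{\bigotimes}_f \mathbb{S}f$ respectively. The proposition reduces to the equality
\[
I \cap M_T \;=\; (\otimes_f \Tr_{Sf})(M_S).
\]
The $\supseteq$ inclusion is immediate from the bimodule-homomorphism property of each $\Tr_{Sf}$ (Theorem~\ref{thm:fsQT}). For the $\subseteq$ direction, the central observation is that \emph{every generator of $V_\pm$, viewed inside $\bigotimes_f \widetilde{\mathbb{T}}^{\otimes 2}$, already lies in $I$}, i.e.\ has even total degree in each face-suspension factor. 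This holds because, by Lemma~\ref{lem:triangle_biangle_embedding}, the triangle algebra $\mathbb{T}$ embeds into $\widetilde{\mathbb{T}}$ through the degree-$2$ Weyl-ordered products $[bc]$, $[ca]$, $[ab]$, and the biangle algebra $\mathbb{B}$ embeds through the degree-$2$ elements $a_1\!\otimes\! a_2$, $b_1\!\otimes\! b_2$, $c_1\!\otimes\! c_2$.

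Granting this parity claim, given $i \in I \cap M_T$ written as $i = \sum_k v_k y_k + \sum_\ell z_\ell w_\ell$ with $v_k \in V_+$, $w_\ell \in V_-$, and $y_k, z_\ell \in \bigotimes_f \mathbb{S}f$, let $\pi:\bigotimes_f \mathbb{S}f \to I$ be the projection onto $I$. Since each $v_k$ and $w_\ell$ has even parity in every face-suspension factor and multiplication in a quantum torus preserves monomial degree, left (resp.\ right) multiplication by $v_k$ (resp.\ $w_\ell$) commutes with $\pi$. Applying $\pi$ then yields $i = \pi(i) = \sum_k v_k \pi(y_k) + \sum_\ell \pi(z_\ell) w_\ell$ with $\pi(y_k), \pi(z_\ell) \in I$. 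Pulling back via the injection $\otimes_f \Tr_{Sf}$ and using the bimodule-homomorphism property expresses $i$ as an element of $(\otimes_f \Tr_{Sf})(M_S)$, finishing the argument.

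The main obstacle I expect is verifying the key parity claim for the three-term generator of relation~\ref{item:redSkMod3} of Corollary~\ref{cor:fs_splitting_reduced}, which is supported on three distinct face suspensions meeting at a vertex cone. Each of the three summands must be checked individually to embed into $I$, which is straightforward (each acts as an identity on all face-suspension factors except the one on which the triangle algebra element acts) but requires tracking sign/normalization conventions through Lemma~\ref{lem:triangle_biangle_embedding}. A secondary technical point is to verify that $V_+$ and $V_-$ interact correctly on the same element, i.e.\ that the computation of $\pi(v_k y_k) = v_k \pi(y_k)$ carries through when $y_k$ is itself of the form $z \cdot w$ with $w \in V_-$; this should follow from the fact that both $V_+$ and $V_-$ lie in the (commutative-up-to-parity) even-parity subalgebra.
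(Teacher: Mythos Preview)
Your approach is essentially the same as the paper's: both exploit that $\otimes_f \Tr_{Sf}$ is an embedding onto the even part of $\bigotimes_f \mathbb{S}f$ with respect to the $\mathbb{Z}^{\mathcal{F}}$-parity grading, and both reduce injectivity to the equality $I \cap M_T = (\otimes_f \Tr_{Sf})(M_S)$. The paper states this equality with a terse ``It follows that\ldots'', while you spell out the projection argument via $\pi$; your version is a more explicit form of the same proof.

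One remark: the ``main obstacle'' you flag is not actually an obstacle. By definition (see the paragraph preceding the proposition in the paper), $V_+$ and $V_-$ are the \emph{images} of $U_+$ and $U_-$ under $\otimes_f \Tr_{Sf}$, so they lie in $I$ automatically; there is nothing to verify via Lemma~\ref{lem:triangle_biangle_embedding}, and no need to check each summand of relation~\ref{item:redSkMod3} separately. Likewise, your secondary worry about mixed terms $v_k \cdot z \cdot w_\ell$ dissolves once you observe that $\pi$ commutes with both left multiplication by anything in $I$ and right multiplication by anything in $I$, since $I$ is a direct summand closed under the graded multiplication.
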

\begin{proof}
We saw in the proof of Theorem \ref{thm:InjectivityOfTrSf} that $\Tr_{Sf} : \overline{\Sk}(Sf) \rightarrow \mathbb{S}f$ is an embedding onto the even part with respect to the total $\mathbb{Z}$-grading on $\mathbb{S}f$. 
Likewise, 
\[
\otimes_{f\in \mathcal{F}} \Tr_{Sf} : 
\bigotimes_{f\in \mathcal{F}} \overline{\Sk}(Sf) \rightarrow
\bigotimes_{f\in \mathcal{F}} \mathbb{S}f
\]
is an embedding onto the even part with respect to the $\mathbb{Z}^{\mathcal{F}}$-grading on $\bigotimes_{f\in \mathcal{F}} \mathbb{S}f$. 
It follows that
\begin{align*}
&V_{+} \qty(\bigotimes_{f\in \mathcal{F}} \mathbb{S}f) + 
\qty(\bigotimes_{f\in \mathcal{F}} \mathbb{S}f) V_{-} 
\;\cap\;
\mathrm{Im}(\otimes_{f\in \mathcal{F}} \Tr_{Sf}) \\
&=
\qty(\otimes_{f\in \mathcal{F}} \Tr_{Sf})
\qty(
U_{+}\qty(\bigotimes_{f\in \mathcal{F}} \overline{\Sk}(Sf)) +
\qty(\bigotimes_{f\in \mathcal{F}} \overline{\Sk}(Sf))U_{-}
).
\end{align*}
This means that, if $\alpha \in \bigotimes_{f\in \mathcal{F}} \overline{\Sk}(Sf)$ represents a kernel $\overline{\alpha} \in \overline{\bigotimes}_{f\in \mathcal{F}} \overline{\Sk}(Sf)$ of the map $\overline{\otimes}_{f\in \mathcal{F}} \Tr_{Sf}$ so that
\[
(\otimes_{f\in \mathcal{F}} \Tr_{Sf})(\alpha) 
\;\in\; 
V_{+} \qty(\bigotimes_{f\in \mathcal{F}} \mathbb{S}f) +
\qty(\bigotimes_{f\in \mathcal{F}} \mathbb{S}f) V_{-}, 
\]
then 
\[
(\otimes_{f\in \mathcal{F}} \Tr_{Sf})(\alpha) 
\;\in\; 
\qty(\otimes_{f\in \mathcal{F}} \Tr_{Sf})
\qty(
U_{+}\qty(\bigotimes_{f\in \mathcal{F}} \overline{\Sk}(Sf)) +
\qty(\bigotimes_{f\in \mathcal{F}} \overline{\Sk}(Sf))U_{-}
),
\]
meaning $\overline{\alpha} = 0$. 
Therefore, $\overline{\otimes}_{f\in \mathcal{F}} \Tr_{Sf}$ is injective. 
\end{proof}

We describe the subspaces $V_+$ and $V_-$ explicitly below. 
By convention, we suppress the tensor products defining $\bigotimes_{f\in \mathcal{F}} \mathbb{S}f$. 
That is, we refer to the element $(a_1\otimes b_1) \otimes (a_2 \otimes b_2) \otimes \cdots \otimes (a_k \otimes b_k) \in \bigotimes_{f\in \mathcal{F}} \mathbb{S}f \cong \bigotimes_{f\in \mathcal{F}} \widetilde{\mathbb{T}}^{\otimes 2}$ as $a_1 b_2 a_2 b_2 \cdots a_k b_k$. 
We use the notation $x_{f,e}$ to refer to the generator of $\mathbb{S}f$ corresponding to the edge cone $Ce$. 

\begin{prop}
\begin{enumerate}
    \item \label{item:left_ideal_gen} The subspace $V_-$ of $\otimes_{f\in \mathcal{F}}\widetilde{\mathbb{T}}^{\otimes 2}$ is spanned by the following elements, for every edge $e\in \mathcal{E}$: 
    \[
    [x_{f_1,e_1}x_{f_1,e_2} x_{f_2,e_2}x_{f_2,e_3} \cdots x_{f_k,e_k}x_{f_k,e_1}]^{\epsilon} 
    \;+\;  
    \qty((-1)^{\frac{k}{2}} A^2)^{\epsilon},\quad \epsilon \in \{\pm 1\}, 
    \]
    where we suppose $e$ is surrounded by the face suspensions $Sf_1, Sf_2, \cdots, Sf_k$ and edge cones $Ce_1, Ce_2, \cdots, Ce_k$. 
    \item The subspace $V_+$ of $\otimes_{f\in \mathcal{F}}\widetilde{\mathbb{T}}^{\otimes 2}$ is spanned by the following elements, for every vertex cone $Cv$:
    \begin{enumerate}
        \item \label{item:right_ideal_gen1}
        \[
        [x_{f_1,e_1}x_{f_1,e_2}x_{f_2,e_2}x_{f_2,e_3}x_{f_3,e_3}x_{f_3,e_1}]^{\epsilon} 
        \;+\;
        A^{\epsilon},\quad \epsilon \in \{\pm 1\}, 
        \]
        \item \label{item:right_ideal_gen2}
        \[
        A^{-1}[x_{f_1,e_1}x_{f_1,e_2}x_{f_2,e_2}x^{-1}_{f_2,e_3}] 
        \;+\;
        A[x_{f_1,e_1}x^{-1}_{f_1,e_2}x^{-1}_{f_2,e_2}x_{f_2,e_3}^{-1}]
        \;-\;
        [x_{f_3,e_3}x^{-1}_{f_3,e_1}],
        \]
        where we suppose $Cv$ is surrounded by the face suspensions $Sf_1, Sf_2, Sf_3$ and edge cones $Ce_1, Ce_2, Ce_3$ in clockwise order when viewed from $v$, as in the figure below:
        \[
        \vcenter{\hbox{
        \includestandalone[scale=0.7]{figures/labelings_around_vertex_cone}
        }}.
        \]
    \end{enumerate}
\end{enumerate}
\end{prop}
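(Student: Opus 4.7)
The proof is a direct computation. By the very definition of $\overline{\bigotimes}_{f\in \mathcal{F}}\mathbb{S}f$, we have $V_{\pm} = (\otimes_{f\in\mathcal{F}}\Tr_{Sf})(U_{\pm})$, and $U_{\pm}$ is spanned by the two-sided differences of the relations listed in Corollary \ref{cor:fs_splitting_reduced}. Hence it suffices to apply the quantum trace to each generator of $U_{\pm}$, plug in the embeddings of Lemma \ref{lem:triangle_biangle_embedding}, and match the resulting expressions with those stated.

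For part (\ref{item:left_ideal_gen}), fix an internal edge $e$ surrounded by face suspensions $Sf_1, \ldots, Sf_k$ and edge cones $Ce_1, \ldots, Ce_k$, indexed so that $Sf_i$ contains the edge cones $Ce_i$ and $Ce_{i+1}$ on its boundary (indices modulo $k$). Relation \ref{item:redSkMod1} of Corollary \ref{cor:fs_splitting_reduced} asserts that the cyclic product of the bigon generators $x_{f_i,e}$, each acting with state $\epsilon$, equals $(-A^2)^{\epsilon}$. The bigon embedding $x_{f_i,e}\mapsto (-1)^{-1/2}\,x_{f_i,e_i}x_{f_i,e_{i+1}}$ and the commutativity between distinct $\widetilde{\mathbb{T}}^{\otimes 2}$ factors (as well as between $x_{f_i,e_i}$ and $x_{f_i,e_{i+1}}$ within each single $\widetilde{\mathbb{T}}^{\otimes 2}$, since they sit in opposite tensor factors) turn the cyclic product into $((-1)^{-1/2})^{k\epsilon}[x_{f_1,e_1}x_{f_1,e_2}\cdots x_{f_k,e_k}x_{f_k,e_1}]^{\epsilon}$. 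Collecting the constants yields the generator displayed in (\ref{item:left_ideal_gen}).

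For parts (\ref{item:right_ideal_gen1}) and (\ref{item:right_ideal_gen2}), fix a vertex cone $Cv$ surrounded by face suspensions $Sf_1, Sf_2, Sf_3$ with adjacent edge cones $Ce_1, Ce_2, Ce_3$. Relation \ref{item:redSkMod2} of Corollary \ref{cor:fs_splitting_reduced} is a product of three triangle generators acting from the left, one on each $\overline{\Sk}(Sf_i)$. Applying the triangle embedding $\alpha\mapsto A^{-1}[bc]$ (and the cyclic analogues for $\beta,\gamma$) turns this product into a product of three commuting Weyl-ordered pairs, which assembles into the six-variable Weyl-ordered expression of (\ref{item:right_ideal_gen1}); the three $A^{-1}$'s from the embeddings combine with the $(-A^2)^{\epsilon}$ constant from \ref{item:redSkMod2} to give $A^{\epsilon}$. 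Relation \ref{item:redSkMod3} is treated in the same spirit: applying the triangle embedding term-by-term produces a linear combination of two-variable Weyl-ordered products, and the prefactors $A^{\pm 1}$ in (\ref{item:right_ideal_gen2}) are exactly what is needed to reconcile the $A^{-1}$ from the embedding with the $A$-commutation relations encountered when bringing each pair into Weyl-normal form.

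The principal obstacle is the careful bookkeeping of $A$-exponents. Within a single copy of $\widetilde{\mathbb{T}}$ the generators obey $A$-commutation ($ba=Aab$ in the first copy, and the reversed convention in the second), whereas across different $\widetilde{\mathbb{T}}^{\otimes 2}$'s everything commutes. One must verify that the internal $A$-factors generated by Weyl normalization exactly cancel the external $A^{-1}$'s and $(-1)^{-1/2}$'s coming from the embeddings to yield the clean constants of the proposition; it is this matching that makes writing the generators in Weyl-ordered form so compact, and it is the only nontrivial source of computational error.
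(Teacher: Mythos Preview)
Your approach is correct and essentially identical to the paper's: apply $\otimes_{f}\Tr_{Sf}$ to each generator of $U_\pm$ from Corollary~\ref{cor:fs_splitting_reduced} via the embeddings of Lemma~\ref{lem:triangle_biangle_embedding}, and simplify. One small slip: in your treatment of relation~\ref{item:redSkMod2} the constant is $(-A^2)^{-\epsilon}$, not $(-A^2)^{\epsilon}$, and the embedding contributes $A^{-\epsilon}$ per arc rather than $A^{-1}$; with those corrections the bookkeeping you describe goes through exactly as the paper records it.
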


\begin{proof}
    We compute the images of relations (\ref{item:redSkMod1}), (\ref{item:redSkMod2}), (\ref{item:redSkMod3}), from Corollary \ref{cor:fs_splitting_reduced}.

    \begin{enumerate}
        \item Suppose an internal edge is surrounded by the face suspensions $Sf_{1}, Sf_{2}, \cdots, Sf_ {k}$ and edge cones $Ce_1, Ce_2, \cdots, Ce_k$. Then, 
        \begin{multline*}
            \left (\bigotimes_{i=1}^k \Tr_{Sf} \right) \left(
            \includestandalone[scale=.8,valign=c]{figures/fs_reduced_splitting_relation1}
            \;-\;
            (-A^2)^{\epsilon} \includestandalone[scale=.8,valign=c]{figures/fs_reduced_splitting_relation2}\right) \\
            \;=\; 
            (-1)^{-\frac{k \epsilon}{2}}[x_{f_1,e_1}^{\epsilon}x_{f_1,e_2}^{\epsilon} x_{f_2,e_2}^{\epsilon}x_{f_2,e_3}^{\epsilon} \cdots x_{f_k,e_k}^{\epsilon}x_{f_k,e_1}^{\epsilon}] - (-A^2)^{\epsilon}.
        \end{multline*}
        This gives the relation in (\ref{item:left_ideal_gen}) above. 
            
        \item Suppose a vertex cone $Cv$ is surrounded by the face suspensions $Sf_{1}, Sf_{2}, Sf_{3}$ and edge cones $Ce_1, Ce_2, Ce_3$. Then, 
        \begin{multline*}
            \left (\bigotimes_{i=1}^3 \Tr_{Sf} \right) \left(
            \includestandalone[scale=.8,valign=c]{figures/fs_reduced_splitting_relation3}
            \;-\;
            (-A^2)^{-\epsilon} \includestandalone[scale=.8,valign=c]{figures/fs_reduced_splitting_relation4}\right) \\
            \;=\; 
            A^{-3\epsilon}[x_{f_1,e_1}^{\epsilon}x_{f_1,e_2}^{\epsilon}x_{f_2,e_2}^{\epsilon}x_{f_2,e_3}^{\epsilon} x_{f_3,e_3}^{\epsilon}x_{f_3,e_1}^{\epsilon}]
            -(-A^2)^{-\epsilon}.
        \end{multline*}
        This yields the relation in (\ref{item:right_ideal_gen1}). 
        
        \item Suppose a vertex cone $Cv$ is surrounded by the face suspensions $Sf_{1}, Sf_{2}, Sf_{3}$ and edge cones $Ce_1, Ce_2, Ce_3$ in clockwise order when viewed from $v$. Then, 
        \begin{multline*}
            \left (\bigotimes_{i=1}^3 \Tr_{Sf} \right) \left(
            \includestandalone[scale=.8,valign=c]{figures/fs_reduced_splitting_relation6}
            \;+\;
            \includestandalone[scale=.8,valign=c]{figures/fs_reduced_splitting_relation7}
            \;-\;
            \includestandalone[scale=.8,valign=c]{figures/fs_reduced_splitting_relation5}
            \right) \\
            \;=\; 
            A^{-1}[x_{f_1,e_1}x_{f_1,e_2}x_{f_2,e_2}x^{-1}_{f_2,e_3}] 
            + 
            A[x_{f_1,e_1}x^{-1}_{f_1,e_2} x_{f_2,e_2}^{-1}x_{f_2,e_3}^{-1}]
            - [x_{f_3,e_3}x^{-1}_{f_3,e_1}].
        \end{multline*} 
        This gives the relation in (\ref{item:right_ideal_gen2}).
    \end{enumerate}
\end{proof}

\begin{rmk}
While we have included relations for both signs $\epsilon \in \{\pm 1\}$ in $V_+$ (resp.\ $V_-$), it is enough to consider only one of the relations, as one implies the other in the right (resp.\ left) ideal generated by $V_+$ (resp.\ $V_-$). 
Therefore, from now on, for simplicity, we will drop the relations for $\epsilon = -1$ and, by abuse of notation, still call the corresponding subspaces $V_+$ and $V_-$. 
They still generate the same relations and therefore give the same quotient. 
\end{rmk}

\begin{defn}
    For each bare edge $e\in \mathbf{e}(T)$, 
    the \emph{square root quantized shape parameter} $\hat{x}_{e}$ corresponding to the edge cone $Ce$ is 
    \[
    \hat{x}_{e} = \overline{x_{f_i,e}\otimes x_{f_j,e}},
    \]
    where $x_{f_i,e}$ and $x_{f_j,e}$ are the two generators of $\mathbb{S}f_i$ and $\mathbb{S}f_j$, respectively, that correspond to the edge cone $Ce$, 
    and $\overline{x_{f_i,e}\otimes x_{f_j,e}}$ denotes the image of the element $x_{f_i,e}\otimes x_{f_j,e} \in \bigotimes_{f\in \mathcal{F}} \mathbb{S}f$ in $\overline{\bigotimes}_{f\in \mathcal{F}} \mathbb{S}f$ under the quotient. 
\end{defn}
Note, if the edge cones $Ce_1$ and $Ce_2$ are both adjacent to the same edge cone $Cv$, and $Ce_2$ is clockwise with respect to $Ce_1$ when viewed from $v$, we have 
\[
\hat{x}_{e_1}\hat{x}_{e_2} = A\,\hat{x}_{e_2}\hat{x}_{e_1}.
\]

It is a straightforward exercise to rewrite the generators of $V_+$ and $V_-$ in terms of square root quantized shape parameters. 
\begin{lem} \label{lem:idealRelsUsingSquareRootShapeParams}
\begin{enumerate}
    \item The subspace $V_-$ of $\otimes_{f\in \mathcal{F}}\widetilde{\mathbb{T}}^{\otimes 2}$ is spanned by the following elements, one for each $e\in \mathcal{E}$:
    \[
    [\hat{x}_{e_1}\hat{x}_{e_2}\cdots \hat{x}_{e_k}]
    \;+\;
    (-1)^{\frac{k}{2}} A^2,  
     \]
    where $\hat{x}_{e_1}, \hat{x}_{e_2},\cdots, \hat{x}_{e_{k-1}}$, and $\hat{x}_{e_k}$ are the $k$ square root quantized shape parameters associated to the $k$ edge cones abutting $e$. 
    
    \item The subspace $V_+$ of $\otimes_{f\in \mathcal{F}}\widetilde{\mathbb{T}}^{\otimes 2}$ is spanned by the following elements, two for each vertex cone $Cv$:
    \begin{align*}
        &[\hat{x}_{e_1}\hat{x}_{e_2}\hat{x}_{e_3}] 
        \;+\; 
        A, \\
        &\hat{x}^{-2}_{e_1} \;+\; \hat{x}^{2}_{e_2} 
        \;+\; 
        1,\\ 
    \end{align*}
    where $\hat{x}_{e_1}, \hat{x}_{e_2}, \hat{x}_{e_3}$ are the square root quantized shape parameters associated to the three edge cones $Ce_1, Ce_2, Ce_3$ abutting $Cv$ in clockwise order when viewed from $v$. 
\end{enumerate}
\end{lem}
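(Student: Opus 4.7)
The plan is to verify the equivalence of the two systems of generators by direct computation, exploiting two basic features of the Weyl-ordered product in a quantum torus: invariance under permutation of factors, and the associativity identity $[[AB]C] = [ABC]$.

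First I would observe that each $\hat{x}_e$ lifts to $\bigotimes_{f\in\mathcal{F}}\widetilde{\mathbb{T}}^{\otimes 2}$ as the product $x_{f_i,e}\cdot x_{f_j,e}$, where $Sf_i$ and $Sf_j$ are the two face suspensions of the unique tetrahedron containing $Ce$. These two factors sit in distinct tensor components of the outer $\bigotimes_{f\in\mathcal{F}}$ and hence commute, so $\hat{x}_e = [x_{f_i,e} x_{f_j,e}]$ agrees with the (trivially symmetric) two-factor Weyl-ordered product. Weyl-associativity then implies that any ordered product of $\hat{x}_{e_i}$'s, after expansion, equals the Weyl-ordered product of all underlying $x_{f,e}$ generators.

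For the edge relation in (1) and the first vertex cone relation in (2), this reduces the claim to a rearrangement of factors. Expanding
\[
[\hat{x}_{e_1}\cdots\hat{x}_{e_k}] = [x_{f_k,e_1} x_{f_1,e_1} x_{f_1,e_2} x_{f_2,e_2} \cdots x_{f_{k-1},e_k} x_{f_k,e_k}]
\]
and applying permutation-invariance of the Weyl-ordered product gives $[x_{f_1,e_1}x_{f_1,e_2}\cdots x_{f_k,e_k}x_{f_k,e_1}]$, which is precisely the original generator of $V_-$ from the preceding proposition. The case $k = 3$ yields the first generator of $V_+$ in exactly the same way.

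The main obstacle is the second vertex cone relation $\hat{x}_{e_1}^{-2} + \hat{x}_{e_2}^2 + 1$. Unlike the other generators, this does not coincide with any original generator of $V_+$ under a monomial rescaling; the three summands lie in three distinct multi-gradings of $\bigotimes_f \widetilde{\mathbb{T}}^{\otimes 2}$, just as the three summands of the original 3-term relation do, but with different supports. The strategy is to use the first vertex cone relation $[\hat{x}_{e_1}\hat{x}_{e_2}\hat{x}_{e_3}] = -A$ to substitute for $\hat{x}_{e_3}$ in a suitable left-multiple of the new relation, and then verify directly that the resulting expression coincides, up to a unit in $R$, with the original 3-term relation. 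This is a straightforward but tedious bookkeeping exercise in the quantum torus $\bigotimes_f \widetilde{\mathbb{T}}^{\otimes 2}$, using only the commutation relations of the extended triangle algebra together with the explicit embeddings $\mathbb{T}^{\otimes 2}, \mathbb{B}^{\otimes 3} \hookrightarrow \widetilde{\mathbb{T}}^{\otimes 2}$ of Lemma \ref{lem:triangle_biangle_embedding}. The converse direction, recovering the new relation from the original, proceeds in the same manner.
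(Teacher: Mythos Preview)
Your approach is correct and matches what the paper intends; the paper itself gives no proof and simply calls this ``a straightforward exercise to rewrite the generators of $V_+$ and $V_-$ in terms of square root quantized shape parameters.'' Your use of permutation-invariance and the nesting identity $[[AB]C]=[ABC]$ for Weyl-ordered products is exactly the right tool for the monomial relations, and your plan of using the central relation $[\hat{x}_{e_1}\hat{x}_{e_2}\hat{x}_{e_3}]=-A$ together with a monomial multiplier to pass between the two three-term relations is the natural route.

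One small correction: since the quotient is by $V_+\bigl(\bigotimes_f\mathbb{S}f\bigr)+\bigl(\bigotimes_f\mathbb{S}f\bigr)V_-$, the relevant ideal for $V_+$ is the \emph{right} ideal it generates, so you should take a right-multiple of $r_1=\hat{x}_{e_1}^{-2}+\hat{x}_{e_2}^{2}+1$ rather than a left-multiple. Concretely, right-multiplying $r_1$ by the monomial with $\mathbb{Z}^6$-grading $(1,-1,-1,-1,0,0)$ in $(x_{f_1,e_1},x_{f_1,e_2},x_{f_2,e_2},x_{f_2,e_3},x_{f_3,e_3},x_{f_3,e_1})$ makes two of the three terms match the first two terms of the original three-term generator on the nose, and the remaining term matches the third after a single application of the central relation (the grading discrepancy is exactly $(-1,-1,-1,-1,-1,-1)$). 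Also, the phrase ``substitute for $\hat{x}_{e_3}$'' is slightly off, since $r_1$ does not contain $\hat{x}_{e_3}$; what you actually do is introduce it via the monomial multiplier and then eliminate it with the central relation. These are cosmetic points and do not affect the validity of your argument.
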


\begin{rmk}
    The elements 
    \[
    [\hat{x}_{e_1}\hat{x}_{e_2}\hat{x}_{e_3}]
    \;+\; 
    A ,
    \]
    where $e_1$, $e_2,$ and $e_3$ are three distinct edge cones abutting the same vertex cone, 
    are central. 
    Using the $A$-commutation relations among square root quantum shape parameters, it is an easy check that 
    \[  [\hat{x}_{e_1}\hat{x}_{e_2}\hat{x}_{e_3}]\hat{x}_{e_4} = \hat{x}_{e_4} [\hat{x}_{e_1}\hat{x}_{e_2}\hat{x}_{e_3}],
    \]
    where $e_4$ is any edge cone.
\end{rmk}

\begin{defn} \label{defn:sqgm}
    The \emph{square root quantum gluing module} $\SQGM_{\mathcal{T}}(Y)$ is the $R$-submodule of $\overline{\bigotimes}_{f\in \mathcal{F}} \mathbb{S}f$ spanned by monomials in $\hat{x}_{e}$, for all $T\in \mathcal{T}$ and $e \in \textbf{e}(T)$.
\end{defn}

To prove our main theorem, we need one last lemma.

\begin{lem}
The image of the composition 
\[
\Tr_{\mathcal{T}} := \overline{\otimes} \Tr_{Sf} \,\circ\, \overline{\sigma}
: \overline{\Sk}(Y) 
\overset{\overline{\sigma}}{\rightarrow} 
\overline{\bigotimes}_{f\in \mathcal{F}} \overline{\Sk}(Sf) 
\overset{\overline{\otimes} \Tr_{Sf}}{\rightarrow} 
\overline{\bigotimes}_{f\in \mathcal{F}}\mathbb{S}f
\]
lies in $\SQGM_{\mathcal{T}}(Y)$. 
\end{lem}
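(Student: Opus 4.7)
The plan is to track the $\mathbb{Z}^{E(\Gamma)}$-grading on stated skein modules (Remark \ref{rmk:SkeinModuleIsGraded}) through both the splitting map $\overline{\sigma}$ and the face suspension quantum traces $\Tr_{Sf}$.

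First, I would check that $\Tr_{Sf}$ preserves gradings in a precise sense. The boundary marking $\Gamma_{Sf}$ has six edges, each joining one of the two barycenter vertices to the midpoint of one of the three edge cones, so $E(\Gamma_{Sf})$ is naturally in bijection with the six generators $a_1, b_1, c_1, a_2, b_2, c_2$ of $\mathbb{S}f = \widetilde{\mathbb{T}}^{\otimes 2}$. Direct inspection of the embeddings of Lemma \ref{lem:triangle_biangle_embedding} (e.g.\ $\alpha_1 \mapsto A^{-1}[b_1 c_1]$ contributes $+1$ to the edges touching the barycenter-$1$ triangle on the $b$ and $c$ edge cones, and $x_a \mapsto (-1)^{-1/2} a_1 a_2$ contributes $+1$ to both edges on the $a$ edge cone) shows that the grading of any element of $\overline{\Sk}(Sf)$ along each edge of $\Gamma_{Sf}$ equals the exponent of the corresponding generator in its image under $\Tr_{Sf}$. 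In particular, $\Tr_{Sf}([L_f^{\vec{\epsilon}}])$ is a scalar multiple of a single Laurent monomial in the $a_i, b_i, c_i$ (cf.\ the graded-component count in the proof of Theorem \ref{thm:InjectivityOfTrSf}).

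Second, I would apply Remark \ref{rmk:NewGradingAfterSplitting} iteratively to the splitting $\overline{\sigma} : \overline{\Sk}(Y) \to \overline{\bigotimes}_{f\in\mathcal{F}} \overline{\Sk}(Sf)$ of Corollary \ref{cor:fs_splitting_reduced}. This says that for every edge cone $Ce$ shared by two face suspensions $Sf_i, Sf_j$, and for every marking edge $e'$ of $Ce$, the $e'$-grading of the $Sf_i$ factor equals the $e'$-grading of the $Sf_j$ factor on the image of $\overline{\sigma}$. Combining with the first step, every monomial appearing in a summand of $(\otimes_f \Tr_{Sf}\circ \overline{\sigma})([L])$ has the property that for each shared edge cone $Ce$ the exponent of $x_{f_i,e}$ matches the exponent of $x_{f_j,e}$. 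Since these two generators live in distinct tensor factors of $\bigotimes_f \mathbb{S}f$, they commute, so the relevant sub-monomial $x_{f_i,e}^n \otimes x_{f_j,e}^n$ coincides with $\hat{x}_e^{\,n}$ in $\overline{\bigotimes}_f \mathbb{S}f$. Using the $A$-commutation relations among the $\hat{x}_e$'s arising from the sink-vertex cones, the entire monomial can then be rewritten as a Weyl-ordered product of the $\hat{x}_e$'s (times an overall scalar in $R$), which exhibits $\Tr_{\mathcal{T}}([L])$ as an element of $\SQGM_{\mathcal{T}}(Y)$.

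The main obstacle I anticipate is bookkeeping: carefully matching the edges of $\Gamma_{Sf_i}$ and $\Gamma_{Sf_j}$ along a shared edge cone, and correspondingly matching the generators $x_{f_i,e}$ and $x_{f_j,e}$ of $\mathbb{S}f_i$ and $\mathbb{S}f_j$ with the specific pair singled out in the definition of $\hat{x}_e$. Once this dictionary is fixed consistently with the orientation conventions used in the combinatorial foliations on the boundaries of the face suspensions, the remaining arguments reduce to a direct grading calculation and routine Weyl-ordering checks.
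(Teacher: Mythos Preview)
Your proposal is correct and follows essentially the same argument as the paper: track the $\mathbb{Z}^{E(\Gamma_{Sf})}$-grading, use Remark \ref{rmk:NewGradingAfterSplitting} to see that compatible states force matching gradings on the two marking edges of each shared edge cone, and then observe that $\Tr_{Sf}$ respects this grading so the resulting monomials pair into powers of $\hat{x}_e$. The paper's proof is in fact terser than yours---it stops at the observation that $x_{f_1,e}$ and $x_{f_2,e}$ appear with equal exponents---so your additional remarks on commutation and rewriting in the $\hat{x}_e$'s are a welcome elaboration rather than a departure.
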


\begin{proof}
    Starting with a link $[L] \in \overline{\Sk}(Y)$, we apply the splitting map $\overline{\sigma}$ (Proposition \ref{prop:splitting_reduced}) to obtain 
    \[
    \overline{\sigma}([L]) = 
    \qty[\sum_{\vec{\epsilon} \in \{\substack{\mathrm{compatible}\\ \mathrm{states}}\}}\otimes_{f\in \mathcal{F}} [L_{f}^{\vec{\epsilon}}]] \in  \overline{\bigotimes}_{f\in \mathcal{F}}\overline{\Sk}(Sf).
    \]
    It is enough to check each term of this sum lands in $\SQGM_{\mathcal{T}}(Y)$ under $\overline{\otimes} \Tr_{Sf}$, so fix some compatible state $\vec{s}$. 

    Recall from Remark $\ref{rmk:SkeinModuleIsGraded}$ that $\overline{\Sk}(Sf)$ is naturally $\mathbb{Z}^6$-graded, where the grading is given by the sum of states on each edge of $\Gamma$. 
    Suppose two face suspensions $Sf_1$ and $Sf_2$ are glued along an edge cone $Ce$, and that $\Gamma_1$ and $\Gamma_2$ are the boundary markings of $Sf_1$ and $Sf_2$, respectively.
    Since $\vec{s}$ is a compatible state, the sum of the states of $[L_{f_1}^{\vec{s}}]$ on $\Gamma_1\cap Ce$ and the sum of the states of $[L_{f_2}^{\vec{s}}]$ on $\Gamma_2\cap Ce$ are equal; 
    see Remark \ref{rmk:NewGradingAfterSplitting}. 

    $\Tr_{Sf}$ respects the grading on $\overline{\Sk}(Sf)$. 
    Thus, $\Tr_{Sf_1}([L_{f_1}^{\vec{s}}])$ and $\Tr_{Sf_2}([L_{f_2}^{\vec{s}}])$ will yield monomials $P_1$ and $P_2$, respectively, such that $x_{f_1,e}$ appears in $P_1$ with the same degree that $x_{f_2,e}$ appears in $P_2$. 
    This concludes the proof.       
\end{proof}

Our main theorem now follows immediately. 

\begin{thm}\label{thm:quantumTrace}
There is a $R$-module homomorphism $\Tr_{\mathcal{T}} : \Sk(Y) \rightarrow \SQGM_{\mathcal{T}}(Y)$ defined as the composition 
\[
\Sk(Y) = \overline{\Sk}(Y) \overset{\overline{\sigma}}{\rightarrow} \overline{\bigotimes}_{f\in \mathcal{F}} \overline{\Sk}(Sf) \overset{\overline{\bigotimes}_{f\in \mathcal{F}} \Tr_{Sf}}{\rightarrow} \overline{\bigotimes}_{f\in \mathcal{F}}\mathbb{S}f. 
\]
\end{thm}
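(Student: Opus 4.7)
The plan is to assemble the theorem directly from the constructions developed in Sections \ref{sec:SkeinModules} and \ref{sec:3dQuantumTrace}; at this stage essentially all of the substantive work has already been carried out in the preceding results, so what remains is to check that the pieces glue into a single well-defined $R$-module homomorphism with codomain $\SQGM_{\mathcal{T}}(Y)$.

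First I would observe that because $Y$ has no boundary (except for cusps at infinity), its boundary marking $\Gamma$ is empty, so both of the ideals generated by bad arcs are trivial and $\Sk(Y) = \overline{\Sk}(Y)$; this legitimizes the first identification in the composition. Next I would invoke Corollary \ref{cor:fs_splitting_reduced} to obtain the well-defined reduced splitting map $\overline{\sigma}: \overline{\Sk}(Y) \to \overline{\bigotimes}_{f\in \mathcal{F}} \overline{\Sk}(Sf)$, yielding the first arrow. For the second arrow I would invoke Theorem \ref{thm:fsQT}, which constructs a bimodule homomorphism $\Tr_{Sf}:\overline{\Sk}(Sf)\to \mathbb{S}f$ for each face suspension, and then take their tensor product. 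By the very definition of $\overline{\bigotimes}_{f\in \mathcal{F}} \mathbb{S}f$ as the maximal quotient of $\bigotimes_{f\in \mathcal{F}} \mathbb{S}f$ through which $\otimes_{f\in \mathcal{F}}\Tr_{Sf}$ descends, we obtain a well-defined $R$-module homomorphism $\overline{\bigotimes}_{f\in \mathcal{F}} \Tr_{Sf}: \overline{\bigotimes}_{f\in \mathcal{F}} \overline{\Sk}(Sf) \to \overline{\bigotimes}_{f\in \mathcal{F}} \mathbb{S}f$, fitting into the commutative square in the paragraph preceding the theorem.

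Finally I would apply the lemma immediately preceding the theorem, which says the image of the composition lies in the submodule $\SQGM_{\mathcal{T}}(Y)\subset \overline{\bigotimes}_{f\in \mathcal{F}} \mathbb{S}f$; restricting the codomain then produces the map $\Tr_{\mathcal{T}}: \Sk(Y) \to \SQGM_{\mathcal{T}}(Y)$ as claimed. The conceptual hard part is \emph{not} in this final assembly, but rather in the lemma that guarantees the image lands in $\SQGM_{\mathcal{T}}(Y)$: one must use the grading from Remark \ref{rmk:NewGradingAfterSplitting} together with the compatible-state condition to argue that, for any two adjacent face suspensions $Sf_i, Sf_j$ sharing an edge cone $Ce$, the generators $x_{f_i,e}$ and $x_{f_j,e}$ always appear with matching exponents, and hence can always be combined into powers of the square root quantized shape parameters $\hat{x}_e$. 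Once this degree-matching is established, the theorem reduces to pure bookkeeping.
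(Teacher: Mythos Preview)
Your proposal is correct and matches the paper's approach exactly: the paper simply states ``Our main theorem now follows immediately'' after the lemma showing the image lies in $\SQGM_{\mathcal{T}}(Y)$, and your write-up is just a careful unpacking of that immediate consequence. The extra commentary you give on why the preceding lemma works (the grading/degree-matching argument from Remark~\ref{rmk:NewGradingAfterSplitting}) is also accurate and is precisely the content of that lemma's proof.
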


\subsection{Quantum gluing module}\label{subsec:qgm}

In this subsection, we explain in what sense the square root quantum gluing module $\SQGM_{\mathcal{T}}(Y)$ can be thought of as an extension of the quantum gluing module. 

The following definition is from \cite{Dim} and \cite{AGLR}.

\begin{defn}
    Let
    \[
    \mathrm{QT}_{\mathcal{T}}(Y) := \bigotimes_{T\in \mathcal{T}} \frac{R\langle \hat{Z}_T^{\pm 1}, \hat{Z}_T'^{\pm 1}, \hat{Z}_T''^{\pm 1}\rangle}{\langle \hat{Z}_T \hat{Z}'_T = A^{4} \hat{Z}'_T \hat{Z}_T, 
    \hat{Z}'_T \hat{Z}''_T = A^{4} \hat{Z}''_T \hat{Z}'_T, 
    \hat{Z}''_T \hat{Z}_T = A^{4} \hat{Z}_T \hat{Z}''_T\rangle}
    \]
    be a quantum torus in $3n$ generators, where $n$ is the number of tetrahedra in $\mathcal{T}$. 

    The generators $\hat{Z}_i, \hat{Z}_i', \hat{Z}_i''$ are called \emph{quantized shape parameters}, as
    they are naturally associated to bare edges of tetrahedra in the following way:
    if the bare edge $e$ of a tetrahedron $T$ is labeled by the classical shape parameter $Z_T, Z'_T$, or $Z''_T$, then we associate $\hat{Z}_T,\hat{Z}'_T$, or $\hat{Z}''_T$, respectively, to $e$.
   
    The \emph{quantum gluing module} $\QGM_{\mathcal{T}}(Y)$ is defined as 
    \[
    \QGM_{\mathcal{T}}(Y) := \frac{\mathrm{QT}_{\mathcal{T}}(Y)}{W_+ \mathrm{QT}_{\mathcal{T}}(Y) +
    \mathrm{QT}_{\mathcal{T}}(Y) W_-},
    \]
    where $W_+$ and $W_-$ are the $R$-submodules of $\mathrm{QT}_{\mathcal{T}}(Y)$ defined as follows:
    \begin{enumerate}
        \item $W_-$ is spanned by the following elements, one for each edge $e\in \mathcal{E}$:
        \[
        [\hat{Y}_{1} \hat{Y}_{2} \cdots \hat{Y}_{k}] 
        \;-\; 
        A^4 ,
        \]
        where the $\hat{Y}_{i}\in \{\hat{Z}_T, \hat{Z}'_T, \hat{Z}''_T \;\vert\; T \in \mathcal{T} \}$ are the $k$ quantized shape parameters associated to the $k$ bare edges glued around $e$. 
        \item $W_+$ is spanned by the following elements, two for each tetrahedron $T\in \mathcal{T}$:
        \begin{enumerate}
            \item 
            \[
            [\hat{Z}_T \hat{Z}'_T \hat{Z}''_T] \;+\; A^2 ,
            \]
            \item 
            \[
            \hat{Z}''^{-1}_T + \hat{Z}_T \;-\; 1. 
            \]
        \end{enumerate}
    \end{enumerate}
\end{defn}

In the rest of this section, we show that there is a natural homomorphism
\[
\iota : \QGM_{\mathcal{T}}(Y) \rightarrow \SQGM_{\mathcal{T}}(Y)
\]
which we conjecture to be injective. 

\begin{lem} \label{lem:squaredParams}
    Suppose $\hat{x}_{e_i}$ and $\hat{x}_{e_j}$ are the square root quantized shape parameters corresponding to opposite edges $e_i$ and $e_j$ in a tetrahedron $T$. 
    Then, we must have 
    \[
    \hat{x}_{e_i}^2 = \hat{x}_{e_j}^2. 
    \]
\end{lem}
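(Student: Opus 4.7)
The plan is to exploit the 3-term vertex-cone relations in $V_+$, specifically $\hat{x}_{a}^{-2} + \hat{x}_{b}^{2} + 1 = 0$ (from Lemma \ref{lem:idealRelsUsingSquareRootShapeParams}), by applying them at two distinct vertex cones of $T$ and equating. Let $e_{k\ell}$ denote the bare edge of $T$ joining bare vertices $v_k$ and $v_\ell$; without loss of generality I will take the opposite pair $(e_i, e_j)$ to be $(e_{12}, e_{34})$. The ``bridge'' edge will be $e_{13}$, which shares $v_1$ with $e_{12}$ and $v_3$ with $e_{34}$.

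First I will invoke the shape-parameter convention for an oriented ideal tetrahedron: opposite bare edges share the same shape parameter, and the three shape parameters on edges at any bare vertex $v$, ordered clockwise as seen from $v$, realize the cyclic triple $(Z_T, Z'_T, Z''_T)$. Under this convention, the clockwise cyclic order of adjacent edges at $v_1$ is $(e_{12}, e_{13}, e_{14})$ and at $v_3$ is $(e_{34}, e_{13}, e_{23})$ (both beginning with the edge bearing shape parameter $Z_T$, since $e_{12}$ and $e_{34}$ are the opposite pair carrying $Z_T$). Applying the second generator of $V_+$ from Lemma \ref{lem:idealRelsUsingSquareRootShapeParams} at $Cv_1$ and at $Cv_3$ then yields, in $\SQGM_{\mathcal{T}}(Y)$,
\[
\hat{x}_{e_{12}}^{-2} + \hat{x}_{e_{13}}^{2} + 1 = 0 \quad\text{and}\quad \hat{x}_{e_{34}}^{-2} + \hat{x}_{e_{13}}^{2} + 1 = 0.
\]

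Subtracting these two relations gives $\hat{x}_{e_{12}}^{-2} = \hat{x}_{e_{34}}^{-2}$. Because the two face suspensions containing $Ce_{12}$ (those over the faces $v_1 v_2 v_3$ and $v_1 v_2 v_4$) are disjoint from the two containing $Ce_{34}$ (those over $v_1 v_3 v_4$ and $v_2 v_3 v_4$), the elements $\hat{x}_{e_{12}}$ and $\hat{x}_{e_{34}}$ lie in disjoint tensor factors of $\overline{\bigotimes}_{f\in\mathcal{F}}\mathbb{S}f$ and therefore commute, so I can invert unambiguously to conclude $\hat{x}_{e_{12}}^{2} = \hat{x}_{e_{34}}^{2}$. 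The main obstacle will be justifying the cyclic-ordering claim---namely that $e_{12}$ and $e_{34}$ occupy the analogous ``first'' slot of the cyclic triple at $v_1$ and $v_3$ respectively---which is precisely where the classical labeling convention (opposite edges carrying the same shape parameter, and the cyclic $(Z, Z', Z'')$ pattern being consistent across all four vertices of the oriented tetrahedron) is used. The other two opposite-edge pairs are handled identically by relabeling.
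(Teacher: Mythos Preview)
Your argument is correct, but it takes a different route from the paper's own proof. The paper uses only the \emph{product} relations $[\hat{x}_{e_1}\hat{x}_{e_2}\hat{x}_{e_3}] = -A$ from Lemma~\ref{lem:idealRelsUsingSquareRootShapeParams}, applied at all four vertex cones of $T$. Pairing the relation at the two vertices on one edge gives an equality of products (e.g.\ $\hat{x}_{e_2}\hat{x}_{e_3} = \hat{x}_{e_5}\hat{x}_{e_6}$), and combining two such equalities yields $\hat{x}_{e_i}^2 = \hat{x}_{e_j}^2$. Your proof instead uses the three-term relations $\hat{x}_{e_1}^{-2} + \hat{x}_{e_2}^2 + 1 = 0$ at just two vertex cones and subtracts.

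What each approach buys: the paper's route is insensitive to cyclic-ordering conventions, since the product relation is symmetric under cyclic permutation and central; the price is that four relations must be combined in two stages. Your route is shorter (one subtraction), but hinges on the bridge edge $e_{13}$ occupying the same slot---the $\hat{x}^{2}$ slot---in both relations, which is exactly the combinatorial claim you flag. That claim does hold: the shape-parameter labeling forces the clockwise cyclic order at every vertex of $T$ to be $(Z,Z',Z'')$, so the $Z'$-edge is always immediately clockwise of the $Z$-edge. One small point you leave implicit: even if Lemma~\ref{lem:idealRelsUsingSquareRootShapeParams} records only one cyclic rotation of the three-term relation per vertex cone in $V_+$, the other two rotations are recoverable in the right ideal using the product relation (via $\hat{x}_{e_3}^2 = \hat{x}_{e_1}^{-2}\hat{x}_{e_2}^{-2}$ and substituting $\hat{x}_{e_1}^{-2} = -1 - \hat{x}_{e_2}^2$), so your approach is robust to that ambiguity.
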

\begin{proof}
    Using the labeling of edges as shown in Figure \ref{fig:edgeLabels}, 
    we obtain the following relations:
    \begin{align}                   [\hat{x}_{e_1}\hat{x}_{e_2}\hat{x}_{e_3}] &= -A \label{eqn:lem1},\\ 
    [\hat{x}_{e_1}\hat{x}_{e_5}\hat{x}_{e_6}] &= -A \label{eqn:lem2},\\
    [\hat{x}_{e_4}\hat{x}_{e_2}\hat{x}_{e_6}] &= -A \label{eqn:lem3},\\
    [\hat{x}_{e_4}\hat{x}_{e_5}\hat{x}_{e_3}] &= -A. \label{eqn:lem4}
    \end{align}
    \begin{figure}[H]
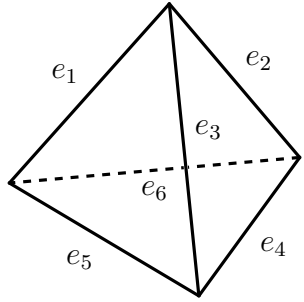

    \centering
    \raisebox{6mm}{\includestandalone[scale=1]{figures/edge_labels}}
    \caption{The labeling of the edges of a tetrahedron we assume for Lemma \ref{lem:squaredParams}.}
    \label{fig:edgeLabels}
    \end{figure}
    From (\ref{eqn:lem1}) and (\ref{eqn:lem2}), we see 
    \begin{equation*}
        \hat{x}_{e_2}\hat{x}_{e_3} = \hat{x}_{e_5}\hat{x}_{e_6},
    \end{equation*}
    and from (\ref{eqn:lem3}) and (\ref{eqn:lem4}), we obtain 
    \begin{equation*}
        \hat{x}_{e_2}\hat{x}_{e_6} = \hat{x}_{e_5}\hat{x}_{e_3}.
    \end{equation*}
    Combining these last two equations, we find 
    \[\hat{x}^2_{e_3} = \hat{x}^2_{e_6}\]
    and 
    \[\hat{x}^2_{e_2} = \hat{x}^2_{e_5}.\]
    We can get the last desired relation similarly.   
\end{proof}

\begin{defn} \label{defn:qgm}
    For each tetrahedron $T \in \mathcal{T}$, define 
    \begin{align*}
        \hat{X}_{T} &\coloneqq -\hat{x}^2_{Z_T},\\
        \hat{X}'_{T} &\coloneqq -\hat{x}^2_{Z_T'},\\
        \hat{X}''_{T} &\coloneqq -\hat{x}^2_{Z_T''},
    \end{align*}
    where each $\hat{x}_{Z_T}, \hat{x}_{Z'_T}, \hat{x}_{Z''_T}$ is a square root quantized shape parameter associated to an edge with classical shape parameter $Z_T, Z'_T$, and $Z''_T$, respectively.
\end{defn} 
Notice that, for any $T_i, T_j \in \mathcal{T}$, these elements satisfy the relations
\begin{align*}
    \hat{X}_{T_i}\hat{X}_{T_j}' &= A^{4\delta_{i,j}}\hat{X}'_{T_j}\hat{X}_{T_i},\\
    \hat{X}'_{T_i}\hat{X}_{T_j}'' &= A^{4\delta_{i,j}}\hat{X}''_{T_j}\hat{X}'_{T_i},\\
    \hat{X}''_{T_i}\hat{X}_{T_j} &=  A^{4\delta_{i,j}}\hat{X}_{T_j}\hat{X}''_{T_i},
\end{align*}
which are exactly the relations for the generators of the quantum torus $\mathrm{QT}_{\mathcal{T}}(Y)$. 

\begin{rmk}
Notice the minus sign in the definition of our quantized shape parameters. 
This is expected from the classical story in Section \ref{subsec:3dClassicalTrace}; our edge cone crossing matrices used square roots of negative shape parameters. 
\end{rmk}

The following lemma is a simple computation, using the presentation of $V_-$ and $V_+$ obtained in Lemma \ref{lem:idealRelsUsingSquareRootShapeParams}.

\begin{lem} \label{lem:containsSquaredRels}
    \begin{enumerate}
    \item The subspace $V_-$ contains the following elements, for every edge $e\in \mathcal{E}$: 
    \[
    [\hat{Y}_{1}\hat{Y}_{2} \cdots \hat{Y}_{k}] - A^4 , 
    \]
    where the $\hat{Y}_{j}\in \{\hat{X}_{T}, \hat{X}'_{T}, \hat{X}''_{T} \;\vert\; T \in \mathcal{T}\}$ are the $k$ quantized shape parameters associated to the bare edges of tetrahedra glued around $e$.
    \item The subspace $V_+$ contains the following elements, for each $T\in \mathcal{T}$: 
    \begin{align*}
         &[\hat{X}_{T} \hat{X}'_{T} \hat{X}''_{T}] + A^2 ,\\
         &\hat{X}_{T}''^{-1} + \hat{X}_{T} - 1.
    \end{align*}
\end{enumerate}
\end{lem}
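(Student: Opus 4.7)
The plan is to derive each claimed element from the generators of $V_{\pm}$ listed in Lemma \ref{lem:idealRelsUsingSquareRootShapeParams}, using two ingredients: the substitution $\hat{X}_T = -\hat{x}_{Z_T}^2$ of Definition \ref{defn:qgm}, and the elementary quantum-torus identity that for any $A$-commuting variables $x_1,\ldots,x_k$ with $x_ix_j = A^{c_{ij}}x_jx_i$ one has
\[
[x_1x_2\cdots x_k]^2 \;=\; [x_1^2x_2^2\cdots x_k^2],
\]
both sides equaling $A^{-2\sum_{a<b}c_{ab}}\,x_1^2\cdots x_k^2$ by direct rearrangement. Throughout, write $M = \bigotimes_{f\in\mathcal{F}}\widetilde{\mathbb{T}}^{\otimes 2}$, so that $\overline{\bigotimes}_{f\in\mathcal{F}}\mathbb{S}f = M/(V_+M + MV_-)$; the containment asserted by the lemma will be understood as vanishing in this quotient, i.e.\ membership in $V_+M + MV_-$.

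For Part (1), fix $e\in\mathcal{E}$ with surrounding edge cones $Ce_1,\ldots,Ce_k$, and let $r_e = [\hat{x}_{e_1}\cdots\hat{x}_{e_k}] + (-1)^{k/2}A^2 \in V_-$ be the edge-cone generator. Setting $P = [\hat{x}_{e_1}\cdots\hat{x}_{e_k}]$ and $c = (-1)^{k/2}A^2$, so that $r_e = P+c$, I compute
\[
P^2 - c^2 \;=\; (P-c)(P+c) \;=\; (r_e - 2c)\cdot r_e \;\in\; M\cdot V_-.
\]
By the Weyl-ordering identity and Definition \ref{defn:qgm},
\[
P^2 \;=\; [\hat{x}_{e_1}^2\cdots\hat{x}_{e_k}^2] \;=\; (-1)^k[\hat{Y}_1\cdots\hat{Y}_k],
\]
where $\hat{Y}_j$ is the quantized shape parameter at the bare edge underlying $Ce_j$. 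Since $c^2 = (-1)^k A^4$, we get
\[
[\hat{Y}_1\cdots\hat{Y}_k] - A^4 \;=\; (-1)^k(P^2 - c^2) \;=\; (-1)^k(r_e - 2c)\,r_e \;\in\; M\cdot V_-,
\]
as required.

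For Part (2)(a), apply the same squaring to the $V_+$-generator $r_v = [\hat{x}_{e_1}\hat{x}_{e_2}\hat{x}_{e_3}] + A$ at any vertex cone $Cv$ of $T$, with $c = A$. The Weyl identity gives $P^2 = (-1)^3[\hat{X}_T\hat{X}'_T\hat{X}''_T] = -[\hat{X}_T\hat{X}'_T\hat{X}''_T]$, so
\[
[\hat{X}_T\hat{X}'_T\hat{X}''_T] + A^2 \;=\; -(P^2 - A^2) \;=\; -r_v(r_v - 2A) \;\in\; V_+\cdot M.
\]
For Part (2)(b), the $V_+$-generator $\hat{x}_{e_1}^{-2} + \hat{x}_{e_2}^2 + 1$ at a vertex cone, under $\hat{x}_e^2 = -\hat{X}_e$, becomes $-\hat{X}_{e_1}^{-1} - \hat{X}_{e_2} + 1 = -\bigl(\hat{X}_{e_1}^{-1} + \hat{X}_{e_2} - 1\bigr)$. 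At the vertex cone of $T$ whose first two clockwise edge cones carry the shape-parameter labels $Z''_T$ and $Z_T$ (such a vertex is produced by the standard cyclic assignment of Figure \ref{fig:tetrahedron}), this yields exactly $-\bigl(\hat{X}_T''^{\,-1} + \hat{X}_T - 1\bigr) \in V_+$, as claimed.

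The main obstacle I expect is the bookkeeping in Part (2)(b): one must verify that each of the three cyclically related shape relations $\hat{X}_T''^{\,-1}+\hat{X}_T - 1$, $\hat{X}_T^{-1}+\hat{X}'_T - 1$, $\hat{X}'^{\,-1}_T+\hat{X}''_T - 1$ arises from a suitable vertex cone of $T$ with the correct $(e_1,e_2)\leftrightarrow(Z''_T,Z_T)$-style pairing. This can be handled either by a case check tracking the standard assignment at all four vertices of an ideal tetrahedron, or more efficiently by showing that once the cubic relation in Part (2)(a) holds in the quotient and one shape relation is available, the remaining two shape relations follow by pure algebra inside the tetrahedron's quantum torus.
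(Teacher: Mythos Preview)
Your argument is correct and is exactly the computation the paper alludes to with ``a simple computation, using the presentation of $V_-$ and $V_+$ obtained in Lemma \ref{lem:idealRelsUsingSquareRootShapeParams}.'' Your factorization $P^2-c^2=(P\mp c)(P\pm c)$ together with the Weyl--ordering identity $[x_1\cdots x_k]^2=[x_1^2\cdots x_k^2]$ is precisely how one passes from the square-root generators of $V_\pm$ to the squared relations, and your treatment of Part~(2)(b) as a literal rescaling of a $V_+$ generator is the intended reading.

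One small remark: you weaken the statement slightly by declaring that ``contains'' will mean membership in $V_+M+MV_-$, but your proof actually establishes the sharper one-sided memberships $[\hat Y_1\cdots\hat Y_k]-A^4\in MV_-$ and $[\hat X_T\hat X'_T\hat X''_T]+A^2\in V_+M$ (with $r_e$ on the right and $r_v$ on the left, respectively). This sharper version is what is genuinely needed in Theorem~\ref{thm:quantumTrace}'s successor, since one must absorb left multiplication by $\iota(\mathrm{QT}_{\mathcal T})$ into $MV_-$ and right multiplication into $V_+M$. So your proof already delivers what is required; only the framing sentence undersells it. The bookkeeping in Part~(2)(b) that you flag---matching the clockwise pair $(e_1,e_2)$ to $(Z''_T,Z_T)$---is handled by the cyclic labeling of Figure~\ref{fig:tetrahedron}, and as you note the remaining two 3-term relations either come from the other vertex cones or follow algebraically once the cubic relation holds.
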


From Lemmas \ref{lem:squaredParams} and \ref{lem:containsSquaredRels}, it immediately follows that:
\begin{thm}
There is a natural $R$-module homomorphism
$\iota: \mathrm{QGM}_{\mathcal{T}}(Y) \rightarrow \SQGM_{\mathcal{T}}(Y)$
defined on monomials by
\[
\prod_{T \in \mathcal{T}} \hat{Z}^{m_{T}}_{T} \hat{Z}'^{m_{T}'}_{T} \hat{Z}''^{m_{T}''}_{T} \mapsto 
\prod_{T \in \mathcal{T}} \hat{X}^{m_{T}}_{T} \hat{X}'^{m_{T}'}_{T} \hat{X}''^{m_{T}''}_{T}.
\]
\end{thm}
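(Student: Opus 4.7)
The plan is to construct $\iota$ in two stages. First, I would build an $R$-module homomorphism $\widetilde{\iota} : \mathrm{QT}_{\mathcal{T}}(Y) \to \SQGM_{\mathcal{T}}(Y)$ at the level of the ambient quantum torus, by declaring $\hat{Z}_T \mapsto \hat{X}_T$, $\hat{Z}'_T \mapsto \hat{X}'_T$, $\hat{Z}''_T \mapsto \hat{X}''_T$ on generators (as in Definition \ref{defn:qgm}) and extending multiplicatively to monomials. Then I would show that $\widetilde{\iota}$ vanishes on the left submodule $W_+\mathrm{QT}_{\mathcal{T}}(Y)+\mathrm{QT}_{\mathcal{T}}(Y)W_-$ defining the quotient $\QGM_{\mathcal{T}}(Y)$, yielding the induced map $\iota$.

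For the first stage, well-definedness of $\widetilde{\iota}$ requires two verifications. The first is that $\hat{X}_T$, $\hat{X}'_T$, $\hat{X}''_T$ are themselves well-defined elements of $\SQGM_{\mathcal{T}}(Y)$, independent of which of the two opposite edges of $T$ one uses to represent the shape parameter. This is exactly Lemma \ref{lem:squaredParams}: opposite edges in a tetrahedron produce equal squares of square-root quantized shape parameters. The second is that the defining $A^4$-commutation relations of the quantum torus $\mathrm{QT}_{\mathcal{T}}(Y)$ are respected by the target generators. This is the content of the commutation identities listed immediately after Definition \ref{defn:qgm}: for the same tetrahedron $T$ these follow from the adjacency of the corresponding edge cones at a common vertex cone and the observation that squaring an $A$-commutation relation produces an $A^{4}$-commutation relation; for different tetrahedra $T_i, T_j$, any square-root shape parameters representing $\hat{X}_{T_i}$ and $\hat{X}'_{T_j}$ lie in different vertex cones (even when $T_i$ and $T_j$ share an ideal vertex, their vertex cones $Cv_{T_i}$ and $Cv_{T_j}$ are distinct geodesic segments to the cusp), so the corresponding $\hat{x}_e$'s commute on the nose and hence so do their squares.

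For the second stage, to show that $\widetilde{\iota}$ descends to $\QGM_{\mathcal{T}}(Y)$, it suffices to check that $\widetilde{\iota}$ sends each generator of $W_+$ and $W_-$ into the submodules $V_+$ and $V_-$ described in Lemma \ref{lem:idealRelsUsingSquareRootShapeParams}, since these submodules are already killed in $\overline{\bigotimes}_{f\in\mathcal{F}}\mathbb{S}f$ and hence a fortiori in $\SQGM_{\mathcal{T}}(Y)$. Indeed, if $\widetilde{\iota}(W_+) \subset V_+$, then for any $q \in \mathrm{QT}_{\mathcal{T}}(Y)$ one has $\widetilde{\iota}(W_+ \cdot q) = \widetilde{\iota}(W_+)\cdot \widetilde{\iota}(q) \subset V_+ \cdot \overline{\bigotimes}_{f\in\mathcal{F}}\mathbb{S}f = 0$ in $\overline{\bigotimes}_{f\in\mathcal{F}}\mathbb{S}f$, and likewise for $W_-$. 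The required containments $\widetilde{\iota}(W_\pm)\subset V_\pm$ are precisely Lemma \ref{lem:containsSquaredRels}, which records that the edge relation $[\hat{Y}_1\cdots \hat{Y}_k]-A^4$ sits inside $V_-$ and that the tetrahedron relations $[\hat{X}_T\hat{X}'_T\hat{X}''_T]+A^2$ and $\hat{X}''_T{}^{-1}+\hat{X}_T-1$ sit inside $V_+$.

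The main conceptual obstacle, which is resolved by Lemma \ref{lem:squaredParams}, is the tension between the refined geometry of square-root shape parameters (which depend on a choice of edge cone and carry a cyclic ordering at each vertex cone) and the coarser combinatorics of shape parameters (which depend only on a pair of opposite edges); demonstrating that this extra structure collapses consistently upon squaring is what makes $\iota$ exist. Everything else amounts to routine bookkeeping with the explicit relations of Lemmas \ref{lem:idealRelsUsingSquareRootShapeParams} and \ref{lem:containsSquaredRels}.
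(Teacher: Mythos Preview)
Your approach is essentially the same as the paper's, which simply invokes Lemmas \ref{lem:squaredParams} and \ref{lem:containsSquaredRels} in one line; you have helpfully unpacked the two stages (defining the map on the quantum torus, then descending to the quotient) and explained why each lemma is needed.

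One small technical imprecision: you declare $\widetilde{\iota}$ to land in $\SQGM_{\mathcal{T}}(Y)\subset \overline{\bigotimes}_{f}\mathbb{S}f$, but then write $\widetilde{\iota}(W_+)\subset V_+$ and invoke multiplicativity $\widetilde{\iota}(W_+\cdot q)=\widetilde{\iota}(W_+)\cdot\widetilde{\iota}(q)$. Since $V_+$ lives in the \emph{unquotiented} algebra $\bigotimes_{f}\mathbb{S}f$ and the quotient $\overline{\bigotimes}_{f}\mathbb{S}f$ is not an algebra (it is a quotient by a one-sided submodule, not a two-sided ideal), these statements do not literally parse. The fix is the obvious one: first define $\widetilde{\iota}$ as an $R$-algebra homomorphism $\mathrm{QT}_{\mathcal{T}}(Y)\to\bigotimes_{f}\mathbb{S}f$ (for some fixed choice of representative edge for each shape parameter), use Lemma~\ref{lem:containsSquaredRels} and multiplicativity there to see that $W_+\mathrm{QT}_{\mathcal{T}}(Y)+\mathrm{QT}_{\mathcal{T}}(Y)W_-$ lands in $V_+\bigl(\bigotimes_f\mathbb{S}f\bigr)+\bigl(\bigotimes_f\mathbb{S}f\bigr)V_-$, and only then compose with the quotient map. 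Lemma~\ref{lem:squaredParams} then guarantees the resulting $\iota$ is independent of the choices made.
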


We conjecture that the map $\iota: \mathrm{QGM}_{\mathcal{T}}(Y) \rightarrow \SQGM_{\mathcal{T}}(Y)$ is an embedding
; see Conjecture \ref{conj:qgmIsSubMod}. 
We have verified the claim classically for the figure-$8$ knot complement by computing elimination ideals in Mathematica.


\section{Explicit examples} \label{sec:examples}

In this section, we detail the process of how one can actually compute our map in practice, and then use this procedure to compute the quantum trace of some links in the figure-8 knot complement $Y = S^3 \setminus 4_1$.

\subsection{Computing the 3d quantum trace in practice}

Given a link $[L]$ in an ideally triangulated $3$-manifold, one follows the following procedure:
\begin{enumerate}
    \item On each edge cone $Ce$, choose an embedded interval $I_{e}$ oriented from the midpoint of $e$ to the barycenter of the tetrahedron containing $Ce$. 
    Isotope $L$ so that at each intersection $p$ of $L$ with an edge cone $Ce$, $p \in I_{e}$, and the framing of $L$ at $p$ is tangent to $I_{e}$.
    \item Use the splitting map $\overline{\sigma}$ to cut $\overline{\Sk}(Y)$ into the reduced tensor product of reduced stated skein modules of face suspensions. 
    \item In each face suspension, apply the face suspension quantum trace from Theorem \ref{thm:fsQT}. 
\end{enumerate}

We show examples of bad arcs in a face suspension in Figure \ref{fig:fsBadArcs}.

For convenience, we repeat the definition of $\Tr_{Sf}$ from Theorem \ref{thm:fsQT} with a bit more detail. 

\begin{figure}
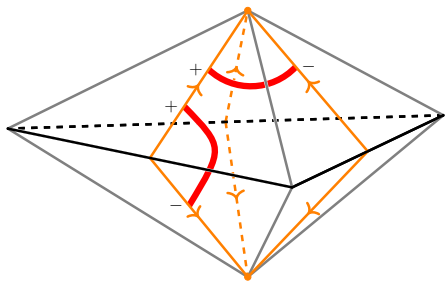

    \centering
    \includestandalone[scale=.8]{figures/fs_bad_arcs}
    \caption{Examples of bad arcs in a face suspension.}
    \label{fig:fsBadArcs}
\end{figure}

Recall that $\overline{\Sk}(Sf)$ is a $\mathbb{T}^{\otimes 2}$-$\mathbb{B}^{\otimes 3}$-bimodule, spanned by the empty skein. 
If $\alpha \in \mathbb{T}$ consists of a single ribbon tangle with the canonical framing connecting the edge cones corresponding to $x_1, x_2 \in \mathbb{S}f$ with states $\mu$ and $\nu$, respectively, then define 
\[
\mathrm{ev}(\alpha) := A^{-\frac{\mu+\nu}{2}}[x^{\mu}_1x^{\nu}_2].
\] 
If $\beta \in \mathbb{B}$ consists of a single ribbon tangle with the canonical framing connecting the edge cones corresponding to $x_1, x_2 \in \mathbb{S}f$ both with states $\mu$, then define 
\[
\mathrm{ev}( \beta ) := (-1)^{-\frac{\mu}{2}}x^{\mu}_1x^{\mu}_2.
\]
These are nothing but the embeddings defined in Lemma \ref{lem:triangle_biangle_embedding}. 

Given $[L] \in \overline{\Sk}(Sf)$, use Lemma \ref{lem:Sk(B)isCyclic} to write $[L] = \alpha_1 \cdots \alpha_k \cdot [\emptyset] \cdot \beta_1 \cdots \beta_l,$
where each $\alpha_i \in \mathbb{T}$ and $\beta_i \in \mathbb{B}$ and each $\alpha_i$ and $\beta_i$ consists of a single tangle.
Then,
\[
\Tr_{Sf}([L]) = \mathrm{ev}(\alpha_1) \cdots \mathrm{ev}(\alpha_k) \mathrm{ev}(\beta_1) \cdots \mathrm{ev}(\beta_l) \in \mathbb{S}f. 
\]

Suppose a face suspension is associated to the face $f$ and spans parts of tetrahedra $T_1$ and $T_2$. 
Label the edge cones from $T_1$ with $z, z', z''$ clockwise when viewed from the barycenter of $T_1$, 
and likewise label the edge cones from $T_2$ with $y, y', y''$ clockwise when viewed from the barycenter of $T_2$. 
An example of such a labeling is shown in Figure \ref{fig:FSM_labeling}. 

\begin{figure}[htbp]
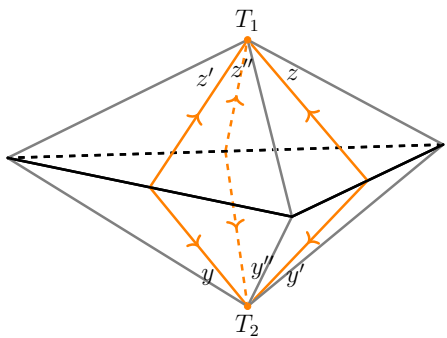

    \centering
    \includestandalone[scale=.8]{figures/fsm_labeling}
    \caption{A labeling of the edge cones of a face suspension used in obtaining the presentation given in this section.}
    \label{fig:FSM_labeling}
\end{figure}

Then 
\[
\mathbb{S}f \cong \frac{R \langle z^{\pm 1},z'^{\pm 1},z''^{\pm1} \rangle \otimes R \langle y^{\pm 1},y'^{\pm 1},y''^{\pm1}\rangle}{\langle zz' = Az'z, z'z'' = Az''z', z''z = Azz'',yy' = Ay'y, y'y'' = Ay''y', y''y = Ayy'' \rangle}.
\]

\subsection{Examples}

In Figure \ref{fig:fig8ComplementTriangulation}, we show a triangulation of the figure-8 knot complement along with two ribbon tangles $K_m$ and $K_b$. 
In that figure, we have also labeled the bare edges (and thus edge cones) with square root quantized shape parameters. 
In Figure \ref{fig:fig8_r3}, we show these same tangles and the figure-$8$ knot in $S^3$.

\begin{figure}
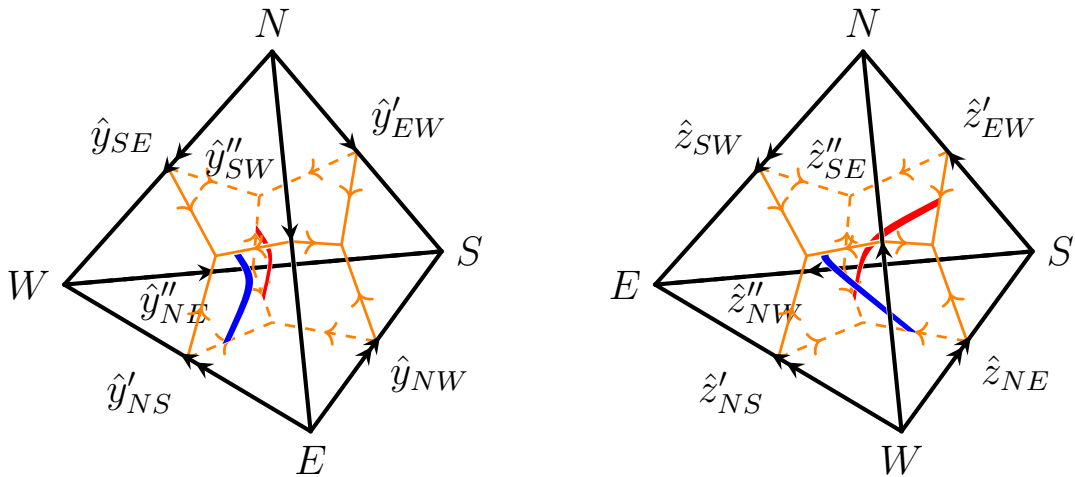

    \centering
    \includestandalone[scale=1.3]{figures/figure8_tet1}
    \hspace{.5cm}
    \includestandalone[scale=1.3]{figures/figure8_tet2}
    \caption{A triangulation of the figure-8 knot complement. 
    Vertex labels index the opposite face. 
    The gluing of the two tetrahedra is determined by gluing the appropriate edges based on the number of solid black arrows and their orientation. 
    The ribbon tangle $K_m$ is shown in red and the ribbon tangle $K_b$ is shown in blue.}
    \label{fig:fig8ComplementTriangulation}
\end{figure}

\begin{figure}[htbp]
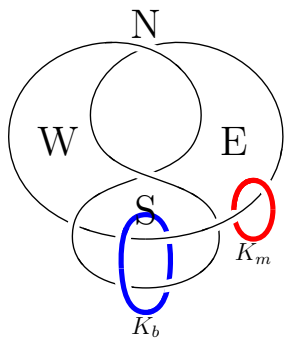

    \centering
    \includestandalone[scale=1.3]{figures/figure_8_knot}
    \caption{We show the figure-8 knot in $S^3$, along with unframed representatives of $K_b$ in blue and $K_m$ in red. 
    The faces of the triangulation can be identified with regions in the plane containing the figure-8 knot, and are labeled appropriately.}
    \label{fig:fig8_r3}
\end{figure}

\subsubsection{Quantum trace of $K_m^2$}
For our first example, we compute the quantum trace of a two cabling of $K_m$, denoted $K_m^2$. 
In Figure \ref{fig:km_in_face_suspensions} we show the way in which one representative of $K_m^2$ splits when we cut $Y$ into face suspensions. 
For simplicity, we only show the two face suspensions that contain components of $K_m^2$. 
Recall that the quantum trace is a sum over all possible states $\epsilon_1, \epsilon_2, \epsilon_3,$ and $\epsilon_4$. 

In the following computations, we will label the generators of the two copies of $\mathbb{S}f$ the shape parameter of the corresponding edge cone, indexed by the face. 
For example, the generator of $Sf_N$ associated to the edge cone corresponding to $Z$ will be referred to as $z_N$.

\begin{figure}[H]
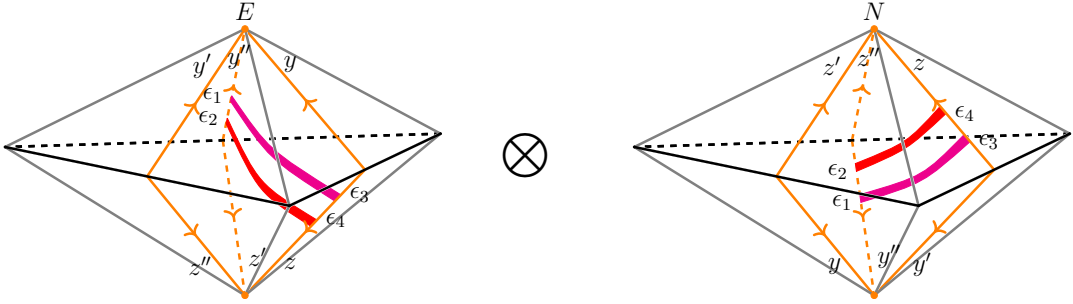

    \centering
    \begin{minipage}{.45\textwidth}  
        \centering
        \includestandalone[scale=.8]{figures/km_first_fs}
    \end{minipage}
    \begin{minipage}{.04\textwidth}
        \centering
        \begin{equation*}
            \bigotimes
        \end{equation*}
    \end{minipage}
    \begin{minipage}{.45\textwidth}
        \centering
        \includestandalone[scale=.8]{figures/km_second_fs}
    \end{minipage}
    \caption{$K^2_m$ after cutting $Y$ into face suspensions. 
    The two face suspensions correspond to the faces $E$ and $N$. } 
    \label{fig:km_in_face_suspensions}
\end{figure}

We use the skein relations to obtain Figure \ref{fig:km_in_face_suspensions_reduced}. 
There are $4$ compatible states that give non-zero elements: 

\begin{figure}[H]
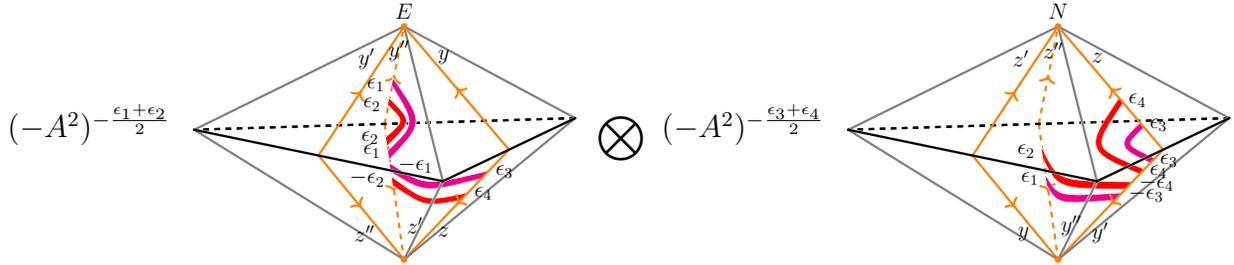

    \centering
    \begin{minipage}{.47\textwidth}  
        \centering
        $(-A^2)^{-\frac{\epsilon_1+\epsilon_2}{2}}$\includestandalone[scale=.7,valign=c]{figures/km_first_fs_reduced}
    \end{minipage}
    \begin{minipage}{.04\textwidth}
        \centering
        \begin{equation*}
            \bigotimes
        \end{equation*}
    \end{minipage}
    \begin{minipage}{.47\textwidth}
        \centering
        $(-A^2)^{-\frac{\epsilon_3+\epsilon_4}{2}}$\includestandalone[scale=.7,valign=c]{figures/km_second_fs_reduced}
    \end{minipage}
    \caption{$K^2_m$ after cutting $Y$ into face suspensions and using skein relations to identify each diagram as the action of some $\alpha \in \mathbb{T}^{\otimes 2} \otimes \mathbb{B}^{\otimes 3}$ on the empty skein.} \label{fig:km_in_face_suspensions_reduced}
\end{figure}

\begin{enumerate}
    \item $\epsilon_1=\epsilon_2=+$ and $\epsilon_3=\epsilon_4=-$: 
    Notice the coefficients in Figure \ref{fig:km_in_face_suspensions_reduced} cancel. 
    We obtain
    \[
    [z_E'^{-1} z_E^{-1}] [z_E'^{-1} z_E^{-1}] y_E'' z_E' y_E'' z_E' \otimes [y_N' y_N''][y_N' y_N''] z_N^{-1} y_N'^{-1} z_N^{-1} y_N'^{-1} 
    = y_E''^{2} z_E^{-2} \otimes y_N''^{2} z_N^{-2} 
    = \hat{y}''^{2}_{NE} \hat{z}_{NE}^{-2}
    = \hat{Y}'' \hat{Z}^{-1}.
    \]
    \item $\epsilon_1 = \epsilon_4 = +$ and $\epsilon_2 = \epsilon_3 = -$: 
    We find 
    \[
    [z_E' z_E] [z_E'^{-1} z_E^{-1}] y_E'' z_E' y_E''^{-1} z_E'^{-1} \otimes [y_N' y_N''] [y_N'^{-1} y_N''^{-1}] z_N^{-1} y_N'^{-1} z_N y_N' 
    = 1 \otimes 1.
    \]
    \item $\epsilon_1 = \epsilon_2 = -$ and $\epsilon_3 = \epsilon_4 = +$: 
    $\hat{Y}''^{-1} \hat{Z}$.
    \item $\epsilon_1 = \epsilon_4 = -$ and $\epsilon_2 = \epsilon_3 = +$: 
    $1 \otimes 1$. 
\end{enumerate}
Summing them up, we get
\[
\Tr_{\mathcal{T}}(K_m^2) = 2 + \hat{Y}'' \hat{Z}^{-1} + \hat{Y}''^{-1} \hat{Z} \;\in \iota(\QGM_{\mathcal{T}}(Y)) \subset \SQGM_{\mathcal{T}}(Y). 
\]

\subsubsection{Quantum trace of $K_b$}
We will now compute the quantum trace on $K_b$. 

In Figure \ref{fig:kb_in_face_suspensions}, we draw one representative of $K_b$ after cutting into face suspensions. 
In Figure \ref{fig:kb_in_face_suspensions_reduced}, we have used the skein relations to rewrite each tangle.

\begin{figure}[H]
    \centering
    \begin{minipage}{.45\textwidth}  
        \centering
        \includestandalone[scale=.8]{figures/kb_first_fs}
    \end{minipage}
    \begin{minipage}{.04\textwidth}
        \centering
        \begin{equation*}
            \bigotimes
        \end{equation*}
    \end{minipage}
    \begin{minipage}{.45\textwidth}
        \centering
        \includestandalone[scale=.8]{figures/kb_second_fs}
    \end{minipage}
    \caption{$K_b$ after cutting $Y$ into face suspensions. 
    The two face suspensions correspond to the faces $S$ and $N$. } 
    \label{fig:kb_in_face_suspensions}
\end{figure}

\begin{figure}[H]
    \centering
    \begin{minipage}{.47\textwidth}  
        \centering
        $(-A^2)^{-\frac{\epsilon_1}{2}}$\includestandalone[scale=.7,valign=c]{figures/kb_first_fs_reduced}
    \end{minipage}
    \begin{minipage}{.04\textwidth}
        \centering
        \begin{equation*}
            \bigotimes
        \end{equation*}
    \end{minipage}
    \begin{minipage}{.47\textwidth}
        \centering
        $(-A^2)^{-\frac{\epsilon_2}{2}}$\includestandalone[scale=.7,valign=c]{figures/kb_second_fs_reduced}
    \end{minipage}
    \caption{$K_b$ after cutting $Y$ into face suspensions and using skein relations to identify each diagram as the action of some $\alpha \in \mathbb{T}^{\otimes 2} \otimes \mathbb{B}^{\otimes 3}$ on the empty skein. } \label{fig:kb_in_face_suspensions_reduced}
\end{figure}

There are $3$ non-zero compatible states:
\begin{enumerate}
    \item $\epsilon_1 = +$ and $\epsilon_2 = -$: We calculate
    \[
    [z_S^{-1} z_S'^{-1}] y_S' z_S \otimes [y_N y_N'] z_N'^{-1} y_N^{-1}
    = A y_S' z_S'^{-1} \otimes z_N'^{-1} y_N' 
    = A \hat{z}_{NS}'^{-1} \hat{y}_{NS}'.
    \]
    \item $\epsilon_1 = -$ and $\epsilon_2 = +$:  
    \[
    [z_S z_S'] y_S'^{-1} z_S^{-1} \otimes [y_N^{-1} y_N'^{-1}] z_N' y_N 
    = A y_S'^{-1} z_S' \otimes z_N' y_N'^{-1} 
    = A \hat{y}_{NS}'^{-1} \hat{z}_{NS}'.
    \]
    \item $\epsilon_1 = \epsilon_2 = -$: 
    \[
    A^{2} [z_S z_S'^{-1}] y_S'^{-1} z_S^{-1} \otimes [y_N y_N'^{-1}] z_N'^{-1} y_N^{-1} 
    = A y_S'^{-1} z_S'^{-1} \otimes z_N'^{-1} y_N'^{-1} 
    = A \hat{z}_{NS}'^{-1} \hat{y}_{NS}'^{-1}.
    \]
\end{enumerate}
We claim the image of $K_b$ under our trace map lives in the submodule $\iota(\QGM_{\mathcal{T}}(Y))$ of $\SQGM_{\mathcal{T}}(Y)$. 
Some relations in $\SQGM_{\mathcal{T}}(Y)$ important for our purposes are:
\begin{enumerate}
    \item We have the following identities among the left actions 
    \begin{align*}
        1 + \hat{z}_{NS}'^{-2} &= -\hat{z}_{NW}''^{2}, \\
        [\hat{z}_{SW} \hat{z}_{NS}' \hat{z}_{NW}''] &= -A,\\
        [\hat{y}_{SE} \hat{y}_{NS}' \hat{y}_{NE}''] &= -A,\\
        [\hat{z}_{SW} \hat{z}_{EW}' \hat{z}_{SE}''] &= -A,\\
        [\hat{y}_{SE} \hat{y}_{EW}' \hat{y}_{SW}''] &= -A.
    \end{align*}
    The last four identities are central elements, so the same holds for right actions. 
    \item We have the following identity among the right actions 
    \[
    [\hat{y}''_{NE} \hat{y}''_{SW} \hat{y}'_{EW} \hat{z}''_{NW} \hat{z}''_{SE} \hat{z}'_{EW}]
    = A^2.
    \]
    From above, this is equivalent to identifying the right actions 
    \[
    \hat{y}_{NE}'' \hat{y}_{SE}^{-1} \hat{z}_{NW}'' \hat{z}_{SW}^{-1} 
    = A^{-1}.
    \]
\end{enumerate}
Then
\begin{align*}
\Tr_{\mathcal{T}}(K_b) &= A \hat{z}_{NS}'^{-1} \hat{y}_{NS}' + A \hat{y}_{NS}'^{-1} \hat{z}_{NS}' + A \hat{z}_{NS}'^{-1} \hat{y}_{NS}'^{-1}\\ 
&= A \hat{z}_{NS}'^{-1} \hat{y}_{NS}' + A (1 + \hat{z}_{NS}'^{-2}) \hat{z}_{NS}' \hat{y}_{NS}'^{-1}\\
&= A \hat{z}_{NS}'^{-1} \hat{y}_{NS}' - A \hat{z}_{NW}''^2 \hat{z}_{NS}' \hat{y}_{NS}'^{-1}\\ 
&= \hat{z}_{NW}'' \hat{z}_{SW} \hat{y}_{NE}''^{-1} \hat{y}_{SE}^{-1} - \hat{z}_{NW}'' \hat{z}_{SW}^{-1} \hat{y}_{NE}'' \hat{y}_{SE}\\ 
&= A \hat{z}_{SW}^{2} \hat{y}_{NE}''^{-2} - A \hat{y}_{SE}^{2}\\
&= A (\hat{Z} \hat{Y}''^{-1} + \hat{Y}),
\end{align*}
as desired.

In the conventions of \cite{AGLR}, $A = q_{AGLR}^{-1/4}$ and our shape parameters go to their inverses. 
After changing the framing by $-1$ (multiplying by $-A^{-3}$) and taking these differences in conventions into account, our result matches the conjecture given in that paper. 

\subsubsection{Quantum trace of $K^2_b$}

Finally, we compute the quantum trace on a $2$-cabling of $K_b$, shown in Figure \ref{fig:kb2_in_face_suspensions}.

\begin{figure}[H]
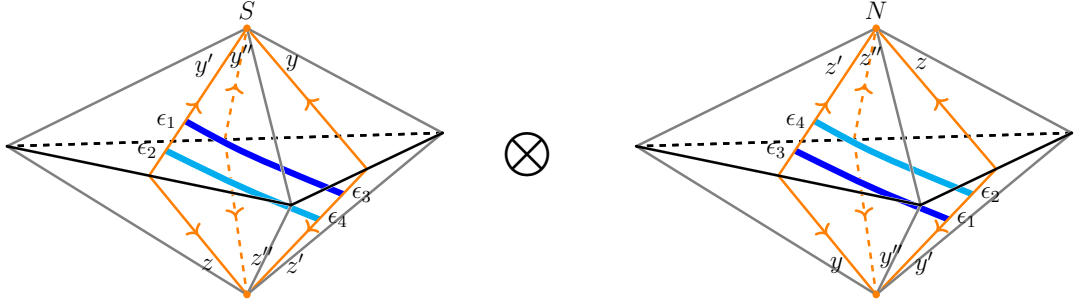

    \centering
    \begin{minipage}{.45\textwidth}  
        \centering
        \includestandalone[scale=.8]{figures/kb2_first_fs}
    \end{minipage}
    \begin{minipage}{.04\textwidth}
        \centering
        \begin{equation*}
            \bigotimes
        \end{equation*}
    \end{minipage}
    \begin{minipage}{.45\textwidth}
        \centering
        \includestandalone[scale=.8]{figures/kb2_second_fs}
    \end{minipage}
    \caption{$K^2_b$ after cutting $Y$ into face suspensions.} 
    \label{fig:kb2_in_face_suspensions}
\end{figure}

One can check there are $9$ compatible states giving non-zero elements, 
and we find 
\begin{align*}
    \Tr_{\mathcal{T}}(K^2_b) 
    &= A^4 \hat{z}'^{-2}_{NS} \hat{y}'^2_{NS} + A^4\hat{z}'^2_{NS} \hat{y}'^{-2}_{NS} + \hat{z}'^{-2}_{NS} \hat{y}'^{-2}_{NS} + (A^4 + 1) \hat{y}'^{-2}_{NS} + (A^4 + 1) \hat{z}'^{-2}_{NS} + 2\\
    &= A^4 \hat{Z}'^{-1} \hat{Y}' + A^4 (\hat{Z}' + A^{-4} \hat{Z}'^{-1} - 1 - A^{-4}) \hat{Y}'^{-1} + (A^4 + 1) \hat{Z}'' + 1 - A^4 \\
    &= A^4 \hat{Z}'^{-1} \hat{Y}' + A^4 \hat{Z}''^2 \hat{Z}' \hat{Y}'^{-1} + (A^4 + 1) \hat{Z}'' + 1 - A^4 \\
    &= A^4 (\hat{Z} \hat{Y}''^{-1} + \hat{Y})^2 + 1 - A^4.
\end{align*}

Surprisingly, even taking into account differences in conventions and possible differences in framing, this result \emph{does not} match the conjecture given in \cite{AGLR}.


\section{Future Directions} \label{sec:future_directions}
We conclude with some natural conjectures, as well as avenues for future research. 

\begin{conj}
The splitting maps $\sigma$ and $\overline{\sigma}$ are injective. 
\end{conj}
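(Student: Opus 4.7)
The plan is to establish injectivity by constructing an approximate left inverse to $\sigma$, adapted from the strategy used in the two-dimensional case by Le \cite{Le} and Costantino--Le \cite{CL}, and then deduce injectivity of $\overline{\sigma}$ by checking compatibility with the bad-arc ideal. First, I would fix a normal form for elements of $\Sk(Y,\Gamma)$: using the proof of Theorem \ref{thm:SplittingMap}, every class $[L]$ admits a representative in general position with respect to the combinatorial foliation of $\Sigma$ whose intersections with $\Sigma$ all lie on edges of the marking $\Gamma_{\Sigma}$, and any two such representatives differ by a finite sequence of moves of types \ref{item:typeIisotopy}--\ref{item:typeVisotopy}. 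The splitting map sends such a representative to a sum of pure tensors $[L_1^{\vec\epsilon}]\otimes [L_2^{\vec\epsilon}]$ indexed by compatible states $\vec\epsilon$ on $L\cap\Sigma$.

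Next, I would exploit the extra $\mathbb{Z}^{E(\Gamma_\Sigma)}$-grading on $\Sk(Y_1,\Gamma_1)\,\overline{\otimes}\,\Sk(Y_2,\Gamma_2)$ observed in Remark \ref{rmk:NewGradingAfterSplitting}, noting that the relations \ref{item:S+}--\ref{item:S-} are homogeneous and that $\mathrm{Im}(\sigma)$ lies in the $0$-graded piece. The key step is to build a ``gluing'' map $g:\Sk(Y_1,\Gamma_1)\,\overline{\otimes}\,\Sk(Y_2,\Gamma_2)\to \Sk(Y,\Gamma)$ defined on state-compatible pure tensors by geometrically reconnecting $L_1$ and $L_2$ along $\Sigma$ when the states at each intersection point agree and returning zero otherwise, with specific coefficients $(-A^2)^{\pm\sum\epsilon/4}$ chosen so that $g$ descends to the reduced tensor product. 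Well-definedness of $g$ modulo \ref{item:S+}--\ref{item:S-} reduces, at each vertex $v$ of $\Gamma_{\Sigma}$, to identities in the stated skein algebras $\SkAlg(D_{\deg v})$ which can be verified by a direct calculation in the spirit of Lemma \ref{lem:StatedSkeinRelLemma}; proceeding inductively one interior edge at a time (in parallel with Corollary \ref{cor:SplittingIntoTetrahedra}) reduces the combinatorics to the biangle/triangle cases, where the identities are known from the 2D theory. Composition then yields $g\circ\sigma=\mathrm{id}$.

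For the reduced version $\overline{\sigma}$, I would then check that the gluing map $g$ sends the bad-arc submodule of $\Sk(Y_1,\Gamma_1)\,\overline{\otimes}\,\Sk(Y_2,\Gamma_2)$ into the bad-arc submodule of $\Sk(Y,\Gamma)$, and conversely that $\sigma$ of a bad-arc element is again bad-arc. Both checks are local at vertices of the boundary markings, away from $\Sigma$, so they reduce to 2D facts already used in \cite{CL} to define reduced stated skein algebras. Consequently $g$ descends to a left inverse of $\overline{\sigma}$.

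The main obstacle is the first check of well-definedness of $g$ modulo the reduced-tensor-product relations: the relations \ref{item:S+}--\ref{item:S-} identify left (resp.\ right) actions twisted by $(-A^2)^{\pm(\mu+\nu)/4}$ on the two sides, and keeping track of these half-integer exponents while simultaneously iterating the construction over a triangulated cutting surface is the most delicate part. I expect that the symmetric reformulation of the relations given in Remark \ref{rmk:SplittingRelationSymmetricalForm} will be indispensable for this bookkeeping, and that once the single-quadrilateral case is dispatched the extension to arbitrary combinatorial foliations follows by induction along the same lines as Corollary \ref{cor:fs_splitting_unreduced}.
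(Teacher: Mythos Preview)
This statement is a \emph{conjecture} in the paper (Section~\ref{sec:future_directions}), not a theorem; the paper offers no proof and explicitly lists injectivity of $\sigma$ and $\overline{\sigma}$ among the open problems. So there is nothing in the paper to compare your argument to.

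Your proposed gluing map $g$ has a genuine definitional gap. An element of $\Sk(Y_1,\Gamma_1)\otimes\Sk(Y_2,\Gamma_2)$ is a linear combination of tensors of \emph{isotopy classes} of stated tangles: the data carried by $[L_1]\otimes[L_2]$ records only the height ordering and states of endpoints on each edge of $\Gamma_\Sigma$, not actual positions. ``Geometrically reconnecting $L_1$ and $L_2$ along $\Sigma$'' therefore requires choosing representatives whose endpoints on $\Gamma_\Sigma$ coincide, but different such choices yield different isotopy classes in $Y$. Concretely, a height-exchange move applied in $Y_1$ alone leaves $[L_1]$ unchanged in $\Sk(Y_1,\Gamma_1)$ but changes the glued tangle in $\Sk(Y,\Gamma)$ by a crossing near $\Sigma$; the relations \ref{item:S+}--\ref{item:S-} you plan to quotient by concern arcs near vertices of $\Gamma_\Sigma$ and do not absorb this discrepancy. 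Restricting to the degree-$0$ piece of the $\mathbb{Z}^{E(\Gamma_\Sigma)}$-grading does not help, since matching state sequences on both sides still leaves the relative interleaving of endpoints undetermined.

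It is worth noting that even in the 2D case the injectivity proofs in \cite{Le,CL} do \emph{not} proceed via a naive gluing inverse of this kind; they rely on explicit $R$-bases of stated skein algebras of surfaces (simple diagrams with increasingly ordered states) and show that $\sigma$ sends basis elements to linearly independent elements. No analogous basis description is available for skein modules of general 3-manifolds, which is precisely why the 3-dimensional statement remains conjectural here.
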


In particular, injectivity of $\overline{\sigma}$, together with Proposition \ref{prop:TensorProductOfQuantumTraceIsInjective} would imply: 
\begin{conj}
The 3d quantum trace map $\Tr_\mathcal{T}$ is injective. 
\end{conj}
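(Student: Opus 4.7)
The plan is to reduce the injectivity of $\Tr_{\mathcal{T}}$ to the previously-stated conjectured injectivity of the reduced splitting map $\overline{\sigma}: \overline{\Sk}(Y) \to \overline{\bigotimes}_{f\in\mathcal{F}}\overline{\Sk}(Sf)$. Once that is in hand, the result is immediate: by construction $\Tr_{\mathcal{T}} = \overline{\otimes}_{f\in\mathcal{F}}\Tr_{Sf}\circ\overline{\sigma}$, and Proposition \ref{prop:TensorProductOfQuantumTraceIsInjective} already gives that $\overline{\otimes}_{f\in\mathcal{F}}\Tr_{Sf}$ is injective, so the composition of two injections is an injection. Thus essentially all of the work lies in proving the injectivity of $\overline{\sigma}$.

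My approach to $\overline{\sigma}$ would be to construct an explicit left inverse by a gluing-and-evaluation procedure. First I would exploit the two compatible gradings available: the $\mathbb{Z}^{E(\Gamma)}$-grading of $\overline{\Sk}(Y)$ by boundary states (Remark \ref{rmk:SkeinModuleIsGraded}), together with the auxiliary $\mathbb{Z}^{E(\Gamma_\Sigma)}$-grading on the reduced tensor product described in Remark \ref{rmk:NewGradingAfterSplitting}. Both $\overline{\sigma}$ and its hypothetical inverse must respect the first grading, and the image of $\overline{\sigma}$ sits inside the zero component of the second. Within each fixed graded piece, I would aim to put skeins in general position with respect to the cutting surfaces and then read off a well-defined preimage from the summands $[L_f^{\vec{\epsilon}}]$ appearing in the splitting sum. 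The structure theorems for $3$-balls (Theorem \ref{thm:SkeinModuleOf3Ball} and Corollary \ref{cor:RedSkeinModuleOf3Balls}), applied to tubular neighborhoods of the cutting surfaces, should supply enough rigidity to see that this procedure is independent of the choices made.

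The main obstacle will be the vertex cones and, more generally, the bimodule identifications \ref{item:S+}, \ref{item:S-} that define the reduced tensor product: I have to rule out the possibility that a nonzero class in $\overline{\Sk}(Y)$ becomes zero after splitting because of a cancellation inside the ideal generated by these relations (or by bad arcs created at a newly exposed vertex). A useful reduction is to prove injectivity of the \emph{partial} splitting map of Theorem \ref{thm:partialSplittingHomomorphism} across a single disk first, and then iterate; each partial split changes the combinatorics of $\Gamma$ in a controlled way, and the grading refinement of Remark \ref{rmk:NewGradingAfterSplitting} is added one factor at a time. The base case — cutting a $3$-ball along an internal disk — can be verified directly using the explicit presentation in Theorem \ref{thm:SkeinModuleOf3Ball}, since there the annihilator of $[\emptyset]$ is understood and the splitting relations match the defining relations generator-by-generator. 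If one can show that each elementary disk-split is injective and that the resulting cumulative identifications on the reduced tensor product are exactly those of \ref{item:S+}--\ref{item:S-}, then injectivity of the full splitting map (and hence of $\Tr_{\mathcal{T}}$) follows by induction on the number of cuts needed to decompose $Y$ into face suspensions.
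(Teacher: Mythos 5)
The statement you are asked to prove is left as an \emph{open conjecture} in the paper; the authors do not prove it. They record only the one-line implication that injectivity of $\overline{\sigma}$ (stated immediately before, and also a conjecture) together with Proposition~\ref{prop:TensorProductOfQuantumTraceIsInjective} would yield the claim. Your opening paragraph reproduces exactly that reduction, which is correct, and it is all the paper says.

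The remainder of your proposal is a sketch of an argument for the injectivity of $\overline{\sigma}$ — i.e., a sketch of a proof of the other conjecture — and as written it has genuine gaps. The central difficulty you correctly name but do not resolve is that $\overline{\bigotimes}_{f}\overline{\Sk}(Sf)$ is a \emph{quotient}: the relations \ref{item:S+}, \ref{item:S-} (and the iterated relations around vertex cones and internal edges in Corollary~\ref{cor:fs_splitting_reduced}) could collapse a nonzero class. The two gradings you invoke cannot by themselves exclude this, because those relations are homogeneous with respect to both gradings; they can kill an entire graded piece. Your plan to ``read off a well-defined preimage'' from the compatible-state summands $[L_f^{\vec{\epsilon}}]$ amounts to inverting the definition of $\sigma$, but that is precisely what is not obviously well defined: two representatives of the same element of the reduced tensor product differ by an element of the ideal generated by \ref{item:S+}--\ref{item:S-} and bad arcs, and you would need to show that such a difference always reassembles to zero in $\overline{\Sk}(Y)$. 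No argument for this is given. The proposed induction is also shaky: each disk-split enlarges the ideal one quotients by, so injectivity of one partial split into its own codomain does not compose to injectivity into the more heavily quotiented module obtained after all cuts. (For comparison, even in the 2d stated skein case the analogous injectivity required exhibiting explicit bases à la Le; no such basis is known for general 3-manifold skein modules, which can even have torsion.) Your base case for a 3-ball using Theorem~\ref{thm:SkeinModuleOf3Ball} is a reasonable sanity check, but it does not address the iteration or quotient issues, so the argument as a whole does not close the gap the paper leaves open.
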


As mentioned in Section \ref{subsec:qgm}, we also conjecture that
\begin{conj}\label{conj:qgmIsSubMod}
The natural homomorphism $\iota : \QGM_{\mathcal{T}}(Y) \rightarrow \SQGM_{\mathcal{T}}(Y)$ is injective. 
\end{conj}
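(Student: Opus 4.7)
The plan is to establish injectivity of $\iota$ in two stages: first in the classical limit $A = 1$, and then to lift the result to the quantum setting via an $\hbar$-adic filtration argument.

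In the classical limit, specializing $A = 1$, the quantum gluing module $\QGM_{\mathcal{T}}(Y)$ becomes the coordinate ring of the affine gluing variety $\mathcal{G}_{\mathcal{T}}(Y) \subseteq (\mathbb{C}^*)^{3|\mathcal{T}|}$ cut out by the classical relations \eqref{eqn:classicalRels1} and \eqref{eqn:classicalRels2}, while $\SQGM_{\mathcal{T}}(Y)$ becomes the coordinate ring of a cover $\widetilde{\mathcal{G}}_{\mathcal{T}}(Y) \to \mathcal{G}_{\mathcal{T}}(Y)$ obtained by adjoining square roots $x_e = \sqrt{-Z_e}$ subject to the classical limits of the relations in Lemma \ref{lem:idealRelsUsingSquareRootShapeParams}. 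The classical $\iota$ is the pullback of regular functions along this cover. To establish classical injectivity I would show this cover is dominant by exhibiting explicit lifts: given a solution $(Z_T, Z'_T, Z''_T)$, choose a sign $x_e = \pm\sqrt{-Z_e}$ for each edge cone. Squaring the vertex cone relations $x_{e_1}x_{e_2}x_{e_3} = -1$ yields the three-term equations $Z_T Z'_T Z''_T = -1$, which hold automatically; squaring the edge relations $\prod_i x_{e_i} = -(-1)^{k/2}$ yields $\prod_i Z_{e_i} = 1$, also automatic. The remaining constraints on the signs form a system of linear equations over $\mathbb{F}_2$, whose solvability corresponds to the existence of a lift of the $\mathrm{PSL}_2(\mathbb{C})$-representation $r:\pi_1(Y) \to \mathrm{PSL}_2(\mathbb{C})$ to $\mathrm{SL}_2(\mathbb{C})$. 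Such lifts always exist for orientable $3$-manifolds, since they are parallelizable and hence spin, establishing dominance and therefore classical injectivity.

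For the quantum lift, I would set $A = e^{\hbar/2}$ (treating $\hbar$ formally) and equip both $\QGM_{\mathcal{T}}(Y)$ and $\SQGM_{\mathcal{T}}(Y)$ with their natural $\hbar$-adic filtrations. The associated gradeds recover the classical coordinate rings discussed above, and $\iota$ induces the classical pullback on the associated gradeds. If $\phi \in \ker(\iota)$ were nonzero, its leading-order $\hbar$-term would yield a nonzero element of the classical kernel, contradicting the classical injectivity established above. An induction on $\hbar$-order then finishes the argument.

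The main obstacle will be verifying the flatness needed for this filtration argument to work, specifically that $\QGM_{\mathcal{T}}(Y)$ is $\hbar$-torsion-free (equivalently, flat over $R$) so that its $\hbar$-adic associated graded is genuinely the classical coordinate ring. Flatness is delicate here because the defining ideal mixes monomial edge relations with the additive three-term relations, and such combinations can produce unexpected torsion in quotients of quantum tori. A promising route around this difficulty is to exhibit an explicit monomial basis: use the three-term relations to eliminate $\hat{Z}'_T$ and $\hat{Z}''_T$ in favor of $\hat{Z}_T$ within a suitable localization, thereby reducing $\QGM_{\mathcal{T}}(Y)$ to a quotient of a smaller quantum torus by purely multiplicative edge relations, with a parallel reduction applying to $\SQGM_{\mathcal{T}}(Y)$. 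In this reduced form, injectivity of $\iota$ becomes a direct combinatorial matching of monomial bases, bypassing the need for a subtle deformation argument entirely.
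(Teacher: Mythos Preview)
This statement is a \emph{conjecture} in the paper, not a theorem: the authors do not prove it. The only evidence they offer is the remark at the end of Section~\ref{subsec:qgm} that they have ``verified the claim classically for the figure-$8$ knot complement by computing elimination ideals in Mathematica.'' So there is no proof in the paper to compare your proposal against; you are attempting to settle an open problem.

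As for the proposal itself, the outline is reasonable but there are real gaps. First, $\QGM_{\mathcal{T}}(Y)$ and $\SQGM_{\mathcal{T}}(Y)$ are defined as quotients of quantum tori by the sum of a right ideal and a left ideal (see Definition~\ref{defn:sqgm} and the definition of $\QGM$), so away from $A=1$ they are only $R$-modules, not algebras. Your $\hbar$-adic filtration argument implicitly treats them as flat deformations of commutative rings, and you correctly flag flatness as the crux; but the mixing of one-sided ideals with additive three-term relations makes even the statement ``the associated graded is the classical coordinate ring'' nontrivial, and you do not establish it. Second, your $\mathbb{F}_2$ solvability step is imprecise: the sign constraints from the vertex-cone relations $[\hat{x}_{e_1}\hat{x}_{e_2}\hat{x}_{e_3}]=-A$ and the edge relations of Lemma~\ref{lem:idealRelsUsingSquareRootShapeParams} are indexed by edge cones, not by edges of tetrahedra, and opposite edge cones carry a priori independent square roots (Lemma~\ref{lem:squaredParams} only equates their squares). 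You would need to check carefully that this $\mathbb{F}_2$ system is equivalent to a spin-lifting obstruction, rather than just asserting it. Third, your proposed workaround of eliminating $\hat{Z}'_T,\hat{Z}''_T$ via the three-term relation requires localization (inverting $1-\hat{Z}_T$ or similar), and it is not clear this localization interacts well with the one-sided ideal structure or that the resulting ``combinatorial matching of monomial bases'' actually goes through.

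In short: the paper leaves this open, your strategy is plausible in spirit, but the argument as written does not close, and the flatness/torsion issue you identify is exactly where it would need substantial new work.
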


For our next conjecture, assume that $Y = S^3 \setminus \mathcal{K}$ is an ideally triangulated knot complement. 
We say that a link $L \subset S^3 \setminus \mathcal{K}$ is \emph{even} if it represents the zero homology class in $H_1(S^3 \setminus \mathcal{K}, \mathbb{Z}_2) \cong \mathbb{Z}_2$. 
\begin{conj}\label{conj:EvenLink}
    The image of an even link under $\Tr_\mathcal{T}$ is contained in $\iota (\QGM_{\mathcal{T}}(Y)) \subset \SQGM_{\mathcal{T}}(Y)$.
\end{conj}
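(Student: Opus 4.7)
The plan is to exhibit a natural $\mathbb{Z}_2^{E}/W$-grading on $\SQGM_{\mathcal{T}}(Y)$ (where $E$ denotes the set of all edge cones) whose $0$-graded piece coincides with $\iota(\QGM_{\mathcal{T}}(Y))$, and then show that for any link $L$ the homogeneous degree of $\Tr_{\mathcal{T}}(L)$ in this grading depends only on $[L] \in H_1(Y;\mathbb{Z}_2)$ and vanishes precisely for even $L$.

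First, I would assign to each generator $\hat{x}_e$ of $\SQGM_{\mathcal{T}}(Y)$ the basis vector $\bar{e}\in \mathbb{Z}_2^{E}$ and let $W\subset \mathbb{Z}_2^{E}$ be the subspace generated by the degrees of the non-scalar sides of the relations in Lemma \ref{lem:idealRelsUsingSquareRootShapeParams}: the internal-edge vectors $w_{e_{\mathrm{int}}}:=\sum_i \bar{e}_i$ for each $e_{\mathrm{int}}\in \mathcal{E}$ (summed over the $k$ edge cones abutting $e_{\mathrm{int}}$), and the vertex-cone vectors $w_v:=\bar{e}_1+\bar{e}_2+\bar{e}_3$ for each vertex cone $Cv$. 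The remaining relation $\hat{x}_{e_1}^{-2}+\hat{x}_{e_2}^{2}+1=0$ is already $\mathbb{Z}_2^{E}$-homogeneous, so we obtain a well-defined $\mathbb{Z}_2^{E}/W$-grading on $\SQGM_{\mathcal{T}}(Y)$. The inclusion $\iota(\QGM_{\mathcal{T}}(Y))\subset \SQGM_{\mathcal{T}}(Y)_0$ is immediate: each $\hat{X}_T=-\hat{x}_{Z_T}^{2}$ has $\mathbb{Z}_2^{E}$-degree $0$. For the reverse, I would show that any monomial whose $\mathbb{Z}_2^{E}$-degree lies in $W$ can be iteratively rewritten, via the vertex-cone relation $[\hat{x}_{e_1}\hat{x}_{e_2}\hat{x}_{e_3}]=-A$ and the edge-sum relation $[\hat{x}_{e_1}\cdots\hat{x}_{e_k}]=-(-1)^{k/2}A^{2}$, as a scalar multiple of a monomial of $\mathbb{Z}_2^{E}$-degree $0$, i.e., a product of $\hat{X}_T,\hat{X}_T',\hat{X}_T''$'s.

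For the degree of $\Tr_{\mathcal{T}}(L)$, the construction $\Tr_{\mathcal{T}}=\overline{\otimes}_f \Tr_{Sf}\circ \overline{\sigma}$ preserves the grading: $\overline{\sigma}$ expresses $\Tr_{\mathcal{T}}(L)$ as a sum over compatible state assignments $\vec{\epsilon}$ on $L\cap \bigcup_e Ce$, and each $\Tr_{Sf}$ is a graded bimodule homomorphism by Theorem \ref{thm:fsQT}. Each summand is therefore a scalar multiple of a monomial whose $\hat{x}_e$-degree is the signed sum $\sum_{p\in L\cap Ce}\epsilon(p)\in \mathbb{Z}$. Reducing modulo $2$, every such signed sum equals $|L\cap Ce|\bmod 2$ independently of $\vec{\epsilon}$, so $\Tr_{\mathcal{T}}(L)$ is homogeneous of $\mathbb{Z}_2^{E}$-degree $p(L):=(|L\cap Ce|\bmod 2)_{e\in E}$, projecting to $p(L)\bmod W$ in the grading group.

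Finally, I would show $p(L)\bmod W$ factors through $H_1(Y;\mathbb{Z}_2)$. For $L$ with $[L]=0$, pick a compact $\mathbb{Z}_2$ 2-chain $S$ with $\partial S=L$ in general position with $\bigcup_e Ce$. The identity $|\partial S\cap Ce|\equiv |S\cap\partial Ce|\pmod 2$, a consequence of $S\cap Ce$ being a compact 1-manifold whose boundary count is even, combined with $\partial Ce = e+Cv_1+Cv_2$, yields
\[
p(L) \;=\; \sum_{e_{\mathrm{int}}\in\mathcal{E}} m_{e_{\mathrm{int}}}\, w_{e_{\mathrm{int}}} \;+\; \sum_v n_v\, w_v \;\in\; W,
\]
where $m_{e_{\mathrm{int}}}=|S\cap e_{\mathrm{int}}|\bmod 2$ and $n_v=|S\cap Cv|\bmod 2$; the grouping by internal edge in the first sum uses the crucial fact that the bare edges of different tetrahedra around a single $e_{\mathrm{int}}\in\mathcal{E}$ are identified after gluing, so $|S\cap e|$ is constant over all edge cones sharing $e_{\mathrm{int}}$. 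Hence $p(L)\equiv 0\pmod W$, and $\Tr_{\mathcal{T}}(L)\in \SQGM_{\mathcal{T}}(Y)_0=\iota(\QGM_{\mathcal{T}}(Y))$.

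The main obstacle is making rigorous the reverse inclusion $\SQGM_{\mathcal{T}}(Y)_0\subset \iota(\QGM_{\mathcal{T}}(Y))$: the iterative reduction of a monomial of $\mathbb{Z}_2^{E}$-degree $w\in W$ to a product of $\hat{X}_T$'s is intuitive, but tracking the accumulated scalar factors arising from the Weyl-ordering and the signs in the vertex-cone and edge-sum relations requires careful combinatorial bookkeeping, especially when $w$ is a sum of several overlapping $w_v$'s and $w_{e_{\mathrm{int}}}$'s.
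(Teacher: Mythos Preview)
The paper does not prove this statement; it is listed as a conjecture in Section~\ref{sec:future_directions}, with the three computations of Section~\ref{sec:examples} offered as supporting evidence. There is therefore no proof in the paper to compare against.

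Your outline appears to be a correct proof strategy, and the obstacle you flag is milder than you suggest. You do not actually need the full equality $\SQGM_{\mathcal{T}}(Y)_0=\iota(\QGM_{\mathcal{T}}(Y))$ as a statement about the quotient; it suffices to argue monomial-by-monomial in $\bigotimes_{f}\mathbb{S}f$ \emph{before} passing to the quotient. Your computation already shows that every summand of $(\otimes_f\Tr_{Sf})\circ\sigma(L)$ is a scalar multiple of a single $\hat{x}_e$-monomial of fixed $\mathbb{Z}_2^{E}$-degree $p(L)$ (each graded piece of $\widetilde{\mathbb{T}}^{\otimes 2}$ is one-dimensional, so $\Tr_{Sf}$ sends homogeneous stated tangles to monomials). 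For even $L$ your bordism argument gives $p(L)\in W$. Now fix any decomposition $p(L)=\sum_v\epsilon_v w_v+\sum_{e_{\mathrm{int}}}\delta_{e_{\mathrm{int}}}w_{e_{\mathrm{int}}}$ in $\mathbb{Z}_2^{E}$: because $[\hat{x}_{e_1}\hat{x}_{e_2}\hat{x}_{e_3}]+A\in V_+$ and $[\hat{x}_{e_1}\cdots\hat{x}_{e_k}]+(-1)^{k/2}A^2\in V_-$, left-multiplying a monomial by the former or right-multiplying by the latter acts in the quotient as multiplication by a unit of $R$, while changing the $\mathbb{Z}_2^{E}$-degree of the representing monomial by $w_v$ or $w_{e_{\mathrm{int}}}$. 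Iterating produces, for each summand $m$, a monomial $m'$ of $\mathbb{Z}_2^{E}$-degree $0$ with $[m']=(\text{unit})\cdot[m]$ in $\SQGM_{\mathcal{T}}(Y)$; and a degree-$0$ monomial is a unit times a product of $\hat{x}_e^2=-\hat{X}_T^{(\ast)}$'s, hence lies in $\iota(\QGM_{\mathcal{T}}(Y))$. The precise scalar is irrelevant to the conclusion, so the ``careful combinatorial bookkeeping'' never needs to be carried out.

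One point deserves a sentence of justification: your $\mathbb{Z}_2^{E}$-grading (with $\deg\hat{x}_e=\bar{e}$) is \emph{not} inherited from the natural $\mathbb{Z}^{E'}$-grading on $\bigotimes_f\mathbb{S}f$, since there $\hat{x}_e=x_{f_i,e}x_{f_j,e}$ has even degree. It lives only on the sub-quantum-torus $\mathcal{A}$ generated by the $\hat{x}_e$'s. If you want the grading to descend to $\SQGM_{\mathcal{T}}(Y)$ itself, you need $\SQGM_{\mathcal{T}}(Y)\cong\mathcal{A}/(V_+\mathcal{A}+\mathcal{A}V_-)$ rather than merely $\mathcal{A}/\bigl(\mathcal{A}\cap(V_+\!\cdot\!\bigotimes+\bigotimes\!\cdot\! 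V_-)\bigr)$; this holds because $\mathcal{A}$ is exactly the degree-$0$ part of $\bigotimes_f\mathbb{S}f$ for the grading that records the difference of exponents of $x_{f_i,e}$ and $x_{f_j,e}$ (cf.\ Remark~\ref{rmk:NewGradingAfterSplitting}), and $V_\pm$ sit in that degree. But as noted above, the monomial-by-monomial argument bypasses this entirely.
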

We have seen some evidence of Conjecture \ref{conj:EvenLink} in Section \ref{sec:examples}, as all of $K_m^2, K_b,$ and $K_b^2$ are even links. 

Beyond addressing these conjectures, here are some other promising future directions:
\begin{enumerate}
    \item How does the 3d quantum trace change as we vary the triangulation? 
    In the 2d case, for the Chekhov-Fock algebra of two different triangulations $\lambda$ and $\lambda'$, there is a coordinate change map $\Theta_{\lambda \lambda'}$ such that $\Theta_{\lambda \lambda'}\circ \Tr_\lambda = \Tr_{\lambda'}$. 
    The proof heavily relies on explicit coordinate change formulas developed by \cite{Hiatt} as one flips a diagonal of the triangulation. 
    To tackle the situation in 3d, one needs a handle on how square root quantum gluing modules associated to two different triangulations differing by a Pachner 2-3 move are related \cite{DG, Dim}.
    We expect our 3d quantum trace map to be natural with respect to changes of triangulation, in a sense analogous to the 2d case. 
    \item Currently, our 3d quantum trace map is defined for $3$-manifolds $Y$ without boundary (i.e.\ it only has cusps at infinity). 
    It should be relatively straightforward to generalize our construction to $3$-manifolds with any ideally triangulated boundary. 
    \item Once we allow ideally triangulated boundaries, as long as the boundary has no markings, the skein module of $Y$ will naturally be a module over the (ordinary) skein algebra of the boundary. 
    We expect our 3d quantum trace to be functorial in a sense that it is compatible with the 2d quantum trace on the skein algebra of the boundary. 
    It would be nice to check this. 
    \item How does the 3d quantum trace compare with the quantum UV-IR map \cite{NY, NY2, FN}? 
    This is the subject of a work in progress. 
    A related comparison in 2-dimensional setting was carried out in \cite{KLS}. 
    \item It should be possible to generalize our construction to $\mathrm{SL}_n$ quantum traces. 
    In the 2d case, this is addressed in \cite{NY2, Douglas, Kim, LS}, and ultimately solved in \cite{LY}. 
    Because the face suspension module is the tensor product of two triangle algebras, it seems the main ideas and techniques of \cite{LY} could be carried over into the 3d setting. 
    The 3d version of Fock-Goncharov coordinates \cite{FGa, FGb} are developed in \cite{GTZ, GGZ}. 
    \item Many ideally triangulated $3$-manifolds can be realized as a fibration over the circle, where the fiber is some ideally triangulated surface $\Sigma$ \cite{Gueritaud, McMullen}. 
    The image of links under the 3d quantum trace map whose projection onto the base of this fibration are trivial should roughly be the same as the image of that link under the 2d quantum trace map for $\Sigma$. 
    It would be illuminating to make a precise statement about this situation. 
    This question also seems intimately related to a conjecture posited and explored in \cite{BWY1, BWY2}.
    \item Another construction of the 3d quantum trace is currently being developed by S. Garoufalidis and T. Yu \cite{GY}. 
    It would be interesting to check if the two constructions are equivalent. 
\end{enumerate}

\bibliographystyle{alpha}
\bibliography{ref}
\end{document}